\newtheoremstyle{longplain}
{\item[\hskip\labelsep \theorem@headerfont ##1\ ##2\theorem@separator]}
{\item[\hskip\labelsep  \theorem@headerfont ##1\ ##2]
\textbf{(##3)}  \theorem@separator\ \ }
\theoremstyle{longplain}
\newtheorem{thm}{Theorem}[section]
\newtheorem{cor}[thm]{Corollary}
\newtheorem{lem}[thm]{Lemma}
\newtheorem{prop}[thm]{Proposition}
\newtheorem{defn}[thm]{Definition}
\newtheorem{rem}[thm]{Remark}
\newtheorem{con}[thm]{Conclusion}
\theoremstyle{nonumberplain}
\newtheorem{proof}{Proof}
\newcommand{\BeweisEnde}{\hfill\hskip 1mm
  \hbox{\vrule height 1.9mm width 2mm depth
    0.1mm}}
\newcommand{\setC}{{\mathord{\mathbb C}}}
\newcommand{\setN}{{\mathord{\mathbb N}}}
\newcommand{\setR}{{\mathord{\mathbb R}}}
\newcommand{\mymarginpar}[1]{}
\def\supp{\mathop{\mathrm{supp}}}
\newcommand{\Tr}{{\mathord{\rm Tr}}}
\newcommand{\n}{{\mathord{|||}}}
\numberwithin{equation}{section} 
\definecolor{orange}{rgb}{1.00,0.65,0.00}
\definecolor{purple}{rgb}{0.63,0.13,0.94}
\begin{document}
\title{Well-Posedness of the  Einstein--Euler System in Asymptotically Flat
  Spacetimes} 
\author{Uwe Brauer and Lavi Karp}
\date{}

\maketitle{}
\begin{small}
\begin{center}
\begin{tabular}{ll}
 \textbf{Departamento Matem\'atica Aplicada}  &  \textbf{Department of
   Mathematics}\\ \textbf{Universidad Complutense Madrid}  &
 \textbf{ORT Braude College} \\ 
\textbf{28040 Madrid, Spain} & \textbf{P. O. Box 78, 21982 Karmiel,
  Israel} \\ 
{\it E-mail:} {oub@mat.ucm.es} & {\it E-mail:} {karp@braude.ac.il} 
            \end{tabular}
            \end{center}
\end{small}



\begin{abstract} 
 We prove a local in time existence and uniqueness theorem of
classical solutions of the coupled Einstein--Euler system, and
therefore establish the well posedness of this system. We use the
condition that the energy density might vanish or tends to zero at
infinity and that the pressure is a certain function of the energy
density, conditions which are used to describe simplified stellar
models. In order to achieve our goals we are enforced, by the
complexity of the problem, to deal with these equations in a new type
of weighted Sobolev spaces of fractional order. Beside their
construction, we develop tools for PDEs and techniques for elliptic
and hyperbolic equations in these spaces. The well posedness is
obtained in these spaces.  The results obtained are related to and
generalize earlier works of Rendall \cite{Rendall92:-fluid} for the
Euler-Einstein system under the restriction of time symmetry and of
Gamblin \cite{gamblin93:_solut_euler
 } for the simpler Euler--Poisson system.
\end{abstract}

\maketitle

\section{Introduction}
\label{sec:introduccion}


This paper deals with the Cauchy problem for the Einstein-Euler system
describing a relativistic self-gravitating perfect fluid, whose
density either has compact support or falls off at infinity in an
appropriate manner, that is, the density belongs to a certain weighted
Sobolev space.

The evolution of the gravitational field is described by
the Einstein equations
\begin{equation}
  \label{eq:eineul:1} 
  R_{\alpha\beta}-\frac{1}{2}g_{\alpha\beta}R = 8\pi T_{\alpha\beta}
\end{equation}
where $ g_{\alpha\beta}$ is a semi Riemannian metric having a
signature $(-,+,+,+)$, $ R_{\alpha\beta}$ is the Ricci curvature
tensor, these are functions of $ g_{\alpha\beta}$ and its first and
second order partial derivatives and $R$ is the scalar curvature.  The
right hand side of (\ref{eq:eineul:1}) consists of the energy-momentum
tensor of the matter, $T_{\alpha\beta}$ and in the case of a perfect
fluid the latter takes the form
\begin{equation}
  \label{eq:eineul:2}
  T^{\alpha\beta} = (\epsilon+p) u^\alpha u^\beta +
  pg^{\alpha\beta}, 
\end{equation}
where $\epsilon$ is the energy density, $p$ is the pressure and
$u^\alpha$ is the four-velocity vector.  The vector $u^{\alpha}$ is a
unit timelike vector, which means that it is required to satisfy the
normalization condition
\begin{equation}
  \label{eq:publ-broken:2}
  g_{\alpha\beta}  u^\alpha u^\beta=-1.
\end{equation}
The Euler equations describing the evolution of the fluid take the
form
\begin{equation}
  \label{eq:eineul:3}
  \nabla_\alpha T^{\alpha\beta}=0,
\end{equation}
where $\nabla$ denotes the covariant derivative associated to the
metric $g_{\alpha\beta}$. 
Equations (\ref{eq:eineul:1}) and (\ref{eq:eineul:3}) are not
sufficient to determinate the structure uniquely, a functional
relation between the pressure $p$ and the energy density $\epsilon$
(equation of state) is also necessary. We choose an equation of state
that has been used in astrophysical problems. It is the analogue of
the well known polytropic equation of state in the non-relativistic
theory, given by
\begin{equation}
  \label{eq:eineul:4}
  p = f(\epsilon) = K\epsilon^{\gamma}, \qquad
K,\gamma\in\setR^{+},\qquad
  1<\gamma.
\end{equation}
The sound velocity is denoted by
\begin{equation}
  \label{eq:publ-broken:14} \sigma^2=\frac{\partial
    p}{\partial\epsilon}.
\end{equation}

The unknowns of these equations are the semi Riemannian metric
$g_{\alpha\beta}$,
the velocity vector $u^\alpha$ and the energy density $\epsilon$.
These are functions of $t$ and $x^a$ where $x^a$ $(a=1,2,3)$ are the
Cartesian
coordinates on $\setR^3$. The alternative notation $x^0=t$ will also
be used and Greek indices will take the values $0,1,2,3$ in the
following.

The common method to solve the Cauchy problem for the Einstein
equations consists usually of two steps.  Unlike ordinary initial
value problems, initial data must satisfy constraint equations
intrinsic to the initial hypersurface. Therefore, the first step is to
construct solutions of these constraints. The second step is to solve
the evolution equations with these initial data, in the present case
these are first order symmetric hyperbolic systems. As we describe
later in detail, the complexity of our problems forces us to consider
an additional third step, that is, after solving  the constraint
equations, we have to construct the initial data for the fluid
equations.

The nature of this Einstein-Euler system (\ref{eq:eineul:1}),
(\ref{eq:eineul:3}) and (\ref{eq:eineul:4}) forces us to treat both
the
constraint and the evolution equations in the same type of functional
spaces.  Under the above consideration, we have established the well
posedness of this Einstein-Euler system in  weighted Sobolev spaces
of fractional order.  Oliynyk has recently studied the Newtonian limit
of this system in weighted Sobolev spaces of integer order
\cite{oliynyk07:_newton_limit_perfec_fluid
}

We will briefly resume the situation in the mathematical theory of
self gravitation perfect fluids describing compact bodies, such as
stars: For the Euler-Poisson system Makino proved a local existence
theorem in the case the density has compact support and it vanishes at
the boundary,
\cite{makino86:_local_exist_theor_evolut_equat_gaseous_stars
}.  Since the Euler equations are singular when the density $\rho$ is
zero, Makino had to regularize the system by introducing a new matter
variable ($w=M(\rho)$).  His solution however, has some disadvantages
such as the fact they do not contain static solutions and moreover,
the connection between the physical density and the new matter density
remains obscure.

Rendall generalized Makino's result to the relativistic case of the
Einstein--Euler equations,
\cite{Rendall92:-fluid
}. His result however suffers from the same disadvantages as Makino's
result and moreover it has two essential restrictions: 1. Rendall
assumed time symmetry, that means that the extrinsic curvature of the
initial manifold is zero and therefore the Einstein's constraint
equations are reduced to a single scalar equation; 2. Both the data
and solutions are $C^{\infty}_0$ functions. This regularity condition
implies a severe restriction on the equation of state
$p=K\epsilon^{\gamma}$, namely $\gamma\in\setN $.

Similarly to Makino and Rendall, we have also used the Makino variable
\begin{equation}
  \label{eq:eineul:11}
  w=M(\epsilon)=\epsilon^{\frac{\gamma-1}{2}}.
\end{equation}
Our approach is motivated by the following observation. As it turns
out, the system of evolution equations have the following form
\begin{equation}
  \label{eq:publ-broken:13}
  A^0 \partial_t U + \sum_{k=1}^3  A^k \partial_k U = Q(\epsilon,..),
\end{equation}
where the unknown $U$ consists of the gravitational field
$g_{\alpha\beta}$ the velocity of the fluid $u^\alpha$ and the Makino
variable $w$, and the lower order term $Q$ contains the energy density
$\epsilon$.  Thus, we need to estimate $\epsilon$ by $w$ in the
corresponding norm of the function spaces.  Combining this estimation
with the Makino variable (\ref{eq:eineul:11}), it results in an
algebraic relation between the order of the functional space $k$ and
the coefficient $\gamma$ of the equation of state (\ref{eq:eineul:4})
of the form
\begin{equation}
  \label{eq:publ-broken:1}
  1<\gamma \leq \frac{2+k}{k}.
\end{equation}
This relation can be easily derived by considering $\|\partial^{\alpha} w\|_{L_2}$,
$|\alpha|\leq k$. Moreover, it can be interpreted either as a restriction on
$\gamma$ or on $k$. Thus, unlike typical hyperbolic systems where often the
regularity parameter is bounded from below, here we have both lower
and upper bounds for differentiability conditions of the sort
$\frac{5}{2}<k\leq\frac{2}{\gamma-1}$.  A similar phenomenon for the
Euler-Poisson equations was noted by Gamblin
\cite{gamblin93:_solut_euler}.

We want to interpret (\ref{eq:publ-broken:1}) as a restriction on $k$
rather than on $\gamma$.  Therefore, instead of imposing conditions on
the equation of state and in order to sharpen the regularity
conditions for existence theorems, we are lead to the conclusion of
considering function spaces of fractional order, and in addition, the
Einstein equations consist of quasi linear hyperbolic and elliptic
equations.  The only function spaces which are known to be useful for
existence theorems of the constraint equations in the asymptotically
flat case, are the weighted Sobolev spaces $H_{k,\delta}$, $k\in\setN
$, $\delta\in\setR$, which were introduced by Nirenberg and Walker,
\cite{nirenberg73:_null_spaces_ellip_differ_operat_r}
and Cantor
\cite{cantor75:_spaces_funct_condit_r
}, 
and they are the completion of $C_0^\infty(\mathbb{R}^3)$ under the
norm
\begin{equation}
  \label{eq:intro:5} \left(\|u\|_{k,\delta}\right)^{2}
  =\sum_{|\alpha|\leq k}\int\left((1+| x|)^{\delta+|\alpha|}
    |\partial^\alpha u|\right)^2dx. 
\end{equation}

Hence we are forced to consider new function spaces $H_{s,\delta}$, $s\in\setR $
which generalize the spaces $H_{k,\delta}$ to fractional order.  The well
posedness of the Einstein-Euler system is obtained in these spaces.
In order to achieve this, we have to solve both the constraint and the
evolution equations in the $H_{s,\delta}$ spaces.

Another difficulty which arises from the non-linear equation of state
(\ref{eq:eineul:4}) is the compatibility problem of the initial data
for the fluid and the gravitational field.  There are three types of
initial data for the Einstein-Euler system:
\begin{itemize}
  \item The gravitational data is a triple $(M,h,K)$, where $M$ is
  space-like manifold, $h=h_{ab}$ is a proper Riemannian metric on $M$
  and $K=K_{ab} $ is a second fundamental form on $M$ (extrinsic
  curvature). The pair $(h,K)$ must satisfy the constrain equations
  \begin{equation}
    \label{eq:intro:6}
    \left\{
      \begin{array}{ccc}
        R(h) - K_{ab}K^{ab}+(h^{ab}K_{ab})^2 &=&16\pi z,\\
        {}^{(3)}\nabla_b K^{ab}- 
        {}^{(3)}\nabla^b(h^{bc}K_{bc}) &=& -8\pi j^a,
      \end{array}\right.
  \end{equation}
  where $R(h)=h^{ab}R_{ab}$ is the scalar curvature with respect to
  the metric $h$. \item The matter variables, consisting of the energy
  density $z$ and the momentum density $j^a$, appear in the right hand
  side of the constraints (\ref{eq:intro:6}). \item The initial data
  for Makino's variable $w$ and the velocity vector $u^\alpha$ of the
  perfect fluid.
\end{itemize}

The projection of the velocity vector $u^\alpha$, $\bar u^\alpha$, on
the tangent space of the initial manifold $M$ leads to the following
relations
\begin{equation}
  \label{eq:intro:7}
  \left\{
    \begin{array}{ccc}
      z &= &\epsilon+(\epsilon+p)h_{ab}\bar{u}^a\bar{u}^b\\
      j^\alpha
      &=&(\epsilon+p)\bar{u}^a\sqrt{1+ h_{ab}\bar{u}^a\bar{u}^b}
    \end{array}\right.
\end{equation}
between the matters variable $(z,j^a)$ and $(w,\bar u^a)$. We cannot
give $w$, $\bar{u}^b$ and by relations (\ref{eq:eineul:4}) and
(\ref{eq:intro:7}) solve for $z$ and $j^{\alpha}$,
since this is incompatible with the conformal scaling (see Section
\ref{sec:comp-probl-init}). In order to overcome this obstacle, we
let $z=y^{\frac{2}{\gamma-1} }$ and $j^\alpha=v^\alpha/z$, then
(\ref{eq:intro:7}) is equivalent to (\ref{eq:The_Initial_Data:7})
and the last one is  invertible.

The paper is organized as follows: In the next section we perform the
reduction of the Einstein-Euler system into a first order symmetric
hyperbolic system.  Choquet-Bruhat showed that the choice of harmonic
coordinates converts the field equations (\ref{eq:eineul:1}) into
wave equations  which then can be written as a first order
symmetric hyperbolic system
\cite{choquet--bruhat52
},
\cite{CHY
}, \cite{HAE
}.  Reducing the Euler equations (\ref{eq:eineul:3}) to a first order
symmetric hyperbolic system is not a trivial matter.  We use a fluid
decomposition and present a new reduction of the Euler equations.
Beside having a very clear geometric interpretation, we give a
complete description of the structure of the characteristics conformal
cone of the system, namely, it is a union of a three-dimensional
hyperplane tangent to the initial manifold and the sound cone.

In Section \ref{sec:New_function_spaces} we define the weighted
Sobolev spaces of fractional order $H_{s,\delta}$ and present our main
results. These include a solution of the compatibility problem, the
construction of initial data and a solution to the evolution equations
in the $H_{s,\delta}$ spaces. The announcement of the main results has
been published in
\cite{ICH12
}.

Section \ref{sec:The_Initial_Data} deals with the constructions of the
initial data. The common Lichnerowicz-York  scaling method for solving
the constraint equations cannot be
applied here directly \cite{CHY},
\cite{cantor79
}, \cite{york73:_confor_rieman
}, since it violates the relations
(\ref{eq:intro:7}). We need to invert of (\ref{eq:intro:7}) in order
construct the initial data and there are two conditions which
guarantee it: the dominate energy condition $h_{ab}j^a j^b\leq z^2$,
this  is invariant under scaling; and the causality condition,
this is the speed of sound (\ref{eq:publ-broken:14}) less than speed
of light.  Unfortunately the last condition is not invariant under
scaling.  It is also necessary to restrict the matter variables
$(z,j^a)$ to a certain region.  We show the inversion of
(\ref{eq:intro:7}) exists provided that $(z,j^a)$ belong to a
certain region.  This fact enables us to construct initial data
for the evolution equations.

The local existence for first order symmetric hyperbolic systems in
$H_{s,\delta}$ is discussed in Section \ref{sec:local-exist-hyperb}.
The known existence results in the $H^s$ space
\cite{FMA
},
\cite{KATO
},
\cite{hughes76:_well
},
\cite{Taylor91
},
\cite{taylor97c
},
\cite{majda84:_compr_fluid_flow_system_conser
} cannot be applied to the $H_{s,\delta}$ spaces.  The main difficulty
here is the establishment of energy estimates for linear hyperbolic
systems. In order to achieve it we have defined a specific
inner-product in $H_{s,\delta}$ and in addition the Kato-Ponce
commutator estimate
\cite{kato88:_commut_euler_navier_stokes
},
\cite{Taylor91
}, \cite{taylor97c
} has an essential role in our approach. Once the energy estimates and
other tools have been established in the $H_{s,\delta}$ space, we
follow Majda's \cite{majda84:_compr_fluid_flow_system_conser}
iteration procedure and show existence, uniqueness and continuity in
that norm.

In Section \ref{sec:elliptic} we study elliptic theory in
$H_{s,\delta}$ which is essential for the solution of the constraint
equations.  We will extend earlier results in weighted Sobolev spaces
of integer order which were obtained by Cantor
\cite{cantor79
}, Choquet-Bruhat and Christodoulou
\cite{choquet--bruhat81:_ellip_system_h_spaces_manif_euclid_infin%
}, Choquet-Bruhat, Isenberg and York
\cite{y.00:_einst_euclid
}, and Christodoulou and O'Murchadha
\cite{OMC
} to the fractional ordered spaces. The central tool is a priori
estimate for elliptic systems in the $H_{s,\delta}$ spaces
(\ref{eq:a_priori_estimates_weighted:9}). Its proof requires first the
establishment of analogous a priori estimate in Bessel potential
spaces $H^s$. Our approach is based on the techniques of
pseudodifferential operators which have symbols with limited
regularity and in order to achieve that we are adopting ideas being
presented in Taylor's books
\cite{Taylor91
}
and
\cite{taylor00
}.
A different method was derived recently by Maxwell
\cite{maxwell06:_rough_einst} who also showed existence of solutions
to Einstein constraint equations in vacuum in $H_{s,\delta}$ with the
best possible regularity condition, namely $s>\frac{3}{2}$. The
semi-linear elliptic equation is solved by following Cantor's homotopy
argument \cite{cantor79} and generalized to the  $H_{s,\delta}$ spaces.

Finally, in the Appendix we deal with of the construction, properties
and tools for PDEs in the weighted Sobolev spaces of fractional order
$H_{s,\delta}$.  Triebel extended the $H_{k,\delta}$ spaces given by
the norm
(\ref{eq:intro:5}) to a fractional order
\cite{triebel76:_spaces_kudrj2}, 
\cite{triebel95
}. We present three
equivalent norms, one of which is a combination of the norm
(\ref{eq:intro:5}) and the norm of Lipschitz-Sobolevskij spaces
\cite{stein70:_singul_integ_differ_proper_funct}.  This definition is
essential for the understanding of the
relations between the integer and the fractional order spaces (see
(\ref{eq:const:3})).  However the double integral makes it almost
impossible to establish {any property} needed for PDEs.  Throughout
the effort to solve this problem, we were looking for an equivalent
definition of the norm: we let $\{\psi_j\}_{j=0}^\infty$ be a dyadic
resolution
of unity in $\mathbb{R}^3$ and set
\begin{equation}
  \label{eq:intro:9}
  \left(\|u\|_{H_{s,\delta}}\right)^2=
  \sum_{j=0}^\infty 2^{( \frac{3}{2} + \delta)2j} \| (\psi_j  u)_{2^j}
  \|_{H^{s}}^{2},
\end{equation}
where $( f)_\epsilon(x)=f(\epsilon x)$. When $s$ is an integer, then
the norms (\ref{eq:intro:5}) and (\ref{eq:intro:9}) are equivalent.
Our guiding philosophy is to apply the known properties of the Bessel
potential spaces $H^s$ term-wise to each of the norms in the infinite
sum (\ref{eq:intro:9}) and in that way to extend them to the
$H_{s,\delta}$ spaces.  Of course, this requires a careful treatment
and a sound consideration of the additional parameter $\delta$. Among
the properties which we have extended to the $H_{s,\delta}$ spaces are
the algebra, Moser type estimates, fractional power,
the embedding to the continuous and an intermediate estimate.


\section{The Initial Value Problem for the Euler-Einstein System}
\label{sec:init-value-probl}

This section deals with the reduction of the coupled evolution
equations (\ref{eq:eineul:1}), (\ref{eq:eineul:3})  and
(\ref{eq:eineul:4}) into a first order symmetric hyperbolic system.

\subsection{The Euler equations written as a symmetric hyperbolic
  system}
\label{sec:euler-equat-writt}

It is not obvious that the Euler equations written in the conservative
form $\nabla_{\alpha}T^{\alpha\beta}=0$ are symmetric hyperbolic. In
fact these equations have to be transformed in order to be expressed
in a symmetric hyperbolic form. Rendall presented such a
transformation of these equations in
\cite{Rendall92:-fluid
}, however, its geometrical meaning is not entirely clear and it might
be difficult to generalize it to the non time symmetric case.  Hence
we will present a different hyperbolic reduction of the Euler
equations and discuss it in some details, for we have not seen it
anywhere in the literature. The basic idea is to perform the standard
\textit{fluid decomposition} and then to modify the equation by
adding, in an appropriate manner, the normalization condition
(\ref{eq:publ-broken:2}) which will be considered as a constraint
equation.

The fluid decomposition method consists of:
\begin{enumerate}
  \item The equation $ \nabla_{\nu}T^{\nu\beta} = 0 $ is once
  projected orthogonal onto $u^{\alpha}$ which leads to
  \begin{equation}
    u_\beta\nabla_{\nu}T^{\nu\beta} =  0
    \label{kap3.flp20}.
  \end{equation}
  \item The equation $ \nabla_{\nu}T^{\nu\beta} = 0 $ is projected
  into the rest pace ${\cal O}$ orthogonal to $u^\alpha$ of a fluid
  particle gives us:
  \begin{equation}
    P_{\alpha\beta}\nabla_{\nu}T^{\nu\beta} = 0
    \qquad \mbox{with}  \quad
    P_{\alpha\beta}= g_{\alpha\beta}+u_\alpha u_\beta,
    \quad
    P_{\alpha\beta}u^{\beta}=0.
    \label{kap3.flp1}
  \end{equation}
\end{enumerate}

Inserting this decomposition into (\ref{eq:eineul:2}) results in a 
system in the following form:
\begin{subequations}
  \begin{eqnarray}
    \label{eq:euler-rel:10}
    u^{\nu}\nabla_{\nu}\epsilon + (\epsilon+p) \nabla_{\nu}u^{\nu}
    &=& 0; \\[0.2cm]
    \label{kap3.flz3}
    (\epsilon+p)   P_{\alpha\beta}u^{\nu}\nabla_{\nu}{u^\beta} +
P^\nu{}_\alpha
    \nabla_\nu p     &=& 
    0.
  \end{eqnarray}
\end{subequations}

Note that we have beside the evolution equations
(\ref{eq:euler-rel:10}) and (\ref{kap3.flz3}) the following constraint
equation: $g_{\alpha\beta}u^{\alpha}u^{\beta}=-1$.  We will show
later, in subsection \ref{sec:cons-constr-equat} that this constraint
equation is conserved under the evolution equation, that is, if it
holds initially at $t=t_0$, then it will hold for $t>t_0$.  Note that
in most textbooks, the equation (\ref{kap3.flz3}) is presented as
$(\epsilon+p)
g_{\alpha\beta}u^{\nu}\nabla_{\nu}{u^\beta} + P^\nu{}_\alpha
\nabla_\nu p = 0$, which is an equivalent form, since due to the
normalization condition (\ref{eq:publ-broken:2}) we have
$u_{\beta}\nabla_{\nu}u^{\beta}=0$.

In order to obtain a symmetric hyperbolic system  we have to
modify it in the following way. The normalization condition
(\ref{eq:publ-broken:2}) gives that $ u_\beta u^\nu\nabla_\nu u^\beta
= 0$, so we add $(\epsilon+p) u_\beta u^\nu \nabla_\nu u^\beta = 0$ to
equation
(\ref{eq:euler-rel:10}) and $ u_\alpha u_\beta u^\nu\nabla_\nu u^\beta
= 0$ to (\ref{kap3.flz3}), which together with
(\ref{eq:publ-broken:14}) results in,
\begin{subequations}
  \begin{eqnarray}
    \label{eq:eineul:8}
    u^{\nu}\nabla_{\nu} \epsilon + (\epsilon+p)
P^\nu{}_\beta\nabla_\nu
u^\beta&=& 0 \\
    \label{eq:publ-broken:3} \Gamma_{\alpha\beta}
    u^{\nu}\nabla_{\nu}u^{\beta} +\frac{\sigma^2}{(\epsilon+p)}
P^\nu{}_\alpha
    \nabla_\nu \epsilon  &=& 0,
  \end{eqnarray}
\end{subequations}

where $\Gamma_{\alpha\beta}= P_{\alpha\beta}+u_{\alpha}u_{\beta}=
g_{\alpha\beta}+2u_{\alpha}u_{\beta}$. As mentioned above we will
introduce a new nonlinear matter variable which is given by
(\ref{eq:eineul:11}). The idea which is behind this is the following:
The system (\ref{eq:eineul:8}) and (\ref{eq:publ-broken:3}) is almost
of symmetric hyperbolic form, it would be symmetric if we multiply the
system by appropriate factors, for example, (\ref{eq:eineul:8}) by $
\frac{\partial p}{\partial \epsilon }=\sigma^2$ and
(\ref{eq:publ-broken:3}) by
$(\epsilon+p)$. However, doing so we will be faced with a system in
which the
coefficients will either tend to zero or to infinity, as $\epsilon\to
0$. Hence, it is impossible to represent this system in a
non-degenerate form using these multiplications.

The central point is now to introduce a new variable $w=M(\epsilon)$
which will regularize the equations even for $\epsilon=0$. We do this
by multiplying equation (\ref{eq:eineul:8}) by $\kappa^2
M'=\kappa^2\frac{\partial M}{\partial\epsilon}$. This results in the
following system which we have written in matrix form:
\begin{equation}
  \label{eq:Initial:1}
  \left(
    \begin{array}{c|cccc}
      \kappa^2u^\nu    &  & \kappa^{2}(\epsilon+p) M^{\prime}
      P^{\nu}{}_{\beta}  &     \\ \hline
      &  &                     &  &  \\
      \frac{\sigma^2}{(\epsilon+p) M^{\prime}}  P^{\nu}{}_{\alpha}
      &  & \Gamma_{\alpha\beta} u^\nu       &     \\
      &  &                     &  &  \\
    \end{array}
  \right)\nabla_\nu\left(
    \begin{array}{c}
      w \\
      u^\beta \\
    \end{array}
  \right)=\left(
    \begin{array}{c}
      0 \\
      0 \\
    \end{array}
  \right).
\end{equation}

In order to obtain symmetry we have to demand
\begin{equation}
  \label{eq:Initial:6}
  M'=\frac{\sigma}{(\epsilon+p) \kappa},
\end{equation}
where ${\kappa} \gg0$ has been introduced in order to simplify the
expression for $w$. We choose $\kappa$ so that
\begin{equation}
  \label{eq:Initial:7}
  \frac{\sqrt{f'(\epsilon)}}{(\epsilon+p) \kappa}=
  \frac{2}{\gamma-1}\frac{\epsilon^{\frac{\gamma-1}{2}}}{\epsilon},
\end{equation}
which gives the Makino variable (\ref{eq:eineul:11}). Taking into
account the equation of state (\ref{eq:eineul:4}), we see that
\begin{equation}
  \label{eq:Initial:8} {\kappa}=\frac{\gamma-1}{2}
  \frac{\sqrt{K\gamma}}{1+K\epsilon^{\gamma-1}}\gg0.
\end{equation}

Finally we have obtained the following system
\begin{equation}
  \label{eq:Initial:2} \left(
    \begin{array}{c|cccc}
      \kappa^2 u^\nu    &  & \sigma\kappa P^{\nu}{}_{\beta}  &     \\
\hline
      &  &                     &  &  \\
      \kappa\sigma   P^{\nu}{}_{\alpha}
      &  & \Gamma_{\alpha\beta} u^\nu       &     \\
      &  &                     &  &  \\
    \end{array}
  \right)\nabla_\nu\left(%
    \begin{array}{c}
      w \\
      u^\beta \\
    \end{array}%
  \right)=\left(%
    \begin{array}{c}
      0 \\
      0 \\
    \end{array}%
  \right),
\end{equation}
which is both symmetric and non-degenerated. The covariant derivative
$\nabla_{\nu}$ takes in local coordinates the form $\nabla_{\nu} =
\partial_{\nu} + \Gamma(g^{\gamma\delta},\partial g_{\alpha\beta})$
which expresses the fact that the fluid $u^\alpha$ is coupled to
equations (\ref{eq:eineul:1}) for the gravitational field
$g_{\alpha\beta}$. In addition, from the definition of the Makino
variable (\ref{eq:eineul:11}) we see that $\epsilon^{\gamma-1}=w^2$,
so from the expression (\ref{eq:publ-broken:14}), is follows that
$\sigma=\sqrt{\gamma K}w$ and $\kappa$ --which is given by
(\ref{eq:Initial:8})-- are $C^\infty$ functions of $w$.  Thus the
fractional power of the equation of state (\ref{eq:eineul:4}) does not
appear in the coefficients of the system (\ref{eq:Initial:2}), and
these coefficients are $C^\infty$ functions of the scalar $w$, the
four vector $u^\alpha$ and the gravitational field $g_{\alpha\beta}$.

Let us now recall a general definition of symmetric hyperbolic
systems.
\begin{defn}[First order symmetric hyperbolic systems]
  \label{def:euler-rel:1}
  A quasilinear, symmetric hyperbolic system is a system of
  differential equations of the form
  \begin{equation}
    \label{eq:publ-broken:7}
    L\lbrack   U \rbrack      =      \sum_{\alpha = 0}^{4}
    A^\alpha (U;x)\partial_\alpha U  +
    B(U;x) = 0 
  \end{equation}
  where the matrices $A^\alpha $ are symmetric and for every arbitrary
  $U\in G $ there exists a covector $\xi$ such that
  \begin{equation}
    \label{eq:publ-broken:8}
    \xi_\alpha   A^\alpha (U;x)
  \end{equation} is  positive   definite.   The  covectors 
$\xi_\alpha$
  for which (\ref{eq:publ-broken:8}) is positive definite, are 
  {\it spacelike with respect to the equation
}(\ref{eq:publ-broken:7}).
  Both matrices $A^\alpha$, $B$ satisfy certain regularity conditions,
  which are going to be formulated later.
\end{defn}

Usually $\xi$ is chosen to be the vector $(1,0,0,0)$ which implies via
the condition (\ref{eq:publ-broken:8}) that the matrix $A^0$ has to be
is positive definite.

Now we want to show that $A^0$ of our system (\ref{eq:Initial:2}) is
indeed positive definite. We do this in several steps.

\begin{enumerate}
  \item Explicit computation of the principle symbol
  (\ref{eq:Initial:2});
  \item We show that $-u_{\alpha }$ is a space like covector with
  respect to the equations;
  \item Then we apply a deformation argument and show that the
  covector $t_{\alpha}:=(1,0,0,0)$ is a space like covector with
  respect to the equation.
\end{enumerate}
For each $\xi_\alpha\in T_x^*V$ the principle symbol is a linear map
from $\mathbb{R}\times E_x$ to $\mathbb{R}\times F_x$, where $E_x$ is
a fiber in $T_xV$ and $F_x$ is a fiber in the cotangent space
$T_x^*V$.
Since in local coordinates $\nabla_{\nu} =\partial_{\nu} +
\Gamma(g^{\gamma\delta},\partial g_{\alpha\beta})$, the principle
symbol of system (\ref{eq:Initial:2}) is
\begin{equation}
  \label{eq:Initial:4}\xi_\nu A^\nu= \left(
    \begin{array}{c|cccc}
      \kappa^2    (u^\nu \xi_\nu)   &  & \sigma\kappa
      P^{\nu}{}_{\beta}\xi_\nu  &     \\ \hline
      &  &                     &  &  \\
      \sigma\kappa   P^{\nu}{}_{\alpha}\xi_\nu
      &  & (u^\nu\xi_\nu)\Gamma_{\alpha\beta}        &     \\
      &  &                     &  &  \\
    \end{array}
  \right)
\end{equation}

and the characteristics are the set of covectors for which $(\xi_\nu
A^\nu)$ is not an isomorphism. Hence the characteristics are the zeros
of $Q(\xi):=\det(\xi_\nu A^\nu)$.

The geometric advantages of the fluid decomposition are the following.
The operators in the blocks of the matrix (\ref{eq:Initial:4}) are
$P^{\nu}{}_{\alpha}$, the projection on the rest hyperplane
$\mathcal{O}$ and $ \Gamma_{\alpha\beta}$, that is the  reflection with respect
to the same hyperplane. Therefore, the following relations hold:
\begin{displaymath}
  \Gamma^{\alpha\gamma}\Gamma_{\gamma\beta}=\delta_\beta{}^\alpha,
  \qquad \Gamma^{\alpha\gamma}P_\gamma{}^\nu=P^{\alpha\nu}\qquad
  \text{and}\qquad P_\beta{}^\alpha P_{\alpha}{}^\nu=P^{\nu}{}_\beta,
\end{displaymath}
which yields
\begin{equation}
  \label{eq:Initial:5}\left(
    \begin{array}{c|cccc}
      1   &  & 0  &     \\ \hline
      &  &                     &  &  \\
      0
      &  &\Gamma^{\alpha\gamma}         &     \\
      &  &                     &  &  \\
    \end{array}
  \right)
  \left(\xi_\nu A^\nu\right)= \left(
    \begin{array}{c|cccc}
      \kappa^2          (u^\nu \xi_\nu)   &  & \sigma\kappa
      P^{\nu}{}_{\beta}\xi_\nu  &     \\ \hline 
      &  &                     &  &  \\
      \sigma\kappa   P^{\alpha\nu}{}\xi_\nu &
      &(u^\nu\xi_\nu)\left(\delta^{\alpha}_{\beta} \right)       &    
\\ 
      &  &                     &  &  \\
    \end{array}
  \right).
\end{equation}

It is now fairly easy to calculate the determinate of the right hand
side of (\ref{eq:Initial:5}) and we have
\begin{displaymath}
  \det\left(
    \begin{array}{c|cccc}
      \kappa^2      (u^\nu \xi_\nu)   &  & \sigma\kappa
      P^{\nu}{}_{\beta}\xi_\nu  &     \\ \hline 
      &  &                     &  &  \\
      \sigma\kappa   P^{\alpha\nu}{}\xi_\nu &
      &(u^\nu\xi_\nu)\left(\delta^{\alpha}_{\beta} \right)       &    
\\ 
      &  &                     &  &  \\
    \end{array}
  \right)
  =\kappa^2(u^\nu\xi_\nu)^3\left((u^\nu\xi_\nu)^2 -\sigma^2
    P^{\alpha\nu}\xi_\nu P_\alpha ^\nu\xi_\nu\right). 
\end{displaymath}

Since $P_\beta^\alpha$ is a projection,
\begin{equation}
  P^{\alpha\nu}\xi_\nu P_\alpha ^\nu\xi_\nu= g^{\nu\beta}\xi_\nu
  P^{\alpha}_\beta P_\alpha ^\nu\xi_\nu 
  =g^{\nu\beta}\xi_\nu P^{\nu}{}_\beta \xi_\nu
  =P^{\nu}{}_\beta\xi_\nu  \xi^\beta
\end{equation}
and since $\Gamma_\beta^\gamma:\mathbb{R}^4\to\mathbb{R}^4$ is a
reflection with respect to a hyperplane,
\begin{equation}
  \det\left(
    \begin{array}{c|cccc}
      1   &  & 0  &     \\ \hline

      0
      &  &\Gamma^{\alpha\gamma}         &     \\
    \end{array}
  \right) =\det\left(g^{\alpha\beta}\Gamma_\beta^\gamma\right)
  ={\rm det}\left(g^{\alpha\beta}\right){\rm
    det}\left(\Gamma_\beta^\gamma\right)= -\left({\rm
      det}\left(g_{\alpha\beta}\right)\right)^{-1}.
\end{equation}

Consequently,
\begin{equation}
  \label{eq:Initial:11}
  Q(\xi ):=  \det (\xi_\nu A^\nu  ) =-\kappa^{2}\det(g_{\alpha\beta})
  ( u^\nu\xi_\nu)^3
  \left\{ ( u^\nu\xi_\nu)^2 - \sigma^2
    P^{\alpha}{}_{\beta}\xi_\alpha \xi^\beta \right\}
\end{equation}
and therefore the characteristic covectors are given by two simple
equations:
\begin{eqnarray}
  \label{eq:publ-broken:39}
  \xi_\nu u^\nu & = &0 ;\\
  \label{eq:publ-broken:40} (\xi_\nu u^\nu)^2 - \sigma^2
  P^{\alpha}{}_{\beta}\xi_\alpha \xi^\beta & = &0.
\end{eqnarray}
\begin{rem}[The structure of the characteristics conormal cone of ]
  \label{rem:euler-rel:7}
  The characteristics conormal cone is therefore a union of two
  hypersurfaces in $T_x^*V$.  One of these hypersurfaces is given by
  the condition (\ref{eq:publ-broken:39}) and it is a three
  dimensional hyperplane $\mathcal{O}$ with the normal $u^\alpha$.
  The other hypersurface is given by the condition
  (\ref{eq:publ-broken:40}) and forms a three dimensional cone the so
  called {\it sound cone}.
\end{rem}
\begin{rem}
  \label{rem:euler-rel:8}
  Equation (\ref{eq:publ-broken:40}) plays an essential role in
  determining whether the equations form a symmetric hyperbolic
  system.
\end{rem}

Let us now consider the timelike vector $u_{\nu }$ and the linear
combination $-u_\nu A^\nu$, with $A^{\nu}$ from equation
(\ref{eq:Initial:2}), we then obtain that
\begin{equation}
  -u_\nu A^\nu=\left(
    \begin{array}{c|cccc}
      \kappa^2 &  & 0         &     \\
      \hline
      &  &           &  &  \\
      0 &  & \Gamma_{\alpha\beta} &     \\
      &  &           &  &  \\
    \end{array}
  \right) \label{kap3.mk3}
\end{equation}
is positive definite. Indeed, $\Gamma_{\alpha\beta}$ is a reflection
with respect to a hyperplane. The normal of this hyperplane is a
timelike vector.  Hence, $-u_\nu$ is for the hydrodynamical equations
a spacelike covector in the sense of partial differential equations.
Herewith one has showed relatively elegant and elementary that the
relativistic hydrodynamical equations are symmetric--hyperbolic.

Now we want however to show that the covector $t_{\alpha}=(1,0,0,0)$
is spacelike with respect to the system (\ref{eq:Initial:2}). Since
$P^\alpha{}_{\beta}u_\alpha=0$, the covector $-u_\nu$ belongs to the
sound cone
\begin{equation}
  \label{eq:Initial:9}
  (\xi_\nu u^\nu)^2 - \sigma^2P^{\alpha}{}_{\beta}\xi_\alpha
  \xi^\beta>0.
\end{equation}
Inserting $t_\nu=(1,0,0,0)$ the right hand side of
(\ref{eq:Initial:9}) yields
\begin{equation}
  \label{eq:Initial:10}
  (u^0)^2(1-\sigma^2)-\sigma^2 g^{00}.
\end{equation}
Since the sound velocity is always less than the light speed, that is
$\sigma^2=\frac{\partial p}{\partial \epsilon}<c^2=1$, we conclude
from (\ref{eq:Initial:10}) that $t_\nu$ also belongs to the sound cone
(\ref{eq:Initial:9}). Hence, the vector $-u_\nu$ can be continuously
deformed to $t_\nu$ while condition (\ref{eq:Initial:9}) holds along
the deformation path. Consequently, the determinant of
(\ref{eq:Initial:11}) remains positive under this process and hence
$t_{\nu}A^\nu=A^{0}$ is also positive definite.

\subsubsection{Conservation of the constraint equation
  $g_{\alpha\beta}u^{\alpha}u^{\beta}=-1$}
\label{sec:cons-constr-equat}

Now it will be shown that the condition $g_{\alpha\beta} u^\alpha
u^\beta = -1$, which acts as a constraint equation for the evolution
equation, is conserved along stream lines $u^\alpha$.  Because, if for
$t=t_0$ the condition $g_{\alpha\beta} u^\alpha u^\beta = -1$ holds
and if it is conserved a long stream lines, then $g_{\alpha\beta}
u^\alpha u^\beta = -1$ holds also for $t>t_0$.  So let $c(t)$ be a
curve such that $c'(t)=u^\alpha$ and set $Z(t)=(u\circ c)_\beta(u\circ
c)^\beta$, then we need to establish
\begin{equation}
  \frac{d}{dt}Z(t)=2 u_\beta \nabla_{c'(t)} u^\beta
  =2u^\nu u_\beta\nabla_\nu u^\beta= 0.
  \label{kap3.43A}
\end{equation}
Multiplying the last four last rows of the Euler system
(\ref{eq:Initial:2}) by $u^\alpha$ and recalling that
$P^{\nu}{}_\alpha$ is the projection on the rest space $\mathcal{O}$
orthogonal to $u^\alpha$, we have
\begin{eqnarray*}
  0 &=& u^\alpha\left(\Gamma_{\alpha\beta}u^\nu\nabla_\nu u^\beta +
    \kappa\sigma P^{\nu}{}_\alpha\nabla_\nu w\right)\\
  &=& u^\alpha P_{\alpha\beta}u^\nu\nabla_\nu u^\beta - u^\nu
  u_\beta\nabla_\nu u^\beta 
  + \kappa\sigma u^\alpha P^{\nu}{}_\alpha\nabla_\nu w\\ &=&
  - u^\nu u_\beta\nabla_\nu u^\beta.
\end{eqnarray*}

\subsection{The reduced Einstein field equations}
\label{sec:evolution-equations}

In this paper we study the field equations (\ref{eq:eineul:1}) with
the choice of the harmonic coordinate  condition which takes the form
\begin{equation}
  \label{eq:publ-broken:16}
 H^\alpha\equiv g^{\alpha\beta}g^{\gamma\delta}(\partial_\gamma
g_{\beta\delta}-
  \frac{1}{2}
  \partial_\delta g_{\beta\gamma})=0. 
\end{equation}
When (\ref{eq:publ-broken:16}) is imposed, then the Einstein equations
(\ref{eq:eineul:1}) convert to
\begin{equation}
  \label{eq:publ-broken:18}
  g^{\mu\nu}\partial_\mu \partial_\nu g_{\alpha\beta}=
  H_{\alpha\beta}({g},\partial{g})-16\pi T_{\alpha\beta}+8\pi
  g^{\mu\nu}T_{\mu\nu}g_{\alpha\beta}. 
\end{equation}
Hawking and Ellis proved the conservation of the harmonic coordinates
for Einstein equations with matter including a perfect fluid
\cite{HAE}.  Since (\ref{eq:publ-broken:18}) are quasi linear wave
equations, the introducing auxiliary variables
\begin{equation}
  \label{eq:publ-broken:17} h_{\alpha\beta\gamma}=  \partial_\gamma
  g_{\alpha\beta},
\end{equation}
reduce them into a first order symmetric hyperbolic system:
\begin{equation}
  \label{eq:publ-broken:19}
  \begin{array}{ccl}
    \partial_t  g_{\alpha\beta} &=& h_{\alpha\beta  0} \\
    g^{ab}\partial_t  h_{\gamma\delta  a} & = &g^{ab}\partial_a 
h_{\gamma\delta 0} \\
    -g^{00}\partial_t  h_{\gamma\delta  0} &=&
    2g^{0a}\partial_a h_{\gamma\delta   0} + g^{ab}\partial_a 
h_{\gamma\delta b} \\
    &&+
   
C_{\gamma\delta\alpha\beta\rho\sigma}^{
\epsilon\zeta\eta\kappa\lambda\mu}
    h_{\epsilon\zeta\eta}h_{\kappa\lambda\mu}
    g^{\alpha\beta}  g^{\rho\sigma}-16\pi T_{\gamma\delta}+8\pi
    g^{\rho\sigma}  T_{\rho\sigma}g_{\gamma\delta}
  \end{array}
\end{equation}
The object
$C_{\gamma\delta\alpha\beta\rho\sigma}^{
\epsilon\zeta\eta\kappa\lambda\mu}$
is a combination of Kronecker deltas with integer coefficients.  We
therefore conclude:

\begin{con}[The evolution equations in a first order symmetric
  hyperbolic form]
  \label{thm:hyperbolic_reduction}
  The equations for Einstein gravitational field (\ref{eq:eineul:1})
  coupled with the Euler equations (\ref{eq:eineul:3}) with the
  normalization conditions (\ref{eq:publ-broken:2}) and the equation
  of state (\ref{eq:eineul:4}), are equivalent to the system
  (\ref{eq:publ-broken:19}) and (\ref{eq:Initial:2}). The coupled
  systems (\ref{eq:publ-broken:19}) and (\ref{eq:Initial:2}) take the
  form of a first order symmetric hyperbolic system in accordance with
  Definition \ref{def:euler-rel:1} and where $A^0$ is a positive
  definite matrix.
\end{con}


\section{New Function Spaces and the Principle Results}
\label{sec:New_function_spaces}

Our principle results concern the solution of the compatibility of the
initial data for the equations of the fluid and the gravitational
field (\ref{eq:intro:7}), solution to Einstein constraint equations
(\ref{eq:intro:6}) and solution to the coupled evolution
equations (\ref{eq:eineul:1}) and (\ref{eq:eineul:3}).

The coupled evolution equations (\ref{eq:eineul:1}) and
(\ref{eq:eineul:3}) are equivalent to the first order symmetric
hyperbolic systems (\ref{eq:Initial:2}) and (\ref{eq:publ-broken:19})
as we have shown in section \ref{sec:euler-equat-writt}.
The initial data for these coupled systems cannot be given freely,
therefore they are constructed in the following way. Firstly the
compatibility of the initial data for the fluid and the gravitational
field (\ref{eq:intro:7}) have to be solved and next the constraint
equations (\ref{eq:intro:6}), which lead to an elliptic system.
The presence of an equation of state (\ref{eq:eineul:4})  compels us to
treat both the elliptic and hyperbolic
equations in the weighted Sobolev spaces of fractional order.

The Bessel potential spaces $H^s$ which are the natural choice for the
hyperbolic systems are inappropriate for the solutions of the
constraint equations in asymptotically flat manifolds. Roughly
speaking, because the Laplacian is not invertible in these spaces.

As we explained in the introduction, the Nirenberg-Walker-Cantor
weighted Sobolev spaces of integer order
$H_{m,\delta}$ 
\cite{cantor75:_spaces_funct_condit_r},
\cite{nirenberg73:_null_spaces_ellip_differ_operat_r} are suitable for
the solutions of the constraints in asymptotically flat manifolds.
Their norm is given by (\ref{eq:intro:5}).

%
%

We first define the {\it weighted fractional Sobolev spaces}.
We make  a dyadic resolution of the unity in
$\mathbb{R}^3$ as follows.
 Let $K_j=\{x:2^{j-3}\leq |x|\leq 2^{j+2}\}$,
$(j=1,2,...)$ and $K_0=\{x: |x|\leq4\}$.  Let
$\{\psi_j\}_{j=0}^\infty$ be a sequence of $C_0^\infty(\setR^3)$ such
that $ \psi_j(x)=1$ on $K_j$, $\supp(\psi_j)\subset
\cup_{l=j-4}^{j+3}K_{l}$, for $j\geq 1$ and $\supp(\psi_0)\subset
K_0\cup K_{1}$.

We denote by $H^s$ the Bessel potential spaces with the norm ($p=2$)
\begin{displaymath}
  \|u\|_{H^s}^2=c\int(1+|\xi|^2)^s |\hat{u}(\xi)|^2d\xi,
\end{displaymath}
where $\hat{u}$ is the Fourier transform of $u$.  Also, for a function
$f$, $f_\varepsilon(x)=f(\varepsilon x)$.

\begin{defn}[Weighted fractional Sobolev spaces: infinite sum of semi
  norms]
  \label{def:weighted:3}
  For $s\geq 0$ and $-\infty<\delta<\infty$,
  \begin{equation}
    \label{eq:weighted:4}
    \left(\|u\|_{H_{s,\delta}}\right)^2=
    \sum_j 2^{( \frac{3}{2} + \delta)2j} \| (\psi_j u)_{(2^j)}
\|_{H^{s}}^{2}.
  \end{equation}
  The space $H_{s,\delta}$ is the set of all temperate distributions
  with a finite norm given by (\ref{eq:weighted:4}).
\end{defn}

\subsection{The principle results}
\label{sec:principle-results}

\subsubsection{The compatibility of the initial data for the fluid and
  the gravitational field}
\label{sec:comp-init-data}

The matter data (non-gravitational) $(z,j)$ which appear in the
right hand side of (\ref{eq:intro:6}) are coupled to
the initial data of the perfect fluid (\ref{eq:eineul:2}) via the
relations (\ref{eq:intro:7}).
Thus, an indispensable condition for obtaining a
solution of the Einstein-Euler system is the inversion of
(\ref{eq:intro:7}). This system is not invertible for all
$(z,j^a)\in \mathbb{R}_+\times \mathbb{R}^3$, but the inverse does
exist in a certain region.

\begin{thm}[Reconstruction theorem for the initial data]
\label{thm:Reconstruction theorem for the initial data} 
  There is a real function $S:[0,1)\to \mathbb{R}$ such that if
  \begin{equation}
    \label{eq:Functions_spaces:2}
    0\leq z< S(\sqrt{h_{ab}j^{a}j^b}/z),
  \end{equation}
  then system (\ref{eq:intro:7}) has a unique inverse.
  Moreover, the inverse mapping is continuous in $H_{s,\delta}$ norm.
\end{thm}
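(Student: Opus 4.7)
The plan is to reduce the inversion to a pointwise algebraic problem, identify the admissible region via the Jacobian together with the causality condition $\sigma^{2}<1$, and finally lift the pointwise inverse to a continuous map in $H_{s,\delta}$ via the composition/Moser-type estimates developed for these spaces.

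First I would freeze a point $x$ and regard $(z,j^{a})$ and $(\epsilon,\bar u^{a})$ as finite-dimensional variables, with $h_{ab}$ entering only as a positive-definite bilinear form. Introduce the scalar $X=h_{ab}\bar u^{a}\bar u^{b}\geq 0$ so that
\begin{equation*}
z=\epsilon+(\epsilon+p)X,\qquad |j|^{2}:=h_{ab}j^{a}j^{b}=(\epsilon+p)^{2}X(1+X),
\end{equation*}
with $p=K\epsilon^{\gamma}$. This splits the inversion into (a) solving the scalar system for the pair $(\epsilon,X)$ in terms of $(z,|j|^{2})$, and (b) recovering the direction $\bar u^{a}/\sqrt{X}$ from $j^{a}/|j|$. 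Step (b) is immediate once $X>0$, so the whole issue reduces to the scalar inversion.

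Next I would compute the Jacobian of $(\epsilon,X)\mapsto(z,|j|^{2})$. Using $\partial p/\partial\epsilon=\sigma^{2}$ this Jacobian factors as a strictly positive function of $(\epsilon,X)$ times $\bigl(1-\sigma^{2}(1+X\text{-correction})\bigr)$, so it is non-vanishing exactly when the causality condition $\sigma^{2}<1$ holds in an $X$-dependent sharpened form. Rewriting this inequality in terms of the ratio $|j|/z$ (which is monotone in $X$ at fixed $\epsilon$) yields the explicit threshold function $S$: the admissible region is precisely $\{0\leq z<S(|j|/z)\}$, where $S:[0,1)\to\mathbb R$ comes from inverting the relation defining the critical curve. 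To obtain a \emph{global} inverse on this region I would use the change of variables anticipated in the introduction, $z=y^{2/(\gamma-1)}$ and $j^{a}=v^{a}/z$, which clears the fractional power coming from the equation of state and turns the problem into a smooth invertible algebraic system; combined with the non-vanishing Jacobian and convexity of the admissible region along rays through the origin, a homotopy/continuation argument gives a globally defined $C^{\infty}$ inverse $\Psi$ on the admissible region, with $\Psi(0,0)=(0,0)$.

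Finally I would transfer the pointwise inversion to a continuous map in $H_{s,\delta}$. Because $\Psi$ is smooth with $\Psi(0,0)=(0,0)$, the Moser-type composition estimates for $H_{s,\delta}$ (stated in the appendix, together with the fractional-power estimate used for $z=y^{2/(\gamma-1)}$) apply and give $\Psi(z,j)\in H_{s,\delta}$ whenever $(z,j)\in H_{s,\delta}$ lies pointwise in the admissible region, with a bound depending only on $\|(z,j)\|_{H_{s,\delta}}$. Continuity then follows by writing the difference $\Psi(z_{1},j_{1})-\Psi(z_{2},j_{2})$ as an integral of $D\Psi$ along a segment and again invoking the algebra and Moser estimates in $H_{s,\delta}$.

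I expect the main obstacle to be Step two: identifying $S$ in closed form and, more delicately, proving \emph{global} (not merely local) invertibility on the region $\{z<S(|j|/z)\}$. The causality condition $\sigma^{2}<1$ is not preserved under the conformal rescaling used later for the constraints, so the region has to be described intrinsically in the $(z,j)$ variables; checking that the substitution $z=y^{2/(\gamma-1)}$, $j^{a}=v^{a}/z$ really does rationalize the system and yields a diffeomorphism onto the admissible image is the delicate algebraic core of the argument. Once that is in place, the $H_{s,\delta}$ regularity and continuity are routine consequences of the function space machinery set up in the appendix.
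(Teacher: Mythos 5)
Your proposal follows essentially the same route as the paper's proof (given for the equivalent Theorem \ref{thm:The_Initial_Data:1}): reduction to a two-dimensional scalar inversion by splitting off the direction of $\bar u^a$, the Jacobian computation yielding the factor $(\epsilon+p)\bigl(1+\rho^2(1-\sigma^2)\bigr)$ so that causality gives local invertibility, identification of $S$ from the image of the critical curve $\sigma^2=1$ as a graph, a global topological argument (the paper invokes local injectivity between simply connected sets where you propose continuation along rays), and the lift to $H_{s,\delta}$ via the Moser-type composition estimates. One minor correction: the Jacobian is nonzero under $\sigma^2<1+1/X$, which is a \emph{weakening} rather than a sharpening of the causality condition, so $\sigma^2<1$ suffices uniformly in $X$.
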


\begin{rem}
  The matter initial data $(z,j^a)$ for the Einstein-Euler system with
  the equation of state (\ref{eq:eineul:4}) 
cannot be given freely. They must satisfy condition
(\ref{eq:Functions_spaces:2}).
  This condition includes the inequality
  \begin{equation}
    z^2\geq h_{ab}j^a j^b,
  \end{equation}
  which is known as the dominate energy condition.
\end{rem}

\subsubsection{Solution to the constraint equations}
\label{subsubsection:Function_space:1}

The gravitational data is a triple $(M,h,K)$, where $M$ is a
space-like asymptotically flat manifold, $h=h_{ab}$ is a proper
Riemannian metric on $M$, and $K=K_{ab} $ is the second fundamental
form on $M$ (extrinsic curvature). The metric $h_{ab}$ and the
extrinsic curvature $K$ must satisfy Einstein's constraint
equations (\ref{eq:intro:6}).  The free
  initial data is a set $(\bar h_{ab},\bar A_{ab},\hat y,\hat v^a)$,
  where $\bar h_{ab}$ is a Riemannian metric, $\bar A_{ab}$ is
divergence
  and trace free form, $\hat y$ is a scalar function and $\hat v^a$ is
  a vector.

  \begin{thm}[Solution of the constraint equations]
    \label{thm:Function_space:1}
Given free data $(\bar h_{ab}, \bar A_{ab},\hat
      y,\hat v^a)$ such that $(\bar h_{ab}-I)\in H_{s,\delta}$, $ \bar
      A_{ab}\in H_{s-1,\delta+1}$, $(\hat y,\hat v^a)\in
      H_{s-1,\delta+2}$,
      $\frac{5}{2}<s<\frac{2}{\gamma-1}+\frac{3}{2}$ and
      $-\frac{3}{2}<\delta<-\frac{1}{2}$.
    \begin{itemize}
      \item[(i)]  Then there exists two
      positive functions $\alpha$ and $\phi$ such that
      $(\alpha-1),(\phi-1)\in H_{s,\delta}$, a vector field $W\in
      H_{s,\delta}$ such that the gravitational data
      \begin{equation}
 \label{eq:Functions_spaces:5}
 h_{ab}=(\phi\alpha)^4\bar h_{ab} \qquad \text{ and}\qquad
 K_{ab}=(\phi\alpha)^{-2}\bar A_{ab}+\phi^{-2} \hat{\mathcal{L}}(W)
      \end{equation}
      satisfy the constraint equations (\ref{eq:intro:6}) with
      $z=\phi^{-8}\hat y^{\frac{2}{\gamma-1}}$ and $j^b=\phi^{-10}\hat
      y^{\frac{2}{\gamma-1}}\hat v^b$ as the right hand side, here
      $\hat{\mathcal{L}}$ is the Killing vector field operator.  In
      addition, the $H_{s,\delta}\times H_{s-1,\delta+1} $ norms of
      $(h_{ab}-I,K_{ab})$ depend continuously on the $H_{s,\delta}\times
      H_{s-1,\delta+1}\times H_{s-1,\delta+2}$ norms of $(\bar h_{ab}-I, \bar
      A_{ab},\hat y,\hat j^a)$.
      \item[(ii)] Let $\hat h_{ab}=\alpha^4\bar h_{ab}$, $\hat z=\hat
      y^{\frac{2}{\gamma-1}}$, $j^a=\hat y^{\frac{2}{\gamma-1}}\hat v^a$ and
      $\Omega^{-1}$ denote the inverse of relations (\ref{eq:intro:7}). If
      $(\hat h_{ab},\hat z,\hat j^a)$ satisfies
      (\ref{eq:Functions_spaces:2}), then the data for the four
      velocity vector and Makino variable are given by: $z=\phi^{-8}\hat
      z$, $j^a=\phi^{-10}\hat j^a$,
      \begin{equation}
         \label{eq:Functions_spaces:3}
        (w,\bar u^a):=\Omega^{-1}(z,j^a)
        \qquad \text{ and}\qquad \bar u^0=1+h_{ab}\bar u^a\bar u^b
      \end{equation}
      and they satisfy the compatibility conditions
      (\ref{eq:intro:7}).  In addition, the $H_{s-1,\delta+2} $ norms of
      $(w,\bar u^a,u^0-1)$ depend continuously on the $H_{s,\delta}\times
      H_{s-1,\delta+2}$ norms of $(\bar h_{ab}-I,\hat y,\hat j^a)$.
    \end{itemize}
  \end{thm}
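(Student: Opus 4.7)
The plan is to apply the Lichnerowicz--York conformal method, suitably adapted to the weighted fractional Sobolev scale $H_{s,\delta}$. The ansatz (\ref{eq:Functions_spaces:5}) is engineered so that the coupled Einstein constraints (\ref{eq:intro:6}) decouple into three subproblems: a preparatory conformal transformation by $\alpha$ that absorbs $R(\bar h)$ into the ``hatted'' metric $\hat h_{ab} = \alpha^4 \bar h_{ab}$, a \emph{linear} momentum constraint for the vector field $W$, and a \emph{semi-linear} Lichnerowicz-type equation for the conformal factor $\phi$. Each subproblem is elliptic on $\mathbb R^3$, so the main work will be to solve them in $H_{s,\delta}$ using the elliptic theory of Section \ref{sec:elliptic} and the nonlinear estimates of the Appendix; part (ii) will then be a direct application of Theorem \ref{thm:Reconstruction theorem for the initial data}.

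Step one: I would solve the linear momentum constraint. With $\bar A_{ab}$ trace- and divergence-free and $\hat{\mathcal L}$ the conformal Killing operator of $\hat h$, inserting the ansatz into the momentum constraint produces
\begin{equation*}
\hat{\nabla}_b\hat{\mathcal L}(W)^{ab} \;=\; -8\pi\,\hat y^{2/(\gamma-1)}\,\hat v^a
\end{equation*}
(up to lower-order terms coming from $\bar A_{ab}$ and $\alpha$). The operator $\hat\nabla\hat{\mathcal L}$ is elliptic with $H_{s,\delta}$ coefficients by the algebra and composition estimates; within the weight range $-3/2<\delta<-1/2$ it is an isomorphism $H_{s,\delta}\to H_{s-2,\delta+2}$ by the fractional-order weighted elliptic theory of Section \ref{sec:elliptic}. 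This yields $W\in H_{s,\delta}$ depending continuously on the data, provided the right-hand side can be placed in $H_{s-1,\delta+2}$, which is addressed in the final paragraph.

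Step two: substituting into the Hamiltonian constraint gives, for a positive $\phi$ with $\phi-1\in H_{s,\delta}$, a Lichnerowicz-type equation
\begin{equation*}
\Delta_{\hat h}\phi \;=\; F\bigl(x,\phi,\hat h,\bar A+\hat{\mathcal L}(W),\hat y\bigr),
\end{equation*}
in which $F$ is linear in $\phi$ plus a combination of the subcritical powers $\phi^{-7}$ and $\phi^{-3}$ with nonnegative coefficients; this nonnegativity is precisely what the preliminary $\alpha$-rescaling is designed to arrange. Linearizing at $\phi\equiv 1$ produces $\Delta_{\hat h}$ plus a nonnegative potential, whose kernel in $H_{s,\delta}$ is trivial by a weighted maximum-principle / integration-by-parts argument, hence an isomorphism onto $H_{s-2,\delta+2}$. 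Existence of a positive solution then follows by Cantor's homotopy-continuity argument generalized to $H_{s,\delta}$ in Section \ref{sec:elliptic}, with positivity propagated along the path via the asymptotic condition $\phi\to 1$ and the maximum principle. Continuous dependence is obtained by differentiating the implicit equation and inverting the same elliptic operator.

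The hard part will be controlling the nonlinear right-hand sides in the weighted fractional scale, and this is where the upper bound $s<2/(\gamma-1)+3/2$ originates. The factor $\hat y^{2/(\gamma-1)}$ must be handled by the fractional-power Moser-type estimate of the Appendix, which requires precisely $s-1\leq 2/(\gamma-1)$ for this power to remain in $H_{s-1,\delta+2}$; the negative powers $\phi^{-7},\phi^{-3}$ are controlled by combining the embedding $H_{s,\delta}\hookrightarrow C^0$ (so $\phi$ is bounded away from $0$) with the algebra and composition estimates. Once $(h_{ab},K_{ab})$ has been constructed, part (ii) is then immediate: by hypothesis $(\hat h_{ab},\hat z,\hat j^a)$ lies in the region (\ref{eq:Functions_spaces:2}), so Theorem \ref{thm:Reconstruction theorem for the initial data} produces $(w,\bar u^a)\in H_{s-1,\delta+2}$ continuously, $\bar u^0$ is determined algebraically from the normalization (\ref{eq:publ-broken:2}), and continuity in the stated norms follows by composing the continuity of the inversion with that of the conformal scalings $z=\phi^{-8}\hat z$, $j^a=\phi^{-10}\hat j^a$.
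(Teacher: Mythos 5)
Your plan is the paper's own: Cantor's two-conformal-factor method (first $\alpha$ to normalize the scalar curvature, then the Lichnerowicz Laplacian for $W$, then the semi-linear Lichnerowicz equation for $\phi$ solved by a homotopy/continuity argument with a weighted maximum principle), the fractional-power Moser estimate to place $\hat z=\hat y^{2/(\gamma-1)}$ in $H_{s-1,\delta+2}$, and the reconstruction theorem for part (ii). So in outline there is nothing to object to.

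There is, however, one step you treat as free which is not: the existence, regularity and strict positivity of $\alpha$. You describe $\alpha$ only as "a preparatory conformal transformation that absorbs $R(\bar h)$", but $\alpha$ must be produced by solving the linear equation $-\Delta_{\bar h}\alpha+\tfrac18 R(\bar h)\alpha=0$ with $\alpha\to 1$ at infinity (equation (\ref{eq:conformal_method:3})), and the potential $\tfrac18 R(\bar h)$ is sign-indefinite, so injectivity of $-\Delta_{\bar h}+\tfrac18 R(\bar h)$ on $H_{s,\delta}$ — which is what lets you run the homotopy of Theorem \ref{thm:eq:a_priori_estimates_weighted:2} from $-\Delta$ — is not automatic. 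It requires the Brill--Cantor condition (Definition \ref{def:Brill-Cantor}), which does not appear anywhere in your argument, and even then strict positivity $\alpha>0$ needs a separate truncation plus Harnack argument (the paper's Theorem \ref{thm:Solutions_to_the_elliptic_systems:1}). Your scheme for $\phi$ does not have this problem precisely because the $\alpha$-step has already killed the scalar-curvature term, but that means the whole construction stands or falls on the $\alpha$-step you skipped. A second, smaller point: the threshold you quote for the fractional power, $s-1\leq 2/(\gamma-1)$, is not the one the estimate actually gives; Proposition \ref{prop:properties:5a} requires $s-1<\tfrac{2}{\gamma-1}+\tfrac12$, which is exactly where the hypothesis $s<\tfrac{2}{\gamma-1}+\tfrac32$ comes from, so your version would needlessly shrink the admissible range of $s$.
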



\subsubsection{Solution to the evolution equations}
\label{subsubsection:Function space:2}

  The unknowns of the evolution equations are the gravitational field
  $g_{\alpha\beta}$ and its first order partial derivatives
  $\partial_\alpha g_{\gamma\delta}$, the Makino variable $ w$ and the
  velocity vector $u^\alpha$. We represent them by the vector
  $U=\left(g_{\alpha\beta}-\eta_{\alpha\beta},
    \partial_a g_{\gamma\delta}, \partial_0
    g_{\gamma\delta},w,u^{a},u^0-1\right)$, here $\eta_{\alpha\beta}$
  denotes the Minkowski metric. The initial data for equation
(\ref{eq:publ-broken:19}) are 
\begin{equation}
 \label{eq:Functions_spaces:4}
 g_{ab}{_{\mid_M}=h_{ab}}, \ \ g_{0b}{_{\mid_M}}=0, \ \
g_{00}{_{\mid_M}}=-1, \ \ -\frac{1}{2}\partial_0
g_{ab}{_{\mid_M}}=K_{ab},
\end{equation} 
where $(h_{ab},K_{ab})$ are given by (\ref{eq:Functions_spaces:5}),
and (\ref{eq:Functions_spaces:3}) for equation (\ref{eq:Initial:2}).
Theorem \ref{thm:Function_space:1} guarantees that they satisfy  the
constraints (\ref{eq:intro:6}) and the compatibility condition
({\ref{eq:intro:7}).

\begin{thm}[Solutions of the evolution equations
(\ref{eq:publ-broken:19}) and (\ref{eq:Initial:2})]
 Let $\frac{7}{2}<s<\frac{2}{\gamma-1}+\frac{3}{2}$ and
 $-\frac{3}{2}<\delta<-\frac{1}{2}$. Given the solutions of the
 constraint equations as described in Theorem
 \ref{thm:Function_space:1}, then there exists a $T>0$, a unique
 semi-Riemannian metric $g_{\alpha\beta}$ solution to
 (\ref{eq:publ-broken:19}) and a unique pair $(w,u^\alpha)$
solution to (\ref{eq:Initial:2})
 \begin{eqnarray}
   (g_{\alpha\beta}-\eta_{\alpha\beta}) &\in&
   C\left([0,T],H_{s,\delta}\right)\cap
   C^1\left([0,T],H_{s-1,\delta+1}\right)\\
   ( w,u^a,u^0-1) &\in&  C\left([0,T],H_{s-1,\delta+2}\right)\cap
   C^1\left([0,T],H_{s-2,\delta+3}\right).
 \end{eqnarray}
\end{thm}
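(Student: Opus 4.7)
The strategy is to treat the coupled system (\ref{eq:publ-broken:19})--(\ref{eq:Initial:2}) as a single quasilinear first order symmetric hyperbolic system for the vector $U=(g_{\alpha\beta}-\eta_{\alpha\beta},h_{\alpha\beta\gamma},w,u^\alpha)$, and run Majda's classical Picard iteration scheme, but carried out in weighted Sobolev spaces of fractional order. The starting point is Conclusion~\ref{thm:hyperbolic_reduction}, which guarantees symmetry and positivity of $A^{0}$, combined with Theorems \ref{thm:Reconstruction theorem for the initial data} and \ref{thm:Function_space:1}, which produce initial data $U_{0}$ lying in the product space
\begin{equation*}
 \mathcal{X}_{s,\delta}:=H_{s,\delta}\times H_{s-1,\delta+1}\times H_{s-1,\delta+2}\times H_{s-1,\delta+2}
\end{equation*}
corresponding respectively to $g-\eta$, $h=\partial g$, $w$, and $u$. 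The condition $s>\tfrac{7}{2}$ ensures $s-1>\tfrac{5}{2}$, so that $H_{s-1,\delta+1}$ and $H_{s-1,\delta+2}$ embed into $C^{1}$, which is the minimal regularity needed for the coefficients of the system to make sense pointwise and for the Moser estimates of the Appendix to apply.

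\textbf{Iteration scheme.} First I would freeze the coefficients and solve, for each $n\geq 0$, the linear symmetric hyperbolic problem
\begin{equation*}
 A^{0}(U^{(n)})\partial_{t}U^{(n+1)}+\sum_{k=1}^{3}A^{k}(U^{(n)})\partial_{k}U^{(n+1)}+B(U^{(n)})=0,\qquad U^{(n+1)}(0)=U_{0},
\end{equation*}
with $U^{(0)}\equiv U_{0}$. Because the coefficients depend on $U^{(n)}$ through $C^{\infty}$ functions of $w$, $u^{\alpha}$ and $g$ (the fractional power of $\epsilon$ has been absorbed into the Makino variable), the algebra property, the Moser estimate and the fractional-power estimate for $H_{s,\delta}$ collected in the Appendix give Lipschitz control of $A^{\alpha}(U^{(n)})$ and $B(U^{(n)})$ in terms of $U^{(n)}$. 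At each step one needs to solve a linear hyperbolic problem in $H_{s,\delta}$; for that I would invoke the local existence theory for linear symmetric hyperbolic systems in $H_{s,\delta}$ developed in Section~\ref{sec:local-exist-hyperb}.

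\textbf{Energy estimates --- the main obstacle.} The heart of the argument is to obtain a weighted energy estimate of the form
\begin{equation*}
 \|U^{(n+1)}(t)\|_{\mathcal{X}_{s,\delta}}\leq C\bigl(\|U_{0}\|_{\mathcal{X}_{s,\delta}}+\textstyle\int_{0}^{t}F(\|U^{(n)}\|_{\mathcal{X}_{s,\delta}})\|U^{(n+1)}\|_{\mathcal{X}_{s,\delta}}\,d\tau\bigr)
\end{equation*}
on a time interval $[0,T]$ that does not shrink along the iteration. The difficulty, which has no analogue in the standard $H^{s}$ theory, is twofold: the norm (\ref{eq:intro:9}) is an infinite sum of semi-norms over dyadic annuli, and the different blocks of $U$ carry different weights $\delta$, $\delta+1$, $\delta+2$. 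I would follow the blueprint of Section~\ref{sec:local-exist-hyperb}: test the linearized equation against the specific $H_{s,\delta}$ inner product designed for this purpose, commute the operator $\Lambda^{s}$ (localized dyadically) past $A^{k}\partial_{k}$, and absorb the commutators using the Kato--Ponce estimate adapted to $H_{s,\delta}$. The compatibility between the weights $\delta+|\alpha|$ and the scaling $(\psi_{j}u)_{2^{j}}$ in the definition of the norm is what makes it work; verifying that the cross-terms between the three blocks respect these weights is the single most delicate calculation.

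\textbf{Contraction and conclusion.} Once uniform bounds in $\mathcal{X}_{s,\delta}$ are in hand, I would show that the differences $U^{(n+1)}-U^{(n)}$ form a Cauchy sequence in the weaker norm $\mathcal{X}_{s-1,\delta+1}$ (or equivalently in $L^{2}_{\delta+1}$-type norm), again via an energy estimate but now for the difference equation; the loss of one derivative is harmless because the intermediate estimate of the Appendix together with the uniform bound in $\mathcal{X}_{s,\delta}$ gives convergence in every intermediate space. The limit $U$ belongs to $L^{\infty}([0,T],\mathcal{X}_{s,\delta})\cap C([0,T],\mathcal{X}_{s-1,\delta+1})$ and, by the standard Bona--Smith type regularization argument transplanted into $H_{s,\delta}$, it is actually continuous with values in $\mathcal{X}_{s,\delta}$. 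Uniqueness follows from the energy estimate applied to the difference of two solutions. Finally, unpacking the product structure of $\mathcal{X}_{s,\delta}$ yields exactly the regularity stated: $(g_{\alpha\beta}-\eta_{\alpha\beta})\in C([0,T],H_{s,\delta})\cap C^{1}([0,T],H_{s-1,\delta+1})$ and $(w,u^{a},u^{0}-1)\in C([0,T],H_{s-1,\delta+2})\cap C^{1}([0,T],H_{s-2,\delta+3})$, with the loss of one derivative in time coming from reading off $\partial_{t}U$ directly from the equation.
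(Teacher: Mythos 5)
Your overall strategy is the one the paper follows: assemble $U=(\mathbf{g},\partial\mathbf{g},\mathbf{v})$, run Majda's iteration with linear solves, prove energy estimates via the dyadic $A^0$-weighted inner product and Kato--Ponce, contract in a lower norm, pass to the limit, and recover continuity in the top norm and uniqueness. There are, however, two points where your route diverges from Section \ref{subsec:Local_Existence_Einstein-Euler_system}, and the first one hides a real difficulty. You propose to work in $\mathcal{X}_{s,\delta}=H_{s,\delta}\times H_{s-1,\delta+1}\times H_{s-1,\delta+2}\times H_{s-1,\delta+2}$, i.e.\ with the metric block at regularity $s$ and all other blocks at $s-1$. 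The energy-estimate machinery of Section \ref{sec:local-exist-hyperb} (and its product-space extension, Theorem \ref{thm:cor:1}) is built for a \emph{single} Sobolev index across all components --- only the weights $\delta,\delta+1,\delta+2$ differ --- because the commutator and Moser estimates require the coefficients $A^\alpha(U)$, which depend on $\partial\mathbf{g}$ and $\mathbf{v}$, to have at least the regularity at which you are estimating. With $\partial\mathbf{g},\mathbf{v}\in H_{s-1,\cdot}$ you cannot close an $H^{s}$-level estimate for the first block within that framework. The paper's resolution is to \emph{lower} the metric block to $H_{s-1,\delta}$ (using $H_{s,\delta}\subset H_{s-1,\delta}$), run the entire well-posedness argument uniformly at level $s-1$ (this is precisely why the hypothesis is $s-1>\tfrac52$, i.e.\ $s>\tfrac72$), and only afterwards recover $g_{\alpha\beta}-\eta_{\alpha\beta}\in H_{s,\delta}$ from the inequality (\ref{eq:hyper-cor:1}), $\|\mathbf{g}\|_{H_{s,\delta}}\lesssim\|\mathbf{g}\|_{H_{s-1,\delta}}+\|\partial\mathbf{g}\|_{H_{s-1,\delta+1}}$, exploiting that $\partial\mathbf{g}$ is itself one of the unknowns. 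You should either adopt this reduction or explain separately why the $\mathbf{g}$-block (whose evolution equation $\partial_t g_{\alpha\beta}=h_{\alpha\beta 0}$ has constant coefficients and no spatial derivatives of $\mathbf{g}$) can be estimated at the higher level; as written, the mixed-index energy estimate is asserted but not available.

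The second divergence is harmless but worth noting: for continuity in the strong $H_{s,\delta}$ topology you invoke a Bona--Smith regularization, whereas the paper (Theorem \ref{Continuation}) proves weak continuity via Lemma \ref{lem:Weak_Convergence:1} and then shows $\limsup_{t\downarrow 0}\|u(t)\|_{H_{s,\delta},A^0(u(0))}\le\|u_0\|_{H_{s,\delta},A^0(u(0))}$ from the Gronwall bound on the iterates, concluding by the Hilbert-space fact that weak convergence plus convergence of norms implies strong convergence. Both are legitimate; the paper's version avoids re-running the existence proof for smoothed data, at the price of needing the carefully chosen $A^0$-dependent inner product.
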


\section{The Initial Data}
\label{sec:The_Initial_Data}

The Cauchy problem for Einstein fields equations (\ref{eq:eineul:1})
 coupled with the Euler system (\ref{eq:eineul:3}) consists of solving
 coupled hyperbolic systems (\ref{eq:publ-broken:19}) and
(\ref{eq:Initial:2})
with given initial data. There are two types of data for equations
(\ref{eq:eineul:1}), gravitational
and matter data.

The gravitational data is a triple $(M,h,K)$, where $M$ is a
space-like manifold, $h=h_{ab}$ is a proper Riemannian metric on $M$
and $K=K_{ab} $ is the second fundamental form on $M$ (extrinsic
curvature). 

Let $n$ be the unit normal to the hypersurface $M $,
$\delta_\beta^\alpha+n^\alpha n_\beta$ be the projection on $M$ and
define
\begin{eqnarray}
  \label{eq:The_Initial_Data:2}
  z &= &T_{\alpha\beta}n^\alpha n^\beta,\\
  \label{eq:The_Initial_Data:8} j^\alpha
  &=&{(\delta_\gamma^\alpha+n^\alpha n_\gamma)
    T^{\gamma\beta}n_\beta}.
\end{eqnarray}

The scalar $z$ is the energy density and the vector $j^\alpha$ is the
momentum density. These quantities are called matter variables and
they appear as sources in the constraint equations 
(\ref{eq:publ-broken:23}) and (\ref{eq:publ-broken:38}) below.

In conjunction with these we must supply initial data for the velocity
vector $u^\alpha$. So we apply the projection to $u^\alpha$ and set
$\bar u^\alpha=(\delta^\alpha{}_\beta+n^\alpha n_\beta)u^\beta$.  Then
from the relation of the perfect fluid 
(\ref{eq:eineul:2}) ,
(\ref{eq:The_Initial_Data:2}), and (\ref{eq:The_Initial_Data:8}) we
see that
\begin{eqnarray}
  \label{eq:The_Initial_Data:3}
  z &= &(\epsilon+p)(n_\beta u^\beta)^2-p,\\
  j^\alpha &=&(\epsilon+p)\bar{u}^\alpha(n_\beta u^\beta).
\end{eqnarray}
The vectors $j^\alpha$ and $\bar{u}^\alpha$ are tangent to the initial
surface and so they can be identified with vectors $j^{a}$ and
$\bar{u}^a$ intrinsic to this surface. Recalling the normalization
condition (\ref{eq:publ-broken:2}),
 we have
\begin{math}
  -1=-(n_\beta u^\beta)^2+h_{ab}\bar{u}^a\bar{u}^b
\end{math}. 
Thus the matter data $(z,j^a)$ can be identified with the
initial data for the velocity vector as follow:
\begin{eqnarray}
  \label{eq:The_Initial_Data:4}
  z &= &\epsilon+(\epsilon+p)h_{ab}\bar{u}^a\bar{u}^b,\\
  \label{eq:The_Initial_Data:5} j^\alpha
  &=&(\epsilon+p)\bar{u}^a\sqrt{1+ h_{ab}\bar{u}^a\bar{u}^b}.
\end{eqnarray}

These two types of data cannot be given freely, because the
hypersurface $(M,h)$ is a sub-manifold of $(V,g)$, therefore  the
Gauss Codazzi
equations lead to Einstein constraint equations
\begin{eqnarray}
 \label{eq:publ-broken:23}
  R(h)-K_{ab}K^{ab}+(h^{ab}K_{ab})^2 &=&16\pi z\quad
(\text{Hamiltonian constaint})\\
  \label{eq:publ-broken:38} {}^{(3)}\nabla_b K^{ab}-
  {}^{(3)}\nabla^b(h^{bc}K_{bc}) &=& -8\pi j^a \quad(\text{
momentum constraint}).
\end{eqnarray}
Here $R(h)=h^{ab}R_{ab}$ is the scalar curvature with respect to
the metric $h$.

We turn now to the conformal method which allows us to construct the
solutions of the constraint equations (\ref{eq:publ-broken:23}) and
(\ref{eq:publ-broken:38}).  Before entering into details we have to
discuss the relations between the initial data for the system of
Einstein gravitational fields (\ref{eq:eineul:1})
 and Euler equations (\ref{eq:eineul:3})  
which are given by (\ref{eq:The_Initial_Data:4}) and
(\ref{eq:The_Initial_Data:5}). As it turns out this relations is by no
means trivial, and indeed they will force us to modify the conformal
method.

\subsection{The compatibility problem of the initial data for the
  fluid and the gravitational fields}
\label{sec:comp-probl-init}

On the one hand, the initial data for the Euler equations
are $w(\epsilon)$ and $u^{\alpha}$.  On the other
hand $z=F(w(\epsilon),\bar{u}^{a})$ and
$j^a=H(w(\epsilon),\bar{u}^{a})$, which are given by
(\ref{eq:The_Initial_Data:4}) and (\ref{eq:The_Initial_Data:5})
respectively, appear as sources in the constraint equations
(\ref{eq:publ-broken:23}) and (\ref{eq:publ-broken:38}). 
There we have
the possibility of either to consider $w$ and $u^{\alpha}$ as the
fundamental quantities and construct then $z$ and $j^a$ or, vice
verse, to consider $z$ and $j^a$ as the fundamental quantities and
construct then $w$ and $u^{\alpha}$.

The first possibility does not work because the geometric quantities
which occur on the left hand side of the constraint equations are
supposed to scale with some power of a scalar function $\phi$. So $z$
and $j^a$, which are the sources in the constraint equations, must
also scale with a definite power of $\phi$.  If $\epsilon$ is scaled
with a certain power of $\phi$, then $p$ would be scaled, according to
the equation of state (\ref{eq:eineul:4}), 
to a different power.
Hence, by (\ref{eq:The_Initial_Data:4}) $z$ is a sum of different
powers. Thus, the power which $\epsilon$ and $p$ are scaled would have
to be zero and they would be left unchanged by the rescaling.
Similarly it can be seen that $\bar u^a$ would remain unchanged. So in
fact $z$ would be unchanged and this is inconsistent with the scalding
used in the conformal method.

Instead of constructing $(w,\bar u^a)$ from $(z,j^a)$ it is more
useful to introduce some auxiliary quantities. Beside the Makino
variable $ w=\epsilon^{\frac{\gamma-1}{2} }$, we set
\begin{equation}
  \label{eq:The_Initial_Data:6}  y=z^{\frac{\gamma-1}{2}}
\qquad\mbox{and}\qquad
  v^a=\frac{j^a}{z}.
\end{equation}
Now we consider the following map
\begin{equation}
  \label{eq:The_Initial_Data:7}
  \Phi\left(%
    \begin{array}{c}
      w \\
      \bar u^a \\
    \end{array}%
  \right)=\left(%
    \begin{array}{c}
      {w[1+(1+Kw^2)h_{ab}\bar{u}^a\bar{u}^b)]^{\frac{\gamma -1}{2}}}
\\ 
      \frac{(1+Kw^2)\bar{u}^a\sqrt{1+h_{bc}\bar{u}^b\bar
          u^c}}{1+(1+Kw^2)h_{bc}\bar u^b\bar 
        u^c} \\
    \end{array}%
  \right)=\left(%
    \begin{array}{c}
      y \\
      v^a \\
    \end{array}%
  \right),
\end{equation}
which is equivalent to equations (\ref{eq:The_Initial_Data:4}) and
(\ref{eq:The_Initial_Data:5}).  The initial data $(w,\bar u^a)$ for
the fluid are reconstructed through the inversion of $\Phi$ above.

\begin{thm}[Reconstruction theorem for the initial data]
  \label{thm:The_Initial_Data:1}
  There is a function $s:[0,1)\to \mathbb{R}$ such that the map $\Phi$
  defined by (\ref{eq:The_Initial_Data:7}) is a diffeormophism from
  $[0,(\sqrt{\gamma K})^{-\frac{1}{2}})\times \mathbb{R}^3$ to
  $\Omega$, where
  \begin{equation}
    \label{eq:The_Initial_Data:14} \Omega=\{(y,v^a): 0\leq y<
    s\left(\sqrt{h_{ab}v^a v^b}\right), h_{ab}v^a v^b< 1\}.
  \end{equation}
\end{thm}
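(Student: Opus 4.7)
The plan is to reduce Theorem~4.1 to a two-variable problem by exploiting that the $v$-component of $\Phi$ is parallel to $\bar u$, then to invert the reduced problem explicitly via a quadratic equation and to define $s$ as the boundary of the image. First I would observe that the second component of (\ref{eq:The_Initial_Data:7}) writes $v^a$ as a positive scalar multiple of $\bar u^a$, so $\Phi$ splits along each $h$-unit direction $\hat n^a$: writing $\bar u^a=\bar U\hat n^a$ and $v^a=V\hat n^a$ with $\bar U,V\geq 0$, and setting $\mu=1+Kw^2$, it suffices to prove that the reduced map
\[
\tilde\Phi:(w,\bar U)\longmapsto(y,V)=\Bigl(w[1+\mu\bar U^2]^{(\gamma-1)/2},\ \tfrac{\mu\bar U\sqrt{1+\bar U^2}}{1+\mu\bar U^2}\Bigr)
\]
is a diffeomorphism from $[0,(\sqrt{\gamma K})^{-1/2})\times[0,\infty)$ onto a planar region $\tilde\Omega=\{0\leq y<s(V),\,0\leq V<1\}$. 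The full diffeomorphism statement for $\Phi$ then follows by restoring $\hat n^a$ away from the origin and extending continuously at the origin.

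For local invertibility of $\tilde\Phi$ I would compute the Jacobian directly. After factoring out the positive prefactor $\mu[1+\mu\bar U^2]^{(\gamma-3)/2}/[\sqrt{1+\bar U^2}(1+\mu\bar U^2)^2]$, the remainder of $\det D\tilde\Phi$ is a polynomial in $\bar U^2$ whose coefficients depend on $Kw^2$; positivity of this polynomial throughout the domain is exactly the content of the causality cutoff built into the $w$-interval. For the global inversion, squaring the second equation and clearing denominators yields the quadratic
\[
\mu^2(V^2-1)X^2+\mu(2V^2-\mu)X+V^2=0,\qquad X=\bar U^2,
\]
with discriminant $\mu^2(\mu^2-4V^2Kw^2)$. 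Since $V<1$ the product of roots $V^2/[\mu^2(V^2-1)]$ is negative, so there is a unique non-negative root $X=G(w,V)$, smooth in $(w,V)$ and satisfying $G(w,0)=0$. Substituting into the first equation reduces the inversion to solving $y=w[1+\mu G(w,V)]^{(\gamma-1)/2}$ for $w$, which is monotone in $w$ by the Jacobian positivity just shown. Defining $s(V):=\lim_{w\uparrow(\sqrt{\gamma K})^{-1/2}}w[1+\mu G(w,V)]^{(\gamma-1)/2}$ identifies the image as $\tilde\Omega$, and the inverse function theorem gives smoothness of $\tilde\Phi^{-1}$ on the interior.

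The main obstacle is the global matching between domain and image. One has to verify that the chosen root of the quadratic stays non-negative and does not hit the discriminant locus in the interior of the physical region; that the scalar equation $y=w[1+\mu G(w,V)]^{(\gamma-1)/2}$ actually bijects $[0,(\sqrt{\gamma K})^{-1/2})$ onto $[0,s(V))$ for every fixed $V\in[0,1)$; and that the function $s$ defined by this supremum is continuous on $[0,1)$. Tracking the asymptotic behavior $V(\bar U)\to 1$ as $\bar U\to\infty$, checking that the boundary $V=1$ is approached uniformly in that limit, and confirming that the cutoff on $w$ is the precise value which turns $\tilde\Phi$ into a bijection onto $\tilde\Omega$ constitute the crux of the argument; the Jacobian computation and the local existence of the inverse are routine once these are in place.
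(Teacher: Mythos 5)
Your proposal is correct in outline and shares the paper's opening moves: the reduction to a two--variable map via the radial/rotational structure of the second component, the observation that nonvanishing of the Jacobian is exactly the causality condition $\sigma^2=\gamma K w^2<1$ (which fixes the cutoff $w<(\sqrt{\gamma K})^{-1/2}$), and the definition of $s$ as the graph traced out by the image of the boundary line $w=(\sqrt{\gamma K})^{-1/2}$. Where you genuinely diverge is in the two remaining steps. For the Jacobian, the paper does not differentiate the composite map directly; it factors $\Theta$ through the physical variables as $(w,\rho)\mapsto(\epsilon,\rho)\mapsto(z,r)\mapsto(z^{(\gamma-1)/2},r/z)$ and computes only the determinant of the middle factor, obtaining the clean expression $\frac{\epsilon+p}{\sqrt{1+\rho^2}}\left(1+\rho^2(1-p')\right)$, whose positivity under $p'<1$ is immediate --- this avoids the polynomial-in-$\bar U^2$ positivity check you defer. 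For global bijectivity, the paper argues topologically: boundary analysis identifies $\Theta(S)$ as the simply connected region under the graph of $s$, and then it invokes the fact that a locally injective map between simply connected sets is a bijection. You instead invert explicitly, solving the quadratic $\mu^2(V^2-1)X^2+\mu(2V^2-\mu)X+V^2=0$ for $X=\bar U^2$ (your discriminant $\mu^2(\mu^2-4V^2Kw^2)$ is indeed positive on the physical region since it dominates $\mu^2(1-Kw^2)^2$) and then solving a scalar monotone equation for $w$. Your route costs more computation but yields an explicit inverse, which is strictly more than the paper's topological argument delivers and would feed directly into the $H_{s,\delta}$-continuity of $\Phi^{-1}$ claimed in Theorem \ref{thm:Reconstruction theorem for the initial data}; conversely the paper's argument is shorter, though its closing topological assertion as literally stated needs a properness or covering-map justification. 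The items you flag as the remaining crux (monotonicity of $y$ in $w$ at fixed $V$, which follows from $\det D\tilde\Phi>0$ together with $\partial V/\partial\bar U>0$; continuity of $s$; smoothness of the inverse across $\bar u=0$ where the radial reduction degenerates) are real but routine, and the last of these is glossed over in the paper's proof as well.
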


\begin{proof}[of theorem \ref{thm:The_Initial_Data:1}]

  Let $\rho=\sqrt{h_{ab}\bar u^a\bar u^b}$, $\bar u_0$ be a unit
  vector and $R_{{\bar u}^a}$ be the rotation with respect to the
  metric $h_{ab}$ such that $\bar u^a=\rho R_{{\bar u}^a}\bar u_0$.
  Then

  \begin{equation}
    \label{eq:The_Initial_Data:9}
    \Phi\left(%
      \begin{array}{c}
        w \\
        \bar u^a \\
      \end{array}%
    \right)=\Phi\left(%
      \begin{array}{c}
        w \\
        \rho R_{{\bar u}^a}\bar u_0 \\
      \end{array}%
    \right)=\left(%
      \begin{array}{c}
        {w[1+(1+Kw^2)\rho^2]^{\frac{\gamma -1}{2}}} \\
        \frac{(1+Kw^2)  R_{{\bar u}^a}\bar
          u_0\rho\sqrt{1+\rho^2}}{1+(1+Kw^2)\rho^2} \\ 
      \end{array}%
    \right).
  \end{equation}

  Therefore, we can first invert the two dimensional map

  \begin{equation} \label{eq:The_Initial_Data:10} \Theta\left(%
      \begin{array}{c}
        w \\
        \rho \\
      \end{array}%
    \right):=\left(%
      \begin{array}{c}
        {w[1+(1+Kw^2)\rho^2]^{\frac{\gamma -1}{2}}} \\
        \frac{(1+Kw^2)  \rho\sqrt{1+\rho^2}}{1+(1+Kw^2)\rho^2} \\
      \end{array}%
    \right)
  \end{equation}
  for $\rho\geq 0 $ and then apply again the rotation. For $w>0$, we
  decompose $\Theta$ of (\ref{eq:The_Initial_Data:10}) as follows:
  \begin{equation}
    \label{eq:The_Initial_Data:15}
    \left(%
      \begin{array}{c}
        w \\
        \rho \\
      \end{array}%
    \right) \mapsto \left(%
      \begin{array}{c}
        \epsilon \\
        \rho \\
      \end{array}%
    \right)\mapsto \left(%
      \begin{array}{c}
        \epsilon+(\epsilon+p(\epsilon))\rho^2 \\
        (\epsilon+p(\epsilon))\rho\sqrt{1+\rho^2} \\
      \end{array}%
    \right)=:\left(%
      \begin{array}{c}
        z \\
        r \\
      \end{array}%
    \right) \mapsto \left(%
      \begin{array}{c}
        z^{\frac{\gamma-1}{2}} \\
        \frac{r}{z} \\
      \end{array}%
    \right).
  \end{equation}
  In order to show that this is a one to one map, we need to show that
  the Jacobian of
  $G(\epsilon,\rho):=(\epsilon+(\epsilon+p(\epsilon))\rho^2,
  (\epsilon+p(\epsilon))\rho\sqrt{1+\rho^2})$ does not vanish. This
  computation results with
  \begin{equation}
    \label{eq:The_Initial_Data:11}
    \det\left(%
      \begin{array}{cc}
        1+(1+p')\rho^2 &  (1+p')\rho\sqrt{1+\rho^2}\\
        (\epsilon+p)2\rho &
(\epsilon+p)\frac{1+2\rho^2}{\sqrt{1+\rho^2}} \\
      \end{array}%
\right)=\frac{(\epsilon+p)}{\sqrt{1+\rho^2}}
\left(1+\rho^2(1-p')\right).
  \end{equation}
  Recall that 
  is the speed of sound is given by (\ref{eq:publ-broken:14}),
therefore the causality condition
  $\sigma^2<c^2=1$ imposes the below restriction of the domain of
  definition of the map $\Theta$:
  \begin{equation}
    \label{eq:The_Initial_Data:12} \sigma^2=p'=\frac{\partial
      p}{\partial \epsilon}=\frac{\partial }{\partial \epsilon}\left(
      K\epsilon^\gamma\right)=\gamma K\epsilon^{\gamma-1}=\gamma K
    w^2<1.
  \end{equation}
  Let $S$ be the strip $\{0\leq w<(\sqrt{\gamma K})^{-\frac 1 2},
  0\leq \rho<\infty\}$. We now want to show that
  $\Theta:S\to\Theta(S)$ is a bijection.  Clearly,
  $\Theta(0,\rho)=(0,\frac{\rho}{\sqrt{1+\rho^2}})$ maps $\{0\}\times
  [0,\infty)$ to $\{0\}\times [0,1)$ in a one to one manner, and
  $\Theta(w,0)=(w,0)$ is of course a bijection. The line
  $(\sqrt{\gamma K})^{-\frac 1 2},\rho)$ is mapped to the curve
  \begin{equation}
    \label{eq:The_Initial_Data:16}
    \left(%
      \begin{array}{c}
        y(\rho) \\
        x(\rho) \\
      \end{array}%
    \right)=\left(%
      \begin{array}{c}
        (\sqrt{\gamma K})^{-\frac 1
2}\left(1+2\rho^2\right)^{\frac{\gamma-1}{2}} \\
        \frac{2\rho\sqrt{1+\rho^2}}{1+2\rho^2} \\
      \end{array}%
    \right).
  \end{equation}
  Since $\frac{d x}{d \rho}>0$, there exists a function $s:[0,1)\to
  \mathbb{R}$ such that the curve (\ref{eq:The_Initial_Data:16}) is
  given by the graph of $s$ and the image of $\Theta$ is the set below
  this graph, that is,
  \begin{equation}
    \label{eq:The_Initial_Data:13} \Theta(S)=\{(y,x): y<s(x), 0\leq
    x<1\}.
  \end{equation}
  By (\ref{eq:The_Initial_Data:15}), (\ref{eq:The_Initial_Data:11})
  and (\ref{eq:The_Initial_Data:12}) we conclude that the Jacobian of
  the map $\Theta$ does not vanish in the interior of $S$, hence
  $\Theta:S\to \Theta(S)$ is locally one to one map. It is well known
  that a locally one to one map between two simply connected sets is a
  bijective map. \BeweisEnde
\end{proof}

\subsection{Cantor's conformal method for solving the constraint
  equations }
\label{sec:conformal_method}
In principle there are two possibilities for solving the constraint
equation for an asymptotically flat manifold:
\begin{itemize}
  \item Either to adapt directly the method of York et all, but then
  one is forced to impose certain relations between $R(\bar h)$ and
  the second fundamental form (see Choquet-Bruhat and York
  \cite{CHY
  } for details).
  \item These undesirable conditions can be substituted by a method
  developed by Cantor and we will describe it in the following. (This
  method has been discussed in detail in the literature, see for
  example \cite{bartnik04}, 
  \cite{CHY}, 
  \cite{cantor79} 
  \cite{OMC} 
  and reference therein.)
\end{itemize}

In this method parts of the data are chosen (the so-called free data),
and the remaining parts are determined by the constraint equations
(\ref{eq:publ-broken:23}) and (\ref{eq:publ-broken:38}). 
The free initial data are
\begin{math}
  \left( \bar h_{ab}, \bar A_{ab},\bar z, \bar j \right)
\end{math}, where $A_{ab}$ is a divergence and trace free $2$-tensor.
The main idea is to consider two conformal scaling functions, $\alpha$
and $\phi$.

\begin{enumerate}
  \item We start with $\hat h_{ab}=\alpha^4\bar h_{ab}$. If $\alpha$
  is a positive solution to (\ref{eq:conformal_method:3}), then
  $R(\hat h)=0$. The Brill-Cantor condition (see Definition
  \ref{def:Brill-Cantor}) is necessary and sufficient for the
  existence of positive solutions.
  Having solved equation (\ref{eq:conformal_method:3}), we now adjust
  the given data to the new metric: $\hat A^{ab} =\alpha^{-10}\bar
  A^{ab}$, $\hat z=\alpha^{-8}\bar z$ and $\hat j^a=\alpha^{-10}\bar
  j^a$.

  \item The second step here is solve the Lichnerowicz Laplacian
  (\ref{eq:conformal method:8})
  and set
  \begin{equation}
    \hat K^{ab}=\left(\mathcal{L}(W)\right)^{ab}+\alpha^{-10}A^{ab},
  \end{equation}
  where \begin{math} \left(\mathcal{L}(W)\right)^{ab}\end{math} is the
  Killing operator giving by (\ref{eq:conformal method:6}).
  \item The third step is: If $\phi$ is a solution to the Lichnerowicz
  equation (\ref{eq:conformal method:9}), then it follows from
  (\ref{eq:conformal method:11}) that the data $h_{ab}=\phi^4\hat
  h_{ab}$, $K^{ab} =\phi^{-10}\hat K^{ab}$, $z=\phi^{-8}\hat z$ and
  $j^a=\phi^{-10}\hat j^a$ satisfy the constraint equations
  (\ref{eq:publ-broken:23}) and (\ref{eq:publ-broken:38}).
\end{enumerate}

For the Einstein-Euler system with the equation of state
(\ref{eq:eineul:4}) it is essential that the initial data will satisfy
condition (\ref{eq:The_Initial_Data:14}) of Theorem
\ref{thm:The_Initial_Data:1}. Therefore in this case it is necessary
to adjust this method.

Here the free initial data are:
\begin{equation}
  \label{eq:publ-broken:20} \left( \bar h_{ab}, \bar A_{ab},\hat y,
    \hat v^b \right).
\end{equation}
where $\bar A_{ab} $ is trace and divergence free, that is,
\begin{math} \bar D_a \bar A^{ab}=\bar h_{ab}\bar A^{ab}=0\end{math},
where $\bar D_a$ is the covariant derivative with respect to the
metric $\bar h_{ab}$.  We require that the matter data $(\hat y,\hat
v^a)$, will satisfy the condition
\begin{equation}
  \label{eq:conformal_method:4}  0\leq \hat y < s\left(\sqrt{ \hat
      h_{ab}\hat v^a\hat v^b}\right),
\end{equation}
where $s(\cdot)$ is given by (\ref{eq:The_Initial_Data:13}). The
remaining initial data are determined by the constraint equations
(\ref{eq:publ-broken:23}) and (\ref{eq:publ-broken:38}), relations
(\ref{eq:The_Initial_Data:6}) and Theorem
\ref{thm:The_Initial_Data:1}.

\begin{rem}
  The distinction between the gravitational data $(\bar h_{ab},\bar
  A_{ab})$ and the matter data $(\hat z,\hat j^b)$ is caused by
  condition (\ref{eq:The_Initial_Data:14}). For if we make the scaling
  $\hat h_{ab}=\phi^4\bar h_{ab}$, $\hat z=\phi^{-8}\bar z$, and $\hat
  j^b=\phi^{-10}\bar j^b$, then $\hat v^b=\phi^{-2}\bar v^b $, $\hat
  y=\phi^{-4(\gamma-1)}\bar y$ and \begin{math} \hat h_{ab}\hat
    v^a\hat v^b= \bar h_{ab}\bar v^a\bar v^b\end{math}.  Thus under
  this conformal transformation, the argument of $s$ in
  (\ref{eq:The_Initial_Data:14}) is invariant, while the left hand
  side will be affected. Therefore the free initial data are partially
  invariant under conformal transformations.
\end{rem}

Now, if we perform the conformal transformation
\begin{equation}
  \label{eq:conformal method:1}  \hat h_{ab}=\alpha^4\bar h_{ab},
\end{equation}
then the scalar curvature with respect to the metric $\hat h_{ab}$,
$R(\hat h)$, satisfies
\begin{equation}
  \label{eq:conformal method:2}  -8\Delta_{\bar h}\alpha +R(\bar
  h)\alpha =R(\hat h)\alpha^5.
\end{equation}
Therefore, if there exists a nonnegative solution to the equation
\begin{equation}
  \label{eq:conformal_method:3}  -\Delta_{\bar h}\alpha
  +\frac{1}{8}R(\bar h)\alpha =0,
\end{equation}
then the metric $\hat h_{ab}$ given by (\ref{eq:conformal method:1})
will have zero scalar curvature. We  continue the construction as
follow.  Let $\hat A^{ab}=\alpha^{-10}\bar A^{ab}$, $\hat D_a$ denotes
the covariant derivative with respect to the metric $\hat h_{ab}$,
since \begin{math} \hat D_a \hat A^{ab}=\alpha^{-10}\bar D_a \bar
  A^{ab}\end{math}, $\hat A^{ab}$ is a divergence and trace free $2$
tensor.

Assume $\hat K$ is a symmetric covariant $2$-tensor which satisfies
the maximal slice condition, that is $\hat h_{ab}\hat K^{ab}=0$. Then
we split $\hat K$ by writing it for some vector $W$:
\begin{equation}
  \label{eq:conformal method:5} \hat K^{ab} =\hat A^{ab} +\hat{
    \mathcal{L}}^{ab}(W),
\end{equation}
where $\hat{\mathcal{L}}$ is the Killing field operator
\begin{equation}
  \label{eq:conformal method:6} \left(\hat{
      \mathcal{L}}(W)\right)^{ab}=\left(\hat{\pounds}_W\hat
    h\right)^{ab}-\frac{1}{3}\hat h^{ab}\Tr\hat{\pounds}_W\hat h=\hat
  D_a W^b+\hat D_b W^a-\frac{1}{3}\hat h^{ab}\Tr\hat{\pounds}_W\hat
  h,
\end{equation}
and $\hat{\pounds}_W\hat h$ is the Lie derivative. The momentum
constraint (\ref{eq:publ-broken:38}) is now equivalent  to
\begin{equation}
  \label{eq:conformal method:7} \hat D_a\hat K^{ab} =\hat
  D_a\left(\hat{ \mathcal{L}}(W)\right)^{ab}=-8\pi\hat j^b,
\end{equation}
that is, $W$ is a solution to the Lichnerowicz Laplacian system
\begin{equation}
  \label{eq:conformal method:8} \left(\Delta_{L_{\hat
        h}}W\right)^b:=\hat D_a\left(\hat{
\mathcal{L}}(W)\right)^{ab}=
  \Delta_{\hat h}W+\frac{1}{3}\hat D^b\left(\hat D_a W^a\right)+
  \hat R_a^b W^a=-8\pi\hat j^b,
\end{equation}
here $\hat R_a^b$ is the Ricci curvature tensor with respect to the
metric $\hat h_{ab}$.

Having solved the Lichnerowicz Laplacian (\ref{eq:conformal
method:8}), we consider the Lichnerowicz equation
\begin{equation}
  \label{eq:conformal method:9} -\Delta_{\hat h}\phi=2\pi \hat z
  \phi^{-3}+\frac{1}{8}\hat K_a^b\hat K_b^a \phi^{-7}.
\end{equation}
Now we put $h_{ab}=\phi^4\hat h_{ab}$, $K_{ab}=\phi^{-2}\hat K_{ab}$,
$z=\phi^{-8}\hat z$ and $j^b=\phi^{-10}\hat j^b$. Since
\begin{equation}
  \label{eq:conformal method:11} -\Delta_{\hat
    h}\phi=\phi^5\frac{1}{8} R(h)=\phi^5\left(2\pi z
    +\frac{1}{8}K_a^bK_b^a\right)=\phi^5\left(2\pi \hat z \phi^{-8}
    +\frac{1}{8}\hat K_a^b\hat K_b^a\phi^{-12}\right),
\end{equation}
the Hamiltonian constraint (\ref{eq:publ-broken:23}) is  valid,
and since \begin{equation}
  \label{eq:conformal method:10} D_a K^{ab}=\phi^{-10}\hat D_a \hat
  K^{ab}=-8\pi\phi^{-10}\hat j^b=-8\pi j^b
\end{equation}
 the data  $(h_{ab},K_{ab},z,j^b)$ satisfy the constraint
equations
(\ref{eq:publ-broken:23}) and (\ref{eq:publ-broken:38}).  
In order that the matter variables and $(z,j)$ satisfy the
compatibility
conditions (\ref{eq:The_Initial_Data:4}) and
(\ref{eq:The_Initial_Data:5}) it is necessary to check that
$y=z^{\frac{\gamma-1}{2}}=\left(\phi^{-8}\hat
  z\right)^{\frac{\gamma-1}{2}}=\phi^{-4(\gamma-1)}\hat y$ and
$v^b=\frac{j^b}{z}=\phi^{-2}\hat v^b$ satisfy condition
(\ref{eq:The_Initial_Data:14}). Indeed,
\begin{equation}
  \label{eq:conformal method:12} 0\leq y< s\left(\sqrt{h_{ab}v^a
      v^b}\right)\Leftrightarrow 0\leq \phi^{-4(\gamma-1)}\hat y<
  s\left(\sqrt{\hat h_{ab}\hat v^a \hat v^b}\right),
\end{equation}
but since $\phi \geq 1$, $\phi^{-4(\gamma-1)}\hat y\leq\hat y$ and
thus assumption (\ref{eq:conformal_method:4}) assures condition
(\ref{eq:The_Initial_Data:14}).

\begin{thm}[Construction of the gravitational data]
  \label{thm:conformal_method:1}
  Given the free data $(\bar h_{ab}, \bar A_{ab},\hat y,\hat v^b)$
  such that $(\bar h_{ab}-I)\in H_{s,\delta}$, $ \bar A_{ab}\in
  H_{s-1,\delta+1}$, $(\hat y,\hat v^b)\in H_{s-1,\delta+2}$,
  $\frac{5}{2}<s<\frac{2}{\gamma-1}+\frac{3}{2}$ and
  $-\frac{3}{2}<\delta<-\frac{1}{2}$.  Then the gravitational data:
  \begin{equation*}
    h_{ab}=(\phi\alpha)^4\bar h_{ab}
    \qquad \text{ and}\qquad K_{ab}=(\phi\alpha)^{-2}\bar
    A_{ab}+\phi^{-2} \hat{\mathcal{L}}(W)
  \end{equation*}
  satisfy the constraint equations (\ref{eq:publ-broken:23}) and
  (\ref{eq:publ-broken:38}) with $z=\phi^{-8}\hat
y^{\frac{2}{\gamma-1}}$ and
  $j^b=\phi^{-10}\hat y^{\frac{2}{\gamma-1}}\hat v^b$ as the right
hand side.  In addition,
  $(h_{ab}-I)\in H_{s,\delta} $ and $K_{ab}\in H_{s-1,\delta+1}$ and
  therefore if $\frac{7}{2}<s<\frac{2}{\gamma-1}+\frac{3}{2}$, then
  these data have the needed regularity so they can serve as initial
  data for the evolution equations
  of the reduced Einstein equations (\ref{eq:publ-broken:19}).
\end{thm}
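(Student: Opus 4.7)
The plan is to execute the three-step conformal construction described just before the theorem and to check in each step that the outputs land in the required weighted Sobolev spaces. The analytic backbone is elliptic theory in $H_{s,\delta}$ developed in Section~\ref{sec:elliptic}, together with the algebra, Moser, composition, and fractional-power estimates for $H_{s,\delta}$ listed in the appendix. The hypotheses $s>\tfrac{5}{2}$ and $-\tfrac{3}{2}<\delta<-\tfrac{1}{2}$ give the embedding into continuous functions decaying at infinity and place $\delta$ in the range where $-\Delta$ (and its perturbations) is an isomorphism from $H_{s,\delta}$ to $H_{s-2,\delta+2}$, which is what makes each of the three solvability steps work.

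First I would solve the linear conformal Laplace equation $-\Delta_{\bar h}\alpha+\tfrac{1}{8}R(\bar h)\alpha=0$ for $\alpha-1\in H_{s,\delta}$. Writing $\alpha=1+\tilde\alpha$, this becomes $-\Delta_{\bar h}\tilde\alpha+\tfrac{1}{8}R(\bar h)\tilde\alpha=-\tfrac{1}{8}R(\bar h)$; the source is in $H_{s-2,\delta+2}$ because $\bar h_{ab}-I\in H_{s,\delta}$ and $R(\bar h)$ is quadratic in first derivatives of the metric plus linear in second derivatives, so Moser estimates place it in the correct space. Invertibility is guaranteed by the Brill--Cantor condition together with the $H_{s,\delta}$-isomorphism result of Section~\ref{sec:elliptic}, extending Cantor's argument to fractional order. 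Positivity of $\alpha$ (and hence $\alpha\geq 1$ asymptotically) follows from a maximum-principle argument. Next I solve the Lichnerowicz Laplacian $(\Delta_{L_{\hat h}}W)^b=-8\pi\hat j^b$ with $\hat j^b=\alpha^{-10}\hat y^{2/(\gamma-1)}\hat v^b$. The source sits in $H_{s-1,\delta+2}$: the factor $\hat y^{2/(\gamma-1)}$ is handled by the fractional-power estimate in $H_{s,\delta}$ (this uses the upper bound $s<\tfrac{2}{\gamma-1}+\tfrac{3}{2}$), and multiplication by $\hat v^b\in H_{s-1,\delta+2}$ and $\alpha^{-10}$ by the algebra and Moser estimates. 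Since $\Delta_{L_{\hat h}}$ is a small perturbation of the flat Lichnerowicz Laplacian and the flat operator is an isomorphism $H_{s+1,\delta}\to H_{s-1,\delta+2}$ in our weight range, one gets $W\in H_{s+1,\delta}$, hence $\hat{\mathcal L}(W)\in H_{s,\delta+1}\subset H_{s-1,\delta+1}$.

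The main obstacle is the third step: the semilinear Lichnerowicz equation
\begin{equation*}
-\Delta_{\hat h}\phi=2\pi\hat z\,\phi^{-3}+\tfrac{1}{8}\hat K^b_a\hat K^a_b\,\phi^{-7},
\end{equation*}
which I would solve for $\phi-1\in H_{s,\delta}$ with $\phi\geq 1$. The approach is Cantor's homotopy argument generalized to $H_{s,\delta}$: introduce a family $\phi_t$ parametrized by $t\in[0,1]$, obtain a priori $L^\infty$ and $H_{s,\delta}$ bounds, and show the set of $t$ for which a solution exists is both open and closed. Openness uses the implicit function theorem, which requires the linearization $-\Delta_{\hat h}+(\text{positive potential})$ to be an isomorphism $H_{s,\delta}\to H_{s-2,\delta+2}$; this is again provided by the fractional elliptic theory of Section~\ref{sec:elliptic}. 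Closedness requires uniform $L^\infty$ lower bounds $\phi_t\geq 1$ (by maximum principle, using that the source is nonnegative and decays) and uniform $H_{s,\delta}$ bounds obtained from elliptic regularity plus the Moser estimate applied to the nonlinear terms $\phi^{-3},\phi^{-7}$, which are controlled because $\phi$ is bounded away from zero. The lower bound $\phi\geq 1$ is exactly what is needed for the implication in~(\ref{eq:conformal method:12}), so that the scaled matter data still lies in the admissible region of Theorem~\ref{thm:The_Initial_Data:1}.

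Once $\alpha$, $W$ and $\phi$ are constructed, the constraint equations follow by the algebraic identities~(\ref{eq:conformal method:11}) and~(\ref{eq:conformal method:10}) already derived in the conformal method section. It remains to read off the regularity of $(h_{ab},K_{ab})$: since $\bar h_{ab}-I$, $\alpha-1$, $\phi-1$ all lie in $H_{s,\delta}$, the algebra property yields $h_{ab}-I=(\phi\alpha)^4\bar h_{ab}-I\in H_{s,\delta}$; similarly $(\phi\alpha)^{-2}\bar A_{ab}\in H_{s-1,\delta+1}$ by the algebra/Moser estimates, and $\phi^{-2}\hat{\mathcal L}(W)\in H_{s,\delta+1}\hookrightarrow H_{s-1,\delta+1}$, giving $K_{ab}\in H_{s-1,\delta+1}$. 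The continuity of the solution map in the free-data norms follows by running the same three-step construction on differences and invoking the uniqueness obtained in each step.
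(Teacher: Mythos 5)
Your proposal follows essentially the same route as the paper: solve the linear conformal equation for $\alpha$ (Theorem \ref{thm:Solutions_to_the_elliptic_systems:1}), the Lichnerowicz Laplacian for $W$ (Theorem \ref{thm:Solutions_to_the_elliptic_systems:2}), and the semilinear Lichnerowicz equation for $\phi$ via Cantor's homotopy argument (Theorem \ref{thm:Semi_linear_elliptic:1}), then read off the regularity of $(h_{ab},K_{ab})$ from the algebra, Moser and fractional-power estimates. One caveat: your claim that $W\in H_{s+1,\delta}$ (and hence $\hat{\mathcal L}(W)\in H_{s,\delta+1}$) is not supported by the elliptic theory in the weight range used here, since the zeroth-order coefficient of $\Delta_{L_{\hat h}}$ is the Ricci tensor of $\hat h$, which lies only in $H_{s-2,\delta+2}$ and so caps the solution regularity at $H_{s,\delta}$; this does not damage the argument, because $W\in H_{s,\delta}$ already gives $\hat{\mathcal L}(W)\in H_{s-1,\delta+1}$, which is all that is needed for $K_{ab}\in H_{s-1,\delta+1}$.
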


\begin{proof}[of Theorem \ref{thm:conformal_method:1}]
  \hfill
  \begin{itemize}

    \item The free data are $(\bar h_{ab}, \bar A_{ab},\hat y,\hat
    v^b)$, where $(\bar h_{ab}-I)\in H_{s,\delta}$, $\bar A_{ab}\in
    H_{s-1,\delta+1}$ a divergence a trace free $2$-tensor and $(\hat
    y,\hat v^b)\in H_{s-1,\delta+2}$.

    \item The function $\alpha$ satisfies equation
    (\ref{eq:conformal_method:3}), so by Theorem
    \ref{thm:Solutions_to_the_elliptic_systems:1} $\alpha>0$ and
    $(\alpha-1)\in H_{s,\delta}$ provided that $s\geq 2$ and $\delta >
    -\frac{3}{2}$. Since $\alpha$ is continuous and $\lim_{|x|\to
      \infty}\alpha(x)-1=0$, there is a compact set $D$ of
    $\mathbb{R}^3$ such that $\alpha(x)\geq\frac{1}{2}$ for $x\not \in
    D$ and $\min_{D}\alpha(x)\geq t_0>0$.

    \item The function $F(t):=\frac{1-t}{t}$ has bounded derivatives
    in $[\min\{t_0,\frac{1}{2}\},\infty)$, so by Moser type estimate,
    Theorem \ref{thr:Appendix:5},  
    ${\alpha}^{-1}-1=\frac{1-\alpha}{\alpha}\in H_{s,\delta}$.

    \item Now, by algebra (Proposition \ref{prop:Properties:3}),
    $(\hat h_{ab}-I)=(\alpha^4\bar h_{ab}-I)\in H_{s,\delta}$ and
    $\hat A^{ab}=\alpha^{-10} A^{ab}\in H_{s-1,\delta+1}$.

    \item The matter variables $(\hat z,\hat j^b)$ are given by $\hat
    z=\hat y^{\frac{2}{\gamma-1}}$, $\hat j^b=\hat z\hat v^b$. 
Therefore Proposition \ref{prop:properties:5a} implies that $\hat z\in
H_{s-1,\delta+2}$,
    provided that $\frac{3}{2}<s-1<\frac{2}{\gamma-1}+\frac{1}{2}$ and
    also $j^b\in H_{s-1,\delta+2}$ by the Proposition
\ref{prop:Properties:3}.

    \item The vector $W$ is a solution of the Lichnerowicz Laplacian
    (\ref{eq:conformal method:8}), thus according to Theorem
    \ref{thm:Solutions_to_the_elliptic_systems:2} below, $W\in
    H_{s,\delta}$ if $s\geq 2$. Hence $\hat K^{ab}$ given in
    (\ref{eq:conformal method:5}) belongs to $H_{s-1,\delta+1}$.
    Again, by Proposition \ref{prop:Properties:3},
    $\hat K_a^b\hat
    K_b^a\in H_{s-2,\delta+2}$ if $s\geq2$ and 
    $\delta\geq-\frac{3}{2}$.
    \item Setting $u=\phi-1$, then Lichnerowicz equation
    (\ref{eq:conformal method:11}) becomes
    \begin{equation}
  \label{eq:conformal_method:13} -\Delta_{\hat h}u=2\pi \hat z  
      (u+1)^{-3} +\frac{1}{8}\hat K_a^b\hat K_b^a(u+1)^{-7}.
    \end{equation}

    \item So applying Theorem \ref{thm:Semi_linear_elliptic:1}
    with $s'=s$ and $\delta'=\delta$ results that $(\phi-1)=u\in
    H_{s,\delta} $ and $(\phi-1)=u\geq 0$.
  \end{itemize}
  \BeweisEnde
\end{proof}

Combining our results of Section \ref{sec:comp-probl-init} with
Theorem \ref{thm:conformal_method:1} we obtain the following
corollary:
\begin{cor}[Construction of the data for the fluid]
  \label{cor:conformal_method:2}
  Given the free data $(\bar h_{ab}, \bar A_{ab},\hat y,\hat v^b)$
  such that $(\bar h_{ab}-I)\in H_{s,\delta}$, $ \bar A_{ab}\in
  H_{s-1,\delta+1}$, $(\hat y,\hat v^b)\in H_{s-1,\delta+2}$,
  $\frac{5}{2}<s<\frac{2}{\gamma-1}+\frac{3}{2}$,
  $-\frac{3}{2}<\delta<-\frac{1}{2}$ and $(\hat y,\hat v^a)\in
  \Omega$, where $\Omega$ is given by (\ref{eq:The_Initial_Data:14}).
  Then the data of the four velocity vector $u^\alpha$ and the Makino
  variable $w$ are: $y=\phi^{-4(\gamma-1)}\hat y$, $v^b=\phi^{-2}\hat
  v^b$,
  \begin{equation*}
    (w,\bar u^a):=\Phi^{-1}(y,v^a)
    \qquad \text{ and}\qquad \bar u^0=1+h_{ab}\bar u^a\bar u^b
  \end{equation*}
  and the data for the energy and momentum densities are:
  $z=y^{\frac{2}{\gamma-1}}$, $j^a=zv^a$. These data satisfy the
  compatibility conditions (\ref{eq:The_Initial_Data:4}) and
  (\ref{eq:The_Initial_Data:5}).  In addition, by Moser type estimate
  Theorem \ref{thr:Appendix:5} and Proposition
  \ref{prop:Properties:3}, 
  $(w,\bar u^a)\in H_{s-1,\delta+2} $ and
  $\bar u^0-1\in H_{s-1,\delta+2}$ and therefore if
  $\frac{7}{2}<s<\frac{2}{\gamma-1}+\frac{3}{2}$, then these data have
  the needed regularity so they can serve as initial data for 
  the hyperbolic system (\ref{eq:Initial:2}) for the perfect fluid.   
\end{cor}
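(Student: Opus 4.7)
The plan is to assemble the conclusion from three already-established ingredients: Theorem \ref{thm:conformal_method:1} for the conformal factor $\phi$, Theorem \ref{thm:The_Initial_Data:1} for the inverse of $\Phi$, and the weighted Sobolev calculus (algebra, fractional powers, Moser estimates) collected in the Appendix. No new estimate is needed.

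First I would invoke Theorem \ref{thm:conformal_method:1} to obtain the conformal factor $\phi$ with $(\phi-1)\in H_{s,\delta}$, $\phi\geq 1$, and with the metric $h_{ab}$ of (\ref{eq:Functions_spaces:5}). Next, since $\phi\geq 1$ is uniformly bounded below and $(\phi-1)\in H_{s,\delta}$ with $s>5/2$, the Moser-type estimate (Theorem \ref{thr:Appendix:5}) applied to the smooth functions $F_1(t)=(1+t)^{-4(\gamma-1)}-1$ and $F_2(t)=(1+t)^{-2}-1$ (both having bounded derivatives on $[0,\infty)$) yields $(\phi^{-4(\gamma-1)}-1),(\phi^{-2}-1)\in H_{s,\delta}$. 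Combining this with $(\hat y,\hat v^b)\in H_{s-1,\delta+2}$ through the algebra Proposition \ref{prop:Properties:3}, I conclude that $y=\phi^{-4(\gamma-1)}\hat y$ and $v^b=\phi^{-2}\hat v^b$ belong to $H_{s-1,\delta+2}$.

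Second, I would verify that the pointwise pair $(y,v^a)$ lies inside the admissible set $\Omega$ of Theorem \ref{thm:The_Initial_Data:1}, so that $\Phi^{-1}$ is well-defined and smooth. This is precisely the content of (\ref{eq:conformal method:12}): since $\phi\geq 1$ one has $\phi^{-4(\gamma-1)}\hat y\leq \hat y$, and the conformal invariance $h_{ab}v^av^b=\hat h_{ab}\hat v^a\hat v^b$ together with the assumption $(\hat y,\hat v^a)\in\Omega$ gives $0\leq y<s\bigl(\sqrt{h_{ab}v^av^b}\bigr)$ and $h_{ab}v^av^b<1$ pointwise.

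Third, set $(w,\bar u^a):=\Phi^{-1}(y,v^a)$. Because $\Phi^{-1}$ is $C^\infty$ on $\Omega$ and vanishes at the origin (since $y,v^a\to 0$ at spatial infinity forces $w,\bar u^a\to 0$ there), the Moser-type composition estimate (Theorem \ref{thr:Appendix:5}) transports the $H_{s-1,\delta+2}$ regularity of $(y,v^a)$ into $(w,\bar u^a)\in H_{s-1,\delta+2}$. For $\bar u^0$ one applies Proposition \ref{prop:Properties:3} (algebra) to $h_{ab}\bar u^a\bar u^b$, noting that $(h_{ab}-I)\in H_{s,\delta}$ via Theorem \ref{thm:conformal_method:1}, to obtain $\bar u^0-1\in H_{s-1,\delta+2}$. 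The compatibility identities (\ref{eq:The_Initial_Data:4})--(\ref{eq:The_Initial_Data:5}) then hold by construction, being equivalent to $\Phi(w,\bar u^a)=(y,v^a)$ together with $z=y^{2/(\gamma-1)}$ and $j^a=zv^a$.

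The only delicate point is the composition with $\Phi^{-1}$, which fails to be globally smooth across the boundary of $\Omega$: one must ensure that the graph of $(y,v^a)$ stays in a \emph{compact} subset of $\Omega$ so that $\Phi^{-1}$ truly has bounded derivatives of all orders along it. This is handled by the decay $(y,v^a)\to(0,0)$ at infinity (guaranteed by the embedding $H_{s-1,\delta+2}\hookrightarrow C_0$ since $s-1>3/2$ and $\delta+2>1/2$) together with the strict inequality $\hat y<s(\sqrt{\hat h_{ab}\hat v^a\hat v^b})$ on the compact complement, so that $(y,v^a)$ takes values in a compact subset of $\Omega$; everything else is bookkeeping through the Appendix. \BeweisEnde
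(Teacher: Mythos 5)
Your proof is correct and follows essentially the same route the paper intends: the corollary is obtained by combining Theorem \ref{thm:conformal_method:1} (for $\phi$ and $h_{ab}$), the membership argument (\ref{eq:conformal method:12}) placing $(y,v^a)$ in $\Omega$, Theorem \ref{thm:The_Initial_Data:1} for $\Phi^{-1}$, and the Moser and algebra estimates of the Appendix for the $H_{s-1,\delta+2}$ regularity. Your additional observation that the range of $(y,v^a)$ must lie in a compact subset of $\Omega$ for the composition estimate to apply is a legitimate refinement of a point the paper leaves implicit.
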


\subsection{Solutions to the elliptic systems }
\label{sec:Solutions_to_the_elliptic_systems}
This section is devoted to the solutions the linear elliptic systems
(\ref{eq:conformal_method:3}) and (\ref{eq:conformal method:8}). The
assumption on the given metric $\bar h_{ab}$ is that $(\bar
h_{ab}-I)\in H_{s,\delta}$. So according to Theorem
\ref{thm:eq:a_priori_estimates_weighted:1} of Section
\ref{sec:elliptic}, the operator
\begin{math}
  \Delta_{\bar h}:H_{s,\delta}\to H_{s-2,\delta+2}
\end{math}
is semi Fredholm. In fact, it is an isomorphism, this can be shown in
a similar manner to Step 1 of Section
\ref{subsection:Semi_linear_elliptic}. We now consider the operator
\begin{equation}
  \label{eq:Solutions_to-the_elliptic_systems:2}L:=-\Delta_{\bar
    h}+\frac{1}{8}R(\bar h):H_{s,\delta}\to H_{s-2,\delta+2},
\end{equation}
which  is also a  semi Fredholm operator. If $R(\bar h)\geq 0$, then $L$ is
injective.  However, a weaker condition is that $L$ does not have
non-positive eigenvalues, the variational formulation of this property
known as the {\it Brill-Cantor condition} \cite{cantor81:_laplac}.

Let us first introduce few notations. For a Riemannian metric
$\bar h_{ab}$, we set $(Du,Dv)_{\bar h}=\bar h^{ab}\partial_a
u\partial_b v$, $|Du|^2_{\bar h}=(Du,Du)_{\bar h}$ and $\mu_{\bar h}$
is the volume element with respect to the metric $\bar h_{ab}$.
\begin{defn}[Brill-Cantor condition]
  \label{def:Brill-Cantor} A metric $\bar h_{ab}$ satisfies the
  Brill-Cantor condition if
  \begin{equation}
    \label{eq:Solutions_to-the_elliptic_systems:3}
    \inf_{u\not=0}\frac{\int\left(|Du|^2_{\bar h}+\frac{1}{8}R(\bar
        h)u^2\right)d\mu_{\bar h}}{\int u^2 d \mu_{\bar h}}>0,
  \end{equation}
  where the infimum is taken over all $u\in C_0^1(\mathbb{R}^3)$.
\end{defn}

This condition is invariant under conformal transformations, a fact
that has been proved for example in
\cite{y.00:_einst_euclid
}

\begin{thm}[Construction of a metric having zero scalar curvature]
  \label{thm:Solutions_to_the_elliptic_systems:1}
  Assume the given metric $\bar h_{ab}$ satisfies $(\bar
  h_{ab}-\delta_{ab})\in H_{s,\delta}$, $s\geq 2$,
  $\delta>-\frac{3}{2}$ and $\bar h_{ab}$ satisfies the Brill-Cantor
  condition (\ref{eq:Solutions_to-the_elliptic_systems:3}). Then there
  exists a scalar function $\alpha$ such that $\alpha-1\in
  H_{s,\delta}$, $\alpha(x)>0$ and the metric $\hat
  h_{ab}=\alpha^4\bar h_{ab}$ has a scalar curvature zero.
\end{thm}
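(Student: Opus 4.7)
The plan is to recast the problem as a linear elliptic equation and then address existence, regularity and positivity separately. Writing $\alpha = 1+u$ in (\ref{eq:conformal_method:3}) (equivalently, setting $R(\hat h)=0$ in (\ref{eq:conformal method:2})) converts the problem to the inhomogeneous linear equation
\begin{equation*}
L u := -\Delta_{\bar h} u + \tfrac{1}{8}R(\bar h) u = -\tfrac{1}{8}R(\bar h),
\end{equation*}
to be solved for $u\in H_{s,\delta}$. Since $(\bar h_{ab}-\delta_{ab})\in H_{s,\delta}$ with $s\geq 2$, the scalar curvature $R(\bar h)$ is a quadratic polynomial in the inverse metric and its first two derivatives; by the algebra property (Proposition \ref{prop:Properties:3}) and the Moser-type estimate (Theorem \ref{thr:Appendix:5}) one has $R(\bar h)\in H_{s-2,\delta+2}$. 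In particular the right-hand side lies in the target space of the operator $L$ introduced in (\ref{eq:Solutions_to-the_elliptic_systems:2}).

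The second step is to show that $L:H_{s,\delta}\to H_{s-2,\delta+2}$ is an isomorphism. It is semi-Fredholm by the elliptic theory of Section \ref{sec:elliptic} (Theorem \ref{thm:eq:a_priori_estimates_weighted:1}); viewed as a relatively compact perturbation of $-\Delta_{\bar h}$, which is itself an isomorphism on this scale (proved as in Step~1 of Section \ref{subsection:Semi_linear_elliptic}), the deformation $L_t=-\Delta_{\bar h}+t\tfrac{1}{8}R(\bar h)$, $t\in[0,1]$, exhibits $L$ as a Fredholm operator of index zero. For injectivity I would invoke the Brill-Cantor condition (\ref{eq:Solutions_to-the_elliptic_systems:3}): if $u\in H_{s,\delta}$ satisfies $Lu=0$, then $u$ can be approximated by $C_0^1(\mathbb{R}^3)$ functions in a weighted norm, and integration by parts gives
\begin{equation*}
\int\bigl(|Du|^2_{\bar h}+\tfrac{1}{8}R(\bar h)u^2\bigr)\,d\mu_{\bar h}=0,
\end{equation*}
which by (\ref{eq:Solutions_to-the_elliptic_systems:3}) forces $u\equiv 0$. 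Combined with index zero this gives surjectivity, hence a unique $u\in H_{s,\delta}$ solving $Lu=-\tfrac{1}{8}R(\bar h)$.

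The final step is to show that $\alpha = 1+u$ is strictly positive. Since $s\geq 2 > 3/2$, the embedding for $H_{s,\delta}$ into continuous functions tending to zero at infinity gives $\alpha(x)\to 1$ as $|x|\to\infty$, so any non-positive value of $\alpha$ must occur in a compact set. I would employ the continuity/homotopy method along the family $L_t$ introduced above: the corresponding solutions $\alpha_t-1\in H_{s,\delta}$ depend continuously on $t$, with $\alpha_0\equiv 1>0$, and as long as $\alpha_t>0$ one may test the equation $L_t\alpha_t=(1-t)\cdot 0$ against a cut-off of the negative part $\alpha_t^-$, using the Brill-Cantor inequality (which is conformally invariant, hence survives along the deformation) to exclude the appearance of a first zero. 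Equivalently, the minimum principle applied on the compact set where $\alpha$ could possibly dip below zero, combined with the positivity of the quadratic form in (\ref{eq:Solutions_to-the_elliptic_systems:3}), rules out a non-trivial sign change.

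The main obstacle is precisely this positivity step: because $R(\bar h)$ belongs only to a weighted fractional Sobolev space and is sign-indefinite, the standard pointwise maximum principle is unavailable, and one must combine the weak Brill-Cantor coercivity with a homotopy (or a direct test-function) argument to ensure that the solution never vanishes. The Fredholm/isomorphism step is routine once the elliptic theory of Section \ref{sec:elliptic} has been set up in $H_{s,\delta}$.
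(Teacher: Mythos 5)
Your overall strategy matches the paper's: linearize via $\alpha=1+u$, prove $L=-\Delta_{\bar h}+\tfrac18 R(\bar h)$ is an isomorphism by a homotopy through $L_\tau=-\Delta_{\bar h}+\tfrac{\tau}{8}R(\bar h)$ with injectivity coming from the Brill--Cantor condition, and then argue positivity by testing against the negative part of $\alpha$. Two points in the isomorphism step deserve more care. First, the homotopy theorem (Theorem \ref{thm:eq:a_priori_estimates_weighted:2}) requires injectivity of \emph{every} $L_\tau$, not only of $L_1$; for $\tau<1$ the Brill--Cantor inequality does not apply verbatim, and the paper handles this by splitting into the cases $\int R(\bar h)u^2\,d\mu_{\bar h}\ge 0$ and $<0$, using $\tau-1\le 0$ in the second case. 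Second, your alternative justification of surjectivity via a ``relatively compact perturbation of index zero'' is doubtful here: multiplication by $R(\bar h)$ gains weight but not regularity, and the compact embedding of Theorem \ref{thr:Appendix:1} requires a strict drop in \emph{both} indices, so the zeroth-order term is not obviously compact from $H_{s,\delta}$ to $H_{s-2,\delta+2}$. The homotopy route is the one that actually works. Also, before integrating by parts in $(u,L_\tau u)$ you must know that $u$ decays enough for the integrals to converge; this is supplied by Lemma \ref{lem:a_priori_estimates_weighted:5} (a solution of $L_\tau u=0$ in $H_{s,\delta}$ lies in $H_{s,-1}$), which your ``approximate by $C_0^1$ functions'' remark does not replace.

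The genuine gap is the final step. Testing the weak formulation of $L\alpha=0$ against $w=-\min(\alpha,0)$ (compactly supported since $\alpha\to1$ at infinity) and invoking Brill--Cantor yields only $\alpha\ge 0$; it says nothing about interior zeros, whereas the theorem asserts $\alpha(x)>0$, which is essential for $\hat h_{ab}=\alpha^4\bar h_{ab}$ to be a Riemannian metric. Your proposal stops at ``rules out a non-trivial sign change,'' which is nonnegativity, and your homotopy-in-$t$ phrasing (``exclude the appearance of a first zero'') is not an argument: continuity of $t\mapsto\alpha_t$ in $H_{s,\delta}$ does not prevent the minimum of $\alpha_t$ from reaching zero. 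The paper closes this with Harnack's inequality, $\sup_{B_r}\alpha\le C\inf_{B_r}\alpha$ on small balls (valid even though the zero-order coefficient is only in $L^q_{\rm loc}$ with $q>\tfrac32$, as explained in the remark following the theorem), so that $\{\alpha=0\}$ is both open and closed and hence empty. Some such strong-minimum-principle ingredient is required; the variational argument alone does not deliver strict positivity.
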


\begin{proof}[of Theorem
\ref{thm:Solutions_to_the_elliptic_systems:1}]
  The desired $\alpha$ is a solution to the elliptic equation
  (\ref{eq:conformal_method:3}). By setting $u=\alpha+1$ this equation
  goes to
  \begin{equation}
    \label{eq:Solutions_to_the_elliptic_systems:4} Lu=  -\Delta_{\bar
      h}u +\frac{1}{8}R(\bar h)u =-\frac{1}{8}R(\bar h).
  \end{equation}
  We define for $\tau\in [0,1]$, $L_\tau u=-\Delta_{\bar{h}}u+\frac
  {\tau}{8} R(\bar{h})u$.  If $L_\tau u=0$, then by Lemma
  \ref{lem:a_priori_estimates_weighted:5}, $u\in H_{s,-1}$ so
  \begin{equation}
    \label{eq:Solutions_to_the_elliptic_systems:5}
    0=(u,L_\tau u) =\int\left(|Du|_{\bar{h}}^2+
      \frac {\tau}{8} R(\bar{h})u^2\right)d\mu_{\bar{h}}.
  \end{equation}

  Now, if $ \int R(\bar{h})u^2d\mu_{\bar{h}}\geq 0$, then obviously
  (\ref{eq:Solutions_to_the_elliptic_systems:5}) implies that $u\equiv
  0$. Otherwise $ \int R(\bar{h})u^2d\mu_{\bar{h}}< 0$, then there is
  sequence $\{u_n\}\subset C_0^\infty$ such that $u_n\to u$ in
  $H_{s,-1}$ - norm and
  \begin{equation}
    \label{eq:Solutions_to_the_elliptic_systems:6}
    \int\left(|Du|_{\bar{h}}^2+
      \frac 1 8     
R(\bar{h})u^2\right)d\mu_{\bar{h}}=\lim_n\int\left(|Du_n|_{\bar{h}}
^2+ 
      \frac 1 8 R(\bar{h})u_n^2\right)d\mu_{\bar{h}}>0
  \end{equation}
  by the Brill-Cantor condition
  (\ref{eq:Solutions_to-the_elliptic_systems:3}).  Substitution of 
  (\ref{eq:Solutions_to_the_elliptic_systems:6}) in
  (\ref{eq:Solutions_to_the_elliptic_systems:5}) yields
  \begin{equation}
    \label{eq:Solutions_to_the_elliptic_systems:7}
    0 =\int\left(|Du|_{\bar{h}}^2+
      \frac 1 8 R(\bar{h})u^2\right)d\mu_{\hat{h}}
    +\frac {(\tau-1)} 8\int R(\bar{h})u^2d\mu_{\bar{h}},
  \end{equation}
  this is certainly a contradiction. Thus $L_\tau$ is injective for
  each $\tau\in [0,1]$, $L_0=-\Delta_{\bar{h}}$ is isomorphism, hence
  $L_1=-\Delta_{\bar{h}}+\frac{1}{8} R(\bar{h})$ is isomorphism by
  Theorem \ref{thm:eq:a_priori_estimates_weighted:2}.
  Having proved the existence, we now show that $\alpha=u+1$ is
  nonnegative.  The set $\{x:\alpha(x)<0\}$ has compact support since
  $\lim_{x\to \infty}u(x)=0$ by the embedding Theorem
  \ref{thr:Appendix:2}.  So letting $w=-\min(\alpha,0)$, we have $w\in
  H_0^1(\setR^3)$ and if the set $\{x:\alpha(x)<0\}$ is not empty,
  then $w\not\equiv 0$ and then the Brill-Cantor condition gives
  \begin{equation}
    \label{eq:Solutions_to_the_elliptic_systems:8}
    \int_{\{\alpha<0\}}\left(|Dw|^2_{\bar{h}}+ \frac 1 8 R(\bar{h})
      w^2\right)d\mu_{\bar{h}}>0.
  \end{equation}
  On the other hand, according to Definition
  \ref{def:Semi_linear_elliptic:1} of weak solutions,
  \begin{equation}
    \label{eq:Solutions_to_the_elliptic_systems:9}
    0 =\int\left((D\alpha,Dw)_{\bar{h}}+ \frac 1 8 R(\bar{h})\alpha
      w\right)d\mu_{\bar{h}}
    =-\int_{\{\alpha<0\}}\left(|Dw|^2_{\bar{h}}+ \frac 1 8 R(\bar{h})
      w^2\right)d\mu_{\bar{h}}.
  \end{equation}
  So we conclude that $\alpha\geq 0$. Since $\alpha\geq0$, we have by
  Harnack's inequality
  \begin{equation*}
    \sup_{B_r}\alpha\leq C \inf_{B_r}\alpha
  \end{equation*}
  provided that $B_r$ is sufficiently small ball. Hence, the set
  $\{\alpha(x)=0\}$ is both open and closed, which is impossible.
  Thus $\alpha(x)>0$.
  \BeweisEnde
\end{proof}

\begin{rem}
  The conditions for applying Harnack's inequality to a second order
  elliptic operator
  \begin{equation*}
    Lu=\partial_a\left(A_{ab}(x)\partial_b u\right) + C(x)u
  \end{equation*}
  are boundedness of the coefficients (see
  e.~g.~\cite{gilbarg83:_ellip_partial_differ_equat_secon_order};
  Section 8) However, following carefully the proofs we found it can
  be applied also when the zero order coefficient belongs to $L^q_{\rm
    loc}(\setR^3)$ with $q>\frac 3 2$. In local coordinates equation
  (\ref{eq:conformal_method:3}) takes the form
  \begin{equation*}
L\alpha=\partial_a\left(\sqrt{|\bar{h}|}\bar{h}^{ab}\partial_b\alpha
    \right) +\sqrt{|\bar{h}|}R(\bar{h})\alpha=0.
  \end{equation*}
  For $s\geq 2$, $\sqrt{|\bar{h}|}\bar{h}^{ab}$ are bounded and
  non-degenerate, while $\sqrt{|\bar{h}|}R(\bar{h})\in L^2_{{\rm
      loc}}(\setR^3)$.
\end{rem}

We turn now the Lichnerowicz Laplacian system (\ref{eq:conformal
  method:8}).
\begin{thm}[Solution of Lichnerowicz Laplacian]
  \label{thm:Solutions_to_the_elliptic_systems:2}
  Let $\hat{h}_{ab}$ be a Riemannian metric in $\setR^3$ so that
  $(\hat{h}-I)\in H_{s,\delta}$. Let vector $\hat{j}^b\in
  H_{s-2,\delta+2}$, $s\geq 2$ and $\delta>-\frac 3 2$. Then equation
  (\ref{eq:conformal method:8}) has a unique solution $W\in
  H_{s,\delta}$.
\end{thm}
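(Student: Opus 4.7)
The strategy is to apply the weighted elliptic a priori theory to promote the Lichnerowicz Laplacian $L := \Delta_{L_{\hat h}}$ to a semi--Fredholm operator on $H_{s,\delta}$, to prove its injectivity via an energy identity, and finally to invoke Theorem \ref{thm:eq:a_priori_estimates_weighted:2} (in the same way as in the proof of Theorem \ref{thm:Solutions_to_the_elliptic_systems:1}) to conclude that $L$ is an isomorphism. Since $L$ is a second--order linear elliptic system whose coefficients are built polynomially from $\hat h$, $\partial\hat h$ and $\hat h^{-1}$, and since $(\hat h - I)\in H_{s,\delta}$ with $s\geq 2$ and $\delta>-\tfrac{3}{2}$, Theorem \ref{thm:eq:a_priori_estimates_weighted:1} gives that $L\colon H_{s,\delta}\to H_{s-2,\delta+2}$ is semi--Fredholm.

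Next I would establish the fundamental energy identity. Using the symmetry and $\hat h$--trace--freeness of $\hat{\mathcal{L}}(W)^{ab}$ (both immediate from (\ref{eq:conformal method:6})), one integration by parts gives, for $W\in C_0^\infty(\setR^3)$,
\begin{equation*}
\int_{\setR^3} W_b\,(\Delta_{L_{\hat h}}W)^b\,d\mu_{\hat h}
= -\int_{\setR^3}\hat D_a W_b\,\hat{\mathcal{L}}(W)^{ab}\,d\mu_{\hat h}
= -\tfrac{1}{2}\int_{\setR^3}|\hat{\mathcal{L}}(W)|^2_{\hat h}\,d\mu_{\hat h}.
\end{equation*}
By Lemma \ref{lem:a_priori_estimates_weighted:5}, any $W\in\ker L\subset H_{s,\delta}$ has sufficient decay that the integration by parts can be justified by density of $C_0^\infty$, so that $LW=0$ forces $\hat{\mathcal{L}}(W)=0$. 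That is, $W$ is a conformal Killing vector field of $(\setR^3,\hat h)$ lying in $H_{s,\delta}$, and in particular $W(x)\to 0$ as $|x|\to\infty$ by the embedding Theorem \ref{thr:Appendix:2}.

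The main obstacle is then to show that this asymptotically vanishing conformal Killing field is identically zero. For the Euclidean background $\delta_{ab}$ this is elementary: the conformal Killing fields of flat $\setR^3$ form a ten--parameter family (translations, rotations, dilation, special conformal transformations), none of which decay at infinity, so none belongs to $H_{s,\delta}$ with $\delta>-\tfrac{3}{2}$. For the perturbed metric $\hat h$ I would adapt the homotopy used in the proof of Theorem \ref{thm:Solutions_to_the_elliptic_systems:1}: set $\hat h_\tau=(1-\tau)\delta_{ab}+\tau\hat h_{ab}$, $\tau\in[0,1]$, obtaining a norm--continuous family of semi--Fredholm operators $L_\tau\colon H_{s,\delta}\to H_{s-2,\delta+2}$ with constant Fredholm index. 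The same energy identity reduces injectivity of each $L_\tau$ to conformal Killing rigidity for $\hat h_\tau$, which follows by matching leading--order Euclidean asymptotic data at infinity to the decaying tail of $W$. Since $L_0$ is the flat vector Laplacian and is a known isomorphism on $H_{s,\delta}$, Theorem \ref{thm:eq:a_priori_estimates_weighted:2} then propagates the isomorphism property to $L_1=L$, yielding the unique solution $W\in H_{s,\delta}$ of $LW=-8\pi\hat j$ for every $\hat j\in H_{s-2,\delta+2}$.
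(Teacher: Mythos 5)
Your architecture is the same as the paper's: verify strong ellipticity so that $\Delta_{L_{\hat h}}\colon H_{s,\delta}\to H_{s-2,\delta+2}$ falls under the semi--Fredholm Theorem \ref{thm:eq:a_priori_estimates_weighted:1}, upgrade a kernel element to $H_{s,-1}$ via Lemma \ref{lem:a_priori_estimates_weighted:5} so that integration by parts is legitimate, derive the energy identity forcing $\hat{\mathcal{L}}(W)=0$ (your constant $-\tfrac12$ versus the paper's $-2$ is only a normalization of the adjoint and is harmless), and then run the homotopy $h_\tau=(1-\tau)I+\tau\hat h$ from the constant--coefficient operator, which is an isomorphism by Lemma \ref{lem:a_priori_estimates_weighted:3}, through Theorem \ref{thm:eq:a_priori_estimates_weighted:2}. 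One small omission: to invoke the semi--Fredholm theorem you must actually check hypothesis (H1); the paper does this by computing the principal symbol $|\xi|^2_{\hat h}\delta_a^b+\tfrac13\xi^b\xi_a$ and verifying positivity. That is routine, but it should appear.

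The genuine gap is the conformal Killing rigidity step. After the energy identity you are left with: a vector field $W\in H_{s,-1}$ satisfying $\hat{\mathcal{L}}_\tau(W)=0$, i.e.\ a conformal Killing field of the asymptotically flat metric $h_\tau$ that decays at infinity, and you must conclude $W\equiv 0$. For the flat metric this is indeed elementary (the ten--parameter family of Euclidean conformal Killing fields contains no decaying member), but for a metric that is merely $H_{s,\delta}$--close to flat with $s\geq 2$ this is a nontrivial theorem; your sentence ``follows by matching leading--order Euclidean asymptotic data at infinity to the decaying tail of $W$'' is a statement of what needs to be proved, not a proof. The difficulty is precisely that at low regularity one cannot simply expand $W$ in an asymptotic series and compare coefficients. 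The paper does not prove this either: it reduces $\hat{\mathcal{L}}_\tau(W)=0$ to the vanishing of $\pounds_W\widetilde h$ for the conformally rescaled metric $\widetilde h=|h_\tau|^{-1/3}h_\tau$ and then cites the known rigidity results of Choquet--Bruhat, Christodoulou--O'Murchadha, Bartnik (who covers exactly $s\geq 2$), and Maxwell. So either supply such a reference or an actual argument at this point; as written, the injectivity of each $L_\tau$ --- and hence the whole homotopy --- rests on an unproved claim.
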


\begin{proof}[of theorem
\ref{thm:Solutions_to_the_elliptic_systems:2}]
  In order to verify condition (H1) of Section
  \ref{subsec:a_priori_estimates_weighted} we compute the principle
  symbol of $L_{\Delta_{\hat h}}$ in (\ref{eq:conformal method:8}).
  For each $\xi\in T_x^*M$, the principle symbol $
\left(\Delta_{L_{\hat h}}(\xi)\right)_a^b$ is a linear map
  from $ E_x$ to $ F_x$, where $E_x$ and $F_x$ are a fibers in $T_xM$.
  In local coordinates $\Delta_{\hat h}=\hat
  h^{ab}\partial_a\partial_b +\text{lower terms}$ and
  $D_a=\partial_a+\Gamma(\hat h^{ab},\partial \hat h_{ab})$, hence
  \begin{equation}
    \label{eq:Solutions_to_the_elliptic_systems:13}
    \left(\Delta_{L_{\hat h}}(\xi)\right)_a^b=|\xi|^2_{\hat
      h}\delta_a^b+\frac{1}{3}\xi^b\xi_a.
  \end{equation}
 Therefore
  \begin{equation}
    \label{eq:Solutions_to_the_elliptic_systems:14}\left
      (\left(\Delta_{L_{\hat h}}(\xi)\right)\eta,\eta\right)_{\hat
      h}=\hat h^{bc}\left(L_{\Delta_{\hat
          h}}(\xi)\right)_a^b\eta^a\eta^c=|\xi|^2_{\hat h}|\eta|_{\hat
      h}^2+\frac{1}{3}(\xi_a\eta^a)^2\geq |\xi|^2_{\hat h}|\eta|_{\hat
      h}^2.
  \end{equation}
  Thus $\left(\Delta_{L_{\hat h}}(\xi)\right)_a^b$ has positive
  eigenvalues and therefore $L_{\Delta_{\hat h}}$ is strongly elliptic.
  Furthermore, by Proposition \ref{prop:Properties:3}
  and Remark \ref{rem:Properties:2} we have that if $(\hat
h_{ab}-I)\in
  H_{s,\delta}$, $s\geq 2$ and $\delta>-\frac 3 2$, then 
  \begin{equation*} 
    \Delta_{L_{\hat
        h}}:H_{s,\delta}\to H_{s-2,\delta+2}.
  \end{equation*}
  Hence, we may apply Theorem
  \ref{thm:eq:a_priori_estimates_weighted:2} in order to obtain
  existence of the elliptic system (\ref{eq:conformal method:8}). For
  the given metric $\hat{h}_{ab}$ we define one parameter family of
  metrics $h_t=(1-t)I+t\hat{h}$, $0\leq t\leq 1$, and the following
  associated operators with respect to these metrics: $(D_a)_t$ the
  covariant derivative, $\mathcal{L}_t$ the Killing operator and
  $L_t=\Delta_{L_{h_t}}=(D)_t\cdot \mathcal{L}_t$ the Lichnerowicz
  Laplacian. We want to show that $L_t$ is injective. We recall that
  $-2 \mathcal{L}_t$ is the formal adjoint of $D_t$ (see
  e.~g.~\cite{BES}), in addition, if $L_t(W)=0$, then by Lemma
  \ref{lem:a_priori_estimates_weighted:5} implies $W\in H_{s,-1}$.
  Thus we may use integration by parts and get
  \begin{equation}
    \label{eq:Solutions_to_the_elliptic_systems:15}
    \begin{split} 0=\left(W,L_t W\right)_{ h_t} &=
      \int\left( {h_t}\right)_{ab} W^a L_t( W)^b d\mu_{{h_t}}
      =
      \int\left( {h_t}\right)_{ab}W^a{(D_{c})_t}\cdot
\left(\mathcal{L}_{t}
        W\right)^{cb} d\mu_{{h_t}}\\ &= -2 \int
      \left( {h_t}\right)_{ab}\left(
{h_t}\right)_{dc}(\mathcal{L}_{t}W)^{ad}
      (\mathcal{L}_{t}W)^{cb} \mu_{{h_t}} =
      -2\int|\mathcal{L}_{t}W)|_{{h_t}}^2\mu_{{h_t}}
    \end{split}
  \end{equation}

  Now, if let $\widetilde{h}=|{h_t}|^{-\frac 1 3}{h_t}$, then
  \begin{equation}
    \label{eq:elliptic part:87}
    \pounds_W{\widetilde{h}}=|{h_t}|^{-\frac 1
      3}\left(\pounds_W{{h_t}}-{h_t}\frac 2 3 ({D_a)_t}W^a \right)=
    |{h_t}|^{-\frac 1 3}\mathcal{{L}}_{t}(W).
  \end{equation}

  Thus $L_t(W)=\Delta_{L_{h_t}}(W)=0$ implies $W\equiv 0$ if and only
  if there are no non-trivial Killing vector fields $W$ in $H_{s,-1}$.
  This fact has been proved by G. Choquet and Y.Choquet-Bruhat
  \cite{choquet78:_sur} for $s>\frac{7}{2}$,
  D. Christodoulou and N. O'Murchadha for $s>3+\frac{3}{2}$
  \cite{OMC},
  and Bartnik for $s\geq 2$ \cite{bartnik05:_phase_einst}
  (See also Maxwell \cite{maxwell06:_rough_einst}, where he obtained
  the minimum regularity $s>\frac{3}{2}$ ). Now $L_0=\Delta_{L_{I}}$
  is an operator with constant coefficients, so by Lemma
  \ref{lem:a_priori_estimates_weighted:3} is an isomorphism.
  \BeweisEnde
\end{proof}



\section{Local Existence for Hyperbolic Equations}
\label{sec:local-exist-hyperb}

In this section we prove an existence theorem (locally in time) for
quasi linear symmetric hyperbolic system in the $H_{s,\delta}$ spaces.
The known existence results in the $H^s$ space of Fisher and Marsden
\cite{FMA} and Kato \cite{KATO} (see also \cite{Taylor91},
\cite{taylor97c}), cannot be applied to the $H_{s,\delta}$ spaces.
The main difficulty here is the establishment of energy estimates for
linear hyperbolic systems. In order to achieve it we have defined a
specific inner-product in $H_{s,\delta}$ (see Definition
\ref{def:inner_product}) and in addition the Kato-Ponce commutator
estimate \cite{kato88:_commut_euler_navier_stokes}, \cite{Taylor91},
has an essential role in our approach. Once the energy estimates have
been established in the $H_{s,\delta}$ space, we follow Majda's
\cite{majda84:_compr_fluid_flow_system_conser} iteration procedure and
show existence, uniqueness and continuity in that norm.

We consider the  Cauchy problem for a quasi linear (uniform)
symmetric hyperbolic system of the form
\begin{equation}
  \label{eq:neu-existence:21}
  \begin{cases}
      & \displaystyle{A^0(u;t,x)\partial_t u + \sum_{a=1}^3 A^a(u;t,x)\partial_a u +
      B(u;t,x)u+ F(u;t,x)=0,}\\
      & u(0,x)=u_0(x).
    \end{cases}
  \end{equation}
under the following assumptions:

\begin{enumerate}
  \item[(H1)]
\label{item:wellposs-einstein-euler-hyper:1}
  $A^\alpha$ are symmetric matrices for
$\alpha=0,1,2,3$;
  \item[(H2)]
  \label{item:wellposs-einstein-euler-hyper:2}
$A^\alpha(u;t,x),B(u;t,x),F(u;t,x) $ are smooth in their arguments;
  \item[(H3)] 
\label{item:wellposs-einstein-euler-hyper:3}
$ \left( A^0(0;t,\cdot)-I\right), A^a(0;t,\cdot),
  B(0;t,\cdot), F(0;t,\cdot)\in H_{s,\delta}$;
  \item[(H4)]
  \label{item:wellposs-einstein-euler-hyper:4}
  $\partial_t A^0(u;t,\cdot)\in L^\infty$.
\end{enumerate}

The main result of this section is the well posedness of the system
(\ref{eq:neu-existence:21}) in $H_{s,\delta}$ spaces:
\begin{thm}[Well posedness of first order hyperbolic symmetric systems
  in $H_{s,\delta}$]
  \label{thr:publ-broken:1} Let $s>\frac{5}{2}$, $\delta \geq
-\frac{3}{2}$ and assume hypotheses
(H1)-(H4) 
holds. If the initial condition $u_0$ belongs to $
H_{s,\delta}$ and satisfies
  \begin{equation}
    \label{eq:publ-broken:10} \frac{1}{\mu}
    \delta_{\alpha\beta}u_0^\alpha  u_0^\beta \leq A^0_{\alpha\beta}
    u_0^\alpha u_0^\beta\leq
    \mu \delta_{\alpha\beta}u_0^\alpha u_0^\beta, \qquad \mu\in
    \setR^+
  \end{equation}
  then there exits a positive $T$ which depends on the
  $H_{s,\delta}$-norm of the initial data and there exists a unique
  $u(t,x)$ a solution to (\ref{thr:publ-broken:1}) which in addition
  satisfies
  \begin{equation}
    \label{eq:appendix:11}
    u\in C([0,T],H_{s,\delta})\cap C^1([0,T],H_{s-1,\delta+1}).
  \end{equation}
\end{thm}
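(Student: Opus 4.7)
The plan is to follow Majda's iteration scheme, adapted to the weighted fractional Sobolev spaces $H_{s,\delta}$. First, I would set up the iteration: define $u^{(0)}(t,x)=u_0(x)$ and, for $k\geq 0$, solve the linear symmetric hyperbolic Cauchy problem
\begin{equation*}
A^0(u^{(k)};t,x)\partial_t u^{(k+1)}+\sum_{a=1}^3 A^a(u^{(k)};t,x)\partial_a u^{(k+1)}+B(u^{(k)};t,x)u^{(k+1)}+F(u^{(k)};t,x)=0,
\end{equation*}
with $u^{(k+1)}(0,\cdot)=u_0$. The goal of steps two and three below is to show that $\{u^{(k)}\}$ stays in a ball of $C([0,T],H_{s,\delta})$ of radius depending only on $\|u_0\|_{H_{s,\delta}}$ for some uniform $T>0$, and that it converges to a solution.

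The main obstacle, and the heart of the argument, is the linear energy estimate in $H_{s,\delta}$. Working with the dyadic expression (\ref{eq:weighted:4}), I would equip $H_{s,\delta}$ with the specific inner product mentioned in the introduction, built by summing the rescaled $H^s$ inner products of the pieces $(\psi_j v)_{2^j}$ weighted by $2^{(3/2+\delta)2j}$. Applying the usual $\Lambda^s$-energy method term-wise to each piece produces commutators $[\Lambda^s,A^\alpha]\partial_\alpha v$, which are handled by the Kato-Ponce estimate in $H^s$, and relocalization commutators $[\psi_j,A^\alpha\partial_\alpha]$, which are supported on overlapping annuli and must be absorbed using the algebra, Moser-type and embedding properties developed for $H_{s,\delta}$ in the Appendix. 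The assumption $s>\frac{5}{2}$ ensures the $L^\infty$ control required by these tools, (H4) handles $\partial_t A^0$ in the symmetrizer, and the positivity condition (\ref{eq:publ-broken:10})—which, once established for $u_0$, persists for short times by continuity of the coefficients—guarantees that the quadratic form $\langle A^0 v,v\rangle$ is equivalent to $\|v\|_{H_{s,\delta}}^2$. Combining these ingredients and applying Gronwall's inequality yields
\begin{equation*}
\|v(t)\|_{H_{s,\delta}}^2\leq e^{Ct}\left(\|v(0)\|_{H_{s,\delta}}^2+\int_0^t\|f(\tau)\|_{H_{s,\delta}}^2\,d\tau\right)
\end{equation*}
for the linear inhomogeneous problem, with $C$ depending on the $H_{s,\delta}$-norm of the frozen coefficients.

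With this linear estimate, iterative bounds close as in Majda: Moser-type composition estimates control $A^\alpha(u^{(k)})-A^\alpha(0)$, $B(u^{(k)})$ and $F(u^{(k)})$ in $H_{s,\delta}$, and a standard bootstrap argument on a sufficiently small interval $[0,T]$ yields a uniform bound $\|u^{(k)}(t)\|_{H_{s,\delta}}\leq R$. Convergence of the iteration is then extracted in the weaker norm $H_{s-1,\delta+1}$: the difference $w^{(k+1)}=u^{(k+1)}-u^{(k)}$ solves a linear symmetric hyperbolic equation whose coefficients live in the uniformly bounded family and whose source is quadratic in $w^{(k)}$, so the same energy estimate at regularity $s-1$ produces a contraction in $C([0,T],H_{s-1,\delta+1})$, with the time derivative controlled by the equation itself. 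Interpolation between uniform $H_{s,\delta}$ bounds and $H_{s-1,\delta+1}$ convergence yields strong convergence in every intermediate space, and the limit $u$ belongs to $L^\infty([0,T],H_{s,\delta})\cap C([0,T],H_{s-1,\delta+1})$ and solves (\ref{eq:neu-existence:21}). Uniqueness is immediate from the same $H_{s-1,\delta+1}$-estimate applied to the difference of two solutions. Finally, upgrading $u\in L^\infty([0,T],H_{s,\delta})$ to $u\in C([0,T],H_{s,\delta})$ is done by a Bona--Smith-type regularization: mollify $u_0$ to $u_0^\varepsilon\in H_{s+1,\delta}$, use the uniform-in-$\varepsilon$ bounds to pass to the limit in the strong norm, and combine weak continuity with convergence of norms. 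This proves (\ref{eq:appendix:11}) and establishes well-posedness.
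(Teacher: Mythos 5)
Your overall strategy (Majda-type iteration, a term-wise $\Lambda^s$ energy method on the dyadic pieces with Kato--Ponce, contraction in a lower norm plus interpolation, and a final upgrade to continuity in the top norm) is the same as the paper's, but two of your steps have concrete gaps. First, you take $u^{(0)}=u_0$ and assert that each linear Cauchy problem can be "solved", but with coefficients and data only in $H_{s,\delta}$ there is no off-the-shelf linear existence theorem to invoke, and the energy estimate you construct is proved for smooth compactly supported solutions. The paper resolves this by approximating $u_0$ by a sequence $u_0^k\in C_0^\infty$ with $\|u_0^k-u_0\|_{H_{s,\delta}}\leq 2^{-k}R/(8\mu)$ and giving the $(k+1)$-st iterate the datum $u_0^{k+1}$; then every iterate is a classical $C_0^\infty$ solution of a smooth linear system (existence by the classical theory, e.g.\ John), the energy lemmas apply legitimately, and the slightly different initial data are absorbed into the Gronwall bound. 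Your proposal needs this (or an equivalent regularization of the linear problems) to be rigorous.

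Second, you run the contraction in $C([0,T],H_{s-1,\delta+1})$. The linear energy estimate in these spaces requires the regularity index to exceed $\frac{5}{2}$ (it is what controls $\|\partial u\|_{L^\infty}$ and the Kato--Ponce terms), so an estimate at level $s-1$ is only available when $s-1>\frac{5}{2}$, i.e.\ $s>\frac{7}{2}$ --- which is not assumed here. The paper instead contracts in the bottom norm $H_{0,\delta}\simeq L^2_\delta$, where the energy inequality needs only $L^\infty$ bounds on the coefficients and their first derivatives (available from the uniform $H_{s,\delta}$ bound since $s>\frac{5}{2}$) and the source term $[A(u^k)-A(u^{k-1})]\partial u^k$ is estimated pointwise by the mean value theorem; interpolation $\|u\|_{H_{s',\delta}}\leq\|u\|_{H_{s,\delta}}^{s'/s}\|u\|_{H_{0,\delta}}^{1-s'/s}$ then gives convergence in every $H_{s',\delta}$, $s'<s$. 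You should adopt this lower-norm contraction. Your Bona--Smith regularization for the endpoint continuity $u\in C([0,T],H_{s,\delta})$ is a legitimate alternative to what the paper does (the paper instead shows $\limsup_{t\downarrow0}\|u(t)\|_{H_{s,\delta},A^0(u(0))}\leq\|u_0\|_{H_{s,\delta},A^0(u(0))}$ directly from the Gronwall bound on the iterates in the $A^0$-weighted inner product and combines it with weak continuity), but as written it is only a sketch and would require the difference estimates between solutions with regularized data, which brings you back to the same contraction issue.
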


\begin{rem}
  Condition (H3) is sometime too restrictive for applications.  We may
  replace it by
  \begin{enumerate}
    \item[(H3')] $ \left( A^0(U^0;t,\cdot)-I\right), A^a(U^0;t,\cdot),
    B(U^0;t,\cdot), F(U^0;t,\cdot)\in H_{s,\delta}$,
  \end{enumerate}
  where $U^0$ is a constant vector. Setting $u=U^0+v$, then $v$
  satisfies
  \begin{equation}
    \label{eq:formulation:4}
    \begin{cases}
        &\tilde{A}^0(v;t,x)\partial_t v = \sum_{a=1}^3
        \tilde{A}^a(u;t,x)\partial_a v +
        \tilde{B}(v;t,x)v+\tilde{F}(v;t,x) \\
        &v(0,x)=u_0(x)-U^0
    \end{cases}
  \end{equation}
  where $\tilde{A}^\alpha(v;t,x)={A}^\alpha(U^0+v;t,x)$,
  $\tilde{B}(v;t,x)={B}(U^0+v;t,x)$ and
  $\tilde{F}(v;t,x)=F(U^0+v;t,x)+\tilde{B}(U^0+v;t,x)$. The Moser type
  estimates are valid under assumptions (H3') (see Remark
  \ref{rem:Properties:2}).
\end{rem}

\subsection{Strategy}
\label{sec:strategy-1}

We will proceed with the following strategy:
\begin{enumerate}
  \item{ The establishment of energy estimates for linear systems in
    the fractional weighted spaces $H_{s,\delta}$.}
  \item We approximate the initial data by a $C_0^\infty$ sequence and
  then construct an iteration process which consists of solutions to a
  linear system having a $C_0^\infty$ initial data.
  \item We show that the sequence which is constructed by the
  iteration process is bounded in $H_{s,\delta}$-norm and weakly
  converges to a solution.

  \item At the final stage we prove uniqueness and continuity in
  $H_{s,\delta}$-norm.
\end{enumerate}

\subsection{Energy estimates in the fractional weighted spaces}
\label{subsec:Energy_estimates}

The energy estimates are indispensable means for the proof of well
posedness of hyperbolic systems. In order to achieve it we introduce
an inner product which depends on a matrix $A$. We assume $A=A(t,x)$
is $m\times m $ symmetric matrix which satisfies
\begin{equation}
  \label{eq:energy-estimates:3}
  \frac{1}{\mu} U^TU\leq U^TAU\leq\mu U^TU
\end{equation}
for some positive $\mu$. Here $B^T$ denotes the transpose matrix.  We
recall that $f_\epsilon(x)=f(\epsilon x)$, the sequence $\{\psi_j\}$
is a dyadic resolution of the unity in $\mathbb{R}^3$ which is defined
the Appendix \ref{sec:constr-spac-h_s} and that
$\Lambda^su={\cal{F}}^{-1}\left( (1+|\xi|^2)^{\frac{s}{2}}
  {\cal{F}}u\right) $, where ${\cal{F}}$ denotes the Fourier
transform. In this section the expression
(\ref{eq:energy-estimates:1}) below will serve as a norm of the space
$H_{s,\delta}$:
\begin{equation}
  \label{eq:energy-estimates:1}
  \|u\|_{H_{s,\delta}}^2
  :=\sum_{j=0}^\infty 2^{( \frac{3}{2} + \delta)2j}
  \|(\psi_j^2
  u)_{(2^j)}\|_{H^s}^2.
\end{equation}

Corollary \ref{cor:app11} implies that (\ref{eq:energy-estimates:1})
is equivalent to the norm of Definition \ref{def:weighted:3}.

\begin{defn}[Inner Product]
  \label{def:inner_product}
  For a symmetric matrix $A=A(t,x)$ which satisfies
  (\ref{eq:energy-estimates:3}) we let
  \begin{eqnarray} \nonumber \langle u,v\rangle_{s,\delta,A}&:=&
    \sum_{j=0}^\infty 2^{( \frac{3}{2} + \delta)2j} \left\langle
      \Lambda^s\left((\psi_j^2 u)_{(2^j)}\right),\left(A\right)_{2^j}
      \Lambda^s\left((\psi_j^2 v)_{(2^j)}\right)\right\rangle_{L^2}\\
    \label{eq:energy-estimates:4}
    &=&\sum_{j=0}^\infty 2^{( \frac{3}{2} + \delta)2j}
    \int \left[\Lambda^s\left((\psi_j^2
        u)_{(2^j)}\right)\right]^T\left(A\right)_{2^j}
    \left[\Lambda^s\left((\psi_j^2
        v)_{(2^j)}\right)\right]dx
  \end{eqnarray}
  and its associated norm $\|u\|_{H_{s,\delta,A}}^2=\langle
  u,u\rangle_{s,\delta,A}$.
\end{defn}

Obviously $\langle u,v\rangle_{s,\delta,A}=\langle
v,u\rangle_{s,\delta,A} $ and from (\ref{eq:energy-estimates:3}) we
obtain the equivalence,
\begin{equation}
  \label{eq:energy-estimates:5}
  \frac{1}{\mu}\|u\|_{H_{s,\delta}}^2\leq  \|u\|_{H_{s,\delta,A}}^2
  \leq {\mu}\|u\|_{H_{s,\delta}}^2.
\end{equation}

We come now to the crucial estimate of this section.
\begin{lem}[An energy estimate]
  \label{lem:energy-estimates:1}
  Let $s>\frac{5}{2}$, $\delta \geq -\frac{3}{2}$,
  $A^\alpha=A^\alpha(t,x)$ be $m\times m$ symmetric matrices such that
  $(A^0(t,\cdot)-I), A^a(t,\cdot)\in H_{s,\delta}$ and $A^0$ satisfies
  (\ref{eq:energy-estimates:3}). If $u(t)=u(t,\cdot)$ is a
  $C^{\infty}_0$ solution of the linear hyperbolic system
  \begin{equation}
    \label{eq:energy-estimates:7} A^0(t,x)\partial_t u= \sum_{a=1}^3
    A^a(t,x)\partial_a u,
  \end{equation}
  then
  \begin{equation}
    \label{eq:energy-estimates:8}
    \frac{d}{dt}\|u(t)\|_{H_{s,\delta,A^0}}^2\leq C \left(
      \mu\|u(t)\|_{H_{s,\delta,A^0}}^2  +1 \right),
  \end{equation}

  where
  $C=C(\left\|A^0-I\right\|_{H_{s,\delta}},\left\|A^a\right\|_{H_{s,\delta}},
  \left\|\partial_t u\right\|_{H_{s-1,\delta}},\|\partial_t
  A^0\|_{L^\infty})$.
\end{lem}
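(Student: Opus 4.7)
The plan is to mimic the classical $H^s$ energy argument of Kato and Majda, but carried out on each dyadic piece $v_j := (\psi_j^2 u)_{(2^j)}$ and then summed with the weight $2^{(3/2+\delta)2j}$. First, I would differentiate the sum defining $\|u(t)\|_{H_{s,\delta,A^0}}^2$ term by term, which yields, inside each summand, a symmetric contribution involving $\Lambda^s\partial_t v_j$ plus a term carrying $\partial_t (A^0)_{(2^j)}$; the latter is bounded by $\|\partial_t A^0\|_{L^\infty}\|u\|_{H_{s,\delta}}^2$, which is already admissible.

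To handle $\partial_t v_j$ I would derive the scaled equation for $v_j$. Multiplying the hyperbolic equation by $\psi_j^2$ and commuting through the derivatives gives
\begin{equation*}
A^0\,\partial_t(\psi_j^2 u) = \sum_a A^a\,\partial_a(\psi_j^2 u) - \sum_a A^a(\partial_a\psi_j^2)u,
\end{equation*}
and rescaling $x\mapsto 2^j x$ turns this into
\begin{equation*}
A^0_{(2^j)}\,\partial_t v_j = 2^{-j}\sum_a A^a_{(2^j)}\,\partial_a v_j - \sum_a \bigl(A^a(\partial_a\psi_j^2)u\bigr)_{(2^j)}.
\end{equation*}
Substituting into $\langle \Lambda^s\partial_t v_j, A^0_{(2^j)}\Lambda^s v_j\rangle$ and commuting $\Lambda^s$ past $A^0_{(2^j)}$ and $A^a_{(2^j)}$ (picking up commutators of the Kato-Ponce type), the principal term is $2^{-j}\sum_a\langle A^a_{(2^j)}\Lambda^s\partial_a v_j,\Lambda^s v_j\rangle$. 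By symmetry of $A^a$ and an integration by parts this equals $-\tfrac12\langle (\partial_a A^a)_{(2^j)}\Lambda^s v_j,\Lambda^s v_j\rangle$ (the factor $2^j$ from $\partial_a(A^a)_{(2^j)}$ exactly cancels the $2^{-j}$ prefactor), which is controlled by $\|\partial_a A^a\|_{L^\infty}\|\Lambda^s v_j\|_{L^2}^2$.

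What remains are three groups of error terms: (a) the two commutators $[\Lambda^s,A^0_{(2^j)}]\partial_t v_j$ and $2^{-j}[\Lambda^s,A^a_{(2^j)}]\partial_a v_j$, estimated by the Kato--Ponce inequality in $\mathbb{R}^3$; (b) the cutoff error $\Lambda^s(A^a_{(2^j)}((\partial_a\psi_j^2)u)_{(2^j)})$, controlled by $|\partial_a\psi_j^2|\leq C\,2^{-j}$ and by the finite overlap of the supports of the $\psi_j$; (c) the $\partial_t A^0$ contribution. The balance $s>5/2$ ensures the Sobolev embedding $H^s(\mathbb{R}^3)\hookrightarrow C^1$ needed for the $L^\infty$ norms in Kato--Ponce, and the bounds on $A^a_{(2^j)}$ in $H^s$ transfer termwise to bounds on $A^a$ in $H_{s,\delta}$ via Definition~\ref{def:weighted:3}.

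The main obstacle is the careful bookkeeping of the powers of $2^j$: each spatial derivative on a scaled function produces a factor $2^j$, the rescaled PDE provides a factor $2^{-j}$, and the Kato--Ponce commutator bound
\begin{equation*}
\|[\Lambda^s,A^\alpha_{(2^j)}]g\|_{L^2}\lesssim \|\nabla A^\alpha_{(2^j)}\|_{L^\infty}\|g\|_{H^{s-1}}+\|A^\alpha_{(2^j)}\|_{H^s}\|g\|_{L^\infty}
\end{equation*}
introduces further factors of $2^j$. One must verify that all these powers cancel cleanly so that, after multiplying by $2^{(3/2+\delta)2j}$ and summing, the resulting bounds are precisely $\|A^0-I\|_{H_{s,\delta}}$, $\|A^a\|_{H_{s,\delta}}$, $\|\partial_t u\|_{H_{s-1,\delta}}$, and $\|u\|_{H_{s,\delta,A^0}}^2$ (with the additive constant $1$ absorbing nonlinear $H_{s,\delta}$ norms of the coefficients arising from the second term in Kato--Ponce). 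Once this is verified, the inequality \eqref{eq:energy-estimates:8} follows by Cauchy--Schwarz and the equivalence \eqref{eq:energy-estimates:5}.
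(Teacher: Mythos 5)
Your overall strategy is the same as the paper's: differentiate the weighted inner product term by term, substitute the equation for $A^0\partial_t u$, peel off Kato--Ponce commutators, handle the principal term $\langle A^a_{(2^j)}\Lambda^s\partial_a v_j,\Lambda^s v_j\rangle$ by symmetry and integration by parts, and resum with the weights $2^{(3/2+\delta)2j}$. The derivation of the scaled localized equation for $v_j$ and the cancellation of $2^{-j}$ against $\partial_a(A^a_{(2^j)})=2^j(\partial_a A^a)_{(2^j)}$ are correct.

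There is, however, one genuine gap: you apply Kato--Ponce with the \emph{global} rescaled coefficients, i.e.\ with $\|A^\alpha_{(2^j)}\|_{H^s}$ and $\|\nabla A^\alpha_{(2^j)}\|_{L^\infty}$, and assert that these ``transfer termwise'' to $\|A^\alpha\|_{H_{s,\delta}}$. They do not. First, $A^0_{(2^j)}\notin H^s(\setR^3)$ since $A^0\to I$ at infinity, and even $(A^0-I)_{(2^j)}$ need not lie in $H^s$ when $-\tfrac32\le\delta<0$ (the hypothesis is only $A^0-I\in H_{s,\delta}$, which does not embed into $H^s$ for negative $\delta$). Second, $\|\nabla A^0_{(2^j)}\|_{L^\infty}=2^j\|\nabla A^0\|_{L^\infty}$ carries an uncancelled factor $2^j$: with the coefficient left un-localized this factor has nowhere to go, and the sum over $j$ diverges. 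The paper's proof resolves exactly this by inserting the auxiliary partition of unity $\{\Psi_k\}$ in front of every coefficient before commuting, so that only $(\Psi_k A^\alpha)_{2^j}$ with $k=j-3,\dots,j+4$ ever appears inside an $H^s$ or Lipschitz norm. The finite overlap then lets the extra power of $2^j$ be absorbed into the $k$-indexed weight (converting $\delta$ into $\delta+1$ on the differentiated coefficient, so that one lands on $\|\nabla(A^0-I)\|_{H_{s-1,\delta+1}}\lesssim\|A^0-I\|_{H_{s,\delta}}$ via Corollary \ref{cor:const:1}), and the three summation lemmas at the end of the paper's proof close the estimate. Without this localization of the coefficients your bookkeeping of powers of $2^j$ cannot ``cancel cleanly,'' so the step is not merely unverified but would fail as written; with it, your argument becomes the paper's.
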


An essential tool for deriving these estimates is the Kato \& Ponce
Commutator Estimate \cite{kato88:_commut_euler_navier_stokes},
\cite{Taylor91}.

\begin{thm}[Kato and Ponce]
  \label{thm:energy-estimates:1}
  Let $s>0$, $f\in H^s\cap C^1$, $g\in H^{s-1}\cap L^{\infty}$, then
  \begin{equation}
    \label{eq:energy-estimates:9} \| \Lambda^s(fg)-f
    \Lambda^{s}g\|_{L^2} \leq C \left\{ \|\nabla f\|_{L^{\infty}}
      \|g\|_{H^{s-1}} + \|f\|_{H^s}\|g\|_{L^{\infty}}
    \right\}.
  \end{equation}
\end{thm}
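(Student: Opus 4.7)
My plan is to prove this via Bony's paraproduct decomposition, which is the classical route through Littlewood--Paley theory. Let $\{\Delta_k\}_{k\geq -1}$ be a dyadic Littlewood--Paley decomposition, with $\Delta_k$ projecting onto frequencies $|\xi|\sim 2^k$, and set $S_k := \sum_{l<k-3}\Delta_l$. I would first write
$$fg = T_f g + T_g f + R(f,g), \qquad T_f g := \sum_k S_{k-3}f\,\Delta_k g, \qquad R(f,g) := \sum_{|k-l|\leq 2}\Delta_k f\,\Delta_l g,$$
apply this expansion to both $fg$ and $f\Lambda^s g$, and subtract to reach the fundamental identity
$$[\Lambda^s,f]g \;=\; [\Lambda^s,T_f]g \;+\; \Lambda^s(T_g f) \;-\; T_{\Lambda^s g}f \;+\; \Lambda^s R(f,g) \;-\; R(f,\Lambda^s g).$$
The task then reduces to estimating the five pieces on the right separately.

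The four non-commutator pieces will each contribute the term $\|f\|_{H^s}\|g\|_{L^\infty}$. For $\Lambda^s(T_g f)$, each block $S_{k-3}g\,\Delta_k f$ is frequency-localized around $2^k$, so Bernstein's inequality gives $\|\Lambda^s(S_{k-3}g\,\Delta_k f)\|_{L^2}\lesssim 2^{ks}\|g\|_{L^\infty}\|\Delta_k f\|_{L^2}$, and almost-orthogonality of the $\Delta_k f$ converts the $\ell^2$ sum in $k$ into $\|g\|_{L^\infty}\|f\|_{H^s}$. The paraproduct $T_{\Lambda^s g}f$ is handled the same way once one uses $\|S_{k-3}(\Lambda^s g)\|_{L^\infty}\lesssim 2^{ks}\|g\|_{L^\infty}$, again by Bernstein. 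For the resonant terms $\Lambda^s R(f,g)$ and $R(f,\Lambda^s g)$ the hypothesis $s>0$ is crucial: in each diagonal block $\Delta_k f\,\Delta_l g$ with $|k-l|\leq 2$ the derivative $\Lambda^s$ can be transferred onto either factor at the cost of a bounded multiplier, and dyadic summation (now genuinely a convergent geometric-type sum because $s>0$) again produces $\|f\|_{H^s}\|g\|_{L^\infty}$.

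The main obstacle will be the first piece $[\Lambda^s,T_f]g$, since it is exactly there that the gain of replacing $\|f\|_{H^s}$ by $\|\nabla f\|_{L^\infty}$ must emerge. On each dyadic block the input frequencies satisfy $|\eta|\lesssim 2^{k-3}$ and $|\xi-\eta|\sim 2^k$, so a first-order Taylor expansion of the Fourier symbol of $\Lambda^s$ yields
$$\bigl|\,|\xi|^s - |\xi-\eta|^s\,\bigr| \;\lesssim\; |\eta|\,|\xi-\eta|^{s-1}.$$
Inserted into the Fourier representation of the commutator, this shows that at each dyadic scale the operator $[\Lambda^s,S_{k-3}f]\Delta_k$ behaves essentially as $\nabla(S_{k-3}f)\cdot\Lambda^{s-1}\Delta_k g$, with the gradient falling on the low-frequency factor. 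Summing in $k$ via Plancherel and almost-orthogonality then produces the bound $\|[\Lambda^s,T_f]g\|_{L^2}\lesssim\|\nabla f\|_{L^\infty}\|g\|_{H^{s-1}}$. Combining this with the four preceding estimates completes the proof.
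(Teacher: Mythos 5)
Your overall strategy is the standard paradifferential route and, for four of the five pieces, your sketch can be made rigorous exactly as you describe. But there is a genuine gap at the resonant term $R(f,\Lambda^s g)$, and it sits precisely where the $L^\infty$ endpoint of the Kato--Ponce estimate is hard. Write $R(f,\Lambda^s g)=\sum_k \Delta_k f\,\tilde{\Delta}_k(\Lambda^s g)$ with $\tilde{\Delta}_k=\sum_{|l-k|\le 2}\Delta_l$. Each block is bounded in $L^2$ by $2^{ks}\|\Delta_k f\|_{L^2}\|g\|_{L^\infty}=c_k\|f\|_{H^s}\|g\|_{L^\infty}$ with $(c_k)\in\ell^2$, but its Fourier support is a \emph{ball} $\{|\xi|\le C2^k\}$, not an annulus, so there is no almost-orthogonality in $k$; applying $\Delta_j$ to the sum gives $\|\Delta_j R(f,\Lambda^s g)\|_{L^2}\lesssim\sum_{k\ge j-C}c_k\,\|f\|_{H^s}\|g\|_{L^\infty}$, a discrete convolution with the kernel $\mathbf{1}_{\{m\le C\}}$, which is not in $\ell^1$. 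The hypothesis $s>0$ does not rescue you here: the geometric gain $2^{(j-k)s}$ appears only when $\Lambda^s$ weights the (low) output frequency $2^j$, i.e.\ for $\Lambda^s R(f,g)$; in $R(f,\Lambda^s g)$ the operator sits on the (high) inner frequency $2^k$ and produces $2^{ks}$, leaving nothing with which to sum the ball-supported blocks. Transferring $\Lambda^s$ onto $f$ ``at the cost of a bounded multiplier'' merely converts the term into the resonant paraproduct $\sum_k\Delta_k(\Lambda^s f)\cdot\tilde{\Delta}_k g$ with $\Lambda^s f\in L^2$ and $g\in L^\infty$, and naive blockwise summation then only yields the estimate under the stronger hypothesis $f\in B^s_{2,1}$. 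Closing this term on $L^2\times L^\infty$ requires the Coifman--Meyer multiplier theorem, or equivalently a Carleson-measure/maximal-function argument for the paraproduct with the $L^\infty$ factor in the high-frequency slot; this is the additional harmonic-analysis input your sketch omits, and it is in fact how Kato and Ponce themselves proceed (they reduce the whole commutator to Coifman--Meyer symbols after a Taylor expansion in the region where $f$ carries the low frequency).

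Two smaller remarks. First, the paper does not prove this theorem: it is quoted from Kato--Ponce and from Taylor's books, so there is no in-paper argument to compare yours with. Second, your treatment of the remaining pieces is correct in outline: $[\Lambda^s,T_f]g$ via the first-order symbol expansion gives $\|\nabla f\|_{L^\infty}\|g\|_{H^{s-1}}$, the paraproducts $\Lambda^s(T_g f)$ and $T_{\Lambda^s g}f$ have annular frequency supports and sum in $\ell^2$ (with $s>0$ needed for $\|S_{k-3}\Lambda^s g\|_{L^\infty}\lesssim 2^{ks}\|g\|_{L^\infty}$), and $\Lambda^s R(f,g)$ does admit the $2^{(j-k)s}$ gain and Young's inequality.
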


This estimate will be used term wise in the inner product
(\ref{eq:energy-estimates:4}).

\begin{proof}[of Lemma \ref{lem:energy-estimates:1}]
  \label{sec:energy-estim-fract-1}
  Since $u$ is $C_0^\infty$ we may interchange the derivation with
  respect to $t$ with the inner-product (\ref{eq:energy-estimates:4})
  and get
  \begin{eqnarray}
\nonumber
    \frac{d}{dt}  \left\langle u,u
    \right\rangle_{s,\delta,A^0} &=& 2 \left\langle u,\partial_t u
    \right\rangle_{s,\delta,A^0} \\
    \nonumber  &+&\sum_{j=0}^\infty 2^{( \frac{3}{2} + \delta)2j}
    \int \left[\Lambda^s\left((\psi_j^2
        u)_{(2^j)}\right)\right]^T\left(\partial_t A^0\right)_{2^j}
    \left[\Lambda^s\left((\psi_j^2
        u)_{(2^j)}\right)\right]dx \\
    \nonumber
    &\leq&
    2 \left\langle u,\partial_t u
    \right\rangle_{s,\delta,A^0}+\|\partial_t A^0\|_{L^\infty}\left(
      \sum_{j=0}^\infty 2^{( \frac{3}{2} + \delta)2j}
      \|(\psi_j^2
      u)_{(2^j)}\|_{H^s}^2\right)\\ \label{eq:energy-estimates:10} &=&
    2\left\langle u,\partial_t u
    \right\rangle_{s,\delta,A^0}+\|\partial_t A^0\|_{L^\infty}
    \|u\|_{H_{s,\delta}}^2
  \end{eqnarray}

  We turn now to the hard task of the proof, namely, the estimation of
  $\left\langle u,\partial_t u \right\rangle_{s,\delta,A^0}$. Put
  \begin{equation}
    \label{eq:energy-estimates:11} E(j)=
    \left\langle\Lambda^s\left(\left(\psi_j^2u\right)_{2^j}\right),
      \left((A^0)_{2^j}\right)
      \Lambda^s\left(\left(\psi_j^2\partial_t u\right)_{2^j}\right)
    \right\rangle_{L^2}
  \end{equation}
  and let $\{\Psi_k\}=\frac{1}{\sum \psi_j(x)}\psi_k(x)$,
where $\{\psi_j\}$  is defined in  Appendix
  \ref{sec:constr-spac-h_s}. It follows from the properties of this
sequence that
  \begin{equation}
    \label{eq:energy-estimates:12} \Psi_k\psi^2_j\not=0\qquad \text{
      only when } k=j-3,...,j+4.
  \end{equation}
  Hence,
  \begin{eqnarray*} E(j) &=&
\left\langle\Lambda^s\left(\left(\psi_j^2u\right)_{2^j}\right),
\left((A^0)_{2^j}\right)
\Lambda^s\left(\left(\sum_{k=0}^\infty\Psi_k\right)_{2^j}
\left(\psi_j^2\partial_t u\right)_{2^j}\right)\right\rangle_{L^2} \\ &
= &\sum_{k=j-3}^{j+4}\left\langle
\Lambda^s\left(\left(\psi_j^2u\right)_{2^j}\right),
\left((A^0)_{2^j}\right) \Lambda^s\left(\left(\Psi_k\right)_{2^j}
\left(\psi_j^2\partial_t u\right)_{2^j}\right)\right\rangle_{L^2} \\ &
= &\sum_{k=j-3}^{j+4}\left\langle\Lambda^s
\left(\left(\psi_j^2u\right)_{2^j}\right), (A^0)_{2^j}
\left[\Lambda^s\left(\left(\Psi_k\right)_{2^j}
\left(\psi_j^2\partial_t u\right)_{2^j}\right)
-\left(\Psi_k\right)_{2^j} \Lambda^s\left(\psi_j^2\partial_t
u\right)_{2^j}\right]\right\rangle_{L^2} \\ & + &
\sum_{k=j-3}^{j+4}\left\langle\Lambda^s\left(\left(\psi_j^2u\right)_{
2^j} \right), (\Psi_kA^0)_{2^j} \Lambda^s\left(\psi_j^2\partial_t
u\right)_{2^j}\right\rangle_{L^2}\\ \\ &= &E_{1}(j,k)+E_{2}(j,k).
\end{eqnarray*} This splitting will enable us to estimate $
E_{2}(j,k)$ in terms of the $H_{s,\delta}$ norm of $A^0-I$ while by
Theorem \ref{thm:energy-estimates:1}, 
\begin{eqnarray} 
\nonumber &
&|E_{1}(j,k)|\\ \nonumber
&\leq&\left\|\Lambda^s\left(\left(\psi_j^2u\right)_{2^j}
\right)\right\|_{L^2} \left\|A^0\right\|_{L^\infty}\left\|
\Lambda^s\left(\left(\Psi_k\right)_{2^j}\left(\psi_j^2\partial_t
u\right)_{2^j}\right)
-\left(\Psi_k\right)_{2^j}\Lambda^s\left(\psi_j^2\partial_t
u\right)_{2^j} \right\|_{L^2} \\ \nonumber
&\leq&\left\|\left(\psi_j^2u\right)_{2^j}\right\|_{H^s}
\left\|A^0\right\|_{L^\infty}\left\{\left\|
\nabla\left(\Psi_k\right)_{2^j}\right\|_{L^\infty}\left\|
\left(\psi_j^2\partial_t u\right)_{2^j}\right\|_{H^{s-1}} +
\left\|\left(\Psi_k\right)_{2^j}\right\|_{H^s}
\left\|\left(\psi_j^2\partial_t u\right)_{2^j}
\right\|_{L^\infty}\right\} \\ & \leq& C
\left\|A^0\right\|_{L^\infty}\left(
\left\|\left(\psi_j^2u\right)_{2^j}\right\|_{H^s}
\left\|\left(\psi_j^2\partial_t
u\right)_{2^j}\right\|_{H^{s-1}}\right). \end{eqnarray}

  In the last step above we have used the below useful estimates.
  First, by (\ref{eq:const:4}) and (\ref{eq:energy-estimates:12}),
  \begin{equation}
    \label{eq:energy-estimates:13}
    \left\|\nabla\left(\Psi_k\right)_{2^j}\right\|_{L^\infty}=2^j\left\|
      \nabla\Psi_k\right\|_{L^\infty}\leq C2^j 2^{-k}\leq  8 C.
  \end{equation}
  Secondly, it is well known that for any smooth function $f$,
\begin{equation}
 \label{eq:multiplication:1} 
\|fu\|_{H^s}\leq C(\|f\|_{C^N})\|u\|_{H^s},
\end{equation} 
where the integer $N$ is not less than $s$. In addition, 
 from (\ref{eq:const:9}) we see that
  \begin{equation}
    \label{eq:energy-estimates:26}
    \|f_\epsilon\|_{H^s}^2\lesssim\left\{\begin{array}{cc}
        \epsilon^{-3} \|f\|_{H^s}^2, & \epsilon\leq 1 \\
        \epsilon^{2s-3} \|f\|_{H^s}^2, & \epsilon  \geq  1 \\
      \end{array}\right..
  \end{equation}
  Recalling that $\psi_k(x)=\psi_1(2^{-k}x)$ and
  $\left(\psi_k(x)\right)_{2^j}=\left(\psi_1(x)\right)_{2^{j-k}}$,
  applying the above and combing this with
  (\ref{eq:energy-estimates:12}) and (\ref{eq:multiplication:1}), we have
  \begin{eqnarray}\nonumber
    \left\|\left(\Psi_k\right)_{2^j}\right\|_{H^s}&=& \left\|
      \left(\sum_j
        \psi_j\right)_{2^j}^{-1}\left(\psi_k\right)_{2^j}\right\|_{H^s}\leq
    C\left\|
      \left(\psi_k\right)_{2^j}\right\|_{H^s}\\
    \label{eq:energy-estimates:14}&=&C\left\|
      \left(\psi_1\right)_{2^(j-k)}\right\|_{H^s}\leq C
    2^{(s-\frac{3}{2})3}\left\|\psi_1\right\|_{H^s}.
  \end{eqnarray}

  Finally, by the Sobolev embedding
  \begin{equation}
    \label{eq:energy-estimates:15} \left\|v\right\|_{L^\infty}\leq C
    \left\|v\right\|_{H^{s}},
  \end{equation}
  we obtain \begin{math} \left\|\left(\psi_j^2 \partial_t
        u\right)_{2^j}\right\|_{L^\infty}\leq C \left\|\left(\psi_j^2
        \partial_t u\right)_{2^j}\right\|_{H^{s-1}}
  \end{math}.

  In order to use equation (\ref{eq:energy-estimates:7}) we split
  $E_{2}(j,k)$ as follows:
  \begin{eqnarray*}
    E_{2}(j,k)
    &=&
    \left\langle\Lambda^s\left(\left(\psi_j^2u\right)_{2^j}\right),
      \left((\Psi_k A^0)_{2^j}\right) \Lambda^s\left(
        \left(\psi_j^2\partial_t u\right)_{2^j}\right)\right\rangle_{L^2} \\
    & = &\left\langle\Lambda^s\left(\left(\psi_j^2u\right)_{2^j}\right),
      \left[\left(\Psi_k
          A^0\right)_{2^j}\Lambda^s\left(\left(\psi_j^2\partial_t
            u\right)_{2^j}\right)
        -\Lambda^s\left(\left(\Psi_kA^0\right)_{2^j}
          \left(\psi_j^2\partial_t u\right)_{2^j}\right)\right]\right\rangle_{L^2}
    \\
    & + &\left\langle\Lambda^s\left(\left(\psi_j^2u\right)_{2^j}\right),
      \Lambda^s\left(\left(\Psi_kA^0\right)_{2^j}
        \left(\psi_j^2\partial_t u\right)_{2^j}\right)\right\rangle_{L^2}
    \\
    &= &E_{3}(j,k)+E_{4}(j,k).
  \end{eqnarray*}

  In the estimation of the first term $E_{3}(j,k)$, the Kato-Ponce
  commutator estimate (\ref{eq:energy-estimates:9}) is being used
  again:
  \begin{eqnarray*}
    & &|E_{3}(j,k)|\\
    &\leq&C\left\|\left(\psi_j^2u\right)_{2^j}\right\|_{H^s}
    \left\{\left\|
        \nabla\left(\Psi_kA^0\right)_{2^j}\right\|_{L^\infty}\left\|
        \left(\psi_j^2\partial_t u\right)_{2^j}\right\|_{H^{s-1}}+
      \left\|\left(\Psi_kA^0\right)_{2^j}\right\|_{H^s}
      \left\|\left(\psi_j^2\partial_t u\right)_{2^j}
      \right\|_{L^\infty}\right\}.
  \end{eqnarray*}

  From (\ref{eq:energy-estimates:13}) and the embedding
  (\ref{eq:energy-estimates:15}), we have
  \begin{eqnarray*}
    \left\|
      \nabla\left(\Psi_kA^0\right)_{2^j}\right\|_{L^\infty}&=& 2^j
    \left\|\left(\nabla\left(\Psi_kA^0-I\right)\right)_{2^j}\right\|_{L^\infty}
    +2^j\left\|  \nabla(\Psi_k)_{2^j}\right\|_{L^\infty}\\
    & \leq&
    C\left\{ 2^j \left\|
        \left(\nabla\Psi_k\left(A^0-I\right)\right)_{2^j}\right\|_{H^{s-1}}+
      1\right\}
  \end{eqnarray*}
  and from (\ref{eq:energy-estimates:14})
  \begin{eqnarray*}
    \left\| \left(\Psi_kA^0\right)_{2^j}\right\|_{H^s}&\leq&
    \left\|
      \left(\Psi_k\left(A^0-I\right)\right)_{2^j}\right\|_{H^s} +\left\|
      \nabla\left(\Psi_k\right)_{2^j}\right\|_{H^s}\leq \left\|
      \left(\Psi_k\left(A^0-I\right)\right)_{2^j}\right\|_{H^s} + C.
  \end{eqnarray*}
  Thus
  \begin{eqnarray}\nonumber
    & & |E_{3}(j,k)|\\ \nonumber
    &\leq&C\left\|\left(\psi_j^2u\right)_{2^j}\right\|_{H^s}\quad\left\|
      \left(\psi_j^2\partial_t u\right)_{2^j}\right\|_{H^{s-1}}
    \quad\left\{2^j\left\|
        \left(\nabla\Psi_k\left(A^0-I\right)\right)_{2^j}
      \right\|_{H^{s-1}}+1\right\}  
    \\  \nonumber & + &
    C\left\|\left(\psi_j^2u\right)_{2^j}\right\|_{H^s}
    \left\|\left(\psi_j^2\partial_t u\right)_{2^j}\right\|_{L^\infty}\left\{
      \left\|\left(\Psi_k\left(A^0-I\right)\right)_{2^j}\right\|_{H^s}+1\right\}
    \\    &\leq&
    C\left\|\left(\psi_j^2u\right)_{2^j}\right\|_{H^s}\quad\left\|
      \left(\psi_j^2\partial_t u\right)_{2^j}\right\|_{H^{s-1}}\left\{ 2^j\left\|
        \left(\nabla\Psi_k\left(A^0-I
          \right)\right)_{2^j}\right\|_{H^{s-1}}+1\right\} 
    \\ \nonumber & + &
    C\left\|\left(\psi_j^2u\right)_{2^j}\right\|_{H^s}
    \left\|\left(\Psi_k\left(A^0-I\right)\right)_{2^j}\right\|_{H^s}
    \left\|\partial_t u\right\|_{L^\infty}
    \\   \nonumber  & + &
    C\left\|\left(\psi_j^2u\right)_{2^j}\right\|_{H^s}
    \left\|\left(\psi_j^2\partial_t u\right)_{2^j}\right\|_{H^{s-1}}.\nonumber
  \end{eqnarray}
  Now equation (\ref{eq:energy-estimates:7}) is being utilized and
  \begin{eqnarray}\nonumber
    E_{4}(j,k)
    &=&
    \left\langle\Lambda^s\left(\left(\psi_j^2u\right)_{2^j}\right),
      \Lambda^s\left(
        \left(\Psi_k\psi_j^2\right)_{2^j}
        \left(A^0\partial_t u\right)\right)_{2^j}\right\rangle_{L^2} \\
    \nonumber &=&
    \left\langle\Lambda^s\left(\left(\psi_j^2u\right)_{2^j}\right),
      \Lambda^s\left(
        \left(\Psi_k\psi_j^2\right)_{2^j}
        \left(\sum_{a=1}^3A^a\partial_au\right)_{2^j}\right)
    \right\rangle_{L^2} \\
    \label{eq:energy-estimates:16} &=&\sum_{a=1}^3
    \left\langle\Lambda^s\left(\left(\psi_j^2u\right)_{2^j}\right),
      \Lambda^s\left(
        \left(\Psi_kA^a\right)_{2^j}\left(\psi_j^2\partial_au
        \right)_{2^j}\right)\right\rangle_{L^2}\\ 
    \nonumber
    & = &
    \sum_{a=1}^3\left\langle\Lambda^s
      \left(\left(\psi_j^2u\right)_{2^j}\right), 
      \left[\Lambda^s\left(
          \left(\Psi_kA^a\right)_{2^j}
          \left(\psi_j^2\partial_au\right)_{2^j}\right)
        -
        \left(\Psi_kA^a\right)_{2^j}
        \Lambda^s\left(\left(\psi_j^2\partial_au\right)_{2^j}
        \right)\right]\right\rangle_{L^2}
    \\\nonumber
    & + &
    \sum_{a=1}^3\left\langle\Lambda^s\left(\left(\psi_j^2u\right)_{2^j}\right),
      \left[\left(\Psi_kA^a\right)_{2^j}
        \Lambda^s\left(\left(\psi_j^2\partial_au\right)_{2^j}
\right)\right]\right\rangle_{L^2}\\
    \nonumber  &= &E_{5}(j,k,a)+E_{6}(j,k,a).
  \end{eqnarray}
  Again, by Kato-Ponce commutator estimate
  (\ref{eq:energy-estimates:9}),
  \begin{eqnarray}\nonumber
    & & |E_{5}(j,k,a)|\\ \nonumber
    &\leq&C\left\|\left(\psi_j^2u\right)_{2^j}\right\|_{H^s}
    \left\{\left\|
        \nabla\left(\Psi_kA^a\right)_{2^j}\right\|_{L^\infty}\left\|
        \left(\psi_j^2\partial_a u\right)_{2^j}\right\|_{H^{s-1}}
      +
      \left\|\left(\Psi_kA^a\right)_{2^j}\right\|_{H^s}
      \left\|\left(\psi_j^2\partial_au\right)_{2^j}
      \right\|_{L^\infty}\right\}
    \\  & \leq& 
    C \left\|\left(\psi_j^2u\right)_{2^j}\right\|_{H^s}
    \left\{\left\|
        \nabla A^a\right\|_{L^\infty} +\left\|
        A^a\right\|_{L^\infty}\right\}2^j\left\|
      \left(\psi_j^2\partial_a u\right)_{2^j}\right\|_{H^{s-1}}\\
    & +& C \left\|\left(\psi_j^2u\right)_{2^j}\right\|_{H^s}
    \left\|\left(\Psi_kA^a\right)_{2^j}\right\|_{H^s}\quad\left\|
      \partial_a u\right\|_{L^\infty}.\nonumber
  \end{eqnarray}  
Using the commutation
$\partial_a\Lambda^s=\Lambda^s\partial_a$ and the fact that $\Lambda^s
\left(\psi_j^2u
  \right)$ is rapidly decreasing, we see that
  \begin{eqnarray*}
    0 &=&  \int \partial_a \left\{
      \left[ \Lambda^s  \left( \left(\psi_j^2u
          \right)_{2^j}\right)\right]^T \left(\Psi_k A^a\right)_{2^j}\left[
        \Lambda^s \left(\left( \psi_j^2u \right)_{2^j} \right)
      \right]\right\} dx \\& =& 2^j\int\left\{ \left[ \Lambda^s  \left(
          \left(\psi_j^2\partial_a u \right)_{2^j}\right)\right]^T \left(\Psi_k
        A^a\right)_{2^j}\left[ \Lambda^s \left(\left( \psi_j^2u
          \right)_{2^j} \right) \right]\right\} dx \\& +& 2^j\int\left\{
      \left[ \Lambda^s  \left( \left(\psi_j^2 u
          \right)_{2^j}\right)\right]^T \left(\Psi_k A^a\right)_{2^j}\left[
        \Lambda^s \left(\left( \psi_j^2\partial_au \right)_{2^j} \right)
      \right]\right\} dx\\& +& 2^j2\int\left\{ \left[ \Lambda^s  \left(
          \left(\left(\partial_a\psi_j\right)\psi_j u \right)_{2^j}\right)\right]^T
      \left(\Psi_k A^a\right)_{2^j}\left[ \Lambda^s \left(\left(
            \psi_j^2u \right)_{2^j} \right) \right]\right\} dx\\& +&
    2^j2\int\left\{ \left[ \Lambda^s  \left( \left(\psi_j^2 u
          \right)_{2^j}\right)\right]^T \left(\Psi_k A^a\right)_{2^j}\left[
        \Lambda^s \left(\left(
            (\partial_a\psi_j)\psi_ju \right)_{2^j} \right) \right]\right\} dx
    \\& +& 2^j\int\left\{ \left[ \Lambda^s  \left(
          \left(\psi_j^2 u \right)_{2^j}\right)\right]^T
      \left(\partial_a\left(\Psi_k A^a\right)\right)_{2^j}\left[ \Lambda^s
        \left(\left( \psi_j^2u \right)_{2^j} \right) \right]\right\} dx.
  \end{eqnarray*}
Now  $E_{6}(j,k,l)$ is equal to the
second term, but since $A^a$ is a
symmetric matrix, the first and the second terms are equal and also
the third and the forth. Hence by (\ref{eq:multiplication:1}) and
Cauchy Schwarz inequality,

  \begin{eqnarray*}
    |2E_{6}(j,k,a)|
    &\leq&2\left\|\left(\Psi_kA^a\right)_{2^j}\right\|_{L^\infty}
    \left\|\left(\partial_a\psi_j\psi_j u\right)_{2^j}\right\|_{H^s}\quad\left\|
      \left(\psi_j^2 u\right)_{2^j}\right\|_{H^{s}}
    \\&+&
    \left\|\left(\partial_a\left(\Psi_kA^a\right)\right)_{2^j}
    \right\|_{L^\infty}
    \left\| \left(\psi_j^2 u\right)_{2^j}\right\|_{H^{s}}^2\\
    &\leq&C\left\|A^a\right\|_{L^\infty}
    \left\|\left(\psi_ju\right)_{2^j}\right\|_{H^s}\quad\left\|
      \left(\psi_j^2 u\right)_{2^j}\right\|_{H^{s}}
    \\&+&
    \left\{\left\|\partial_aA^a\right\|_{L^\infty}+C\left\|A^a
      \right\|_{L^\infty}\right\}
    \left\| \left(\psi_j^2 u\right)_{2^j}\right\|_{H^{s}}^2.
  \end{eqnarray*}

  Taking the sum $\sum 2^{(\frac{3}{2}+\delta)2j} E(j)$ we are coming
  across three types of summations:

  \begin{enumerate}
    \item Given $v\in H_{s_1,\delta}$, $w\in H_{s_2,\delta}$ and
    $\gamma_i$ equals $1$ or $2$, then
    \begin{eqnarray*}
      & &\sum_{j=0}^\infty2^{(\frac{3}{2}
        +\delta)2j}\left\|(\psi_j^{\gamma_1}v)_{2^j}\right\|_{H^{s_1}}
      \left\|(\psi_j^{\gamma_2}w)_{2^j}\right\|_{H^{s_2}}\\&\leq&
      \frac{1}{2}\left(\sum_{j=0}^\infty2^{(
          \frac{3}{2}+\delta)2j}\left\|(\psi_j^{\gamma_1}v)_{2^j}
        \right\|_{H^{s_1}}^2+
        2^{(\frac{3}{2} +
          \delta)2j}\left\|(\psi_j^{\gamma_2}w)_{2^j}\right\|_{H^{s_2}}^2\right)\\
      &\leq& C\left(
        \|v\|_{H_{s_1,\delta}}^2+\|w\|_{H_{s_2,\delta}}^2\right),
    \end{eqnarray*}
    where in the last inequality the equivalence of the norms, (see
    Corollary \ref{cor:app11}), was involved.

    \item Given $v\in H_{s,\delta}$ and $w\in H_{s,\delta}$, then from
    the scaling property (\ref{eq:energy-estimates:26}) and
    (\ref{eq:multiplication:1}) we have 
    \begin{eqnarray*}
      & &\sum_{j=0}^\infty\sum_{k=j-3}^{j+4}
      2^{(\frac{3}{2}+\delta)2j}\left\|(\psi_j^2v)_{2^j}\right\|_{H^{s}}
      \left\|(\Psi_kw)_{2^j}\right\|_{H^{s}}\\&\leq&\frac{1}{2}
      \sum_{j=0}^\infty \sum_{k=j-3}^{j+4} 2^{(\frac{3}{2}+\delta)2j}
      \left\|(\psi_j^{2}v)_{2^j}\right\|_{H^{s}}^2+
      \frac{1}{2}\sum_{j=0}^\infty
      \sum_{k=j-3}^{j+4} 2^{(\frac{3}{2}+\delta)2j}\left\|(\Psi_k
        w)_{2^j}
      \right\|_{H^{s}}^2\\
      &\leq&\frac{7}{2} \|v\|_{H_{s,\delta}}^2+ C \sum_{j=0}^\infty
      \sum_{k=j-3}^{j+4} 2^{(\frac{3}{2}+\delta)2j}\left\|(\Psi_k
        w)_{2^k}\right\|_{H^{s}}^2\\&\leq &\frac{7}{2}
      \|v\|_{H_{s,\delta}}^2+ C \sum_{j=0}^\infty \sum_{k=j-3}^{j+4}
      2^{(\frac{3}{2}+\delta)2k}\left\|(\psi_k
        w)_{2^k}\right\|_{H^{s}}^2\\&\leq &
      C\left(
        7\|v\|_{H_{s,\delta}}^2+7\|w\|_{H_{s,\delta}}^2\right).
    \end{eqnarray*}
    \item Given $v\in H_{s_1,\delta}$, $w\in H_{s_2,\delta}$, $z\in
    H_{s_3,\delta}$ and $\gamma_i$ equals $1$ or $2$, then by H\"older
    inequality, Corollary \ref{cor:const:1} and the same arguments as
in type 2, we get
    \begin{eqnarray*}
      & &\sum_{j=0}^\infty\sum_{k=j-3}^{j+4}2^{(\frac{3}{2} +
        \delta)2j}\left\|(\psi_j^{\gamma_1}v)_{2^j}\right\|_{H^{s_1}}
      \left\|(\psi_j^{\gamma_2}w)_{2^j}\right\|_{H^{s_2}}2^j
      \left\|\left(\nabla
          \left(\Psi_k z\right)\right)_{2^j}\right\|_{H^{s_3-1}}\\
      &\leq& \sum_{j=0}^\infty\sum_{k=j-3}^{j+4}2^{(\frac{3}{2} +
        \delta)j}\left\|(\psi_j^{\gamma_1}v)_{2^j}\right\|_{H^{s_1}} 
      2^{(\frac{3}{2} + \delta)j}
      \left\|(\psi_j^{\gamma_2}w)_{2^j}\right\|_{H^{s_2}}2^{(\frac{3}{2}
        +\delta+1)j} \left\|\left(\nabla \left(\Psi_k
            z\right)\right)_{2^j}\right\|_{H^{s_3-1}}\\ &\leq&
      \left(\left(\sum\limits_{j=0}^{\infty} \sum\limits_{k=j-3}^{j+4}
          \left(2^{\left(\frac{3}{2}+\delta \right)2j}
            \left
              \|\left(\psi_j^{\gamma_1}v\right)_{2^j}
            \right\|_{H^{s_1}}^2\right)^2\right)
        ^{\frac{1}{2}}\right)^{\frac{1}{2}}\\
      & \times& \left(\left(\sum\limits_{j=0}^{\infty}
          \sum\limits_{k=j-3}^{j+4}
          \left(2^{\left(\frac{3}{2}+\delta \right)2j}
            \left
              \|\left(\psi_j^{\gamma_2}w\right)_{2^j}
            \right\|_{H^{s_2}}^2\right)^2\right)
        ^{\frac{1}{2}}\right)^{\frac{1}{2}}
      \\
      & \times & C\left(\sum\limits_{j=0}^{\infty}
        \sum\limits_{k=j-3}^{j+4}
        2^{\left(\frac{3}{2}+\delta + 1\right)2j}
        \left\|\left(\nabla ({\psi_{k}}z
            )\right)_{2^{k}}\right\|_{H^{s_3-1}}^2\right) ^{\frac{1}{2}}\\
      &\leq& \left(\sum\limits_{j=0}^{\infty}
        \sum\limits_{k=j-3}^{j+4} 2^{\left(\frac{3}{2}+\delta
          \right)2j} \left
          \|\left(\psi_j^{\gamma_1}v\right)_{2^j}\right\|_{H^{s_1}}^2
      \right)^{\frac{1}{2}}\\
      & \times& \left(\sum\limits_{j=0}^{\infty}
        \sum\limits_{k=j-3}^{j+4} 2^{\left(\frac{3}{2}+\delta
          \right)2j} \left \|\left(\psi_j^{\gamma_2}
            w\right)_{2^j}\right\|_{H^{s_2}}^2\right)^{\frac{1}{2}} \\
      & \times & C\left(\sum\limits_{j=0}^{\infty}
        \sum\limits_{k=j-3}^{j+4} 2^{\left(\frac{3}{2}+\delta +
            1\right)2k} \left\|\left(\nabla ({\psi_{k}}z
            )\right)_{2^{k}}\right\|_{H^{s_3-1}}^2\right)
      ^{\frac{1}{2}}\\
      &\leq& C \|v\|_{H_{s_1,\delta}}\quad\|w\|_{H_{s_2,\delta}}\quad
      \|\nabla z\|_{H_{s_3-1,\delta+1}}\\
      &\leq& C
      \|v\|_{H_{s_1,\delta}}\quad\|w\|_{H_{s_2,\delta}}\quad
      \|z\|_{H_{s_3,\delta}}\\ 
      &\leq&C \left(\|v\|_{H_{s_1,\delta}}^2
        +\left(\|w\|_{H_{s_2,\delta}}
          \|z\|_{H_{s_3,\delta}}\right)^2\right).
    \end{eqnarray*}

  \end{enumerate}
  Applying these three types of inequalities we have,
  \begin{eqnarray}
    \label{eq:energy-estimates:17}
    \sum_{j=0}^\infty\sum_{k=j-3}^{j+4}2^{(\frac{3}{2}+\delta)2j}
    |2E_{6}(j,k,a)|\leq C
    \left(\left\|A^a\right\|_{L^\infty}+\left\|\partial_a
        A^a\right\|_{L^\infty}\right)\left\|u\right\|_{H_{s,\delta}}^2,
  \end{eqnarray}
  \begin{eqnarray}\nonumber
    & &
    \sum_{j=0}^\infty\sum_{k=j-3}^{j+4}2^{(\frac{3}{2}+\delta)2j}
    |E_{5}(j,k,a)|  \\ \nonumber  & \leq  & C\left(\left\|\nabla
        A^a\right\|_{L^\infty}+\left\|
        A^a\right\|_{L^\infty}\right)\left\{\left\|u\right\|_{H_{s,\delta}}^2
      + \left\|\partial_a u\right\|_{H_{s-1,\delta+1}}^2\right\}+ C
\left\{\left\|u\right\|_{H_{s,\delta}}^2
      + \left\|A^a\right\|_{H_{s,\delta}}^2\left\|\partial_au
      \right\|_{L^\infty}^2\right\}\\ \nonumber &\leq & C
\left(\left\|\nabla
        A^a\right\|_{L^\infty}+\left\|
        A^a\right\|_{L^\infty}\right)\left\{\left\|u\right\|_{H_{s,\delta}}^2
      +\left\|u\right\|_{H_{s,\delta}}^2\right\}+
    C\left\{\left\|u\right\|_{H_{s,\delta}}^2
      +\left\|A^a\right\|_{H_{s,\delta}}^2\left\|\partial_au
      \right\|_{H_{s-1,\delta+1}}^2\right\}
   \\
    \label{eq:energy-estimates:18}  &\leq& C
    \left\{2\left\|\nabla
        A^a\right\|_{L^\infty}+2\left\|
        A^a\right\|_{L^\infty}+\left\|A^a\right\|_{H_{s,\delta}}^2+1
    \right\}\left\|u\right\|_{H_{s,\delta}}^2, 
  \end{eqnarray}
  here we have applied Corollary \ref{cor:const:1} and Theorem
  \ref{thr:Appendix:2} to $\partial_a u$. Applying again Theorem
  \ref{thr:Appendix:2} to $\|\partial_t u\|_{L^\infty}$ we have
  \begin{eqnarray}\nonumber
    & & \sum_{j=0}^\infty\sum_{k=j-3}^{j+4}2^{(\frac{3}{2}+\delta)2j}
    |E_{3}(j,k)|\nonumber \\ &\leq & C
    \left\{\left\|u\right\|_{H_{s,\delta}}^2 +\left\|
        \partial_t u\right\|_{H_{s-1,\delta}}^2
      \left\|\nabla\left(A^0-I\right)\right\|_{H_{s-1,\delta+1}}^2\right\}
    + 2C\left\{\left\|u\right\|_{H_{s,\delta}}^2
      +\left\| \partial_t u\right\|_{H_{s-1,\delta}}^2\right\} \nonumber \\
    &+& C \left\{\left\|u\right\|_{H_{s,\delta}}^2 +
      \left\|\left(A^0-I\right)\right\|_{H_{s,\delta}}^2\left\|
        \partial_t u\right\|_{L^\infty}^2\right\}\\
    \label{eq:energy-estimates:19}
    & \leq & 2C\left\{\left\|u\right\|_{H_{s,\delta}}^2+\left\|
        \partial_t u\right\|_{H_{s-1,\delta}}^2\left(1+
        \left\|A^0-I\right\|_{H_{s,\delta}}^2\right)\right\}\nonumber
  \end{eqnarray}
  and finally
  \begin{eqnarray}
    \label{eq:energy-estimates:20}
    \sum_{j=0}^\infty2^{(\frac{3}{2}+\delta)2j} |E_{1}(j)| \leq C
    \left\| A^0\right\|_{L^\infty} 
    \left\{\left\|u\right\|_{H_{s,\delta}}^2 +\left\|
        \partial_t u\right\|_{H_{s-1,\delta}}^2\right\}.
  \end{eqnarray}
  Recalling that
  \begin{equation*}
    \langle
    u,\partial_t u\rangle_{s,\delta,A^0} =
    \sum_{j=0}^\infty2^{(\frac{3}{2}+\delta)2j}E(j) 
    =\sum_{j=0}^\infty2^{(\frac{3}{2}+\delta)2j}
    \left\langle\Lambda^s\left(\left(\psi_j^2u\right)_{2^j}\right),
      \left((A^0)_{2^j}\right)
      \Lambda^s\left(\left(\psi_j^2\partial_t u\right)_{2^j}\right)
    \right\rangle_{L^2},
  \end{equation*}
  then inequalities (\ref{eq:energy-estimates:17}),
  (\ref{eq:energy-estimates:18}), (\ref{eq:energy-estimates:19}) and
  (\ref{eq:energy-estimates:20}) imply that

  \begin{equation*}
    \langle u,\partial_t u\rangle_{s,\delta,A^0}\leq
    C\left(\|A^\alpha\|_{L^\infty},\|\nabla
      A^a\|_{L^\infty},\|A^a\|_{H_{s,\delta}},
      \|A^0-I\|_{H_{s,\delta}},\|\partial_t u\|_{H_{s-1,\delta}}\right)
    \left\{\|u\|_{H_{s,\delta}}^2+1\right\}.
  \end{equation*}

  Since $s>\frac{5}{2}$ and $\delta\geq-\frac{3}{2}$ we can use
  Theorem \ref{thr:Appendix:2} (of the Appendix
  \ref{sec:some-properties-h_s}) and bound the norms
  $\|A^\alpha\|_{L^\infty}$ and $\|\nabla A^\alpha\|_{L^\infty}$ by 
  the norms $\|A^0-I\|_{H_{s,\delta}}$ and $\|A^a\|_{H_{s,\delta}}$.
  Thus, combining these bounds with above inequality and inequality
  (\ref{eq:energy-estimates:10}), we have obtained
  \begin{equation}
    \label{eq:energy-estimates:21} \frac{d }{dt} \left\langle u(t),u(t)
    \right\rangle_{s,\delta,A^0}\leq C \left(
      \left\|u(t)\right\|_{H_{s,\delta}}^2+1\right),
  \end{equation}

  where
  $C=C(\|A^a\|_{H_{s,\delta}},\|A^0-I\|_{H_{s,\delta}},\|\partial_t
  u\|_{H_{s-1,\delta}},\|\partial_t A^0\|_{L^\infty})$.  Inserting the
  equivalence of norms $\left\|u\right\|_{H_{s,\delta}}^2\leq \mu
  \left\|u\right\|_{H_{s,\delta,A^0}}^2$ in
  (\ref{eq:energy-estimates:21}), we obtain
  (\ref{eq:energy-estimates:8}) which completes the proof of Lemma
  \ref{lem:energy-estimates:1}.  \BeweisEnde
\end{proof}

We may extend the energy estimate (\ref{eq:energy-estimates:8}) to a
non-homogeneous symmetric hyperbolic systems.
\begin{lem}[An energy estimate]
  \label{lem:energy-estimates:2}
  Let $s>\frac{5}{2}$, $\delta \geq -\frac{3}{2}$,
  $A^\alpha=A^\alpha(t,x)$ be $m\times m$ symmetric matrices such that
  $(A^0(t,\cdot)-I), A^a(t,\cdot)\in H_{s,\delta}$ and $A^0$ satisfies
  (\ref{eq:energy-estimates:3}).  Let $B(t,\cdot)$, $F(t,\cdot)\in
  H_{s,\delta}$. If $u(t,\cdot)$ is a $C^{\infty}_0$ solution of the
  linear hyperbolic system
  \begin{equation}
    \label{eq:energy-estimates:22} A^0(t,x)\partial_t u= \sum_{a=1}^3
    A^a(t,x)\partial_a u +B(t,x)u+F(t,x),
  \end{equation}
  then
  \begin{equation}
    \label{eq:energy-estimates:23}
    \frac{d}{dt}\|u(t)\|_{H_{s,\delta,A^0}}^2\leq C \left(
      \mu\|u(t)\|_{H_{s,\delta,A^0}}^2  +1 \right),
  \end{equation}
  where the constant $C$ depends on $\|A^a\|_{H_{s,\delta}}$,
  $\|A^0-I\|_{H_{s,\delta}}$, $\|\partial_t u\|_{H_{s-1,\delta}}$,
  $\|\partial_t A^0\|_{L^\infty}$, $\left\|B\right\|_{H_{s,\delta}}$
  and $\left\|F\right\|_{H_{s,\delta}}$.
\end{lem}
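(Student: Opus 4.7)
The plan is to run the proof of Lemma \ref{lem:energy-estimates:1} verbatim, tracking the new contributions that arise from the lower order terms $Bu$ and $F$ in (\ref{eq:energy-estimates:22}). The identity (\ref{eq:energy-estimates:10}) remains unchanged, since its derivation depends only on the inner product structure and not on the PDE satisfied by $u$. The decomposition $E(j) = E_1(j,k) + E_2(j,k)$ with the further split $E_2 = E_3 + E_4$ proceeds identically, and the estimates of $E_1$ and $E_3$ from the homogeneous case carry over word for word, since they involve only $A^0$ and $u$ and rely solely on the Kato--Ponce commutator estimate (Theorem \ref{thm:energy-estimates:1}) together with the algebra property.

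The essential change occurs in $E_4(j,k)$, where in the homogeneous case equation (\ref{eq:energy-estimates:7}) was used to substitute $\sum_{a=1}^3 A^a \partial_a u$ for $A^0 \partial_t u$ inside the inner product. Under the new equation (\ref{eq:energy-estimates:22}) this substitution produces, in addition to the previously estimated $\sum_a A^a \partial_a u$ term (which is handled exactly as in (\ref{eq:energy-estimates:16})--(\ref{eq:energy-estimates:18})), two extra contributions
\begin{equation*}
E_4^B(j,k) := \bigl\langle \Lambda^s\bigl((\psi_j^2 u)_{2^j}\bigr),\, \Lambda^s\bigl((\Psi_k\psi_j^2 Bu)_{2^j}\bigr) \bigr\rangle_{L^2}
\end{equation*}
and the analogue $E_4^F(j,k)$ with $F$ in place of $Bu$.

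To handle $E_4^B$ I would apply the Cauchy--Schwarz inequality to each summand, use $\|\Lambda^s v\|_{L^2} = \|v\|_{H^s}$, and then sum via the type-2 scheme from the proof of Lemma \ref{lem:energy-estimates:1}. Invoking the algebra property of $H_{s,\delta}$ (Proposition \ref{prop:Properties:3}) to bound $\|Bu\|_{H_{s,\delta}} \leq C\,\|B\|_{H_{s,\delta}}\,\|u\|_{H_{s,\delta}}$, the total $E_4^B$ contribution is controlled by $C(\|B\|_{H_{s,\delta}})\,\|u\|_{H_{s,\delta}}^2$. For $E_4^F$ the same procedure yields a bound by $\|u\|_{H_{s,\delta}}\|F\|_{H_{s,\delta}}$; the elementary inequality $ab \leq \tfrac{1}{2}(a^2+b^2)$ then splits this as $\tfrac{1}{2}\|u\|_{H_{s,\delta}}^2 + \tfrac{1}{2}\|F\|_{H_{s,\delta}}^2$, and the last term is absorbed into the ``$+1$'' on the right of (\ref{eq:energy-estimates:23}) by letting $C$ depend on $\|F\|_{H_{s,\delta}}$.

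I do not expect a genuine obstacle: the algebra and Cauchy--Schwarz based bounds on $\|Bu\|_{H_{s,\delta}}$ and on the inner product with $F$ are global rather than dyadic, so they combine cleanly with the bounds obtained in the homogeneous case. The only point requiring care is the term-wise dyadic decomposition of $\Lambda^s((\Psi_k\psi_j^2 Bu)_{2^j})$, which must fall within the type-2 summation scheme; this follows from the localization property (\ref{eq:energy-estimates:12}) of the cutoffs $\{\Psi_k\}$ and $\{\psi_j\}$. Collecting all bounds and using the norm equivalence (\ref{eq:energy-estimates:5}), the estimate (\ref{eq:energy-estimates:23}) follows with the stated dependence of $C$ on $\|A^a\|_{H_{s,\delta}}$, $\|A^0-I\|_{H_{s,\delta}}$, $\|\partial_t u\|_{H_{s-1,\delta}}$, $\|\partial_t A^0\|_{L^\infty}$, $\|B\|_{H_{s,\delta}}$, and $\|F\|_{H_{s,\delta}}$.
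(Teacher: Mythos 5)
Your proposal is correct and follows essentially the same route as the paper: the authors likewise observe that only $E_4$ changes, adding exactly the two extra inner products with $(\Psi_k B)_{2^j}(\psi_j^2 u)_{2^j}$ and $(\Psi_k\psi_j^2 F)_{2^j}$, bounding the first via the algebra property and the second via Cauchy--Schwarz with $ab\leq\tfrac12(a^2+b^2)$, then summing with the weights $2^{(\frac{3}{2}+\delta)2j}$. The only cosmetic difference is that the paper applies the $H^s$ algebra term-wise to each dyadic piece before summing, whereas you invoke the global $H_{s,\delta}$ algebra (Proposition \ref{prop:Properties:3}) after summation; both yield the same bound $\|u\|_{H_{s,\delta}}^2\|B\|_{H_{s,\delta}}$.
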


\begin{proof}[of Lemma \ref{lem:energy-estimates:2}]
  This proof is precisely as the previous one expect the two terms
  \begin{equation}
    \label{eq:energy-estimates:24}
    \left\langle\Lambda^s\left(\left(\psi_j^2u\right)_{2^j}\right),
      \Lambda^s\left(
        \left(\Psi_kB\right)_{2^j}
        \left(\psi_j^2u\right)_{2^j}\right)\right\rangle_{L^2} 
  \end{equation}
  and
  \begin{equation}
    \label{eq:energy-estimates:25}
    \left\langle\Lambda^s\left(\left(\psi_j^2u\right)_{2^j}\right),
      \Lambda^s\left(
        \left(\Psi_k\psi_j^2F\right)_{2^j}\right)\right\rangle_{L^2}
  \end{equation}
  which are added to (\ref{eq:energy-estimates:16}). Using the algebra
  properties of $H^s$ spaces, we see that
  (\ref{eq:energy-estimates:24}) is less than
  \begin{equation*}
    C\left\|\left(\psi_j^2u\right)_{2^j}\right\|_{H^s}^2 \left\|
      \left(\Psi_kB\right)_{2^j}\right\|_{H^s}\leq C
    \left\|\left(\psi_j^2u\right)_{2^j}\right\|_{H^s}^2 \left\|
      B\right\|_{H_{s,\delta}};
  \end{equation*}
  and by Cauchy Schwarz inequality (\ref{eq:energy-estimates:25}) is
  less than
  \begin{equation*}
    C\left\|\left(\psi_j^2u\right)_{2^j}\right\|_{H^s} \left\|
      \left(\Psi_k\psi_j^2F\right)_{2^j}\right\|_{H^s}
    \leq C\frac{1}{2}\left\{ \left\|\left(\psi_j^2u\right)_{2^j}\right\|_{H^s}^2
      +\left\| \left(\psi_j^2F\right)_{2^j}\right\|_{H^s}^2\right\}.
  \end{equation*}
  Multiplying (\ref{eq:energy-estimates:24}) and
  (\ref{eq:energy-estimates:25}) by $2^{(\frac{3}{2}+\delta)2j}$ and
  taking the sum, it results with two quantities less than
  $\|u\|_{H_{s,\delta}}^2\|B\|_{H_{s,\delta}}$ and
  $\left(\|u\|_{H_{s,\delta}}^2+\|F\|_{H_{s,\delta}}^2\right)$
  respectively.  \BeweisEnde
\end{proof}

\subsection{Construction of the iteration}
\label{sec:constr-iter}

We assume $u_0(x)$, the initial value of (\ref{eq:neu-existence:21}),
is contained in $G_1$, where the origin belongs to $G_1$ and $G_1$ is
a compact subset of an open set $G$ of $\mathbb{R}^m$.  In addition we
assume, 
\begin{equation}
  \label{eq:formulation:3}
  \frac{1}{\mu} U^TU\leq U^TA^0(U;t,x)U\leq \mu U^TU \qquad \text{for
all}\quad
  U \in G_2,
\end{equation}
where $G_2$ is a compact set of $G$ such that $G_1\Subset G_2$ and
$\mu>0$.

\begin{rem}
  Since the matrix $A^0$ is continuous, the initial condition
  (\ref{eq:publ-broken:10}) guarantees the existence of a domain
  $G_2$.
\end{rem}

The initial data $u_0$ will be approximated by a sequence
$\{u_0^{k}\}$ of smooth functions with compact support, which
converges to $u_0$ in $H_{s,\delta}(\setR^3)$. It follows from the
embedding $\|v\|_{L^\infty} \leq C
\|v\|_{H_{s,\delta}}$ (see Theorem
    \ref{thr:Appendix:2}, and the density Theorem
\ref{thm:density}, that there is a positive $R$, $u_0^0\in
C_0^\infty(\mathbb{R}^3)$ and $ \{u_0^k\}_{k=1}^\infty\subset
C_0^\infty(\mathbb{R}^3)$ such that
\begin{gather}
  \label{eq:constr-iter:1} 
\|u_0^0\|_{H_{s+1,\delta}}\leq C
  \|u_0\|_{H_{s,\delta}},\\
\label{eq:constr-iter:2}
  \|u_0^0- u_0\|_{H_{s,\delta}}\leq \frac{R}{{\mu 8}},\\
\label{eq:constr-iter:3} \|u- u_0^0\|_{H_{s,\delta}}\leq  {R}
  \Rightarrow u\in G_2
\end{gather}
and
\begin{equation}
  \label{eq:constr-iter:4} \|u_0^k- u_0\|_{H_{s,\delta}} \leq
  2^{-k}\frac{R}{{\mu 8}}.
\end{equation}
The iteration procedure is defined as follows: $u^0(t,x)=u_0^0(x)$ and
$u^{k+1}(t,x)$ is a solution to the linear initial value problem
\begin{equation}
  \label{eq:constr-iter:5} 
  \begin{cases}
      &A^0(u^k;t,x)\partial_t u^{k+1} =  \sum_{a=1}^3
      A^a(u^k;t,x)\partial_a  u^{k+1} + 
      B(u^k;t,x)u^{k+1}+F(u^k;t,x), \\
      &u^{k+1}(0,x)=u_0^{k+1}(x).
\end{cases}
\end{equation}

The existence of $\{u^{k}(t,x)\}\subset C_0^\infty(\mathbb{R}^3)$
follows from:
\begin{thm}[Existence of classical solutions of a linear symmetric
  hyperbolic system] \label{thr:hyperbolic-weighted:1} Let $A^\alpha$,
  $B$ and $F$ be $C^\infty$ functions and $v_0\in
  C_0^\infty(\mathbb{R}^3)$ be an initial datum. Then the linear
  system
  \begin{equation}
    \label{eq:constr-iter:6} 
    \begin{cases}
        & A^0(t,x)\partial_t v =  \sum_{a=1}^3
        A^a(t,x)\partial_a v + B(t,x) v+F(t,x)\\
        &v(0,x)=v_0(x)
    \end{cases}
  \end{equation}
  has a unique solution $v(t,x)$ such that $v(t,x)\in C^\infty$ and it
  has compact support in $\mathbb{R}^3$ for each fixed $t$.
\end{thm}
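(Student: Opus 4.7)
The plan is to combine three classical ingredients for symmetric hyperbolic systems with smooth coefficients and compactly supported data: $H^s$ existence, a finite-speed-of-propagation argument, and a regularity bootstrap. Since $v_0$ has compact support in some ball $B(0, R_0)$, I would first replace the coefficients $A^\alpha, B, F$ by compactly supported smooth truncations agreeing with them on a sufficiently large region which will be shown to contain the support of $v$. This reduces the problem to one with globally bounded smooth coefficients, to which the standard Kato and Fischer-Marsden $H^s$ theory cited earlier in the paper applies.

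First I would construct an $H^s$ solution for some $s > 5/2$ via Friedrichs mollifiers: replace $\partial_a$ by $J_\varepsilon \partial_a J_\varepsilon$ with a standard mollifier $J_\varepsilon$, obtaining a Banach-space ODE whose solution $v_\varepsilon$ exists on a time interval independent of $\varepsilon$ by an $H^s$ energy estimate of the same flavor as Lemma 5.3 but with the simpler unweighted inner product. Weak compactness and the energy estimate then produce a limit $v \in C([0,T], H^s) \cap C^1([0,T], H^{s-1})$ solving the system.

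Next comes compact support via finite speed of propagation. Fix $c$ exceeding the maximal characteristic speed of the system on the truncation region, multiply the equation by $v$, and integrate over a truncated backward cone $K_{x_0, R, T} = \{(t,x) : |x - x_0| \leq R - ct,\ 0 \leq t \leq T\}$. Integration by parts produces a boundary term on the slanted lateral surface whose integrand equals $\tfrac{1}{2} v^T(c A^0 - \nu_a A^a) v$, where $\nu$ is the outward spatial normal; this has nonnegative sign once $c$ exceeds the characteristic speeds. A Gronwall argument on the remaining interior contribution yields the domain-of-dependence statement that the values of $v$ in $K_{x_0, R, T}$ are determined by $v_0$ on $B(x_0, R)$. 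Taking $|x_0|$ large forces $v(t, \cdot)$ to vanish outside $B(0, R_0 + ct)$, giving compact support uniformly in $t \in [0, T]$ and justifying a posteriori the truncation of the coefficients.

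Finally, for $C^\infty$ regularity, differentiate the system: each $\partial_x^\beta v$ satisfies a symmetric hyperbolic system of the same form with forcing in lower-order derivatives. By induction on $|\beta|$ together with the $H^s$ existence for every $s$ and uniqueness, one obtains $v(t, \cdot) \in H^s$ for all $s$, hence $v(t, \cdot) \in C^\infty(\mathbb{R}^3)$ by Sobolev embedding. Joint smoothness in $(t, x)$ follows by iteratively expressing $\partial_t v$ from the equation. Uniqueness is immediate from the basic $L^2$ energy estimate applied to the difference of two solutions with identical data. The main obstacle is the finite-speed-of-propagation step: it is what upgrades a globally defined $H^s$ construction into a compactly supported classical solution and demands a uniform choice of cone aperture $c$ obtained from $L^\infty$ bounds of $A^\alpha$ on the expanding support region.
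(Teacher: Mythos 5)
The paper does not actually prove this theorem: its ``proof'' is a one-line reference to John's textbook on partial differential equations, where the classical existence theory for linear symmetric hyperbolic systems with smooth coefficients and $C_0^\infty$ data is carried out. Your proposal reconstructs essentially that standard textbook argument — Friedrichs mollification to reduce to a Banach-space ODE, an $H^s$ energy estimate to pass to the limit, a domain-of-dependence/energy-in-a-backward-cone argument for finite propagation speed and hence compact support, a differentiate-and-bootstrap step for $C^\infty$ regularity, and an $L^2$ energy estimate for uniqueness — and the outline is correct; it buys a self-contained proof where the paper outsources one. Two points are worth making explicit. First, your own cone argument shows that $v$ vanishes on a backward cone only if both $v_0$ \emph{and the forcing $F$} vanish on its base and interior; so the compact-support conclusion requires $F(t,\cdot)$ to be compactly supported (as it is in the paper's application, where $F$ is built from the compactly supported iterates $u^k$), and the theorem as literally stated for arbitrary $C^\infty$ $F$ would be false on this point — you should state that hypothesis rather than leave it implicit in the truncation step. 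Second, both the reduction $\partial_t v = (A^0)^{-1}(\cdots)$ in your regularity bootstrap and the coercivity of the energy require $A^0$ to be uniformly positive definite on the relevant region; this is part of what ``symmetric hyperbolic'' means in Definition \ref{def:euler-rel:1} and holds in the application via (\ref{eq:formulation:3}), but it should be invoked explicitly since the bare statement of the theorem only calls the coefficients $C^\infty$.
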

For the proof we refer to John
\cite{john86:_partial
}. It is evident from Theorem \ref{thr:hyperbolic-weighted:1}
and inequalities (\ref{eq:formulation:3}) and (\ref{eq:constr-iter:3})
that for each $k$: $u^{k}(t,x)$ is well defined, $u^k(t,x)\in
C^\infty$, $u^k(t,x)$
has compact support in $\mathbb{R}^3$ and $u^k(t,x)\in G_2$ for some
positive $T$. We put
\begin{equation}
  \label{eq:constr-iter:7}  T_k=\sup \{T: \sup_{0<t<T}\|u^{k}(t)
  -u_0^0\|_{H_{s,\delta}}\leq R\}.
\end{equation}

Our next issue is to show the existence of $T^*>0 $ such that $T_k
\geq T^*$ for $k=1,2,3,...$

\subsection{Boundedness in the $H_{s,\delta} $ norm}
\label{sec:boundness-norm}

We introduce the following notations: $u(t):=u(t,x)$ and
\begin{equation}
  \label{eq:boundness:1} \n
  u\n_{s,\delta,T}:=\sup\{\|u(t)\|_{H_{s,\delta}}:0\leq t\leq T\}.
\end{equation}

The main result of this subsection is:
\begin{lem}[Boundedness in the $H_{s,\delta}$ norm]
  \label{lem:boundness:1}
  There are positive constants $T^{*}$ and $L$ such that
  \begin{enumerate}
    \item[{\rm(A)}]
    \begin{math}
      \n u^k -u_0^0\n_{s,\delta,T^*} \leq R
    \end{math}
    \item[{\rm(B)}]
    \begin{math}
      \n \partial_t u^k\n_{s-1,\delta+1,T^*}\leq L.
    \end{math}
  \end{enumerate}
\end{lem}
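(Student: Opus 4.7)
The plan is to prove (A) and (B) simultaneously by induction on $k$, using Lemma \ref{lem:energy-estimates:2} applied to the equation satisfied by the difference $w^{k+1}(t) := u^{k+1}(t) - u_0^0$. The base case $k=0$ is trivial since $u^0 \equiv u_0^0$ is stationary. For the inductive step, I assume (A) and (B) hold for indices $\leq k$ on some interval $[0,T^*]$ and aim to propagate them to $k+1$ with $T^*$ and $L$ chosen uniformly in $k$.

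Under the inductive hypothesis $\n u^k - u_0^0\n_{s,\delta,T^*} \leq R$, inequality (\ref{eq:constr-iter:3}) gives $u^k(t,x)\in G_2$, so (\ref{eq:formulation:3}) ensures that $A^0(u^k;t,x)$ satisfies the uniform positivity (\ref{eq:energy-estimates:3}) with the same $\mu$. Setting $M := \|u_0^0\|_{H_{s,\delta}} + R$, the Moser-type estimate (Theorem \ref{thr:Appendix:5}) together with hypothesis (H3)/(H3$'$) yields $\|A^0(u^k;t,\cdot)-I\|_{H_{s,\delta}}$, $\|A^a(u^k;t,\cdot)\|_{H_{s,\delta}}$, $\|B(u^k;t,\cdot)\|_{H_{s,\delta}}$, $\|F(u^k;t,\cdot)\|_{H_{s,\delta}} \leq C_1(M)$. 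Writing $\partial_t A^0(u^k;t,x) = (\partial_t A^0)(u^k;t,x) + (\partial_U A^0)(u^k;t,x)\,\partial_t u^k$ and combining (H2), (H4) with the embedding $H_{s-1,\delta+1}\hookrightarrow L^\infty$ (valid since $s-1>\tfrac{3}{2}$, Theorem \ref{thr:Appendix:2}) together with (B) for $u^k$, I get $\|\partial_t A^0(u^k;t,\cdot)\|_{L^\infty} \leq C_2(M,L)$.

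The next step is to bound $\partial_t u^{k+1}$ directly from (\ref{eq:constr-iter:5}). Inverting $A^0(u^k)$ (which is admissible by uniform positivity and Moser) and using the multiplicative properties of the $H_{s,\delta}$ spaces together with the coefficient bounds above, one obtains
\begin{equation*}
\|\partial_t u^{k+1}(t)\|_{H_{s-1,\delta+1}} \leq C_3(M)\bigl(\|u^{k+1}(t)\|_{H_{s,\delta}} + 1\bigr),
\end{equation*}
and, via the trivial embedding $H_{s-1,\delta+1}\hookrightarrow H_{s-1,\delta}$, the same bound holds in $H_{s-1,\delta}$. Now I rewrite (\ref{eq:constr-iter:5}) for $w^{k+1} = u^{k+1}-u_0^0$ as a linear symmetric hyperbolic system of the form (\ref{eq:energy-estimates:22}) with forcing $G := \sum_a A^a(u^k)\partial_a u_0^0 + B(u^k)u_0^0 + F(u^k)$; by (\ref{eq:constr-iter:1}) and algebra in $H_{s,\delta}$, $\|G(t,\cdot)\|_{H_{s,\delta}} \leq C_4(M)\,\|u_0\|_{H_{s,\delta}}$. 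Applying Lemma \ref{lem:energy-estimates:2} and substituting the bound on $\|\partial_t u^{k+1}\|_{H_{s-1,\delta}}$ obtained above leads to
\begin{equation*}
\frac{d}{dt}\|w^{k+1}(t)\|_{H_{s,\delta,A^0(u^k)}}^{2} \leq C_5(M,L)\bigl(\|w^{k+1}(t)\|_{H_{s,\delta,A^0(u^k)}}^{2} + 1\bigr).
\end{equation*}
By (\ref{eq:constr-iter:4}) and the norm equivalence (\ref{eq:energy-estimates:5}), the initial energy satisfies $\|w^{k+1}(0)\|_{H_{s,\delta,A^0}}^2 \leq \mu\bigl(R/(8\mu)\bigr)^{2}$, independently of $k$. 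Grönwall's inequality and (\ref{eq:energy-estimates:5}) then yield $\|w^{k+1}(t)\|_{H_{s,\delta}} \leq R$ on $[0,T^*]$ provided $T^*$ is chosen small enough in terms of $R,\mu,M,L$ (but not $k$), establishing (A) for $u^{k+1}$. Bound (B) follows by taking $L := C_3(M)(M+1)$ in the bootstrap estimate above, which depends only on $M$.

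The main obstacle is the circular coupling between (A) and (B): Lemma \ref{lem:energy-estimates:2} produces a constant that itself involves $\|\partial_t u^{k+1}\|_{H_{s-1,\delta}}$, i.e.\ essentially the quantity controlled by (B). The key observation that breaks the circle is that $\partial_t u^{k+1}$ is \emph{algebraically} determined by the hyperbolic system, so its $H_{s-1,\delta+1}$-norm is controlled \emph{linearly} by $\|u^{k+1}\|_{H_{s,\delta}}$ with constants depending only on $u^k$. This linearity — as opposed to a superlinear bound, which would destroy Grönwall — is what allows a closed ODE inequality for $\|w^{k+1}\|^2$. A subsidiary point is that the embedding $H_{s-1,\delta+1}\hookrightarrow L^\infty$ used to control $\|\partial_t A^0(u^k)\|_{L^\infty}$ requires precisely $s>\tfrac{5}{2}$, matching the hypothesis of Theorem \ref{thr:publ-broken:1}, and that the bounds $T^*$, $L$ produced by the induction depend only on $M$, hence can be fixed uniformly in $k$.
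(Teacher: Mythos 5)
Your proof is correct and follows essentially the same route as the paper: induction on $k$, Moser-type estimates to control the coefficients $A^\alpha(u^k)$, $B(u^k)$, $F(u^k)$ and $\partial_t A^0(u^k)$ from the inductive hypotheses, the algebraic identity $\partial_t u^{k+1}=(A^0(u^k))^{-1}G^{k+1}$ to obtain (B), and the energy estimate of Lemma \ref{lem:energy-estimates:2} applied to $u^{k+1}-u_0^0$ followed by Gronwall and the approximation inequalities (\ref{eq:constr-iter:1})--(\ref{eq:constr-iter:4}) to obtain (A) on a $k$-independent interval. The only cosmetic difference is that you fold the bound on $\partial_t u^{k+1}$ into the Gronwall step as a linear function of $\|u^{k+1}\|_{H_{s,\delta}}$, whereas the paper establishes (B) first and then feeds the constant $L$ into the energy estimate; both resolve the same circularity via the maximal-time definition (\ref{eq:constr-iter:7}).
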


\begin{proof}[of Lemma \ref{lem:boundness:1}]
  We first prove $(B)$. Let
  \begin{equation*}
    G^{k+1} =  \sum_{a=1}^3 A^a(u^k;t,x)\partial_a  u^{k+1} +
    B(u^k;t,x)u^{k+1}+F(u^k;t,x),
  \end{equation*}
  then by Proposition \ref{prop:Properties:3}
and Moser
  type estimate, Theorem \ref{thr:Appendix:5} with Remark
\ref{rem:Properties:2},
  \begin{eqnarray}\nonumber
    & &  \|G^{k+1}\|_{H_{s-1,\delta+1}}\\\nonumber &\leq & \sum_{a=1}^3
    \|A^a(u^k)\|_{H_{s,\delta}} \|\partial_a u^{k}\|_{H_{s-1,\delta+1}}+
    \|B(u^k)\|_{H_{s,\delta}} \| u^{k}\|_{H_{s,\delta}}
    +\|F(u^k)\|_{H_{s,\delta} }\\ \nonumber &\leq &  \sum_{a=1}^3
    \left( C\| u^{k}\|_{H_{s,\delta}}+ \|
      A^a(0)\|_{H_{s,\delta}}\right)\| u^{k}\|_{H_{s,\delta}}+ \left(C\|
      u^{k}\|_{H_{s,\delta}}+ \| B(0)\|_{H_{s,\delta}}\right)\|
    u^{k}\|_{H_{s,\delta}}\\ \label{eq:boundness:2} &+&
    C\|u^k\|_{H_{s,\delta}}+\| F(0)\|_{H_{s,\delta}}.
  \end{eqnarray}
  The constant $C$ here depends on $\|A^a\|_{C^{N+1}(G_2)}$,
  $\|B\|_{C^{N+1}(G_2)}$, $\|F\|_{C^{N+1}(G_2)}$ and
  $\|u^k\|_{L^\infty}$ (see (\ref{eq:13})).  Since
  \begin{equation}
    \label{eq:boundness:3}
    \|u^k(t)\|_{H_{s,\delta}}\leq
    \|u^k(t)-u_0^0\|_{H_{s,\delta}}+ \|u^0_0\|_{H_{s,\delta}},
  \end{equation}
  the induction assumption (A) and  inequality
  (\ref{eq:constr-iter:1}) imply that $\|u^k\|_{H_{s,\delta}}\leq R
  +C\|u_0\|_{H_{s,\delta}}$. Using the embedding
  $\|u^k\|_{L^\infty}\leq C \|u^k\|_{H_{s,\delta}}$, we see that
  $\|G^{k+1}\|_{H_{s-1,\delta+1}}\leq C_1(R)$, where the constant
  $C_1(R)$ depends upon $R$, condition (H3) and the initial data, but
  it is independent of $k$. From (\ref{eq:constr-iter:5}) we have
  \begin{equation*}
    \partial_t u^{k+1}=\left(A^0(u^k;t,x)\right)^{-1}G^{k+1}=
    \left(\left(A^0(u^k;t,x)\right)^{-1}-I\right)G^{k+1}+G^{k+1}.
  \end{equation*}
  Repeating same arguments as above, we conclude that
  \begin{equation*}
    \|\left(\left(A^0(u^k;t,x)\right)^{-1} -
      I\right)G^{k+1}\|_{H_{s-1,\delta+1}}\leq 
    C_2(R)
  \end{equation*}
  and the constant $C_{2}(R)$ does not depend on $k$. We take
  $L=C_1(R)+C_2(R)$.  Here we have used Moser estimate with
  $F(u)=A^{-1}(u)-I$, and the formula $
  \frac{\partial{A^{-1}(u)}}{\partial u}={A^{-1}(u)} \frac{\partial
    {A(u)}}{\partial u}{A^{-1}(u)}$.  Thus the constant $C(R)$ depends
  on $\|A^0\|_{C^{N+2}(G_2)}$ and $\mu$.

  We turn now to show (A). Let $V^{k+1} = u^{k+1}-u^0_0$, then
  inserting it in the equation (\ref{eq:constr-iter:5}) we have
  obtained
  \begin{eqnarray}\nonumber
    A^0(u^{k};t,x)\partial_t V^{k+1}&=&
    A^0(u^{k};t,x)u^{k+1}_t = \sum_{a=1}^3
    A^a(u^{k};t,x)\partial_a u^{k+1} + B(u^{k};t,x)u^k+F(u^{k};t,x)\\
    \label{eq:boundness:4} &=&  \sum_{a=1}^3 A^a(u^{k};t,x)\partial_a
    V^{k+1} +B(u^{k};t,x)V^{k+1}+F(u^{k};t,x)\\\nonumber&+&
    \sum_{a=1}^3 A^a(u^{k};t,x)\partial_a u_0^0 +B(u^{k};t,x)u_0^0
  \end{eqnarray}
  and $V^{k+1}(0,x)=u_0^{k+1}(0,x)-u_0^0(x)$. At this stage we would
  like employ the energy estimate Lemma \ref{lem:energy-estimates:2}.
  Due the  fact that the coefficients of (\ref{eq:boundness:4})
  depend on $u^k$, it is obligatory to control the constant of
  (\ref{eq:energy-estimates:23}) in terms of $\|u^k\|_{H_{s,\delta}}$.
  Therefore we need to bound $\|\left(
    A^0(u^{k};t,x)-I\right)\|_{H_{s,\delta}}$, $ \|
  A^a(u^{k};t,x)\|_{H_{s,\delta}}$, $\|B(u^{k};t,x)\|_{H_{s,\delta}}$,
  $\|F(u^{k};t,x)\|_{H_{s,\delta}}$ and $\|\frac{\partial}{\partial
    t}A^0(u^{k};t,x)\|_{L^\infty}$ by $\|u^k\|_{H_{s,\delta}}$. The
  first four are similar, so take for example $ A^a(u^{k};t,x)$: We
  use assumption (H2), Moser type estimate, Theorem
\ref{thr:Appendix:5} with Remark
  \ref{rem:Properties:2} and get that
  \begin{equation}
    \label{eq:boundness:6}
    \|A^a(u^{k};t,x)\|_{H_{s,\delta}}\leq C \left\{\|A^a\|_{C^{N+1}(G_2)}
      \left(1+\|u^k\|_{L^\infty}^N\right)\right\}\|
    u^k\|_{H_{s,\delta}}+\|A^a(0;t,\cdot)\|_{H_{s,\delta}}.
  \end{equation}
  For the last one we have
  \begin{equation}
    \label{eq:boundness:5}
    \begin{split}
      & \|\frac{\partial}{\partial
        t}A^0(u^{k};t,x)\|_{L^\infty}=\|\frac{\partial}{\partial
        u}A^0(u^{k};t,x)\partial_t u^k(t,x)+ \partial_t
      A^0(u^{k};t,x)\|_{L^\infty}\\
      \leq & \|\frac{\partial}{\partial u}A^0(u^{k};t,x)\|_{L^\infty}
      \|\partial_t u^k(t,x)\|_{L^\infty}+ \|\partial_t
      A^0(u^{k};t,x)\|_{L^\infty} \\
      \leq C & \|\frac{\partial}{\partial
        u}A^0(u^{k};t,x)\|_{L^\infty} \|\partial_t
      u^k(t,x)\|_{H_{s-1,\delta+1}}+ \|\partial_t
      A^0(u^{k};t,x)\|_{L^\infty}.
    \end{split}
  \end{equation}
  We conclude from inequalities (\ref{eq:boundness:6}) and
  (\ref{eq:boundness:5}), Theorem \ref{thr:Appendix:2}, the inductions
hypothesis (A) and (B),
  (\ref{eq:constr-iter:3}) and (H4) that the constant of
  (\ref{eq:energy-estimates:23}) depends on $R$, $L$,
  $\|u_0\|_{H_{s,\delta}}$ and the $H_{s,\delta}$-norm of the
  coefficients, but it is independent of $k$.  Hence, the energy
  estimate Lemma \ref{lem:energy-estimates:2} implies that
  \begin{equation}
    \label{eq:wellposs-einstein-euler:1}
    \frac{d}{dt}\|V^{k+1}(t)\|_{H_{s,\delta,A^0}}^2\leq
    C(R,L) \left( \mu\|V^{k+1}(t)\|_{H_{s,\delta,A^0}}^2  +1 \right),
  \end{equation}

  Applying Gronwall's inequality, (\ref{eq:constr-iter:2}),
  (\ref{eq:constr-iter:4}) and the equivalence
  (\ref{eq:energy-estimates:5}) results in
  \begin{eqnarray}\nonumber
    \n V^{k+1}\n_{s,\delta,T}^{2} &\leq &
    \mu e^{C(R,L)\mu T} \left( \mu \|V^{k+1}(0)\|^{2}_{H_{s,\delta}}
      +C(R,L)T\right) \\\nonumber & = &\mu e^{C(R,L)\mu T} \left( \mu
      \|u^{k+1}_0-u_0^0\|^{2}_{H_{s,\delta}}
+C(R,L)T\right)\\\nonumber
    &\leq & \mu e^{C(R,L)\mu T} \left( \mu\left(
        \|u^{k+1}_0-u_0\|^{2}_{H_{s,\delta}}+
        \|u_0^0-u_0\|^{2}_{H_{s,\delta}}\right) +C(R,L)T \right)
    \\\label{eq:boundness:7}
    &\leq & e^{C(R,L)\mu T} \left( 2\mu^2 \left(
        \frac{R}{\mu 8}\right)^2 +\mu C(R,L) T \right).
  \end{eqnarray}
  Therefore $ \n V^{k+1}\n_{s,\delta,T}^{2} \leq R^2$, if
$\mu C(R,L)T\leq \min\{\log 2,\frac{15}{32}R^2\}$.
  Thus taking $T^*=(\mu  C(R,L))^{-1}\min\{\log
2,\frac{15}{32}R^2\}$ proves $(A)$ and
  completes the proof of Lemma \ref{lem:boundness:1}.  \BeweisEnde
\end{proof}

\subsection{Contraction in the lower norm}
\label{sec:contr-lower-norm}

We show here that $\left\{ u^k \right\}$ has a contraction property in
$\|\cdot \|_{0,\delta,T^{**}} $ for a positive $T^{**}$ (see
(\ref{eq:boundness:1})). In order to
achieved it we need an energy estimate in $H_{0,\delta}\backsim
L_\delta^2$.  For that purpose we introduce the below inner-product in
$L_\delta^2$: for two vectors $u $ and $v$ in $L_\delta^2$, we set
\begin{equation}
  \label{eq:contr-lower-norm:1}
  \langle u,v\rangle_{L^2_\delta,A^0}=\int\left(1+|x|\right)^{2\delta}\left(
    u^TA^0v\right)dx,
\end{equation}
and its associated norm $\|u\|^2_{L_\delta^2,A^0}=\langle
u,u\rangle_{L^2_\delta,A^0}$. The ordinary norm is denoted by
$\|u\|^2_{L_\delta^2}=\langle u,u\rangle_{L^2_\delta,I}$.  Since $A^0$
satisfies (\ref{eq:formulation:3}),
\begin{equation}
  \label{eq:contr-lower-norm:9}
  \frac{1}{\mu }\|u\|_{L_\delta}^2\leq \langle u,u\rangle_{L^2_\delta,A^0}
  \leq \mu \|u\|_{L_\delta}^2,
\end{equation}
and hence by Theorem we obtain \ref{thm:a1}, 
$\|u\|^2_{L_\delta^2,A^0}\simeq
\|u\|_{H_{0,\delta}}$.

\begin{prop}[Energy estimate in $L_\delta^2$]
  \label{prop:contr-lower-norm:1} Suppose $u$ satisfies the linear
  hyperbolic system (\ref{eq:constr-iter:6}), then
  \begin{equation}
    \label{eq:contr-lower-norm:3} \frac{d}{dt}  \langle
    u(t),u(t)\rangle_{L^2_\delta,A^0}\leq \mu C\langle
    u(t),u(t)\rangle_{L^2_\delta,A^0}+\|F\|^2_{L_\delta^2} ,
  \end{equation}
  where $C=C(\|\partial_t A^0\|_{L^\infty}, \|A^a\|_{L^\infty},
  \|B\|_{L^\infty}, \|\partial_a A^a\|_{L^\infty})$.
\end{prop}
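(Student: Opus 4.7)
The plan is to mimic the classical symmetrizer energy estimate, the only new twist being the treatment of the weight $(1+|x|)^{2\delta}$ under integration by parts. First I would differentiate under the integral sign:
\begin{equation*}
\frac{d}{dt}\langle u,u\rangle_{L^2_\delta,A^0}
= \int(1+|x|)^{2\delta}\bigl(u^T(\partial_tA^0)u + 2u^T A^0\partial_t u\bigr)\,dx,
\end{equation*}
using symmetry of $A^0$. The first piece is immediately bounded by
$\|\partial_tA^0\|_{L^\infty}\|u\|_{L^2_\delta}^2$, which via \eqref{eq:contr-lower-norm:9} is $\leq \mu\|\partial_tA^0\|_{L^\infty}\langle u,u\rangle_{L^2_\delta,A^0}$.

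Next I would use the equation \eqref{eq:constr-iter:6} to replace $A^0\partial_t u$ by $\sum_a A^a\partial_a u + Bu + F$, and split the resulting integral into three contributions. The $Bu$ and $F$ contributions are immediate: $2\int(1+|x|)^{2\delta}u^T Bu\,dx \leq 2\|B\|_{L^\infty}\|u\|_{L^2_\delta}^2$, and by Cauchy--Schwarz $2\int(1+|x|)^{2\delta}u^TF\,dx \leq \|u\|_{L^2_\delta}^2 + \|F\|_{L^2_\delta}^2$.

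The main step — and the only place where the weight matters — is the principal term $2\int(1+|x|)^{2\delta}u^T A^a\partial_a u\,dx$. Since $A^a$ is symmetric, $u^TA^a\partial_a u = \tfrac{1}{2}\partial_a(u^TA^au) - \tfrac{1}{2}u^T(\partial_a A^a)u$, so after integrating by parts the first summand throws the derivative onto the weight:
\begin{equation*}
2\int(1+|x|)^{2\delta}u^TA^a\partial_a u\,dx
= -\int\partial_a\!\bigl[(1+|x|)^{2\delta}\bigr]\,u^TA^a u\,dx
-\int(1+|x|)^{2\delta}u^T(\partial_aA^a)u\,dx.
\end{equation*}
Here I would use the pointwise bound $|\partial_a(1+|x|)^{2\delta}| \leq 2|\delta|(1+|x|)^{2\delta-1} \leq 2|\delta|(1+|x|)^{2\delta}$, which is the one really weight-specific input. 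Both resulting integrals are therefore dominated by $C(\|A^a\|_{L^\infty}+\|\partial_aA^a\|_{L^\infty})\|u\|_{L^2_\delta}^2$.

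Combining the four contributions and invoking \eqref{eq:contr-lower-norm:9} once more to trade $\|u\|_{L^2_\delta}^2$ for $\mu\langle u,u\rangle_{L^2_\delta,A^0}$ in each term except the $\|F\|_{L^2_\delta}^2$ remainder yields \eqref{eq:contr-lower-norm:3} with a constant depending only on the prescribed norms. The only conceptual obstacle is verifying that differentiating the weight does not degrade the decay — which is harmless because $|\nabla(1+|x|)^{2\delta}| \lesssim (1+|x|)^{2\delta}$; everything else is a routine product and integration-by-parts computation. The $C_0^\infty$ hypothesis on $u$ ensures that all boundary terms in the integration by parts vanish and that differentiation under the integral sign is legitimate.
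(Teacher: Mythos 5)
Your proof is correct and follows essentially the same route as the paper's: differentiate the weighted inner product, use the equation and symmetry of $A^a$ to rewrite the principal term as a total derivative plus a $\partial_a A^a$ term, and integrate by parts so the derivative falls on the weight, which is controlled by $|\partial_a(1+|x|)^{2\delta}|\leq 2|\delta|(1+|x|)^{2\delta-1}$. The remaining $B$, $F$, and $\partial_t A^0$ contributions are handled identically in both arguments.
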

\begin{proof}[of Proposition \ref{prop:contr-lower-norm:1}]
  Taking the derivative of (\ref{eq:contr-lower-norm:1}) with respect
  to $t$, we get
  \begin{eqnarray}\nonumber
    \frac{d}{dt}  \langle u,u\rangle_{L^2_\delta,A^0} &=&  2\langle
    u,\partial_t u\rangle_{L^2_\delta,A^0}+
    \int\left(1+|x|\right)^{2\delta}\left(
      u^T\partial_t A^0 u\right)dx\\\nonumber & = &
    2\sum_{a=1}^3\int\left(1+|x|\right)^{2\delta}\left(
      u^TA^a \partial_a u\right)dx+ 2\int\left(1+|x|\right)^{2\delta}\left(
      u^TB u\right)dx\\\nonumber & + &
    2\int\left(1+|x|\right)^{2\delta}\left(
      u^TF\right)dx+ \int\left(1+|x|\right)^{2\delta}\left(
      u^T\partial_t A^0 u\right)dx\\
    \nonumber
    & = & 2\sum_{a=1}^3 L_{1,a}+2L_2+2L_3+L_4.
  \end{eqnarray}
  Clearly,
  \begin{equation*}
    |L_2|\leq\| B\|_{L^\infty}\int\left(1+|x|\right)^{2\delta}
    |u|^2dx\leq \|B\|_{L^\infty}\|u\|^2_{L_\delta^2}
  \end{equation*}
  and in a similar way we obtain the estimates of $L_4$ while by
  Cauchy-Schwarz inequality,
  \begin{equation*}
    |L_3|\leq \|u\|_{L_{\delta}^2}\|F\|_{L_{\delta}^2}\leq
    \frac{1}{2}\left(\|u\|_{L_{\delta}^2}^2+\|F\|_{L_{\delta}^2}^2\right).
  \end{equation*}
  Now,
  \begin{eqnarray}\nonumber
    0 &=&  \int\partial_a\left(\left(1+|x|\right)^{2\delta}\left(
        u^TA^a u\right)\right)dx\\\nonumber & = &
    2\delta\int\left(1+|x|\right)^{2\delta-1}\frac{x_a}{|x|}\left(
      u^TA^a  u\right)dx+ \int\left(1+|x|\right)^{2\delta}\left((\partial_a
      u)^TA^au\right)dx\\\nonumber & + &
    \int\left(1+|x|\right)^{2\delta}\left(
      u^T\partial_a A^a u\right)dx+ \int\left(1+|x|\right)^{2\delta}\left(
      u^TA^a\partial_a u\right)dx,
  \end{eqnarray}
  and since $A^0$ is symmetric, the second and the fourth terms are
  equal to $L_{1,a}$. Hence,
  \begin{eqnarray}\nonumber
    2|L_{1,a}| &\leq& 2\delta\int\left(1+|x|\right)^{2\delta}\frac
    {|A^0|} {1+|x|}\left( |u|^2\right)dx+
    \int\left(1+|x|\right)^{2\delta}|\partial_a A^a| |u|^2dx\\\nonumber
    & \leq &\left( \|A^a\|_{L^\infty}+\|\partial_a
      A\|_{L^\infty}\right)\|u\|^2_{L_\delta^2}.
  \end{eqnarray}
  \BeweisEnde
\end{proof}

In order to proof the contraction we shall also need the following
proposition.

\begin{prop}[Difference estimate in $L_\delta^2$]
  \label{prop:contr-lower-norm:2} Let $G:\mathbb{R}^m\to\mathbb{R}^m$
  be a $C^1$ mapping. Then
  \begin{equation}
    \|G(u)-G(v)\|^2_{L_\delta^2}\leq \|\nabla
    G\|^2_{L^\infty}\|u-v\|^2_{L_\delta^2}.
  \end{equation}
\end{prop}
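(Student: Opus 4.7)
The plan is to reduce this to a pointwise estimate via the fundamental theorem of calculus applied along the segment from $v(x)$ to $u(x)$, and then multiply by the weight and integrate.

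First I would write, for each fixed $x$,
\begin{equation*}
G(u(x))-G(v(x))=\int_0^1 \frac{d}{dt}G\bigl(v(x)+t(u(x)-v(x))\bigr)\,dt=\int_0^1 \nabla G\bigl(v(x)+t(u(x)-v(x))\bigr)\,dt\cdot\bigl(u(x)-v(x)\bigr),
\end{equation*}
which is valid since $G$ is $C^1$. Taking Euclidean norms and bounding the integrand uniformly gives the pointwise inequality
\begin{equation*}
|G(u(x))-G(v(x))|\le \|\nabla G\|_{L^\infty}\,|u(x)-v(x)|.
\end{equation*}

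Next I would square, multiply by the weight $(1+|x|)^{2\delta}$, and integrate over $\mathbb R^3$; by definition of the $L^2_\delta$ norm this yields
\begin{equation*}
\|G(u)-G(v)\|_{L^2_\delta}^2=\int (1+|x|)^{2\delta}|G(u)-G(v)|^2\,dx \le \|\nabla G\|_{L^\infty}^2\int (1+|x|)^{2\delta}|u-v|^2\,dx=\|\nabla G\|_{L^\infty}^2\,\|u-v\|_{L^2_\delta}^2,
\end{equation*}
which is exactly the asserted inequality. There is no real obstacle here: everything is a direct consequence of the $C^1$ hypothesis and the fact that the weight $(1+|x|)^{2\delta}$ is attached to the point $x$ only and so commutes with the pointwise bound; no function-space machinery specific to $H_{s,\delta}$ enters, since we are at the $s=0$ level where the weighted $L^2$ structure is literally the weighted integral. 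One minor point worth noting is that the statement implicitly requires $\|\nabla G\|_{L^\infty}$ to be finite (or, in applications, one restricts to a compact set $G_2$ in which $u$ and $v$ take values), but under the stated hypothesis the computation above is immediate.
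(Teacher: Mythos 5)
Your proof is correct and follows essentially the same route as the paper: the fundamental theorem of calculus along the segment $s\,u+(1-s)v$, a uniform bound on $\nabla G$, and then multiplication by the weight $(1+|x|)^{2\delta}$ and integration. Your remark that finiteness of $\|\nabla G\|_{L^\infty}$ is implicitly assumed (and secured in applications by restricting to the compact set $G_2$) is a fair observation but does not change the argument.
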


\begin{proof}[of Proposition \ref{prop:contr-lower-norm:2}]
  \begin{eqnarray}\nonumber
    & & \|G(u)-G(v)\|^2_{L_\delta^2} =
    \int\left(1+|x|\right)^{2\delta}\left(G(u)-G(v)\right)^2dx 
    \\\nonumber & = &
    \int\left(1+|x|\right)^{2\delta}\left(\int_0^1  \nabla G\left(s
        u+(1-s)v\right)
      (u-v)ds\right)^2dx\leq\|\nabla G\|^2_{L^\infty}\|u-v\|^2_{L_\delta^2}.
  \end{eqnarray}
  \BeweisEnde
\end{proof}

\begin{lem}[Contraction in a lower norm]
  \label{lem:contr-lower-norm1}
  There is a positive $T^{**}$, $0<\Lambda <1$ and a positive sequence
  $\left\{ \beta_k \right\}$ with $\sum \beta_k<\infty$ such that
  \begin{equation}
    \label{eq:neu-existence:15}
    \n u^{k+1} -u^k\n_{0,\delta,T^{**}} \leq \Lambda
    \n u^{k} -u^{k-1}\n_{0,\delta,T^{**}} + \beta_k.
  \end{equation}
  Here $ \n u\n_{0,\delta,T^{**}}=\sup\{\|u(t)\|_{H_{0,\delta}}: 0\leq
  t\leq T^{**}\}$.
\end{lem}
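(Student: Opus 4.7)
The plan is to set $W^{k+1}:=u^{k+1}-u^k$, derive a linear hyperbolic system for $W^{k+1}$ by subtracting two consecutive instances of (\ref{eq:constr-iter:5}), and then apply the $L^2_\delta$ energy estimate of Proposition \ref{prop:contr-lower-norm:1} together with Gronwall's inequality. Carrying out the subtraction gives
\begin{equation*}
 A^0(u^k)\partial_t W^{k+1}=\sum_{a=1}^3 A^a(u^k)\partial_a W^{k+1}+B(u^k)W^{k+1}+F^{k+1},
\end{equation*}
with initial datum $W^{k+1}(0)=u_0^{k+1}-u_0^k$ and
\begin{equation*}
 F^{k+1}=[A^0(u^{k-1})-A^0(u^k)]\partial_t u^k+\sum_{a=1}^3[A^a(u^k)-A^a(u^{k-1})]\partial_a u^k+[B(u^k)-B(u^{k-1})]u^k+F(u^k)-F(u^{k-1}).
\end{equation*}
This is exactly the form (\ref{eq:constr-iter:6}) to which Proposition \ref{prop:contr-lower-norm:1} applies.

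The second step is to estimate $\|F^{k+1}\|_{L^2_\delta}$ by $\|W^k\|_{L^2_\delta}$ uniformly in $k$. Each bracketed factor has the form $G(u^k)-G(u^{k-1})$ for a smooth $G$, so by Proposition \ref{prop:contr-lower-norm:2},
\begin{equation*}
 \|G(u^k)-G(u^{k-1})\|_{L^2_\delta}\le\|\nabla G\|_{L^\infty}\|W^k\|_{L^2_\delta}.
\end{equation*}
The accompanying multiplicative factors $\partial_t u^k$, $\partial_a u^k$, $u^k$ are controlled in $L^\infty$ uniformly in $k$: Lemma \ref{lem:boundness:1} yields $\n u^k\n_{s,\delta,T^*}\le R+\|u_0^0\|_{H_{s,\delta}}$ and $\n\partial_t u^k\n_{s-1,\delta+1,T^*}\le L$, and since $s>\frac{5}{2}$ the embedding $H_{s-1,\delta+1}\hookrightarrow L^\infty$ of Theorem \ref{thr:Appendix:2} converts these into the required $L^\infty$ bounds. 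Hence $\|F^{k+1}(t)\|_{L^2_\delta}\le C(R,L)\|W^k(t)\|_{L^2_\delta}$ for $0\le t\le T^*$, with $C(R,L)$ independent of $k$.

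Applying Proposition \ref{prop:contr-lower-norm:1} to $W^{k+1}$ and using the same uniform bounds for the coefficients yields
\begin{equation*}
 \frac{d}{dt}\|W^{k+1}(t)\|^2_{L^2_\delta,A^0}\le \mu C\|W^{k+1}(t)\|^2_{L^2_\delta,A^0}+C(R,L)^2\n W^k\n^2_{0,\delta,T}.
\end{equation*}
Gronwall's inequality combined with the equivalence (\ref{eq:contr-lower-norm:9}) gives
\begin{equation*}
 \n W^{k+1}\n^2_{0,\delta,T}\le\mu e^{\mu C T}\bigl(\mu\|u_0^{k+1}-u_0^k\|^2_{L^2_\delta}+C(R,L)^2 T\,\n W^k\n^2_{0,\delta,T}\bigr).
\end{equation*}
Choosing $T^{**}\in(0,T^*]$ so small that $\Lambda^2:=\mu^3 e^{\mu C T^{**}}C(R,L)^2 T^{**}<1$ proves (\ref{eq:neu-existence:15}) with $\beta_k:=\mu e^{\mu C T^{**}/2}\|u_0^{k+1}-u_0^k\|_{L^2_\delta}$. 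Since (\ref{eq:constr-iter:4}) and the continuous inclusion $H_{s,\delta}\hookrightarrow L^2_\delta$ give $\|u_0^{k+1}-u_0^k\|_{L^2_\delta}\lesssim 2^{-k}R/\mu$, we conclude $\sum_k\beta_k<\infty$.

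The main obstacle is making sure every constant involved, both in Proposition \ref{prop:contr-lower-norm:1} and in the nonlinear bound on $F^{k+1}$, is independent of the iteration index $k$. This uniformity rests entirely on the two bounds supplied by Lemma \ref{lem:boundness:1} and on the fact that, under the hypothesis $s>\frac{5}{2}$, Theorem \ref{thr:Appendix:2} converts the $H_{s,\delta}$ control of $u^k$ and the $H_{s-1,\delta+1}$ control of $\partial_t u^k$ into $L^\infty$ control of $u^k$, $\partial_a u^k$ and $\partial_t u^k$; once this is in place, the contraction follows by the routine Gronwall argument above.
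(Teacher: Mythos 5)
Your proof is correct and follows essentially the same route as the paper: form the difference equation for $u^{k+1}-u^k$, bound the source term via Proposition \ref{prop:contr-lower-norm:2} together with the uniform bounds of Lemma \ref{lem:boundness:1} and the embedding Theorem \ref{thr:Appendix:2}, then apply Proposition \ref{prop:contr-lower-norm:1} and Gronwall and shrink $T^{**}$. If anything, your explicit retention of the factor $T^{**}$ inside $\Lambda^2$ makes the smallness of $\Lambda$ more transparent than the paper's final display.
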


\begin{proof}[of Lemma \ref{lem:contr-lower-norm1}]
  Since $u^k$ satisfies equation (\ref{eq:constr-iter:5}), the
  difference $\left[u^{k+1}- u^k\right]$ will satisfy
  \begin{equation}
    \label{eq:contr-lower-norm:2}
\begin{split}
   & A^0(u^k;t,x)\partial_t\left[u^{k+1}- u^k\right]  = \sum_{a=1}^3
    A^a(u^k;t,x)\partial_a\left[u^{k+1}- u^k\right] \\ &+
B(u^k;t,x)\left[u^{k+1}-
      u^k\right]+F^k,
\end{split}
  \end{equation}
  where
  \begin{eqnarray}\nonumber
    F^k & = & -\left[A^0(u^k;t,x)-A^0(u^{k-1};t,x)\right]\partial_t
u^k+\sum_{a=1}^3
    \left[A^a(u^k;t,x)-A^a(u^{k-1};t,x)\right]\partial_a
u^k\\\nonumber &+ &
    \left[B(u^k;t,x)-B(u^{k-1};t,x)\right] u^k
    +\left[F(u^k;t,x)-F(u^{k-1};t,x)\right].
  \end{eqnarray}
  Applying Proposition \ref{prop:contr-lower-norm:1} to equation
  (\ref{eq:contr-lower-norm:2}) above we have
  \begin{equation}
    \label{eq:contr-lower-norm:4} \frac{d}{dt}  \langle
    \left[u^{k+1}-u^k\right],
    \left[u^{k+1}-u^k\right]\rangle_{L^2_\delta,A^0}\leq
    \mu C\langle
    \left[u^{k+1}-u^k\right],
    \left[u^{k+1}-u^k\right]\rangle_{L^2_\delta,A^0}+\|F^k\|^2_{L_\delta^2}. 
  \end{equation}
  Thus Gronwall's inequality yields,
  \begin{equation}
    \label{eq:contr-lower-norm:5}
    \|\left[u^{k+1}(t)-u^k(t)\right]\|_{L_\delta^2,A^0}^2
    \leq e^{\mu C
      t}\left[\|\left[u^{k+1}(0)-u^k(0)\right]\|_{L_\delta^2,A^0}^2+\int_0^t
      \|F^k(s)\|_{L_\delta^2}ds\right].
  \end{equation}
  The constant $C$ in inequalities (\ref{eq:contr-lower-norm:4}) and
  (\ref{eq:contr-lower-norm:5}) depends on
  $\|A^a(u^k;t,x)\|_{L^\infty}$, $\|B(u^k;t,x)\|_{L^\infty}$,
  $\|\partial_t(A^0(u^k;t,x))\|_{L^\infty}$ and $\|\partial_a
  \left(A^a(u^k;t,x)\right)\|_{L^\infty}$. The first two of them are
  bounded by a constant independent of $k$, since it follows from (A)
  of Lemma \ref{lem:boundness:1} that $u^k\in G_2$. The estimation of
  $\|\partial_t(A^0(u^k;t,x))\|_{L^\infty}$ is done in
  (\ref{eq:boundness:5}) and for the last one, since
  $s-1>\frac{3}{2}$, we can use Theorem \ref{thr:Appendix:2} and
Corollary \ref{cor:const:1}  and get 
  \begin{equation*}
    \begin{split}
      & \|\partial_a \left(A^a(u^k;t,x)\right)\|_{L^\infty}\leq
      \|\frac{\partial}{\partial u} A^a(u^k;t,x)\partial_a
      u^k\|_{L^\infty}+\|\partial_a A^a(u^k;t,x))\|_{L^\infty} \\
      \leq C & \|\frac{\partial}{\partial u} A^a(u^k;t,x)\|_{L^\infty}
      \|\partial_a u^k\|_{H_{s-1,\delta+1}}+\|\partial_a
      A^a(u^k;t,x))\|_{L^\infty} \\
      \leq C & \|\frac{\partial}{\partial u} A^a(u^k;t,x)\|_{L^\infty}
      \|u^k\|_{H_{s,\delta}}+\|\partial_a A^a(u^k;t,x))\|_{L^\infty}
    \end{split}
  \end{equation*} 
  Lemma \ref{lem:boundness:1} (A) implies that
  $\|u^k\|_{H_{s,\delta}}$ is bounded and $u^k\in G_2$, therefore the
  above inequality shows that $\|\partial_a
  \left(A^a(u^k;t,x)\right)\|_{L^\infty}$ is bounded by a constant
  independent of $k$. From Proposition \ref{prop:contr-lower-norm:2}
  we obtain
  \begin{equation}
 \label{eq:contr-lower-norm:6}
\begin{split}
    \left\|F^k\right\|_{L^2_{\delta}}^2 
      & \leq  2\left\{  \|\nabla
      A^0\|_{L^\infty(G_2)}^2\left\|\partial_t
        u^k\right\|_{L^\infty}^2 + \sum_{a=1}^3
      \|\nabla A^a\|_{L^\infty(G_2)}^2
      \left\|\partial_a u^k\right\|_{L^\infty}^2\right. \\
     &+ \left. \left\|\nabla
        B\right\|_{L^\infty(G_2)}^2\left\|u^k\right\|_{L^\infty}^2
      + \|\nabla
      F\|_{L^\infty(G_2)}^2\right\}
    \left\|[u^k-u^{k-1}]\right\|_{L^2_{\delta}}^2,
\end{split} 
 \end{equation}
  here $\nabla$ is the gradient with respect to $u$.  Since by Theorem
\ref{thr:Appendix:2} and Corollary \ref{cor:const:1}, 
  $\left\|\partial_t u^k\right\|_{L^\infty}\leq C \left\|\partial_t
    u^k\right\|_{H_{s-1,\delta+1}}$, $\left\|\partial_a
    u^k\right\|_{L^\infty}\leq C
  \left\|u^k\right\|_{H_{s-1,\delta+1}}\leq C
  \left\|u^k\right\|_{H_{s,\delta}}$ and $\left\|
    u^k\right\|_{L^\infty}\leq C \left\|u^k\right\|_{H_{s,\delta}}$,
  it follows from (\ref{eq:contr-lower-norm:6}) and Lemma
  \ref{lem:boundness:1} that
  \begin{equation}
    \label{eq:contr-lower-norm:7}
    \left\|F^k(s)\right\|_{L^2_{\delta}}^2\leq C_1
    \left\|[u^k(s)-u^{k-1}(s)]\right\|_{L^2_{\delta}}^2,
  \end{equation}
  where the constant $C_1 $ depends upon $R$ and $L$ of Lemma
  \ref{lem:boundness:1}, but it is independent of $k$. By the
  equivalence $\|u\|^2_{L_\delta^2,A^0}\simeq \|u\|_{H_{0,\delta}}$,
  (\ref{eq:contr-lower-norm:7}) and (\ref{eq:contr-lower-norm:5})
  above, we conclude that
  \begin{eqnarray}
    \nonumber &
    &\left\|[u^{k+1}(t)-u^{k}(t)\right\|_{H_{0,\delta}}^2
    \\\nonumber  &\leq& C_2e^{\mu C t}\left[
      \left\|[u^{k+1}(0)-u^{k}(0)]\right\|_{H_{0,\delta}}^2+C_1\int_0^t
      \left\|[u^{k}(s)-u^{k-1}(s)\right\|_{H_{0,\delta}}^2\right]\\
    \nonumber & \leq& C_2e^{\mu C
      t}\left[\left\|[u^{k+1}(0)-u^{k}(0)]\right\|_{H_{0,\delta}}^2+C_1t
      \sup_{0\leq s\leq t}
      \left\|[u^{k}(s)-u^{k-1}(s)\right\|_{H_{0,\delta}}^2\right],
  \end{eqnarray}
  where $C_1$, $C_2$ and $C$ do not depend on $k$. Hence 
  \begin{eqnarray}
    \nonumber & &\n[u^{k+1}-u^{k}\n_{0,\delta,T^{**}}
    \\\nonumber  &\leq& \sqrt{\frac{C_2 e^{\mu C T^{**}}}{2}}
  \left\|[u^{k+1}(0)-u^{k}(0)]\right\|_{H_{0,\delta}}+\sqrt{\frac{C_1
e^{\mu C T^{**}}}{2}} \n[u^{k}-u^{k-1}]\n_{_0,\delta,T^{**}}.
  \end{eqnarray}
  Thus, taking $T^{**}$ sufficiently small so that
  $\Lambda:=\sqrt{\frac{C_1 e^{\mu C T^{**}}}{2}}<1$ and
  putting $\beta_k=\sqrt{\frac{C_2 e^{\mu C T^{**}}}{2}}
  \left\|[u^{k+1}(0)-u^{k}(0)]\right\|_{H_{0,\delta}}$ completes the
  proof of the Lemma.  \BeweisEnde
\end{proof}

Lemma \ref{lem:contr-lower-norm1} implies that $\{u^k\}$ is a Cauchy
sequence in $C([0,T^{**}],H_{0,\delta})$. Combing this with the
intermediate estimates $\|u\|_{H_{s',\delta}}\leq \|u
\|_{H_{s,\delta}}^{\frac{s'}{s}}\| u\|_{H_{0,\delta}}^{1-\frac
  {s'}{s}} $ (see Proposition \ref{prop:Properties:5}) and
Lemma \ref{lem:boundness:1} (A), we conclude that $\{u^k\}$ is a
Cauchy sequence in $C([0,T^{**}],H_{s',\delta})$ for any $s'<s$.
Therefore there is a unique $u\in C([0,T^{**}],H_{s',\delta})$ such
that
\begin{equation}
  \label{eq:contr-lower-norm:8} \n u^k-u\n_{s',\delta,T^{**}}\to 0
  \qquad\text{for any }\qquad s'<s.
\end{equation}
Taking $\frac{5}{2}<s'<s$ and utilizing the embedding Theorem
\ref{thr:Appendix:2}, we have
\begin{equation*}
  u^k \to u  \qquad\text{in }\qquad
  C\left([0,T^{**}],C^1_\beta(\mathbb{R}^3)\right) 
  \qquad\text{for any  }\quad\beta\leq\delta+\frac{3}{2},
\end{equation*}
where $C^1_\beta(\mathbb{R}^3)$ is the class for which the norm
\begin{equation*}
  \sup_{\mathbb{R}^3}\left( (1+|x|)^\beta |u(x)|+\sum_{a=1}^3
    (1+|x|)^{\beta+1}|\partial_a u(x)|\right)  
\end{equation*} 
is finite.  From (\ref{eq:constr-iter:5})
\begin{equation*}
  \partial_t u^{k+1} =
  \left(A^0(u^k;t,x)\right)^{-1}\left[\sum_{a=1}^3
    A^a(u^k;t,x)\partial_a  u^{k+1} + 
    B(u^k;t,x)u^{k+1}+F(u^k;t,x)\right],
\end{equation*}
therefore by Corollary \ref{cor:appendix:3}  and Proposition
\ref{prop:Properties:5}, we obtain $\partial_t u^k\to
\partial_t u$ in $H_{s-1,\delta+1}$. Hence
\begin{equation*}
  \partial_t u^k \to \partial_t u  \qquad\text{in }\qquad
  C\left([0,T^{**}],C_{\beta+1}(\mathbb{R}^3)\right) 
  \qquad\text{for any  }\quad\beta\leq\delta+\frac{3}{2}.
\end{equation*}
Thus $u\in C^1\left(\mathbb{R}^3\times[0,T^{**}]\right)$ is a
classical solution of the nonlinear system
(\ref{eq:neu-existence:21}).  Moreover, it follows from Lemma
\ref{lem:boundness:1} (B) that $u\in {\rm Lip}\left([0,T^{**}],
  H_{s-1,\delta+1}\right)$. Our next task is to show that $u^k$
converges weakly to $u$ in $H_{s,\delta}$.

\subsection{Weak Convergence}
\label{sec:Weak_Convergence}

We first define the standard inner-product on $H_{s,\delta}$. For
two vector valued functions $v,\phi\in H^s$, the expression
\begin{equation*}
    \left\langle v,\phi
    \right\rangle_{s} =\int \left(\Lambda^s v\right)^T\Lambda^s
    (\phi)dx=\int (1+|\xi|^2)^s
    \hat{v}^T\overline{\hat{\phi}}d\xi.
  \end{equation*}
is an inner-product on $H^s$. Utilizing Definition
\ref{def:weighted:3} and Corollary \ref{cor:app11} we see that 
\begin{equation*}
  \langle v,\phi\rangle_{s,\delta}= \sum_j 2^{( \frac{3}{2} +
\delta)2j}
  \left\langle(\psi_j^2v)_{(2^j)}
    ,(\psi_j^2 \phi)_{(2^j)}\right\rangle_s
\end{equation*}
is inner-product on $H_{s,\delta}$. This definition coincides with
Definition \ref{def:inner_product} in the case where $A$ is the
identity matrix.
\begin{lem}[Weak Convergence]
  \label{lem:Weak_Convergence:1}
  For any $\phi\in H_{s,\delta}$, we have
  \begin{equation}
    \label{eq:Weak_Convergence:1}
    \lim_k \left\langle u^{k}(t) ,\phi\right\rangle_{s,\delta} = \left\langle
      u(t) ,\phi\right\rangle_{s,\delta}
  \end{equation}
  uniformly for $0\leq t\leq T^{**}$. Consequently
  \begin{equation}
    \label{eq:Weak_Convergence:2}
    \|u(t) \|_{H_{s,\delta}} \leq \lim_k\inf \|u^k(t)\|_{H_{s,\delta}}
  \end{equation}
  and hence the solution $u $ of the initial value problem
  (\ref{eq:neu-existence:21}) belongs to $L^\infty\left([0,T^{**}],
    H_{s,\delta}\right)$.
\end{lem}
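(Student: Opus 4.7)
The plan is to deduce weak convergence in the Hilbert space $H_{s,\delta}$ from the already-established strong convergence in the weaker space $H_{s',\delta}$ (for any $s'<s$) provided by (\ref{eq:contr-lower-norm:8}), combined with the uniform bound $\|u^k(t)\|_{H_{s,\delta}}\leq M$ supplied by Lemma \ref{lem:boundness:1}(A). The first step is a bilinear estimate for the inner product: using Parseval to write
\[
\langle v,\phi\rangle_{s,\delta}=\sum_{j=0}^\infty 2^{(\frac{3}{2}+\delta)2j}\int(1+|\xi|^2)^{s}\,\widehat{(\psi_j^2 v)_{(2^j)}}(\xi)\,\overline{\widehat{(\psi_j^2\phi)_{(2^j)}}(\xi)}\,d\xi,
\]
factoring the Bessel weight as $(1+|\xi|^2)^{s'/2}\cdot(1+|\xi|^2)^{(2s-s')/2}$, and applying the Cauchy--Schwarz inequality first inside each frequency integral and then in the dyadic sum, one obtains
\[
|\langle v,\phi\rangle_{s,\delta}|\leq\|v\|_{H_{s',\delta}}\,\|\phi\|_{H_{2s-s',\delta}}\qquad (s'<s).
\]

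For a fixed test function $\phi\in C_0^\infty(\mathbb{R}^3)$, which lies in every $H_{\sigma,\delta}$, choose $s'\in(\tfrac{5}{2},s)$ and apply this estimate to $v=u^k(t)-u^l(t)$. Since $\{u^k\}$ is Cauchy in $C([0,T^{**}],H_{s',\delta})$ by (\ref{eq:contr-lower-norm:8}), the family $\{\langle u^k(\cdot),\phi\rangle_{s,\delta}\}_k$ is Cauchy in $C([0,T^{**}])$; its uniform limit $F(t,\phi)$ is linear in $\phi$ and satisfies $|F(t,\phi)|\leq M\|\phi\|_{H_{s,\delta}}$ by Cauchy--Schwarz in $H_{s,\delta}$ combined with Lemma \ref{lem:boundness:1}(A), so by the density Theorem \ref{thm:density} it extends to a continuous linear functional on all of $H_{s,\delta}$.

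The main obstacle is identifying $F(t,\cdot)$ with the pairing against $u(t)$, since a priori $u(t)$ only lies in $H_{s',\delta}$. Banach--Alaoglu applied to the $H_{s,\delta}$-bounded sequence $\{u^k(t)\}$ produces a subsequence converging weakly in $H_{s,\delta}$ to some $\tilde u(t)\in H_{s,\delta}$ with $\|\tilde u(t)\|_{H_{s,\delta}}\leq M$. The continuous embedding $H_{s,\delta}\hookrightarrow H_{s',\delta}$ passes this convergence to weak convergence in $H_{s',\delta}$, while the entire sequence already converges \emph{strongly} to $u(t)$ in $H_{s',\delta}$; uniqueness of weak limits in $H_{s',\delta}$ forces $\tilde u(t)=u(t)$, whence $u(t)\in H_{s,\delta}$. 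A standard subsequence-of-subsequence argument upgrades the weak convergence to the whole sequence, and Riesz representation gives $F(t,\phi)=\langle u(t),\phi\rangle_{s,\delta}$.

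Finally, for a general $\phi\in H_{s,\delta}$, pick $\phi_\varepsilon\in C_0^\infty$ with $\|\phi-\phi_\varepsilon\|_{H_{s,\delta}}<\varepsilon$ and split
\[
\langle u^k(t)-u(t),\phi\rangle_{s,\delta}=\langle u^k(t)-u(t),\phi-\phi_\varepsilon\rangle_{s,\delta}+\langle u^k(t)-u(t),\phi_\varepsilon\rangle_{s,\delta}.
\]
Cauchy--Schwarz in $H_{s,\delta}$ bounds the first term by $2M\varepsilon$ uniformly in $k$ and $t\in[0,T^{**}]$, while the second tends to zero uniformly in $t$ by the smooth case. This establishes (\ref{eq:Weak_Convergence:1}); the inequality (\ref{eq:Weak_Convergence:2}) is the standard lower semicontinuity of the norm under weak convergence in a Hilbert space, and $u\in L^\infty([0,T^{**}],H_{s,\delta})$ follows from $\|u(t)\|_{H_{s,\delta}}\leq M$.
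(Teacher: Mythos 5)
Your proof is correct, and its core mechanism coincides with the paper's: the factored Cauchy--Schwarz bound $|\langle v,\phi\rangle_{s,\delta}|\leq\|v\|_{H_{s',\delta}}\|\phi\|_{H_{2s-s',\delta}}$ is exactly the paper's Proposition \ref{prop:Weak_Convergence:2} (with $s''=2s-s'$, so the borderline case $s=\frac{s'+s''}{2}$, which your term-wise Parseval argument does deliver), and the closing density-plus-splitting step, with the uniform bound from Lemma \ref{lem:boundness:1}(A) controlling the error term, is the same as the paper's $I_k+II_k$ decomposition (the paper approximates via Theorem \ref{thm:density}(b) in $H_{s'',\delta}$ rather than by $C_0^\infty$, an immaterial difference). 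The genuine divergence is your intermediate Banach--Alaoglu step: you first extract a subsequence converging weakly in $H_{s,\delta}$ to some $\tilde u(t)$, identify $\tilde u(t)=u(t)$ via the strong $H_{s',\delta}$ limit and uniqueness of weak limits, and only then pair $u(t)$ against test functions in the $H_{s,\delta}$ inner product. The paper omits this and writes $\langle u^k(t)-u(t),\phi-\tilde\phi\rangle_{s,\delta}$ directly, estimating it by $\|u^k(t)-u(t)\|_{H_{s,\delta}}\|\phi-\tilde\phi\|_{H_{s,\delta}}$, which tacitly presupposes $u(t)\in H_{s,\delta}$ --- the very membership the lemma is meant to establish. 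Your extra step closes that small circularity at the cost of a subsequence-of-subsequences argument; beyond that, the two proofs yield the same conclusions, including (\ref{eq:Weak_Convergence:2}) by weak lower semicontinuity of the norm.
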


 In order to show Lemma \ref{lem:Weak_Convergence:1} we need the below
property.

\begin{prop}
  \label{prop:Weak_Convergence:2}
  Let $s< \frac{s^{\prime}+s^{\prime\prime}}{2}$, $v\in
  H_{s^{\prime},\delta }$, $\phi\in H_{{s^{\prime\prime}},\delta}$.
Then
  we have
  \begin{equation}
    \label{eq:Weak_Convergence:3} \left| \left\langle v,\phi
      \right\rangle_{s,\delta}\right| \leq
    \|v\|_{H_{{s^{\prime}},\delta}}
    \|\phi\|_{H_{{s^{\prime\prime}},\delta}}.
  \end{equation}

\end{prop}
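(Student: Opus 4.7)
The plan is to reduce the weighted statement to the corresponding flat inequality on each dyadic piece and then apply Cauchy--Schwarz on the $\ell^2$ sequence of weights. Since the inner product is defined term-wise by
\[
\langle v,\phi\rangle_{s,\delta}
= \sum_{j=0}^{\infty} 2^{(\frac32+\delta)2j}\,
\bigl\langle (\psi_j^2 v)_{(2^j)},(\psi_j^2\phi)_{(2^j)}\bigr\rangle_{s},
\]
it suffices to control each scalar pairing $\langle(\psi_j^2 v)_{(2^j)},(\psi_j^2\phi)_{(2^j)}\rangle_{s}$ by the product of the corresponding $H^{s'}$ and $H^{s''}$ norms, and then sum.

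First I would establish the following elementary lemma on $\setR^3$: if $s\le \tfrac12(s'+s'')$, then for all Schwartz functions $u,w$,
\[
|\langle u,w\rangle_{s}|
=\Bigl|\int (1+|\xi|^2)^{s}\,\hat u(\xi)\,\overline{\hat w(\xi)}\,d\xi\Bigr|
\le \|u\|_{H^{s'}}\|w\|_{H^{s''}}.
\]
This is immediate by writing $(1+|\xi|^2)^{s}=(1+|\xi|^2)^{s'/2}(1+|\xi|^2)^{s''/2}(1+|\xi|^2)^{s-(s'+s'')/2}$, noting that the last factor is $\le 1$ under the hypothesis $s\le\tfrac12(s'+s'')$, and applying Cauchy--Schwarz in $L^2(d\xi)$ to the remaining two factors. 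The strict inequality $s<\tfrac12(s'+s'')$ in the statement is even more than needed here.

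Applying this lemma term-wise to $u=(\psi_j^2 v)_{(2^j)}$ and $w=(\psi_j^2\phi)_{(2^j)}$, and then splitting the weight symmetrically as $2^{(\frac32+\delta)2j}=2^{(\frac32+\delta)j}\cdot 2^{(\frac32+\delta)j}$, I obtain
\[
|\langle v,\phi\rangle_{s,\delta}|
\le \sum_{j=0}^{\infty}
\Bigl(2^{(\frac32+\delta)j}\|(\psi_j^2 v)_{(2^j)}\|_{H^{s'}}\Bigr)
\Bigl(2^{(\frac32+\delta)j}\|(\psi_j^2 \phi)_{(2^j)}\|_{H^{s''}}\Bigr).
\]
A Cauchy--Schwarz inequality for sequences in $\ell^2(\setN)$ then yields
\[
|\langle v,\phi\rangle_{s,\delta}|
\le \Bigl(\sum_{j}2^{(\frac32+\delta)2j}\|(\psi_j^2 v)_{(2^j)}\|_{H^{s'}}^2\Bigr)^{1/2}
\Bigl(\sum_{j}2^{(\frac32+\delta)2j}\|(\psi_j^2 \phi)_{(2^j)}\|_{H^{s''}}^2\Bigr)^{1/2},
\]
which is exactly $\|v\|_{H_{s',\delta}}\|\phi\|_{H_{s'',\delta}}$ by Corollary \ref{cor:app11} (equivalence of the $\psi_j^2$ version of the norm with the one in Definition \ref{def:weighted:3}). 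There is essentially no obstacle here: the only subtle point is the split of the weight $2^{(\frac32+\delta)2j}$ into two equal parts, which has to be done \emph{before} the $\ell^2$ Cauchy--Schwarz step so that both factors reproduce the weighted norms with the same exponent $\delta$, independently of the difference between $s'$ and $s''$.
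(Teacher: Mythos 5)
Your proof is correct and follows essentially the same route as the paper: the flat estimate $|\langle u,w\rangle_s|\le\|u\|_{H^{s'}}\|w\|_{H^{s''}}$ (which the paper dismisses as ``elementary arguments'' and you justify by splitting the Fourier weight), applied term-wise, followed by the symmetric split of $2^{(\frac32+\delta)2j}$ and Cauchy--Schwarz in $\ell^2$. No gaps.
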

\begin{proof}[of Proposition \ref{prop:Weak_Convergence:2}]
  Elementary arguments show that
  \begin{equation*}
    \left|   \left\langle v,\phi
      \right\rangle_{s}\right| \leq \|v\|_{H^{{s^{\prime}}}}
    \|\phi\|_{H{{s^{\prime\prime}}}}.
  \end{equation*}
  Applying it term-wise and using the Cauchy-Schwarz inequality we
  have
  \begin{eqnarray}
    \nonumber  \left|   \left\langle v,\phi
      \right\rangle_{s,\delta}\right|& \leq   & \sum_{j=0}^\infty
    2^{\left( \frac{3}{2} +\delta\right)2j} \left| \left\langle \left(
          \psi_j^2 v \right)_{2^j},\left( \psi_j^2 \phi
        \right)_{2^j} \right\rangle_s \right|\\ \nonumber
    &\leq & \sum_{j=0}^\infty \left(2^{\left(  \frac{3}{2}
          +\delta\right)j} \left\| \left( \psi_j^2 v
        \right)_{2^j}\right\|_{H^{s^{\prime}}}\right) \left(2^{\left(
          \frac{3}{2} +\delta\right)j}
      \left\|  \left( \psi_j^2 \phi  \right)_{2^j}
      \right\|_{H^{s^{\prime\prime}}}\right)\\
    \nonumber  &\leq & \left( \sum_{j=0}^\infty 2^{\left(  \frac{3}{2}
          +\delta\right)2j} \left\|  \left( \psi_j^2 v
        \right)_{2^j}\right\|_{H^{s^{\prime}}}^{2}
    \right)^{\frac{1}{2}}
    \left( \sum_{j=0}^\infty 2^{\left(  \frac{3}{2} +\delta\right)2j}
      \left\| \left( \psi_j^2 \phi
        \right)_{2^j}\right\|_{H^{s^{\prime\prime}}}^{2}
    \right)^{\frac{1}{2}}\\\nonumber
    &=& \|v\|_{H_{{s^{\prime}},\delta}}
    \|\phi\|_{H_{{s^{\prime\prime}},\delta}}
  \end{eqnarray}
  \BeweisEnde
\end{proof}

\begin{proof}[of Lemma \ref{lem:Weak_Convergence:1}]
  Take $s^{\prime}$ and $s^{\prime\prime}$ such that
  $s^{\prime}<s<s^{\prime\prime}$ and $s<\frac{s^{\prime} +
    s^{\prime\prime}}{2}$.  For a given $\phi\in H_{s,\delta}$ and
  positive $\epsilon$, we may find by Theorem \ref{thm:density} (b),
  $\tilde{\phi}\in H_{s^{\prime\prime},\delta}$ such that
  \begin{equation}
    \label{eq:Weak_Convergence:4} \|\phi
    -\tilde{\phi}\|_{H_{s,\delta}}\leq \frac{\epsilon}{2R}\quad
    \text{and}\quad \|\tilde{\phi}\|_{H_{s^{\prime\prime},\delta}}\leq
    C(\epsilon) \|\phi\|_{H_{s,\delta}},
  \end{equation}
  where $R$ is the positive number appearing in
  (\ref{eq:constr-iter:3}). Now,
  \begin{eqnarray*}
    \label{eq:appendix:39} \left\langle u^k(t)-u(t),\phi
    \right\rangle_{s,\delta} & = & \left\langle u^k(t)-u(t),\tilde
      \phi \right\rangle_{s,\delta} + \left\langle  u^k(t)-u(t),\left(
        \phi - \tilde \phi
      \right)\right\rangle_{s,\delta} \\
    &=& I_k + II_k.
  \end{eqnarray*}
  Therefore Proposition \ref{prop:Weak_Convergence:2},
  (\ref{eq:Weak_Convergence:4}) and (\ref{eq:contr-lower-norm:8})
  imply that
  \begin{eqnarray*}
    \label{eq:appendix:42} |I_k| & \leq  & \|u^k(t) -u(t)
    \|_{H_{s^{\prime},\delta}} \|\tilde
    \phi\|_{H_{s^{\prime\prime},\delta}} \leq \|u^k(t) -u(t)
    \|_{H_{s^{\prime},\delta}} C(\epsilon)
    \|\phi\|_{H_{s,\delta}}\to 0.
  \end{eqnarray*}
  While in the second estimate we use Lemma \ref{lem:boundness:1} (A)
  and get
  \begin{eqnarray*}
    \label{eq:appendix:43}
    |II_k| & \leq  & \|u^k(t) -u(t) \|_{H_{s,\delta}} \|\phi- \tilde
    \phi\|_{H_{s,\delta}}\\ 
    &\leq & \left(  \|u^k(t) -u^0_0 \|_{H_{{{s}},\delta}}
      +\|u(t) -u^0_0
      \|_{H_{s,\delta}} \right)
    \|\phi- \tilde \phi\|_{H_{s,\delta}}\leq \frac{2 R \epsilon }{2R}={\epsilon}.
  \end{eqnarray*}
  Thus,
  \begin{equation*}
    \limsup_k\left|\left\langle u^k(t)-u(t),\phi
      \right\rangle_{s,\delta}\right|\leq\epsilon
  \end{equation*}
  which completes the proof of the limit
  (\ref{eq:Weak_Convergence:1}).  \BeweisEnde
\end{proof}

For each $k$, $\left\langle u^{k}(t) ,\phi\right\rangle_{s,\delta}$ is
continuous for $ t\in[0,T^{**}]$ and by Lemma
\ref{lem:Weak_Convergence:1} it convergences uniformly to
$\left\langle u(t) ,\phi\right\rangle_{s,\delta}$, hence $\left\langle
  u(t) ,\phi\right\rangle_{s,\delta}$ is a continuous function of $t$
for any $\phi\in H_{s,\delta}$ and we have obtained the following:
\begin{thm}[Existence]
  \label{existence} Under conditions 
(H1)-(H4) and
  (\ref{eq:formulation:3}) there is $u\in
  C^1\left(\mathbb{R}^3\times[0,T^{**}]\right)$ a classical solution
  to the hyperbolic system (\ref{eq:neu-existence:21}) such that
  $u(t,x)\in \overline{G_2}$ and
  \begin{equation}
    \label{reg} u \in L^\infty\left([0,T^{**}],H_{s,\delta}\right)\cap
    C_w\left([0,T^{**}],H_{s,\delta}\right)\cap {\rm
      Lip}\left([0,T^{**}],H_{s-1,\delta+1}\right),
  \end{equation}
  where $C_w$ means continuous in the weak topology of $H_{s,\delta}$.

\end{thm}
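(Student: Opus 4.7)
The plan is to realise the solution as the limit of the iteration scheme~(\ref{eq:constr-iter:5}), which Section~\ref{sec:constr-iter} has already set up together with the regularisation~(\ref{eq:constr-iter:1})--(\ref{eq:constr-iter:4}) of the initial datum by compactly supported smooth functions $u_0^k$. Each iterate $u^{k+1}$ exists, is $C^\infty$, and has spatial compact support for every fixed time thanks to Theorem~\ref{thr:hyperbolic-weighted:1}, and it remains inside the compact set $G_2$ on some $[0,T_k]$ by the positivity condition~(\ref{eq:formulation:3}) and the definition~(\ref{eq:constr-iter:7}).

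First I would apply Lemma~\ref{lem:boundness:1} to secure a common positive time $T^{*}$ on which $\n u^k-u_0^0\n_{s,\delta,T^{*}}\leq R$ and $\n\partial_t u^k\n_{s-1,\delta+1,T^{*}}\leq L$, uniformly in $k$. This is the step where the fractional weighted energy estimate of Lemma~\ref{lem:energy-estimates:2}, applied to the equation for $V^{k+1}=u^{k+1}-u_0^0$ and combined with Moser-type estimates for composition, yields a differential inequality whose constant depends only on $R$, $L$ and the data, so that Gronwall closes it independently of $k$.

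Next, Lemma~\ref{lem:contr-lower-norm1} gives a possibly smaller time $T^{**}$ on which $\{u^k\}$ is Cauchy in $C([0,T^{**}],H_{0,\delta})$. Interpolating this lower-norm convergence against the $H_{s,\delta}$ bound via Proposition~\ref{prop:Properties:5} upgrades it to convergence in $C([0,T^{**}],H_{s',\delta})$ for every $s'<s$; picking $\tfrac{5}{2}<s'<s$ and using the embedding $H_{s',\delta}\hookrightarrow C^1_\beta$ from Theorem~\ref{thr:Appendix:2} delivers uniform convergence of $u^k$ and of $\partial_a u^k$. Expressing $\partial_t u^{k+1}$ from~(\ref{eq:constr-iter:5}) and invoking Corollary~\ref{cor:appendix:3} together with the algebra and Moser properties produces convergence of $\partial_t u^k$ as well, so the limit $u$ is a $C^1$ classical solution of~(\ref{eq:neu-existence:21}), and the pointwise containment $u(t,x)\in\overline{G_2}$ passes to the limit.

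The final regularity claim~(\ref{reg}) is then extracted through Lemma~\ref{lem:Weak_Convergence:1}: the uniform $H_{s,\delta}$ bound of Lemma~\ref{lem:boundness:1}(A) combined with lower semicontinuity of the weak limit norm gives $u\in L^\infty([0,T^{**}],H_{s,\delta})$; the uniform-in-$t$ weak convergence $\langle u^k(t),\phi\rangle_{s,\delta}\to\langle u(t),\phi\rangle_{s,\delta}$ carries continuity in $t$ over to the limit and yields $u\in C_w([0,T^{**}],H_{s,\delta})$; and Lemma~\ref{lem:boundness:1}(B) passed to the limit supplies $u\in\mathrm{Lip}([0,T^{**}],H_{s-1,\delta+1})$. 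The main obstacle is the $C_w$ regularity rather than mere $L^\infty$: it hinges on the uniformity in $t$ of the weak convergence, which in turn rests on approximating $\phi\in H_{s,\delta}$ by $\tilde\phi\in H_{s'',\delta}$ with $s''>s$ (a density statement) and then bounding $\langle u^k(t)-u(t),\tilde\phi\rangle_{s,\delta}$ via Proposition~\ref{prop:Weak_Convergence:2} using the lower-norm convergence from Lemma~\ref{lem:contr-lower-norm1}. Once this book-keeping is done, the theorem is a synthesis of the preceding lemmas.
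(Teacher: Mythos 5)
Your proposal is correct and follows essentially the same route as the paper: the iteration of Subsection \ref{sec:constr-iter}, the uniform bounds of Lemma \ref{lem:boundness:1}, the contraction of Lemma \ref{lem:contr-lower-norm1} upgraded by interpolation and the embedding Theorem \ref{thr:Appendix:2} to produce a $C^1$ classical limit, and then Lemma \ref{lem:Weak_Convergence:1} together with Lemma \ref{lem:boundness:1}(B) for the regularity statement (\ref{reg}). The paper presents the theorem as exactly this synthesis, with the $C_w$ claim resting on the uniformity in $t$ of the weak convergence obtained via the density approximation and Proposition \ref{prop:Weak_Convergence:2}, just as you describe.
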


\subsection{Well-posedness }
\label{sec:Well-posedness}

In this subsection we well prove continuity in ${H_{s,\delta}}$-norm
and
uniqueness.
\begin{thm}[Uniqueness]
  \label{Uniqueness} Assume conditions (H1)-(H4) and
  (\ref{eq:formulation:3}) hold.  If $u_1(t,x)$ and $ u_2(t,x)$ are
  classical solutions to the hyperbolic system
  (\ref{eq:neu-existence:21}) such that $u_1,u_2\in \overline{G_2}$,
  then $u_1\equiv u_2$.
\end{thm}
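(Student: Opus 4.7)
The plan is to subtract the two equations, derive a linear symmetric hyperbolic system for the difference $w=u_1-u_2$, and then apply the lower-norm energy estimate from Proposition \ref{prop:contr-lower-norm:1} together with Gronwall's inequality, exactly as in the contraction argument of Subsection \ref{sec:contr-lower-norm}. Since $w(0,x)=0$, this will force $w\equiv 0$ on the common interval of existence.

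More concretely, I would first write
\begin{equation*}
A^0(u_1;t,x)\partial_t w + \sum_{a=1}^{3}A^a(u_1;t,x)\partial_a w + B(u_1;t,x)w = F^*,
\end{equation*}
where the source
\begin{equation*}
\begin{split}
F^* = & -\bigl[A^0(u_1;t,x)-A^0(u_2;t,x)\bigr]\partial_t u_2 - \sum_{a=1}^{3}\bigl[A^a(u_1;t,x)-A^a(u_2;t,x)\bigr]\partial_a u_2 \\
& -\bigl[B(u_1;t,x)-B(u_2;t,x)\bigr]u_2 - \bigl[F(u_1;t,x)-F(u_2;t,x)\bigr]
\end{split}
\end{equation*}
is assembled exactly in the spirit of the term $F^k$ appearing in the contraction Lemma \ref{lem:contr-lower-norm1}. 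Since $u_1,u_2\in \overline{G_2}$, the matrix $A^0(u_1;t,x)$ satisfies the positivity condition (\ref{eq:formulation:3}), so Proposition \ref{prop:contr-lower-norm:1} applies to this linear system and yields
\begin{equation*}
\frac{d}{dt}\|w(t)\|_{L^2_{\delta},A^0}^2 \leq \mu C\,\|w(t)\|_{L^2_{\delta},A^0}^2 + \|F^*(t)\|_{L^2_{\delta}}^2,
\end{equation*}
with $C$ depending on $L^\infty$-norms of $A^a(u_1)$, $B(u_1)$, $\partial_t A^0(u_1)$ and $\partial_a A^a(u_1)$.

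Next, I would estimate $\|F^*(t)\|_{L^2_{\delta}}^2$ using Proposition \ref{prop:contr-lower-norm:2}, as in (\ref{eq:contr-lower-norm:6}), to obtain
\begin{equation*}
\|F^*(t)\|_{L^2_{\delta}}^2 \leq C_1\bigl(\|\partial_t u_2\|_{L^\infty}^2+\sum_a\|\partial_a u_2\|_{L^\infty}^2+\|u_2\|_{L^\infty}^2+1\bigr)\|w(t)\|_{L^2_{\delta}}^2.
\end{equation*}
The implicit regularity assumption on the classical solutions produced by Theorem \ref{existence} is $u_i\in L^\infty([0,T],H_{s,\delta})\cap\mathrm{Lip}([0,T],H_{s-1,\delta+1})$ with $s>5/2$, so the embedding Theorem \ref{thr:Appendix:2} ensures that $u_2$, $\partial_a u_2$ and $\partial_t u_2$ are all bounded in $L^\infty$, and similarly that the coefficients appearing in $C$ above are bounded (with $\partial_t A^0(u_1)$ handled exactly as in (\ref{eq:boundness:5})). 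Combining these bounds with the equivalence $\|\cdot\|_{L^2_{\delta},A^0}\simeq\|\cdot\|_{H_{0,\delta}}$ yields
\begin{equation*}
\frac{d}{dt}\|w(t)\|_{H_{0,\delta}}^2 \leq C_2\,\|w(t)\|_{H_{0,\delta}}^2.
\end{equation*}

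Finally, since $w(0)=u_1(0)-u_2(0)=u_0-u_0=0$, Gronwall's inequality immediately gives $\|w(t)\|_{H_{0,\delta}}=0$ on $[0,T^{**}]$, i.e., $u_1\equiv u_2$. The main obstacle is verifying that the coefficients $A^a(u_i)$, $B(u_i)$, and their first derivatives lie in $L^\infty$ under the hypothesis that $u_1,u_2$ are classical solutions with values in $\overline{G_2}$; this is what forces one to read the uniqueness statement in the regularity class of Theorem \ref{existence}, so that Theorem \ref{thr:Appendix:2} supplies the required $L^\infty$ bounds on $u_i$ and their first derivatives, thereby allowing Propositions \ref{prop:contr-lower-norm:1} and \ref{prop:contr-lower-norm:2} to be applied term-by-term.
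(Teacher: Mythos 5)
Your proposal is correct and follows essentially the same route as the paper: subtract the two systems to get a linear symmetric hyperbolic equation for the difference with coefficients frozen at one solution, apply the $L^2_\delta$ energy estimate of Proposition \ref{prop:contr-lower-norm:1} together with the difference estimate of Proposition \ref{prop:contr-lower-norm:2}, and conclude by Gronwall from the zero initial difference. The only cosmetic deviations are that you group the commutator terms against $u_2$ rather than $u_1$ and invoke Gronwall directly instead of the paper's small-time contradiction $\n V\n^2\leq C_3 e^{C\mu T}T\,\n V\n^2$; both are equivalent.
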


\begin{proof}[of Theorem \ref{Uniqueness}]
  Let $u_1$ and $u_2$ be a solutions to the hyperbolic system
  hyperbolic system (\ref{eq:neu-existence:21}) with the same initial
  data and let $V(t,x)=u_1(t,x)-u_2(t,x)$. Then $V$ satisfies the
  equation
\begin{equation}
\label{eq:Well-posedness:1}
    A^0(u_1;t,x)\partial_t V =\sum_{a=1}^3 A^a(u_1;,x)\partial_a
V+ B(u_1;t,x)  V + G
\end{equation}
with the initial condition $V(0,x)=0$ and where
\begin{equation*}
\begin{split}
  G=
  &\left[A^0(u_1;t,x)-A^0(u_2)\right]\partial_t u_1+\sum_{a=1}^3
  \left[A^a(u_1;t,x)-A^a(u_2;t,x)\right)]\partial_a u_1\\
 + & \left[B(u_1;t,x)-B(u_2;t,x)\right] u_1
  +\left[F(u_1;t,x)-F(u_2;t,x)\right)].
\end{split}
\end{equation*}
Applying Proposition \ref{prop:contr-lower-norm:1} to
(\ref{eq:Well-posedness:1}), we have
\begin{equation*}
  \frac{d}{dt}  \langle V,V\rangle_{L^2_\delta,A^0(u_1)}\leq \mu C\langle
  V,V\rangle_{L^2_\delta,A^0(u_1)}+\|G\|^2_{L_\delta^2}.
\end{equation*}
Let $T\leq T^*$, then Gronwall's inequality and the equivalence
(\ref{eq:contr-lower-norm:9}) imply
\begin{equation*}
  \n V\n_{0,\delta,T}^2\leq C_1e^{C\mu T}\int_0^T
  \|G(t)\|_{L^2_\delta}^2dt.
\end{equation*}
Similar estimation as done in (\ref{eq:contr-lower-norm:6}) yield that
$\|G(t)\|_{L^2_\delta}^2\leq C_2 \|V(t)\|_{L^2_\delta}^2$. Hence,
\begin{equation}\label{22}
  \n V\n_{0,\delta,T}^2\leq C_3e^{C\mu T}T \quad\n V\n_{0,\delta,T}^2.
\end{equation}
Thus, if $T$ is sufficiently small, then (\ref{22}) leads to a
contradiction unless $V\equiv 0$.  \BeweisEnde
\end{proof}

\begin{thm}[Continuation in norm]
  \label{Continuation} Under conditions (H1)-(H4) and
  (\ref{eq:formulation:3}), any solutions $u$ to the hyperbolic system
  (\ref{eq:neu-existence:21}) which satisfies $u(t,x)\in
  \overline{G_2}$ and the regularity condition (\ref{reg}), satisfies
  in addition
  \begin{equation}
    \label{eq:Well-posedness:2} u \in
    C\left([0,T^{**}],H_{s,\delta}\right)\cap
    C^1\left([0,T^{**}],H_{s-1,\delta+1}\right).
  \end{equation}
\end{thm}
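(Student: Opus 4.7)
The plan is to upgrade the weak continuity of $u$ in $H_{s,\delta}$ provided by Theorem \ref{existence} to strong continuity, and then to derive the $C^1$ regularity in $H_{s-1,\delta+1}$ directly from the evolution equation. Since $H_{s,\delta}$ is a Hilbert space (use the inner product of Definition \ref{def:inner_product} with $A\equiv I$), weak convergence combined with convergence of norms implies strong convergence. So it suffices to establish continuity of the scalar function $t\mapsto \|u(t)\|_{H_{s,\delta}}$.

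To prove the one-sided bound $\limsup_{t\to t_0^+}\|u(t)\|_{H_{s,\delta}}^2 \leq \|u(t_0)\|_{H_{s,\delta}}^2$, the energy identity of Lemma \ref{lem:energy-estimates:2} cannot be applied directly to $u$, because it was derived under the $C_0^\infty$ regularity, while $u$ is only known to lie in $L^\infty([0,T^{**}], H_{s,\delta})\cap \mathrm{Lip}([0,T^{**}], H_{s-1,\delta+1})$. The remedy is a Friedrichs-type mollification: let $\{J_\epsilon\}_{\epsilon>0}$ be a family of smoothing operators that is uniformly bounded on $H_{s,\delta}$ and converges strongly to the identity. Applying $J_\epsilon$ to the equation yields
\begin{equation*}
A^0(u)\partial_t(J_\epsilon u) + \sum_{a=1}^{3} A^a(u)\partial_a(J_\epsilon u) + B(u) J_\epsilon u + J_\epsilon F(u) = \mathcal{C}_\epsilon,
\end{equation*}
where $\mathcal{C}_\epsilon$ collects the commutators $[J_\epsilon,A^\alpha(u)]\partial_\alpha u$ and $[J_\epsilon,B(u)]u$. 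The bounds on $u$ together with Kato--Ponce-type commutator estimates in $H_{s,\delta}$ (adapted dyadically, as in Section \ref{subsec:Energy_estimates}) force $\mathcal{C}_\epsilon \to 0$ in $H_{s,\delta}$ as $\epsilon\to 0$. The energy inequality of Lemma \ref{lem:energy-estimates:2} is valid for the smooth function $J_\epsilon u$, and after passage to the limit it produces
\begin{equation*}
\|u(t)\|_{H_{s,\delta,A^0(u(t_0))}}^2 \leq \|u(t_0)\|_{H_{s,\delta,A^0(u(t_0))}}^2 + C\int_{t_0}^{t}\bigl(\|u(\tau)\|_{H_{s,\delta,A^0(u(\tau))}}^2 + 1\bigr)\, d\tau;
\end{equation*}
Gronwall and the continuity of $A^0$ in $u$ (Moser-type estimates) then deliver the desired $\limsup$ bound.

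The matching lower bound $\|u(t_0)\|_{H_{s,\delta}}\leq \liminf_{t\to t_0}\|u(t)\|_{H_{s,\delta}}$ follows for free from Theorem \ref{existence}: weak continuity of $u$ combined with the weak lower semi-continuity of the Hilbert space norm yields it. Together these give norm continuity from the right, hence strong right-continuity. Left-continuity at $t_0\in(0,T^{**}]$ is handled by the time reversal $v(\tau):=u(t_0-\tau)$, which again solves a quasilinear symmetric hyperbolic system on $[0,t_0]$ of the same type and satisfies (H1)--(H4) and (\ref{eq:formulation:3}); running the previous argument for $v$ at $\tau=0$ gives the left-continuity of $u$ at $t_0$, so that $u\in C([0,T^{**}], H_{s,\delta})$.

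Finally, writing the equation as
\begin{equation*}
\partial_t u = -\bigl(A^0(u)\bigr)^{-1}\Bigl(\sum_{a=1}^{3} A^a(u)\partial_a u + B(u) u + F(u)\Bigr),
\end{equation*}
the continuity just established, the fact that $\partial_a:H_{s,\delta}\to H_{s-1,\delta+1}$ is continuous, the algebra property of $H_{s-1,\delta+1}$ (Proposition \ref{prop:Properties:3}), and the Moser-type composition estimate (Theorem \ref{thr:Appendix:5}) applied to $A^\alpha,\,(A^0)^{-1},\,B,\,F$, together imply that $\partial_t u\in C([0,T^{**}], H_{s-1,\delta+1})$, which is (\ref{eq:Well-posedness:2}). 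The main obstacle is the mollification step: in the dyadic-weighted setting one must construct a family $J_\epsilon$ and verify commutator estimates term by term in the infinite sum (\ref{eq:weighted:4}), paralleling the detailed splittings $E_1,\dots,E_6$ in Section \ref{subsec:Energy_estimates}; this is where the bulk of the technical work lies.
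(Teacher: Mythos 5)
Your overall strategy coincides with the paper's: reduce strong continuity to weak continuity plus continuity of the norm, get the lower semicontinuity for free from the weak convergence, obtain the $\limsup$ bound from an energy inequality in the $A^0$-weighted inner product, use time reversal for left-continuity, and read off the $C^1$ statement from the equation via the composition and difference estimates. The genuine divergence is in how you justify the energy inequality for the rough solution $u\in L^\infty([0,T^{**}],H_{s,\delta})$. You propose a Friedrichs mollification of $u$ itself, with commutators $[J_\epsilon,A^\alpha(u)]\partial_\alpha u$ controlled by Kato--Ponce-type estimates adapted to the dyadic weighted norm. The paper instead never touches $u$ directly: it invokes the uniqueness theorem to identify $u$ with the limit of the iteration $\{u^k\}$ already built in Section \ref{sec:constr-iter}, applies Lemma \ref{lem:energy-estimates:2} to each smooth iterate $u^{k+1}$ (which solves a linear system with coefficients $A^\alpha(u^k)$ \emph{exactly}, so no commutator error appears), and then passes to the limit using the uniform convergence $u^k\to u$ and the $(1+\epsilon)$-comparison of the norms $\|\cdot\|_{H_{s,\delta,A^0(u^k(t))}}$ and $\|\cdot\|_{H_{s,\delta,A^0(u(t))}}$. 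What your route buys is independence from the particular construction of the solution (it would apply verbatim to any solution with the stated regularity, which is literally what the theorem asserts); what the paper's route buys is the complete avoidance of the mollification machinery, at the cost of routing the statement through uniqueness.

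The one point you should not underestimate is the step you defer: constructing a family $J_\epsilon$ that is uniformly bounded on $H_{s,\delta}$, commutes acceptably with the dyadic localizations $\psi_j$ and the scalings $(\cdot)_{2^j}$, and satisfies $\mathcal{C}_\epsilon\to 0$ in $H_{s,\delta}$ (or at least uniform boundedness plus convergence in a weaker norm sufficient to pass to the limit in the pairing). This is not a routine citation; it is essentially a weighted-fractional Friedrichs lemma of the same order of difficulty as the term-by-term splittings $E_1,\dots,E_6$ in Section \ref{subsec:Energy_estimates}, and it is precisely the work the paper's iteration argument is designed to bypass. As written, your proof is a correct and viable alternative in outline, but it is not complete until that lemma is supplied.
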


\begin{proof}[of Theorem \ref{Continuation}]
  We first treat the continuity
  $C\left([0,T^{**}],H_{s,\delta}\right)$. Since $u$ is a solution of
  initial value problem (\ref{eq:neu-existence:21}) which is
  reversible in time, is sufficient to show that
  \begin{equation}
    \lim_{t\downarrow 0}\| u(t)-u(0)\|_{H_{s,\delta}}=
    \lim_{t\downarrow 0}\| u(t)-u_0\|_{H_{s,\delta}}=0.
  \end{equation}
  We shall use the following known argument: suppose $\{w_n\}$ is a
  sequence in Hilbert space which converge weakly to $w_0$ and
  $\limsup_n \|w_n\|\leq \|w_0\|$, then $\lim_n \|w_n-w_0\|=0$. We are
  going to use the equivalence norm $\|\cdot
  \|_{H_{s,\delta,A^0(u(0))}}$, so we need to show
  \begin{equation}
    \label{eq:Well-posedness:3} \limsup_{t\downarrow 0}\|
    u(t)\|_{H_{s,\delta,A^0(u(0))}}\leq \|
    u_0\|_H{_{s,\delta,A^0(u(0))}}.
  \end{equation}

  Let $\{u^{k}(t)\}$ be the sequence which is defined by the iteration
  process (\ref{eq:constr-iter:5}).  It follows from the uniqueness
  Theorem \ref{Uniqueness} and (\ref{eq:Weak_Convergence:2}) that
  \begin{equation}\label{eq:Well-posedness:4}
    \| u(t)\|_{H_{s,\delta,A^0(u(t))}}\leq \liminf_k \|
    u^{k}(t)\|_H{_{s,\delta,A^0(u(t))}},
  \end{equation}
  where the limit above is uniformly in $t$.  Applying the  energy
  estimate (\ref{eq:energy-estimates:23}), we have
  \begin{equation*}
    \frac{d}{dt}\|u^{k+1}(t)\|_{H_{s,\delta,A^0(u^k(t))}}^2\leq C
    \left( \mu\|u^{k+1}(t)\|_{H_{s,\delta,A^0(u^k(t))}}^2  +1 \right).
  \end{equation*}
  So Gronwall's inequality yields
  \begin{equation}
    \label{eq:Well-posedness:5}
    \|u^{k+1}(t)\|_{H_{s,\delta,A^0(u^k(t))}}^2\leq e^{C\mu t} \left[
      \|u^{k+1}(0)\|_{H_{s,\delta,A^0(u^k(0))}}^2  +Ct \right].
  \end{equation}
  Take now arbitrary $\epsilon>0$, since $u^k(t)\to u(t)$ uniformly in
  $[0,T^{**}]$, we see from the inner-product
  (\ref{eq:energy-estimates:4}) that there is $k_0$ such that
  \begin{equation}
    \label{eq:Well-posedness:6} \| v(t)\|_{H_{s,\delta,A^0(u(t))}}\leq
    (1+\epsilon)\| v(t)\|_{H_{s,\delta,A^0(u^k(t))}}\qquad\qquad\text{
      for}\qquad k \geq  k_0.
  \end{equation}
  Combing (\ref{eq:Well-posedness:4}), (\ref{eq:Well-posedness:5}),
  (\ref{eq:Well-posedness:6}) and (\ref{eq:constr-iter:4}) with the
  fact that $u^k(t)\to u(t)$ uniformly in $[0,T^{**}]$, we obtain
  \begin{equation*}
    \begin{split}  \limsup_{t \downarrow
        0}\|u(t)\|_{H_{s,\delta,A^0(0)}}^2=&\limsup_{t \downarrow
        0}\|u(t)\|_{H_{s,\delta,A^0(u(t))}}^2\\
      \leq & \limsup_{t \downarrow 0}\left(\liminf_k
        \|u^{k+1}(t)\|_{H_{s,\delta,A^0(u(t))}}^2\right)
      \\\leq &  \limsup_{t \downarrow 0}\left(\liminf_k
        (1+\epsilon)^2\|u^{k+1}(t)\|_{H_{s,\delta,A^0(u^k(t))}}^2\right)\\
      \leq &\limsup_{t \downarrow 0}\left(\liminf_k e^{C\mu t} \left[
          (1+\epsilon)^2\|u^{k+1}(0)\|_{H_{s,\delta,A^0(u^k(0))}}^2
+Ct
        \right]\right) \\
      = &\limsup_{t \downarrow 0}\left( e^{C\mu t} \left[
          (1+\epsilon)^2\|u_0\|_{H_{s,\delta,A^0(u(0))}}^2 +Ct
        \right]\right)\\= &
      (1+\epsilon)^2\|u_0\|_{H_{s,\delta,A^0(u(0))}}^2
    \end{split}
  \end{equation*}
  which proves (\ref{eq:Well-posedness:3}).

  It remains to show that $\lim_{t\to t_0}\left(\|\partial_t
    u(t)-\partial_t u(t_0)\|_{H_{s-1,\delta+1}}\right)=0$.  Now,
  \begin{equation}
    \label{eq:Well-posedness:7} \partial_t u=
    \left(A^0(u;t,x)\right)^{-1}\left\{\sum_{a=1}^3
      A^a(u;t,x)\partial_a u + B(u;t,x)u+F(u;t,x)\right\}.
  \end{equation}
  By the first step of the proof, $\|\partial_a u(t)-\partial_a
  u(t_0)\|_{H_{s-1,\delta+1}}\to 0 $ and $\|u(t)-
  u(t_0)\|_{H_{s,\delta}}\to 0 $. At this stage we apply Corollary
  \ref{cor:appendix:3} to the right hand of
  (\ref{eq:Well-posedness:7}) and this completes the proof of Theorem
  \ref{Continuation}.

  \BeweisEnde
\end{proof}

\subsection{Local existence for the evolution equations of
  Einstein-Euler system}
\label{subsec:Local_Existence_Einstein-Euler_system}

In the previous subsections we have established the well-posedness of
the  first order symmetric hyperbolic system in the $H_{s,\delta}$ spaces. We
would like to apply this result to the coupled 
system (\ref{eq:publ-broken:19}) and (\ref{eq:Initial:2}).

The unknowns of the evolution equations are the gravitational field
$g_{\alpha\beta}$ and its first order partial derivatives
$\partial_\alpha g_{\gamma\delta}$, the Makino variable $ w$ and the
velocity vector $u^\alpha$. We represent them by the vector
\begin{equation}
  \label{eq:hyper_cor:7}
  U=\left(g_{\alpha\beta}-\eta_{\alpha\beta},
    \partial_a g_{\gamma\delta}, \partial_0 g_{\gamma\delta},w,u^{a},u^0-1\right),
\end{equation}  
here $\eta_{\alpha\beta}$ denotes the Minkowski metric.
The initial data for equation
(\ref{eq:publ-broken:19}) are 
\begin{equation*}
g_{ab}{_{\mid_M}=h_{ab}}, \ \ g_{0b}{_{\mid_M}}=0, \ \
g_{00}{_{\mid_M}}=-1, \ \ -\frac{1}{2}\partial_0
g_{ab}{_{\mid_M}}=K_{ab}.
\end{equation*}
where $(h_{ab},K_{ab})$ are given by (\ref{eq:Functions_spaces:5}),
and (\ref{eq:Functions_spaces:3}) for equation (\ref{eq:Initial:2}).
 Hence  $g_{ab}{_{\mid_M}}-\eta_{ab}=h_{ab}-I\in
H_{s,\delta}$ and 
$(w_{\mid_{M}},u^{a}{}_{\mid_{M}},u^0{}_{\mid_{M}}-1)\in
H_{s-1,\delta+2}$.. 
Therefore we conclude that
\begin{equation}
  \label{eq:cor:5} 
  U(0,\cdot)\in
  H_{s,\delta}\times H_{s-1,\delta+1}\times H_{s-1,\delta+2}.
\end{equation} 

In this situation we cannot apply directly Theorem
\ref{thr:publ-broken:1}.  The idea to overcome this obstacle is the
following. We first introduce some more convenience notations:
$\mathord{\bf g}=g_{\alpha\beta}-\eta_{\alpha,\beta}$,
$\partial\mathord{\bf g}=\partial_{\alpha}g_{\gamma\delta}$ (that is,
$\partial\mathord{\bf g}$ is the set of all first order partial
derivatives), $\mathord{\bf v}=(w,u^a,u^0-1)$ and $U=(\mathord{\bf
  g},\partial\mathord{\bf g},\mathord{\bf v})$. Since
$H_{s,\delta}\subset H_{s-1,\delta}$, it follows from (\ref{eq:cor:5})
that
\begin{equation}
  \label{eq:cor:9} 
  U(0,\cdot)\in
  H_{s-1,\delta}\times H_{s-1,\delta+1}\times H_{s-1,\delta+2}.
\end{equation}

If we prove the existence of $U(t,x)$ which is a solution to the
coupled systems (\ref{eq:publ-broken:19}) and (\ref{eq:Initial:2})
with initial data in the form of (\ref{eq:cor:9}) and such that
$U(t,\cdot)\in H_{s-1,\delta}\times H_{s-1,\delta+1}\times
H_{s-1,\delta+2}$ and it is continuous with respect to this norm, then
from inequality
\begin{equation}
  \label{eq:hyper-cor:1} 
  \|\mathord{\bf g}\|_{H_{s,\delta}} \lesssim \left( \|\mathord{\bf
      g}\|_{H_{s-1,\delta} }+\|\partial \mathord{\bf
      g}\|_{H_{s-1,\delta+1}}\right), 
\end{equation}
we will get that $U(t,\cdot) \in H_{s,\delta}\times
H_{s-1,\delta+1}\times H_{s-1,\delta+2}$ and it will be continuous
with respect to the norm of $H_{s,\delta}\times H_{s-1,\delta+1}\times
H_{s-1,\delta+2}$. Note that (\ref{eq:hyper-cor:1})  certainly
holds for  the integral representation of the norm
(\ref{eq:const:3}), and by Theorem \ref{thm:a1} it holds also for
the
$H_{s,\delta}$ norm.

In order to achieve this we carefully examine the structure of the
coupled systems (\ref{eq:publ-broken:19}) and (\ref{eq:Initial:2}).
According to Conclusion \ref{thm:hyperbolic_reduction}, we can write
Einstein-Euler system in the form:
\begin{equation}
  \label{eq:cor:2}
  A^0(U)\partial_t U=\sum_{a=1}^3 A^a(U)\partial_aU +B(U)U, 
\end{equation} 
where $A^\alpha$ and $B$ are $55\times 55$ matrices such that
\begin{equation}
 \label{eq:cor:6}
      {A}^\alpha=\left(\begin{array}{c|c|c} I_{10} & {\bf
            0}_{10\times 40} & {\bf 0}_{10\times 5} \\ \hline & & \\
          {\bf 0}_{40\times 10} &\widetilde{A^\alpha}(\mathord{\bf g})
&
          {\bf 0}_{40\times 5}\\ \hline & & 
          \\
          {\bf 0}_{5\times 10} & {\bf 0}_{5\times 40}&
          \widehat{A}^\alpha(\mathord{\bf g},\partial\mathord{\bf
g},\mathord{\bf v})\\ 
        \end{array}\right) 
\end{equation}


and
\begin{equation}
  \label{eq:hyperbolic-cor:3}
  B=\left(\begin{array}{c|c|c} 
      {\bf 0}_{10}   &  {\bf b}_{10\times 40} & {\bf 0}_{10\times 5}  \\ \hline 
      &                         &                        \\ 
      \quad &\widetilde{B}(\mathord{\bf g},\partial\mathord{\bf
        g},\mathord{\bf v}) & \quad\\ \hline 
      &                      &                       \\
      {\bf 0}_{5\times 10} & {\bf 0}_{5\times 40}&
      {\bf 0}_{5\times 5}\\
    \end{array}\right). 
\end{equation}
Here $\widetilde{A^\alpha}(\mathord{\bf g})$ is $40\times 40$
matrix which represents system (\ref{eq:publ-broken:19}),
$\widehat{A^\alpha}(\mathord{\bf g},\partial\mathord{\bf
g}, \mathord{\bf v})$ is 
$5\times 5$ matrix of system (\ref{eq:Initial:2}). Both of
them are symmetric and both $\widetilde{A^0}(\mathord{\bf g})$ and
$\widehat{A^0}(\mathord{\bf g},\partial\mathord{\bf g},\mathord{\bf
v})$ are positive definite
matrices; $\widetilde{B}(\mathord{\bf g},\partial\mathord{\bf
  g},\mathord{\bf v})$ is $40\times 55$ matrix,
 and ${\bf b}_{10\times 40}$ is a constant matrix.

A natural norm of $U=(\mathord{\bf g},\partial\mathord{\bf
g},\mathord{\bf v})$ on the product space $H_{s-1,\delta}\times
H_{s-1,\delta+1}\times H_{s-1,\delta+2}$ is
\begin{equation}
  \label{eq:cor:3} 
  \n U\n_{H_{s-1,\delta}}^2=\|\mathord{\bf g} \|_{H_{s-1,\delta}}^2+
\|\partial
  \mathord{\bf g}\|_{H_{s-1,\delta+1}}^2+\|\mathord{\bf v}\|_{H_{s-1,\delta+2}}^2. 
\end{equation}

Note that from Proposition (\ref{prop:Properties:3}) and
Theorem \ref{thr:Appendix:5} we have that $A^\alpha U, BU\in
H_{s-1,\delta}\times H_{s-1,\delta+1}\times H_{s-1,\delta+2}$,
whenever $U\in H_{s-1,\delta}\times H_{s-1,\delta+1}\times
H_{s-1,\delta+2}$.

We formulate an inner-product in accordance with the norm
(\ref{eq:cor:3}) and the structure of $A^0$.  Let $U_1=(\mathord{\bf
  g}_1,\partial\mathord{\bf g}_1,\mathord{\bf v}_1)$ and
$U_2=(\mathord{\bf g}_2,\partial\mathord{\bf g}_2,\mathord{\bf v}_2)$,
similarly to (\ref{eq:energy-estimates:4}) we set
\begin{eqnarray}
  \nonumber & &\langle U_1,U_2\rangle_{s-1,\delta,A^0}\\ \nonumber&:=&
  \sum_{j=0}^\infty 2^{( \frac{3}{2} + \delta)2j} \int\left[ 
    \Lambda^{s-1}\left((\psi_j^2 \mathord{\bf
        g}_1)_{(2^j)}\right)\right]^T \left[  
    \Lambda^{s-1}\left((\psi_j^2
      \mathord{\bf g}_2)_{(2^j)}\right)\right] dx\\ \nonumber &+&
  \sum_{j=0}^\infty 2^{( \frac{3}{2} + \delta+1)2j} \left[ 
    \Lambda^{s-1}\left((\psi_j^2 \partial
      \mathord{\bf g}_1)_{(2^j)}\right)\right]^T (\widetilde{A}^0)_{(2^j)}
  \left[\Lambda^{s-1}\left((\psi_j^2 \partial
      \mathord{\bf g}_2)_{(2^j)}\right)\right] dx\\
  &+& \sum_{j=0}^\infty 2^{( \frac{3}{2} + \delta+2)2j} \left[ 
    \Lambda^{s-1}\left((\psi_j^2 \mathord{\bf
        v}_1)_{(2^j)}\right)\right]  (\widehat{A}^0)_{(2^j)} 
  \left[  \Lambda^{s-1}\left((\psi_j^2
      \mathord{\bf v}_2)_{(2^j)}\right)\right] dx\ \ \ \label{eq:cor:4}
\end{eqnarray}
and $\n U \n^2_{H_{s-1,\delta,A^0}}=\left\langle
  U,U\right\rangle_{s-1,\delta,A^0}$.  Since $A^0$ is positive
definite, $\n U\n_{H_{s-1,\delta,A^0}}\sim \n U\n_{H_{s-1,\delta}}$

We can now repeat all the arguments and estimations of subsections
\ref{subsec:Energy_estimates}-\ref{sec:Well-posedness}, which are
applied term-wise to the norm (\ref{eq:cor:3}) and inner-product
(\ref{eq:cor:4}), and in this way we extend Theorem
\ref{thr:publ-broken:1} to the product space:

\begin{thm}[Well posedness of hyperbolic systems in product spaces]
  \label{thm:cor:1} Let $s-1>\frac{5}{2}$, $\delta \geq -\frac{3}{2}$
  and assume the coefficient of (\ref{eq:cor:2}) are of the form
  (\ref{eq:cor:6}) and (\ref{eq:hyperbolic-cor:3}). If $U_0\in
  H_{s-1,\delta}\times H_{s-1,\delta+1}\times H_{s-1,\delta+2}$ and
  satisfies
  \begin{equation}
    \label{eq:cor:10} \frac{1}{\mu}
    U_0^TU_0 \leq U_0^T A^0U_0
    \leq
    \mu U_0^TU_0, \qquad \mu\in
    \setR^+
  \end{equation}
  then there exits a positive $T$ which depends on
  $\n U_0\n_{H_{s-1,\delta}}$ and a unique $U(t,x)$ a solution to
  (\ref{eq:cor:2}) such that $U(0,x)=U_0(x)$ and in addition it
  satisfies
  \begin{equation}
    \label{eq:appendix:12}
    U\in C([0,T],H_{s-1,\delta}\times  H_{s-1,\delta+1}\times
    H_{s-1,\delta+2})\cap C^1([0,T],H_{s-2,\delta+1}\times
    H_{s-2,\delta+2}\times H_{s-2,\delta+3}). 
  \end{equation}
\end{thm}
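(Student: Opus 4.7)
The plan is to follow the same four-stage scheme that produced Theorem \ref{thr:publ-broken:1}, but executed \emph{block by block} on the decomposition $U=(\mathord{\bf g},\partial\mathord{\bf g},\mathord{\bf v})$, with the three different weights $\delta,\delta+1,\delta+2$ tracked separately. The reason to hope this works is the block structure in (\ref{eq:cor:6}) and (\ref{eq:hyperbolic-cor:3}): $A^0$ is genuinely block diagonal, so the inner product (\ref{eq:cor:4}) splits into three pieces, and the only coupling in $B$ is the constant matrix $\mathbf{b}_{10\times 40}$ (which is a bounded zero-order operator sending the $\partial\mathord{\bf g}$-block into the $\mathord{\bf g}$-equation) and $\widetilde B(\mathord{\bf g},\partial\mathord{\bf g},\mathord{\bf v})$ (which lives in the middle block only). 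Thus within each block the weight is \emph{constant}, and the only place where weights must be compared is in the lower-order inhomogeneity.

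First I would establish the analogue of Lemma \ref{lem:energy-estimates:2} for the product norm $\n\cdot\n_{H_{s-1,\delta,A^0}}$. For each block I run the same dyadic decomposition, partition of unity $\{\psi_j\}$, and Kato--Ponce commutator argument that produced (\ref{eq:energy-estimates:23}); the outputs are three inequalities, one with weight $\delta$ (trivial, since the top block of $A^0$ is the identity and the top block of $B$ is the constant matrix $\mathbf{b}$ acting on $\partial\mathord{\bf g}$), one with weight $\delta+1$ for the $\partial\mathord{\bf g}$-block, and one with weight $\delta+2$ for the $\mathord{\bf v}$-block. Summing them with the same coefficients as in (\ref{eq:cor:4}) gives
\begin{equation*}
\frac{d}{dt}\n U(t)\n_{H_{s-1,\delta,A^0}}^2 \leq C\left(\mu\n U(t)\n_{H_{s-1,\delta,A^0}}^2+1\right),
\end{equation*}
where $C$ depends on the product norm of $(A^\alpha-I,B)$ and on $\|\partial_t A^0\|_{L^\infty}$.

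Next, I would construct the iteration as in (\ref{eq:constr-iter:5}), approximating the initial datum $U_0$ by a sequence $\{U_0^k\}\subset C_0^\infty$ converging in the product space and satisfying $(A)$--$(C)$ of Section \ref{sec:constr-iter}. The induction argument of Lemma \ref{lem:boundness:1} is repeated verbatim, using the Moser and algebra estimates block-wise: on the middle block one uses $H_{s-1,\delta+1}\cdot H_{s-1,\delta+1}\hookrightarrow H_{s-1,\delta+2}$-type estimates from Proposition \ref{prop:Properties:3}, and similarly for the third block. Then a contraction in the lower product norm $L^2_\delta\times L^2_{\delta+1}\times L^2_{\delta+2}$ is obtained exactly as in Lemma \ref{lem:contr-lower-norm1}, interpolation via Proposition \ref{prop:Properties:5} yields strong convergence in $H_{s-1-\varepsilon,\cdot}$, and weak convergence in the full product space follows as in Lemma \ref{lem:Weak_Convergence:1}. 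Uniqueness and continuity in norm are then concluded by transcribing Theorems \ref{Uniqueness} and \ref{Continuation}. Finally, since the first ten components are driven by $\partial_t\mathord{\bf g}=\mathbf{b}\,\partial\mathord{\bf g}$, the $C^1$-regularity and the inequality (\ref{eq:hyper-cor:1}) transfer the $H_{s-1,\delta}\times H_{s-1,\delta+1}$-regularity of $(\mathord{\bf g},\partial\mathord{\bf g})$ to the $H_{s,\delta}$-regularity claimed for $g_{\alpha\beta}-\eta_{\alpha\beta}$ in the statement preceding this theorem.

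The main technical obstacle I anticipate is the bookkeeping in the three ``types'' of summations that appeared in the proof of Lemma \ref{lem:energy-estimates:1}: with mixed weights the intermediate terms $E_1,\dots,E_6$ no longer have a single index $(\frac32+\delta)2j$ but rather $(\frac32+\delta+m)2j$ with $m\in\{0,1,2\}$, and one must verify that every cross term between blocks carries a weight consistent with the output space of the corresponding row of $B$. This is exactly what the structural identities embodied in Proposition \ref{prop:Properties:3} and Theorem \ref{thr:Appendix:5} guarantee: the matrix $\widetilde B$ maps $H_{s-1,\delta}\times H_{s-1,\delta+1}\times H_{s-1,\delta+2}$ into $H_{s-1,\delta+1}$ because each of its entries is a smooth function of $U$ times an appropriate combination that raises the weight by $1$ relative to the $\partial\mathord{\bf g}$-slot. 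Verifying this slot-by-slot is tedious but parallel to the already-performed single-weight calculation, which is why the authors are justified in merely indicating that the argument is ``applied term-wise.''
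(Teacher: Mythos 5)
Your proposal is correct and follows essentially the same route as the paper: the authors' entire "proof" of this theorem is the remark that the arguments of the single-weight case are repeated term-wise on the product norm (\ref{eq:cor:3}) and the inner product (\ref{eq:cor:4}), and your block-by-block elaboration (energy estimate, iteration, boundedness, contraction in the lower product norm, weak convergence, uniqueness, continuity) is a faithful expansion of that plan, correctly using the block structure of $A^0$ and $B$ and the mapping properties from Proposition \ref{prop:Properties:3} and Theorem \ref{thr:Appendix:5}.
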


\begin{cor}[Solution to the gravitational field and the fluid]
  \label{cor:hyper_cor:1} Let
  $\frac{7}{2}<s<\frac{2}{\gamma-1}+\frac{3}{2}$ and
  $\delta>-\frac{3}{2}$.  Then there exists a positive $T$, a unique
  gravitational field $g_{\alpha\beta}$ solution to
  (\ref{eq:publ-broken:19}) and a unique $(w,u^\alpha)$ solution to
  Euler equation (\ref{eq:Initial:2}) such that
  \begin{equation}
    \label{eq:hyper_cor:5}
    g_{\alpha\beta}-\eta_{\alpha\beta}\in C([0,T],H_{s,\delta})\cap
    C^1([0,T],H_{s-1,\delta+1}) 
  \end{equation}
  and
  \begin{equation}
    \label{eq:hyper_cor:6}
    (w,u^a,u^0-1)\in C([0,T],H_{s-1,\delta+2})\cap C^1([0,T],H_{s-2,\delta+3}).
  \end{equation}
\end{cor}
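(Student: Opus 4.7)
The plan is to combine the initial-data construction of Section~\ref{sec:The_Initial_Data} with the product-space well-posedness Theorem~\ref{thm:cor:1}, and then to upgrade the regularity of the metric component from $H_{s-1,\delta}$ to $H_{s,\delta}$ via the weighted Sobolev inequality (\ref{eq:hyper-cor:1}).

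First I would invoke Theorem~\ref{thm:conformal_method:1} and Corollary~\ref{cor:conformal_method:2} to obtain initial data $(h_{ab}-I, K_{ab}) \in H_{s,\delta} \times H_{s-1,\delta+1}$ for the gravitational field and $(w|_M, \bar u^a|_M, \bar u^0|_M - 1) \in H_{s-1,\delta+2}$ for the fluid, solving the constraints (\ref{eq:intro:6}) and the compatibility relations (\ref{eq:intro:7}). Using (\ref{eq:Functions_spaces:4}) to fix the remaining components of $g_{\alpha\beta}|_M$ and $\partial_0 g_{ab}|_M$, and the harmonic gauge (\ref{eq:publ-broken:16}) to determine the transversal derivatives $\partial_0 g_{0\beta}|_M$, these data assemble into an initial vector $U_0$ of the form (\ref{eq:hyper_cor:7}) in $H_{s,\delta} \times H_{s-1,\delta+1} \times H_{s-1,\delta+2}$, which embeds continuously into the product space $H_{s-1,\delta} \times H_{s-1,\delta+1} \times H_{s-1,\delta+2}$ required by Theorem~\ref{thm:cor:1}.

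The positive-definiteness hypothesis (\ref{eq:cor:10}) on $A^0(U_0)$ I would verify block by block: the leading block in (\ref{eq:cor:6}) is the identity $I_{10}$ coming from $\partial_t g_{\alpha\beta}=h_{\alpha\beta 0}$; the gravitational block $\widetilde{A^0}(\mathord{\bf g}_0)$ is positive definite because $g_{00}|_M = -1$ and $h_{ab}$ is close to the Euclidean metric, so the standard harmonic reduction (\ref{eq:publ-broken:19}) produces the required inequality; the Euler block $\widehat{A^0}$ is positive definite by the deformation argument in Section~\ref{sec:euler-equat-writt}, which relies on the causality bound $\sigma^2 = \gamma K w^2 < 1$ guaranteed by the reconstruction Theorem~\ref{thm:The_Initial_Data:1} that enforces $0 \le w < (\sqrt{\gamma K})^{-1/2}$. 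Applying Theorem~\ref{thm:cor:1} with $s-1 > 5/2$ (i.e.\ $s > 7/2$) and $\delta \ge -3/2$ then yields a positive $T$ and a unique solution
\begin{equation*}
U \in C\bigl([0,T], H_{s-1,\delta} \times H_{s-1,\delta+1} \times H_{s-1,\delta+2}\bigr) \cap C^1\bigl([0,T], H_{s-2,\delta+1} \times H_{s-2,\delta+2} \times H_{s-2,\delta+3}\bigr).
\end{equation*}

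The main subtlety, as anticipated, is to promote $\mathord{\bf g} = g_{\alpha\beta} - \eta_{\alpha\beta}$ from $C([0,T], H_{s-1,\delta})$ to $C([0,T], H_{s,\delta})$, since Theorem~\ref{thm:cor:1} only controls $\mathord{\bf g}$ in the weaker weighted norm while controlling $\partial\mathord{\bf g}$ independently in $H_{s-1,\delta+1}$. For this I apply the inequality (\ref{eq:hyper-cor:1}) to the increments $\mathord{\bf g}(t) - \mathord{\bf g}(t')$: the $C([0,T], H_{s-1,\delta})$ continuity of $\mathord{\bf g}$ together with the $C([0,T], H_{s-1,\delta+1})$ continuity of $\partial\mathord{\bf g}$ transfer via (\ref{eq:hyper-cor:1}) to continuity of $\mathord{\bf g}$ in $H_{s,\delta}$, yielding (\ref{eq:hyper_cor:5}); the $C^1([0,T], H_{s-1,\delta+1})$ part is immediate from $\partial_t g_{\alpha\beta} = h_{\alpha\beta 0}$, which is a component of $\partial\mathord{\bf g}$. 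The regularity (\ref{eq:hyper_cor:6}) for $(w, u^a, u^0-1)$ is read off directly from the $\mathord{\bf v}$-component of $U$ and its first time derivative supplied by Theorem~\ref{thm:cor:1}, while uniqueness of the pair $(g_{\alpha\beta}, (w, u^\alpha))$ follows from the uniqueness clause of that theorem applied to the assembled vector $U$.
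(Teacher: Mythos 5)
Your proposal is correct and follows essentially the same route as the paper: construct the initial data from the constraint/compatibility theorems, verify the positive-definiteness condition (\ref{eq:cor:10}) for $A^0(U_0)$, apply the product-space well-posedness Theorem~\ref{thm:cor:1}, and recover the full $H_{s,\delta}$ regularity of $g_{\alpha\beta}-\eta_{\alpha\beta}$ via inequality (\ref{eq:hyper-cor:1}). The paper's proof is considerably terser (it cites continuity of $A^0$ rather than checking the blocks, and compresses the continuity upgrade into one sentence), so your version simply fills in details the paper leaves implicit.
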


\begin{proof}[of Corollary \ref{cor:hyper_cor:1}]
  Theorem \ref{thm:Function_space:1}  implies that the initial
data
  for $g_{\alpha\beta}$ belong to $H_{s,\delta}$ and initial data for
  $(w,u^\alpha)$ are in $H_{s-1,\delta+2}$. Thus $U(0,\cdot)\in
  H_{s-1,\delta}\times H_{s-1,\delta+1}\times H_{s-1,\delta+2}$, where
  $U$ is given by (\ref{eq:hyper_cor:7}). In addition, the continuity
  of $A^0$ implies that the
  vector $U(0,\cdot)$ satisfies (\ref{eq:cor:10}). Therefore Theorem
  \ref{thm:cor:1} with inequality (\ref{eq:hyper-cor:1}) give the
  desired result.  \BeweisEnde
\end{proof}


\section{Quasi Linear Elliptic Equations in $H_{s,\delta}$}
\label{sec:elliptic}

In this section we will establish the elliptic theory in
$H_{s,\delta}$ which is essential for the solution of the constraint
equations.  We will extend earlier results in weighted Sobolev
spaces of integer order  which were obtained by Cantor
\cite{cantor79}, Choquet-Bruhat and Christodoulou 
\cite{choquet--bruhat81:_ellip_system_h_spaces_manif_euclid_infin%
} and
Christodoulou and O'Murchadha \cite{OMC} to the fractional ordered
spaces. The essential tool is the a priori estimate
(\ref{eq:a_priori_estimates_weighted:6}) and proving it requires first
to
establish  an analogous a priori estimate in Bessel potential
spaces. Our approach is based on the techniques of
Pseudodifferential Operators which have symbols with limited
regularity and we are adopting ideas being presented in Taylor's
books  \cite{Taylor91
}
and 
\cite{taylor00
}.  A different method
was derived recently by Maxwell \cite{maxwell06:_rough_einst}.

\subsection{A priori estimates for linear elliptic systems in  $H^s$}
\label{subsec:a_priori_estimate_Bessel} In this section we
consider a second order homogeneous elliptic system
\begin{equation}
    \label{eq:a_priori_estimate_Bessel:1}
    (Lu)^i=\sum_{\alpha,\beta,j} a^{\alpha \beta}_{ij}(x)
    \partial_\alpha\partial_\beta u^j,
\end{equation}
where the indexes $i,j=1,,,N$ and $\alpha,\beta=1,2,3$ (since only
$ \mathbb{R}^3$ is being discussed in this paper). We will use the
convention
\begin{equation}
    \label{eq:a_priori_estimate_Bessel:2}
    Lu = A(x) D^2u,
\end{equation}
 where  $A(x)=a_{ij}^{\alpha\beta}(x)$,
$(D^2u)^j_{\alpha\beta}=\partial_\alpha
\partial_\beta u^j$ and
$(A(x)D^2u)_i=a_{ij}^{\alpha\beta}(x)(D^2u)^j_{\alpha\beta}$. 
The symbol of (\ref{eq:a_priori_estimate_Bessel:1}) is $N\times N$
matrix $A(x,\xi)$, defined for all $\xi\in \setC^3$ as follows:
\begin{equation}
 \label{eq:a_priori_estimate_Bessel:3}
 A(x,\xi)_{ij}:=
 -\sum_{\alpha,\beta}
  a^{\alpha\beta}_{ij}(x)i\xi_\alpha i\xi_\beta.
\end{equation}

The following definitions are due to Morrey \cite{morrey54:_secon%
}.
\begin{defn}

\quad
\begin{enumerate}
 \item The system
(\ref{eq:a_priori_estimate_Bessel:1}) is {\bf elliptic} provided
that
\begin{equation}
    \label{eq:a_priori_estimate_Bessel:4}
 \det\left(A(x,\xi)\right)=  {\det}\left(\sum_{\alpha,\beta} a^{\alpha
    \beta}_{ij}(x)\xi_\alpha\xi_\beta\right)\not=0, \qquad
    \text{for all\qquad} 0\not=\xi\in \mathbb{R}^3;
\end{equation}

\item The system (\ref{eq:a_priori_estimate_Bessel:1}) is {\bf
strongly elliptic} provided that for some positive $\lambda$
\begin{equation}
    \label{eq:a_priori_estimate_Bessel:5}
  \langle A(x,\xi)\eta,\eta\rangle=  \sum_{\alpha,\beta,i,j}
  a^{\alpha\beta}_{ij}(x)\xi_\alpha\xi_\beta \eta^i\eta^j \geq
  \lambda 
    |\xi|^2|\eta|^2.
\end{equation}
\end{enumerate}
\end{defn}

Our main task is to obtain a priori estimate in the Bessel potential
spaces $H^s$ for the operator (\ref{eq:a_priori_estimate_Bessel:1})
whose coefficients $ a_{\alpha \beta}^{ij}$ belong to $ H^{s_2}$.  In
case $s$ and $ s_2$ are integers, then one may prove
(\ref{eq:a_priori_estimate_Bessel:8}) below by means of induction and
the classical results of Douglas and Nirenberg
\cite{douglis55:_inter
},
and Morrey
\cite{morrey54:_secon
}. We will employ techniques of Pseudodifferential calculus.

If the coefficients of the matrix $A$ belongs to $H^{s_2}$, then
$A(x,\xi)\in H^{s_2}S_{1,0}^2$, that is, $\|\partial_\xi^\alpha
A(\cdot,\xi)\|_{H^{s_2}}\leq C_\alpha
(1+|\xi|^{2})^{(2-|\alpha|)/2}$). We follow Taylor and decompose
\begin{equation}
    \label{eq:a_priori_estimate_Bessel:6}
    A(x,\xi)=A^{\#}(x,\xi)+A^b(x,\xi)
\end{equation}
in such way that a good parametrix can be constructed for
$A^{\#}(x,\xi)$, while  $A^b(x,\xi)$ will have  order less than 2.
According to Proposition 8.2 in \cite{taylor00}, for $s_2>
\frac{3}{2}$ there is $0<\delta<1$ such that
\begin{equation*}
    A^{\#}(x,\xi)\in S_{1,\delta}^2,\qquad\quad A^b(x,\xi)\in
    H^{s_2}S_{1,\delta}^{2-\sigma\delta},
    \qquad\quad\sigma=s_2-\frac{3}{2}
\end{equation*}
where  \begin{math}
  A^{\#}(x,\xi)=  \sum_{k=0}^\infty
    J_{\epsilon_k}A(x,\xi)\phi_k(\xi)
  \end{math}, $\epsilon_k=c2^{-k\delta}$.  Here $\{\phi_{k}\}$ is the
  Littlewood-Paley partition of unity, that is, ${\phi_0\in
    C_0^\infty(\mathbb{R}^3)}$, $\phi_0(0)=1$,
  $\phi_k(\xi)=\phi_0(2^{-k}\xi)-\phi_0(2^{-k+1}\xi)$ and
  $\sum_{k=0}^\infty \phi_k(\xi)=1$. The smoothing operator
  $J_\epsilon$ is defined as follows:
\begin{equation*}
  J_{\epsilon}f(x)=\phi_0(\epsilon D)f(x)=\left(
    \frac{1}{2\pi}\right)^{\frac{3}{2}}\int \epsilon^{-3} 
  \widehat{  \phi_0}(\frac{y}{\epsilon})f(x-y)dy,
\end{equation*}
where $\widehat{\phi}_0$ is the inverse Fourier transform. In order
that $A^{\#}$ will have a good parametrix we need to verify that it is
a strongly elliptic. Since the original operator is strongly elliptic,
\begin{equation*}
   \begin{split}
   &\sum_{\alpha,\beta,i,j} J_{\epsilon_k}a^{\alpha\beta}_{ij}(x)
   \phi_{k}(\xi)
    \xi_\alpha\xi_\beta \eta^i\eta^j\\= &
  \left( \frac{1}{2\pi}\right)^{\frac{3}{2}} \int
  \left(\sum_{\alpha,\beta,i,j}  \epsilon_k^{-3} 
  \widehat{  \phi_0}(\frac{y}{\epsilon_k})(y)a^{\alpha\beta}_{ij}(y-x)
\phi_{k}(\xi) \xi_\alpha\xi_\beta \eta^i\eta^j\right)dy \\
    \geq  &  \left( \frac{1}{2\pi}\right)^{\frac{3}{2}}\quad\lambda
    \phi_{k}(\xi)|\xi|^2|\eta|^2\int \epsilon_k^{-3} 
  \widehat{  \phi_0}(\frac{y}{\epsilon_k})dy
  = \lambda \phi_{k}(\xi)|\xi|^2|\eta|^2{\phi_{0}}(0)
  \\=\quad& \lambda \phi_{k}(\xi)|\xi|^2|\eta|^2
    \end{split}
\end{equation*}
 for each fixed $k$. Summing over the $k$ we have,
\begin{eqnarray*}
\langle A^{\#}(x,\xi)\eta,\eta\rangle =
\sum_{k=0}^\infty \sum_{\alpha,\beta,i,j} \left(J_{\epsilon_k}
a^{\alpha\beta}_{ij}\right)(x)
   \phi_{k}(\xi)
    \xi_\alpha\xi_\beta \eta^i\eta^j
  \geq
    \sum_{k=0}^\infty\lambda \phi_{k}(\xi)|\xi|^2|\eta|^2=
    \lambda |\xi|^2|\eta|^2,
\end{eqnarray*}
thus  (\ref{eq:a_priori_estimate_Bessel:5}) holds for $A^{\#}$. The
last step
assures that
\begin{math}
\|A^{\#}(x,\xi)^{-1} \| \leq \frac{1}{\lambda |\xi|^2}
 \end{math}
 and then it follows from the identity
 $\partial(A^{-1})=A^{-1}(\partial(A))A^{-1}$ that
\begin{equation*}
 \|\partial_x^\beta
\partial_\xi^\alpha(A^{\#}(x,\xi))^{-1} \| \leq C_{\alpha\beta}
(1+|\xi|^2)^{(-2-|\alpha|+\delta|\beta|)/2},
 \end{equation*}
that is,  $(A^{\#}(x,\xi))^{-1}\in S_{1,\delta}^{-2}$. Hence, the
operator $A^{\#}(x,D)$ has a parametrix  $E^{\#}(x,D)\in
OPS_{1,\delta}^{-2}$ satisfying
\begin{equation}
\label{eq:a_priori_estimate_Bessel:7}
 E^{\#}(x,D)A^{\#}(x,D)=I + S,
 \end{equation}
 where $S\in OPS^{-\infty}$ (See e.~g.~\cite{Taylor91
}  Section 0.4).

\begin{lem}[An a priori estimates in $H^s$]
  \label{lem:a_priori_estimate_Bessel:2}
 Let $Lu=A(x) D^2u$ be a strongly elliptic system  and assume
 $A\in H^{s_2}$, $s_2>\frac{3}{2} $ and $0\leq s-2\leq s_2$. Then
there is
 a constant $C$ such that
\begin{equation}
 \label{eq:a_priori_estimate_Bessel:8}
   \|u\|_{H^s}\leq C\left\{ \|Lu\|_{H^{s-2}}+  \|u\|_{H^{s-2}}
  \right\}.
\end{equation}
\end{lem}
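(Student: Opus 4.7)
\textbf{Proof plan for Lemma \ref{lem:a_priori_estimate_Bessel:2}.} The plan is to exploit the symbol decomposition $A(x,\xi)=A^{\#}(x,\xi)+A^{b}(x,\xi)$ already introduced in (\ref{eq:a_priori_estimate_Bessel:6}), together with the parametrix relation $E^{\#}(x,D)A^{\#}(x,D)=I+S$ with $S\in OPS^{-\infty}$ established in (\ref{eq:a_priori_estimate_Bessel:7}). The first step is to apply $E^{\#}(x,D)$ to the identity $A^{\#}(x,D)u=Lu-A^{b}(x,D)u$, which yields
\begin{equation*}
u=E^{\#}(x,D)Lu-E^{\#}(x,D)A^{b}(x,D)u-Su,
\end{equation*}
so the whole argument reduces to bounding each of the three right-hand terms in $H^{s}$.

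The first term is handled by the standard mapping property of pseudodifferential operators: since $E^{\#}\in OPS^{-2}_{1,\delta}$, it maps $H^{s-2}$ continuously into $H^{s}$ for every $s$, giving $\|E^{\#}Lu\|_{H^{s}}\leq C\|Lu\|_{H^{s-2}}$. The third term is even easier, because $S\in OPS^{-\infty}$ is smoothing, so $\|Su\|_{H^{s}}\leq C\|u\|_{H^{s-2}}$. Both of these fit directly into the desired right-hand side of (\ref{eq:a_priori_estimate_Bessel:8}).

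The heart of the argument, and the expected main obstacle, is the middle term $E^{\#}(x,D)A^{b}(x,D)u$. Here $A^{b}\in H^{s_{2}}S^{2-\sigma\delta}_{1,\delta}$ is a symbol of \emph{limited regularity} in $x$, but of strictly lower order $2-\sigma\delta<2$ (with $\sigma=s_{2}-\tfrac{3}{2}>0$). The plan is to invoke the continuity result for pseudodifferential operators whose symbols lie in $H^{r}S^{m}_{1,\delta}$ with $r>3/2$ (Proposition 2.1B of Taylor \cite{Taylor91} or the version in \cite{taylor00}): such an operator maps $H^{s+m}\to H^{s}$ provided $s$ lies in an admissible range determined by $r$ and $m$. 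Applied to $A^{b}$ this yields
\begin{equation*}
\|A^{b}(x,D)u\|_{H^{s-2}}\leq C\|u\|_{H^{s-\sigma\delta}},
\end{equation*}
and composing with $E^{\#}\in OPS^{-2}_{1,\delta}$ gives $\|E^{\#}A^{b}u\|_{H^{s}}\leq C\|u\|_{H^{s-\sigma\delta}}$. Verifying that the range condition $0\leq s-2\leq s_{2}$ places $s-\sigma\delta$ strictly between $s-2$ and $s$ is the delicate bookkeeping step, and is exactly the reason the hypothesis $s-2\leq s_{2}$ appears in the statement.

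Combining the three estimates produces
\begin{equation*}
\|u\|_{H^{s}}\leq C\bigl\{\|Lu\|_{H^{s-2}}+\|u\|_{H^{s-\sigma\delta}}+\|u\|_{H^{s-2}}\bigr\}.
\end{equation*}
Since $s-2<s-\sigma\delta<s$, the final step is a standard interpolation inequality for Bessel potential spaces: for every $\varepsilon>0$ there exists $C_{\varepsilon}$ with $\|u\|_{H^{s-\sigma\delta}}\leq\varepsilon\|u\|_{H^{s}}+C_{\varepsilon}\|u\|_{H^{s-2}}$. Choosing $\varepsilon$ small enough to absorb $\varepsilon\|u\|_{H^{s}}$ into the left-hand side yields (\ref{eq:a_priori_estimate_Bessel:8}) and completes the proof.
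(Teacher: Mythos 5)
Your proposal is correct and follows essentially the same route as the paper: the identity $u=E^{\#}Lu-Su-E^{\#}A^{b}u$ from the parametrix relation (\ref{eq:a_priori_estimate_Bessel:7}), the mapping property $A^{b}(x,D):H^{s-\sigma\delta}\to H^{s-2}$ from Taylor's Proposition 8.1, and the intermediate estimate $\|u\|_{H^{s-\sigma\delta}}\leq\epsilon\|u\|_{H^{s}}+C(\epsilon)\|u\|_{H^{s-2}}$ to absorb the remainder. The only cosmetic difference is that the paper does not belabor the placement of $s-\sigma\delta$ relative to $s-2$ and $s$; if $\sigma\delta\geq 2$ the middle term is bounded by $\|u\|_{H^{s-2}}$ outright, so no delicate bookkeeping is actually required there.
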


\begin{proof}[of Lemma \ref{lem:a_priori_estimate_Bessel:2}]
  We decompose $A(x,D)$ as in (\ref{eq:a_priori_estimate_Bessel:2})
  and let $E^{\#}(x,D)$ be the above parametrix, then by
  (\ref{eq:a_priori_estimate_Bessel:7})
\begin{equation}
 \label{eq:a_priori_estimate-Bessel:9}
E^{\#}(x,D)A(x,D)u=u+Su+E^{\#}(x,D)A^b(x,D)u.
\end{equation}
Since $ E^{\#}(x,D), S: H^{s-2} \to H^s$ are bounded,  
\begin{equation}
 \label{eq:a_priori_estimate_Bessel:10}
\|E^{\#}(x,D)A(x,D)u\|_{H^s}=\|E^{\#}(x,D)Lu\|_{H^s}\leq C
\|Lu\|_{H^{s-2}}
\end{equation}
and
\begin{equation}
 \label{eq:a_priori_estimate_Bessel:11}
\|Su\|_{H^s}\leq C \|u\|_{H^{s-2}}.
\end{equation}
According to \cite{taylor00} Proposition 8.1, (see also
\cite{Taylor91
} 
Proposition 2.1.J)
\begin{equation*}
A^b(x,D): H^{s-\sigma\delta}\to H^{s-2}.
\end{equation*}
Hence,
\begin{equation}
 \label{eq:a_priori_estimate_Bessel:12}
\|E^{\#}(x,D)A^b(x,D)u\|_{H^s}\leq C\|A^b(x,D)u\|_{H^{s-2}}\leq C
\|u\|_{H^{s-\sigma\delta}}.
\end{equation}
Using the intermediate estimate
\begin{math}
\|u\|_{H^{s-\sigma\delta}}\leq \epsilon \|u\|_{H^{s}}+
C(\epsilon)\|u\|_{H^{s-2}},
\end{math}
and combining it with(\ref{eq:a_priori_estimate-Bessel:9})-
(\ref{eq:a_priori_estimate_Bessel:11}),
we obtain the estimate (\ref{eq:a_priori_estimate_Bessel:8}).
\BeweisEnde
\end{proof}

\subsection{A priori estimates in  $H_{s,\delta}$}
\label{subsec:a_priori_estimates_weighted}
Our main task here is to extend the a priori estimate
(\ref{eq:a_priori_estimate_Bessel:8}) to $H_{s,\delta}$-spaces and for
 second order elliptic systems of the form:

\begin{equation}
 \label{eq:a_priori_estimates_weighted:1}
 \begin{split}
 (Lu)^i& =\sum_{\alpha,\beta,j} a^{\alpha
   \beta}_{ij}(x)\partial_\alpha\partial_\beta u^j 
    +\sum_{\alpha,j}b^{\alpha}_{ij}(x)\partial_\alpha u^j+
    \sum_{j} c_{ij}(x) u^j\\& =A(x)D^2u+B(x)(Du)+C(x)u.
    \end{split}
\end{equation}
Here $A(x)$ is as in the previous subsection,
$B(x)=b_{ij}^\alpha(x)$, $(Du)_\alpha^j=\partial_\alpha u^j$,
$(B(x)Du)_i=b_{ij}^\alpha(x)\partial_\alpha u^j$ and 
$C(x)=c_{ij}(x)$ is $N\times N$. We introduce
the following hypotheses:

\medskip\noindent
\textbf{Hypotheses} \begin{math}  (H)\end{math}
\begin{enumerate}
  \item[{\rm (H1)}]
  \label{eq:a_priori_estimates_weighted:H} $\sum
  a^{\alpha,\beta}_{i,j}(x)\eta^i\eta^j\xi_\alpha \xi\beta \geq
  \lambda|\eta|^2|\xi|^2 $ \quad (i.e. $L$ is strongly elliptic);
  \item[{\rm (H2)}] $\left(A(\cdot)-A_\infty\right)\in
  H_{s_2,\delta_2}, \quad B\in H_{s_1,\delta_1},\quad C\in
  H_{s_0,\delta_0}\\
  s_i \geq s-2, i=0,1,2,\quad s_2>\frac{3}{2}, s_1>\frac{1}{2}, s_0
  \geq 0\quad {\rm and}\quad \delta_i>\frac{1}{2} -i, \quad i=0,1,2$,
  the matrix $A_\infty$ has constant coefficients and $A_\infty D^2u$
  is an elliptic system, that is,
  \begin{math}{\rm det}\left(\sum     
\left(a_\infty\right)^{\alpha,\beta}_{ij}
\xi_\alpha\xi_\beta\right)\not
    =0\end{math}.
\end{enumerate}

We shall first derive an a priori estimate for a second order
homogeneous operator
\begin{equation*}
   { L_2u} = A(x) D^2u.
\end{equation*}

\begin{lem}[An a priori estimate for homogeneous operator in
$H_{s,\delta}$]
  \label{lem:a_priori_estimates_weighted:1}
Assume the operator ${L_2}$ satisfies hypotheses $(H)$ and $s \geq
2 $.
 Then
\begin{equation}
\label{eq:a_priori _estimates _weighted:2}
 \| u\|_{H_{s,\delta}} \leq C\left\{ \|{L_2}u\|_{H_{s-2,\delta+2}} +
\|
 u\|_{H_{s-2,\delta}}\right\},
\end{equation}
where the constant $C$ depends on $s,\delta$ and $\|
A-A_\infty\|_{H_{s_2,\delta_2}}$.
\end{lem}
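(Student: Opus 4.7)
The plan is to reduce the weighted estimate to the flat $H^s$ estimate of Lemma \ref{lem:a_priori_estimate_Bessel:2} by exploiting the dyadic definition of the $H_{s,\delta}$ norm (\ref{eq:energy-estimates:1}), applying Lemma \ref{lem:a_priori_estimate_Bessel:2} term-wise to the rescaled pieces $(\psi_j u)_{2^j}$, and then summing with the weights $2^{(3/2+\delta)2j}$. This mirrors the philosophy stated in the introduction: apply known $H^s$ facts piece-by-piece to the dyadic sum.

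First I would write $A(x) = A_\infty + \tilde A(x)$, so that $L_2 u = A_\infty D^2 u + \tilde A(x) D^2 u$, where $\tilde A \in H_{s_2,\delta_2}$ and $A_\infty D^2$ is a strongly elliptic constant-coefficient operator for which a direct weighted estimate is already available (the constant-coefficient case of Lemma \ref{lem:a_priori_estimates_weighted:3} cited in the subsequent proofs). Next, for a fixed $j$, let $v_j = (\psi_j u)_{2^j}$ and observe the scaling identity
\begin{equation*}
A_{2^j}(x) D^2 v_j(x) = 2^{2j}\bigl(L_2(\psi_j u)\bigr)_{2^j}(x)
= 2^{2j}\bigl(\psi_j L_2 u + [L_2,\psi_j] u\bigr)_{2^j}(x),
\end{equation*}
where $A_{2^j}(x) = A(2^j x)$ and the commutator $[L_2,\psi_j]u = A(x)(2 D\psi_j \cdot Du + (D^2\psi_j) u)$ is supported on an annulus where $\nabla \psi_j$ does not vanish. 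Applying Lemma \ref{lem:a_priori_estimate_Bessel:2} to $v_j$ with the rescaled operator $A_{2^j}(x) D^2$ gives
\begin{equation*}
\|v_j\|_{H^s} \;\leq\; C_j\Bigl\{ 2^{2j}\|(\psi_j L_2 u)_{2^j}\|_{H^{s-2}}
+ 2^{2j}\|([L_2,\psi_j]u)_{2^j}\|_{H^{s-2}} + \|v_j\|_{H^{s-2}}\Bigr\}.
\end{equation*}

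The crucial point is to control the constant $C_j$ uniformly in $j$. The constant depends on $\|A_{2^j} - A_\infty\|_{H^{s_2}} = \|\tilde A_{2^j}\|_{H^{s_2}}$; using a further partition-of-unity localization on the rescaled shell and the scaling (\ref{eq:energy-estimates:26}), this is dominated by the single dyadic piece $2^{(3/2+\delta_2)j}\|(\psi_j \tilde A)_{2^j}\|_{H^{s_2}}$, which is a \emph{single term} of the convergent series defining $\|\tilde A\|_{H_{s_2,\delta_2}}$, hence is bounded uniformly in $j$ and in fact tends to zero as $j \to \infty$. Multiplying the above by $2^{(3/2+\delta)2j}$, summing over $j$, and recognizing the resulting sums as norms in $H_{s-2,\delta+2}$ (first two terms) and $H_{s-2,\delta}$ (last term) produces
\begin{equation*}
\|u\|_{H_{s,\delta}}^2 \;\lesssim\; \|L_2 u\|_{H_{s-2,\delta+2}}^2 + \|[L_2,\psi_\cdot]u\|_\ast^2 + \|u\|_{H_{s-2,\delta}}^2,
\end{equation*}
where the middle term must be re-expressed in $H_{s-2,\delta}$-type quantities.

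The commutator bookkeeping is the main technical obstacle and requires the finite-overlap and differentiation properties of $\{\psi_j\}$ (cf.\ (\ref{eq:energy-estimates:13})): the terms $(D\psi_j \cdot Du)_{2^j}$ and $((D^2\psi_j)u)_{2^j}$, after the weight $2^{2j}$ and the prefactor $2^{(3/2+\delta)j}$, rearrange into partial sums of $\|Du\|_{H_{s-2,\delta+1}}$ and $\|u\|_{H_{s-2,\delta+2}}$ respectively, both of which are bounded by $\|u\|_{H_{s-1,\delta}} \lesssim \|u\|_{H_{s-2,\delta}} + \epsilon\|u\|_{H_{s,\delta}}$ by the intermediate estimate (Proposition \ref{prop:Properties:5}). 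The $\epsilon\|u\|_{H_{s,\delta}}$ piece, together with the term coming from $C_j\|\tilde A_{2^j}\|_{H^{s_2}}\|v_j\|_{H^s}$ (which is small for large $j$), is absorbed into the left-hand side; the low-$j$ contributions (including $j=0$, where the shell meets the origin) are handled separately using the constant-coefficient estimate for $A_\infty D^2$ on a bounded region and elliptic regularity. Combining these ingredients yields (\ref{eq:a_priori _estimates _weighted:2}) with $C$ depending only on $s$, $\delta$, and $\|A-A_\infty\|_{H_{s_2,\delta_2}}$.
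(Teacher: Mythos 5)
Your proposal is correct and follows essentially the same route as the paper: localize with the dyadic cutoffs, apply the flat $H^s$ a priori estimate of Lemma \ref{lem:a_priori_estimate_Bessel:2} to each rescaled piece, control the commutator $[L_2,\psi_j]u$ using $|\partial^\alpha\psi_j|\lesssim 2^{-|\alpha|j}$ so that it sums to $\|u\|_{H_{s-1,\delta}}$, and absorb via the intermediate estimate. The only differences are cosmetic: the paper works with $\psi_j^4$ rather than $\psi_j$ to make the commutator terms land directly on dyadic pieces of $u$ and $Du$, and it leaves implicit the uniform-in-$j$ control of the constant that you (correctly) spell out via the decay of the single dyadic piece of $\|A-A_\infty\|_{H_{s_2,\delta_2}}$; your separate treatment of low $j$ is not actually needed once that uniformity is in hand.
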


\begin{proof}[of Lemma \ref{lem:a_priori_estimates_weighted:1}]
According to Corollary \ref{cor:app11}, 
\begin{equation*}
  \sum_{j=0}^\infty 2^{\left( \frac{3}{2} +\delta \right)2j}
  \left\| \left( \psi_j^4u \right)_{2^j} \right\|_{H^s}^2
\end{equation*}
is an equivalent norm in $H_{s,\delta}$.  The main idea of the proof
is to apply Lemma \ref{lem:a_priori_estimate_Bessel:2} to each term of
the equivalent norm above.  We use the convention
(\ref{eq:a_priori_estimate_Bessel:2}) and compute
\begin{equation*}
\begin{split}
  L_2(\psi^4 u)&= \psi^4L_2\left(D^2u\right)
+\psi A(x)R(u,\psi),
   \end{split}
\end{equation*}
where
\begin{equation*}
R(u,\psi)_{\alpha\beta}=8\psi\left(D\psi\right)_\alpha\left(\psi
Du\right)_\beta+12\left(D\psi\right)_\alpha\left(D\psi\right)_\beta
(\psi u)+4\psi\left(D^2\psi\right)_{\alpha\beta}(\psi u).
\end{equation*}
Applying the a priori estimate (\ref{eq:a_priori_estimate_Bessel:8}),
we have
\begin{eqnarray}
\nonumber
 \| u \|_{H_{s,\delta}}^2
 &\lesssim&
  \sum_{j=0}^\infty 2^{\left( \frac{3}{2} +\delta \right)2j}
  \left\| \left( \psi_j^4u \right)_{2^j} \right\|_{H^s}^2\\\nonumber
  &\lesssim &
  \sum_{j=0}^\infty 2^{\left( \frac{3}{2} +\delta \right)2j}
  \left\{\left\|  L_2\left((\psi_j^4u) \right)_{2^j}
  \right\|_{H^{s-2}}^2 +
  \left\| \left( \psi_j^4u \right)_{2^j}
  \right\|_{H^{s-2}}^2\right\}\\\nonumber
  &\lesssim &
  \sum_{j=0}^\infty 2^{\left( \frac{3}{2} +\delta \right)2j}
  \left\{2^{4j}\left\|  \left(\psi_j^4 L_2(u) \right)_{2^j}
  \right\|_{H^{s-2}}^2 +
  \left\| \left( \psi_j^4u \right)_{2^j}
  \right\|_{H^{s-2}}^2\right\}\\\nonumber &  + &
\sum_{j=0}^\infty 2^{\left( \frac{3}{2} +\delta+2 \right)2j}
  \left\| \left(\psi_j A R(u,\psi_j)\right)_{2^j}
  \right\|_{H^{s-2}}^2\\\label{eq:a_priori_estimates_weighted:3}
 &\lesssim &
 \left\| L_2(u) \right\|_{H_{s-2,\delta+2}}^2+
  \left\| u \right\|_{H_{s-2,\delta}}^2
 + \| AR\|_{H_{s-2,\delta+2}}^2.
\end{eqnarray}
The assumption on $s_2$ and $\delta_2$ enable us to use  Proposition
\ref{prop:Properties:3}  and get
\begin{equation}
\label{eq:a_priori_estimates_weighted:4}
\begin{split} \| AR\|_{H_{s-2,\delta+2}} & \leq C\left(\|
(A-A_\infty)R\|_{H_{s-2,\delta+2}}+ \| A_\infty
R\|_{H_{s-2,\delta+2}}\right) \\&\leq
C\left(\|(A-A_\infty)\|_{H_{s_2,\delta_2}}+\|A_\infty\|\right)\|
R\|_{H_{s-2,\delta+2}}.
\end{split}
\end{equation}
Property (\ref{eq:const:4}) of $\psi_j$ and inequality
(\ref{eq:Properties:2}) imply
\begin{equation*}
 \| (\psi_j R)_{2^j}\|_{H^{s-2}}  \leq C\left(2^{-j}\| (\psi_j
 Du)_{2^j}\|_{H^{s-2}}+ 2^{-2j}\| (\psi_j
 u)_{2^j}\|_{H^{s-2}}\right)
\end{equation*}
and hence $\|R\|_{H_{s-2,\delta+2}}\leq C\left(
\|u\|_{H_{s-1,\delta}} + \|u\|_{H_{s-2,\delta}}\right)$. Thus,
inequalities (\ref{eq:a_priori_estimates_weighted:3}) and
(\ref{eq:a_priori_estimates_weighted:4}) yields
\begin{equation}
\label{eq:a_priori_estimtes_weighted:5}
 \|u\|_{H_{s,\delta}}\leq C\left\{
 \|L_2u\|_{H_{s-2,\delta+2}}+\left(
 \|A-A_\infty\|_{H_{s_2,\delta_2}}+1\right)\left(
 \|u\|_H{_{s-1,\delta}}+ \|u\|_{H_{s-2,\delta}}\right)\right\}.
\end{equation}
Invoking the intermediate  estimate 
\begin{math}
\|u\|_H{_{s-1,\delta}}\leq \sqrt{2\epsilon}\|u\|_H{_{s,\delta}}+
C(\epsilon) \|u\|_H{_{s-2,\delta}} 
\end{math} (see Proposition \ref{prop:Properties:4}) and  taking
$\epsilon$ so that $
C\left(
 \|A-A_\infty\|_{H_{s_2,\delta_2}}+1\right)\sqrt{2\epsilon}\leq
\frac{1}{2}$, we obtain from (\ref{eq:a_priori_estimtes_weighted:5})
  the desired estimate (\ref{eq:a_priori _estimates _weighted:2}).
\BeweisEnde
\end{proof}

\begin{lem}[An a priori estimate in $H_{s,\delta}$]
  \label{lem:a_priori_estimates_weighted:2}
  Assume the operator ${L}$ of the form
  (\ref{eq:a_priori_estimates_weighted:1}) satisfies hypotheses $(H)$
  and $s \geq 2 $.  Then
\begin{equation}
\label{eq:a_priori_estimates_weighted:6}
 \| u\|_{H_{s,\delta}} \leq C\left\{ \|{L}u\|_{H_{s-2,\delta+2}} + \|
 u\|_{H_{s-2,\delta}}\right\},
\end{equation}
where the constant $C$ depends on $s,\delta$ and the coefficients
of $L$.
\end{lem}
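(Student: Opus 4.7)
The plan is to reduce the full elliptic estimate to the homogeneous one already established in Lemma \ref{lem:a_priori_estimates_weighted:1}, by moving the lower order terms to the right hand side. Write $L_2u:=A(x)D^2u=Lu-B(x)Du-C(x)u$ and apply Lemma \ref{lem:a_priori_estimates_weighted:1} to obtain
\begin{equation*}
  \|u\|_{H_{s,\delta}}\leq C\bigl\{\|Lu\|_{H_{s-2,\delta+2}}
  +\|B\,Du\|_{H_{s-2,\delta+2}}+\|C\,u\|_{H_{s-2,\delta+2}}
  +\|u\|_{H_{s-2,\delta}}\bigr\}.
\end{equation*}
Thus the task reduces to absorbing the two product terms into
$\|u\|_{H_{s,\delta}}$ plus $\|u\|_{H_{s-2,\delta}}$.

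Next I would invoke the weighted multiplication estimate, Proposition \ref{prop:Properties:3}. For the first order term, with $B\in H_{s_1,\delta_1}$ and $Du\in H_{s-1,\delta+1}$, the product lands in $H_{s-2,\delta+2}$ precisely because (H2) gives $s_1\geq s-2$, $s_1>\tfrac12$, and $\delta_1>-\tfrac12$ (these conditions guarantee the Sobolev index $s-2\leq\min(s_1,s-1)$ and the weight inequality $\delta+2\leq\delta_1+(\delta+1)+\tfrac32$ required by Proposition \ref{prop:Properties:3}); hence
\begin{equation*}
  \|B\,Du\|_{H_{s-2,\delta+2}}\leq C\,\|B\|_{H_{s_1,\delta_1}}\,\|u\|_{H_{s-1,\delta+1}}.
\end{equation*}
The same proposition applied with $C\in H_{s_0,\delta_0}$ and $u\in H_{s-2,\delta}$, using the hypotheses $s_0\geq s-2$, $s_0\geq 0$, $\delta_0>\tfrac12$, yields
\begin{equation*}
  \|C\,u\|_{H_{s-2,\delta+2}}\leq C\,\|C\|_{H_{s_0,\delta_0}}\,\|u\|_{H_{s-2,\delta}}
\end{equation*}
(indeed the bound on $\delta_0$ is exactly what makes the weight shift $\delta\mapsto\delta+2$ admissible).

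The final step is to absorb the $H_{s-1,\delta+1}$ norm. By the intermediate estimate of Proposition \ref{prop:Properties:4},
\begin{equation*}
  \|u\|_{H_{s-1,\delta+1}}\leq \sqrt{2\varepsilon}\,\|u\|_{H_{s,\delta}}
  +C(\varepsilon)\,\|u\|_{H_{s-2,\delta}},
\end{equation*}
for any $\varepsilon>0$. Choosing $\varepsilon$ small enough that $C\|B\|_{H_{s_1,\delta_1}}\sqrt{2\varepsilon}\leq\tfrac12$ lets us move the resulting term to the left side, producing the desired inequality \eqref{eq:a_priori_estimates_weighted:6}, with a constant depending on $\|A-A_\infty\|_{H_{s_2,\delta_2}}$, $\|B\|_{H_{s_1,\delta_1}}$ and $\|C\|_{H_{s_0,\delta_0}}$. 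The main subtlety I expect is to verify (without error) that the index constraints $\delta_i>\tfrac12-i$ in (H2) line up exactly with the weight constraint of Proposition \ref{prop:Properties:3} for each of the three products $(A-A_\infty)D^2u$, $B\,Du$ and $C\,u$; the rest of the argument is then a routine assembly.
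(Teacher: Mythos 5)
Your overall strategy is the same as the paper's: apply the homogeneous estimate of Lemma \ref{lem:a_priori_estimates_weighted:1} to $L_2u=Lu-B\,Du-Cu$, bound the two lower order products via Proposition \ref{prop:Properties:3}, and absorb the remainder with an intermediate estimate. The reduction and the zero order term are fine, but your treatment of $B\,Du$ has a genuine gap in the weight bookkeeping. If you measure $Du$ in $H_{s-1,\delta+1}$, the product estimate gives $\|B\,Du\|_{H_{s-2,\delta+2}}\leq C\|B\|_{H_{s_1,\delta_1}}\|Du\|_{H_{s-1,\delta+1}}$, and by Corollary \ref{cor:const:1} the last factor is controlled only by the \emph{full} norm $\|u\|_{H_{s,\delta}}$, with no small parameter, so it cannot be absorbed. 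You instead replace it by $\|u\|_{H_{s-1,\delta+1}}$ (which does not follow: $u$ itself, unlike $\partial_i u$, gains no weight) and then invoke
\[
\|u\|_{H_{s-1,\delta+1}}\leq\sqrt{2\varepsilon}\,\|u\|_{H_{s,\delta}}+C(\varepsilon)\,\|u\|_{H_{s-2,\delta}}.
\]
This is not Proposition \ref{prop:Properties:4}, which interpolates only in the smoothness index at a \emph{fixed} weight $\delta$; the mixed-weight version is false. Testing on a bump supported near $|x|=R$, normalized so that $\|u\|_{H_{s,\delta}}\sim\|u\|_{H_{s-2,\delta}}\sim1$, one finds $\|u\|_{H_{s-1,\delta+1}}\sim R\to\infty$, so no choice of $\varepsilon$ and $C(\varepsilon)$ works uniformly.

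The repair is exactly the paper's bookkeeping: measure $Du$ in $H_{s-2,\delta+1}$ rather than $H_{s-1,\delta+1}$, so that Corollary \ref{cor:const:1} yields $\|Du\|_{H_{s-2,\delta+1}}\leq\|u\|_{H_{s-1,\delta}}$ --- same weight $\delta$, smoothness strictly below $s$ --- and then Proposition \ref{prop:Properties:4} applies verbatim to give $\|u\|_{H_{s-1,\delta}}\leq\sqrt{2\varepsilon}\|u\|_{H_{s,\delta}}+C(\varepsilon)\|u\|_{H_{s-2,\delta}}$, after which the absorption is legitimate. (This trades one derivative on $Du$ for the weight gain; the smoothness hypothesis of Proposition \ref{prop:Properties:3} then becomes $s_1+(s-2)>(s-2)+\tfrac32$, so strictly one needs $s_1>\tfrac32$ or an interpolation at an intermediate index $s-1-\eta$ with $\eta>0$ small. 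The essential point, however, is that whatever norm of $u$ you end with must carry the weight $\delta$ and a smoothness index strictly less than $s$; your version satisfies neither requirement.)
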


\begin{proof}[of Lemma \ref{lem:a_priori_estimates_weighted:2}] By
Lemma
\ref{lem:a_priori_estimates_weighted:1},
\begin{equation*}
\begin{split}
 \| u\|_{H_{s,\delta}} &\leq C\left\{ \|{L_2}u\|_{H_{s-2,\delta+2}} +
\|
 u\|_{H_{s-2,\delta}}\right\}\\&
 \leq C\left\{ \|{L}u\|_{H_{s-2,\delta+2}} + \|
 u\|_{H_{s-2,\delta}}+ \|(L-L_2)u\|_{H_{s-2,\delta+2}}\right\},
 \end{split}
\end{equation*}
where
\begin{math}
  (L-L_2)u=B(x)(Du)+C(x)u.
\end{math}
Hypothesis (H2) together with Corollary \ref{cor:const:1} and
Proposition \ref{prop:Properties:3} give
\begin{equation}
\label{eq:a_priori_estimates_weighted:7}
\begin{split}
 \| B(x)(Du)\|_{H_{s-2,\delta+2}} \lesssim \|B\|_{H_{s_1,\delta_1}} \|
 D u\|_{H_{s-2,\delta+1}}  \lesssim \|B\|_{H_{s_1,\delta_1}} \|
 u\|_{H_{s-1,\delta}},
 \end{split}
\end{equation}
and
\begin{equation*}
 \| C(x)u\|_{H_{s-2,\delta+2}} \lesssim \|C|\|_{H_{s_0,\delta_0}}
 \|u\|_{H_{s-2,\delta}}. 
\end{equation*}
Finally, we apply the intermediate estimate, Proposition
\ref{prop:Properties:4}, to the right hand side of
(\ref{eq:a_priori_estimates_weighted:7}) and by taking $\epsilon$
sufficiently small we obtain (\ref{eq:a_priori_estimates_weighted:6}).
\BeweisEnde
\end{proof}

\begin{lem}[Isomorphism of an operator with constant coefficients]
  \label{lem:a_priori_estimates_weighted:3}
  Let $A_\infty u:=A_\infty D^2u$ be a homogeneous elliptic system
  with constant coefficients. Then for any $s \geq 2$ and
  $-\frac{3}{2}<\delta<-\frac{1}{2}$, the operator $A_\infty:
  H_{s,\delta+2}\to H_{s-2,\delta}$ is isomorphism satisfying
\begin{equation}
\label{eq:a_priori_estimates_weighted:8}
 \| u\|_{H_{s,\delta}} \leq C\| A_\infty D^2
 u\|_{H_{s-2,\delta+2}}.
\end{equation}
\end{lem}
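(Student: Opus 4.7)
The plan is to prove the two halves of the isomorphism separately and then to absorb the lower-order term from the a priori estimate. Since $A_\infty$ is a special case of the operators covered by Lemma~\ref{lem:a_priori_estimates_weighted:2} (trivially $A-A_\infty\equiv 0$, $B\equiv 0$, $C\equiv 0$), one already has
\begin{equation*}
\|u\|_{H_{s,\delta}}\ \leq\ C\bigl(\|A_\infty D^2u\|_{H_{s-2,\delta+2}}+\|u\|_{H_{s-2,\delta}}\bigr),
\end{equation*}
so it suffices to exhibit a bounded inverse on $H_{s-2,\delta+2}$ whose image lies in $H_{s,\delta}$; the full estimate~(\ref{eq:a_priori_estimates_weighted:8}) will then follow from the open mapping theorem, and injectivity together with surjectivity will give the isomorphism.

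For injectivity I would argue as follows: if $A_\infty D^2 u = 0$ with $u \in H_{s,\delta}$, $\delta<-\tfrac{1}{2}$ and $s\geq 2$, the embedding Theorem~\ref{thr:Appendix:2} shows that $u$ is continuous and $u(x)\to 0$ as $|x|\to\infty$. A linear change of coordinates (or a diagonalization over $\mathbb{C}$) reduces the system $A_\infty D^2 u = 0$ to a collection of scalar constant-coefficient homogeneous second-order elliptic equations; each component is then a decaying solution and the classical Liouville theorem forces $u\equiv 0$. For surjectivity I would construct $u$ by convolution with the matrix fundamental solution $G$ of $A_\infty D^2$: the Fourier symbol $M(\xi)=\sum a^{\alpha\beta}_\infty\xi_\alpha\xi_\beta$ is invertible for $\xi\neq 0$ with $M(\xi)^{-1}$ homogeneous of degree $-2$, so $G$ is smooth off the origin, homogeneous of degree $-1$ in $\mathbb{R}^3$, and $u:=G*f$ is a classical solution of $A_\infty D^2 u = f$. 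What remains is to show $u\in H_{s,\delta}$.

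The main obstacle is establishing the boundedness of $G\ast\,\cdot\,\colon H_{s-2,\delta+2}\to H_{s,\delta}$ at fractional regularity. For integer $s$ the result reduces, after an intertwining Fourier multiplier with symbol $|\xi|^2 M(\xi)^{-1}$, to the classical Nirenberg-Walker-Cantor isomorphism for the Laplacian on weighted Sobolev spaces, valid precisely in the window $-\tfrac{3}{2}<\delta<-\tfrac{1}{2}$. For fractional $s$ I would proceed term-wise against the dyadic definition~(\ref{eq:energy-estimates:1}) of the $H_{s,\delta}$-norm: decompose $f=\sum_j\psi_j^2 f$, observe the scaling identity $(G\ast g)_{2^j}=2^{2j}\,G\ast g_{2^j}$ that stems from the degree $-1$ homogeneity of $G$, and for each fixed $j$ treat $G\ast(\psi_j^2 f)$ as a standard $H^{s-2}\to H^{s}$ operator on a unit-scale annulus via the flat Bessel potential theory (reduction to the Laplacian and Calder\'on-Zygmund estimates). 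The cross terms, where $G$ transports mass between annuli with indices $j\neq k$, are governed by the $(1+|x|)^{-1}$ decay of the kernel and summed via a Schur-type argument; the range $-\tfrac{3}{2}<\delta<-\tfrac{1}{2}$ is exactly what ensures the geometric decay of these cross contributions and the convergence of the resulting weighted $\ell^2$ sum. This dyadic interaction analysis, rather than the symbolic calculus, is the delicate part of the argument, and it is where the sharp range of $\delta$ enters.
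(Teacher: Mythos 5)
Your route is genuinely different from the paper's, and much heavier than it needs to be. The paper disposes of this lemma in a few lines: for integer $s$ the isomorphism $A_\infty:H_{m,\delta}\to H_{m-2,\delta+2}$ in the window $-\frac{3}{2}<\delta<-\frac{1}{2}$ is quoted from Choquet-Bruhat and Christodoulou (in the equivalent integral norm (\ref{eq:const:1}), transferred to the dyadic norm via Theorem \ref{thm:a1}), and the fractional case is then immediate from the complex interpolation identity $H_{s_\theta,\delta}=[H_{m_0,\delta},H_{m_1,\delta}]_\theta$ of Theorem \ref{const:thm:1} applied to the bounded inverse $A_\infty^{-1}$. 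In other words, once the integer case is granted, the entire dyadic kernel analysis you propose is unnecessary; interpolation gives the fractional estimate for free. Your plan instead re-derives the integer case from the fundamental solution and then attacks fractional $s$ head-on, which is where the real difficulty concentrates.

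Two concrete gaps remain in your write-up. First, the injectivity step: a general constant-coefficient elliptic \emph{system} $a^{\alpha\beta}_{ij}\partial_\alpha\partial_\beta u^j$ cannot in general be decoupled into scalar equations by a linear change of coordinates or a diagonalization, so the reduction to the classical scalar Liouville theorem does not go through as stated. The correct elementary argument is on the Fourier side: $M(\xi)\hat u(\xi)=0$ with $\det M(\xi)\neq0$ for $\xi\neq0$ forces $\supp\hat u\subset\{0\}$, so $u$ is a (vector-valued) polynomial, which vanishes identically by the decay supplied by Theorem \ref{thr:Appendix:2}. Second, and more seriously, the boundedness of $G\ast\,\cdot:H_{s-2,\delta+2}\to H_{s,\delta}$ at fractional $s$ is the entire content of the lemma, and your treatment of it is a program rather than a proof: the Schur-type summation of the off-diagonal blocks $j\neq k$, and the claim that $-\frac{3}{2}<\delta<-\frac{1}{2}$ is exactly what makes that sum converge, are asserted but not established. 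Since this is precisely the delicate point (and the one the paper avoids by interpolation), the proposal as it stands does not constitute a complete proof.
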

\begin{proof}[of  Lemma \ref{lem:a_priori_estimates_weighted:3}]
  Both statements are true when $s$ is an integer and under the norm
  (\ref{eq:const:1}) (see e.~g.~\cite{
    choquet--bruhat81:_ellip_system_h_spaces_manif_euclid_infin},
  Theorem 5.1) and by Theorem \ref{thm:a1} they hold also in the
  $H_{s,\delta}$ norm (\ref{eq:weighted:4}). For $s$ between two
  integers $m_0$ and $m_1$, we have $s=s_\theta=\theta
  m_0+(1-\theta)m_1$ and $s-2=s_\theta-2=\theta(
  m_0-2)+(1-\theta)(m_1-2)$, where $0<\theta<1$. The interpolation
  Theorem \ref{const:thm:1} implies
\begin{equation*}
  H_{s,\delta}=[H_{m_0,\delta}, H_{m_1,\delta}]_\theta\quad \text{and}
\quad
  H_{s-2,\delta}=[H_{m_0-2,\delta}, H_{m_1-2,\delta}]_\theta.
\end{equation*}
Since $A_\infty^{-1}: H_{m_i-2,\delta}\to H_{m_i,\delta+2}$, $i=0,1$,
is continuous, it follows from interpolation theory that
$A_\infty^{-1}: H_{s_\theta-2,\delta}\to H_{s_\theta,\delta+2}$ is
also continuous (see
e.~g.~\cite{triebel95
}).  Hence (\ref{eq:a_priori_estimates_weighted:8}) holds.
\BeweisEnde
\end{proof}

The next lemma improves the a priori estimate
(\ref{eq:a_priori_estimates_weighted:6}).

\begin{lem}[Improved a priori estimate]
  \label{lem:a_priori_estimates_weighted:4}
 Let $ {L}$ be an elliptic operator of the form
 (\ref{eq:a_priori_estimates_weighted:1}) which
  satisfies  hypotheses $(H)$. Assume  $s \geq  2$ and $-\frac{3}{2} <
  \delta<-\frac{1}{2}$. 
Then  for any $\delta'$ there is a constant $C$  such that
\begin{equation}
\label{eq:a_priori_estimates_weighted:9}
 \| u\|_{H_{s,\delta}} \leq C\left\{ \|{L}u\|_{H_{s-2,\delta+2}} + \|
 u\|_{H_{s-1,\delta'}}\right\}.
\end{equation}
The constant $C$ depends on the $H_{s_i,\delta_i}$-norm of the
coefficients of $L$, $s$, $\delta$ and $\delta'$.
\end{lem}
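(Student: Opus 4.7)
The plan is to upgrade the a priori estimate by a cutoff argument that exploits the isomorphism of the constant-coefficient operator $A_\infty D^2$ established in Lemma \ref{lem:a_priori_estimates_weighted:3}. Pick a smooth cutoff $\chi_R$ with $\chi_R \equiv 1$ for $|x| \geq 2R$ and $\chi_R \equiv 0$ for $|x| \leq R$, and decompose $u = \chi_R u + (1-\chi_R)u$. The guiding idea is that far from the origin the variable-coefficient operator $L$ is a small perturbation of $A_\infty D^2$, so Lemma \ref{lem:a_priori_estimates_weighted:3} controls $\chi_R u$ up to an absorbable error, while on the bounded region $|x| \leq 2R$ the weighted norms collapse to ordinary Sobolev norms and the standard $H^s$ elliptic estimate of Lemma \ref{lem:a_priori_estimate_Bessel:2} applies.

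For the exterior piece I would apply Lemma \ref{lem:a_priori_estimates_weighted:3} to $\chi_R u$ and expand
\[
A_\infty D^2(\chi_R u) = \chi_R Lu - \chi_R\bigl[(A-A_\infty)D^2 u + B Du + C u\bigr] + [L,\chi_R]u,
\]
estimating each piece in $H_{s-2,\delta+2}$. The commutator $[L,\chi_R]u$ is supported in the annulus $R \leq |x| \leq 2R$ and contains at most first-order derivatives of $u$, so its weighted norm is bounded by $C_R\|u\|_{H_{s-1,\delta'}}$ for any $\delta'$ (on a fixed annulus the weight is innocuous). For the remaining perturbation terms, the embedding Theorem \ref{thr:Appendix:2} together with the hypotheses $\delta_i > \tfrac{1}{2}-i$ give pointwise decay of $A-A_\infty$, $B$ and $C$ at infinity, so $\|\chi_R(A-A_\infty)\|_{L^\infty} + \|\chi_R B\|_{L^\infty} + \|\chi_R C\|_{L^\infty} \to 0$ as $R \to \infty$. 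Feeding this into the multiplication and Moser estimates of Proposition \ref{prop:Properties:3} and Theorem \ref{thr:Appendix:5} produces a bound of the form $\epsilon(R)\|u\|_{H_{s,\delta}} + C_R \|u\|_{H_{s-1,\delta'}}$ with $\epsilon(R)\downarrow 0$.

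For the interior piece, $(1-\chi_R)u$ is supported in $\overline{B_{2R}}$, and since $(1+|x|)^\beta$ is comparable to a constant on $B_{2R}$ for any $\beta$, one has $\|(1-\chi_R)u\|_{H_{s,\delta}} \simeq_R \|(1-\chi_R)u\|_{H^s}$ and $\|u\|_{H^{s-1}(B_{2R})} \leq C_R \|u\|_{H_{s-1,\delta'}}$. Applying Lemma \ref{lem:a_priori_estimate_Bessel:2} to $L_2((1-\chi_R)u) = L((1-\chi_R)u) - B D((1-\chi_R)u) - C (1-\chi_R)u$, handling $BD$ and $C$ terms by ordinary $H^s$ multiplication, and treating the commutator $[L,1-\chi_R]u$ as in the exterior case, yields $\|(1-\chi_R)u\|_{H_{s,\delta}} \leq C_R\{\|Lu\|_{H_{s-2,\delta+2}} + \|u\|_{H_{s-1,\delta'}}\}$. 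Summing the two bounds, choosing $R$ so large that $\epsilon(R)$ is less than the reciprocal of the constant multiplying it, and absorbing the $\|u\|_{H_{s,\delta}}$ term into the left-hand side produces \eqref{eq:a_priori_estimates_weighted:9}.

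The main obstacle will be quantifying the smallness of $\chi_R(A-A_\infty)D^2 u$ in the weighted norm $H_{s-2,\delta+2}$: one has to split this product via Proposition \ref{prop:Properties:3} so that the factor $A-A_\infty$ is measured in $L^\infty$ of the exterior region (where it is small by Sobolev embedding applied to the dyadic pieces $\psi_j(A-A_\infty)$ with $j$ large) while $D^2 u$ keeps the weight $\delta+2$, all without losing the correct exponent. A parallel book-keeping is needed for $B Du$ (using $\delta_1 > -\tfrac{1}{2}$ and $Du \in H_{s-1,\delta+1}$) and for $C u$ (using $\delta_0 > \tfrac{1}{2}$ and $u \in H_{s,\delta}$), after which the intermediate estimate of Proposition \ref{prop:Properties:4} converts any residual $\|u\|_{H_{s-1,\delta}}$ contribution into $\epsilon\|u\|_{H_{s,\delta}} + C(\epsilon)\|u\|_{H_{s-1,\delta'}}$, closing the argument.
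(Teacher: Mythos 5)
Your overall strategy coincides with the paper's: the same near/far cutoff decomposition, the constant-coefficient isomorphism of Lemma \ref{lem:a_priori_estimates_weighted:3} on the exterior piece, the already-established a priori estimate on the compactly supported interior piece (where weighted and unweighted norms are comparable), commutator terms absorbed into $C_R\|u\|_{H_{s-1,\delta'}}$, and a final absorption of the small exterior perturbation by taking $R$ large. The interior piece and the commutators are handled exactly as in the paper.

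The one step you leave open is precisely the step the paper resolves differently, and your proposed mechanism for it does not close. You want smallness of $\|\chi_R(A-A_\infty)D^2u\|_{H_{s-2,\delta+2}}$ (and the analogous $B$, $C$ terms) to come from $\|\chi_R(A-A_\infty)\|_{L^\infty}\to 0$. But for $s-2>0$ the product estimate in $H_{s-2,\delta+2}$ cannot be run with one factor measured only in $L^\infty$: Proposition \ref{prop:Properties:3} requires both factors in weighted Sobolev spaces with $s_1+s_2>s-2+\frac{3}{2}$ and $\delta_1+\delta_2\geq \delta+2-\frac{3}{2}$, so an $L^\infty$ bound on the truncated coefficient buys nothing at positive Sobolev order. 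The paper's device is Proposition \ref{prop:Properties:11}: the strict inequalities $\delta_i>\frac{1}{2}-i$ in hypothesis (H2) leave room to choose $\delta_i'$ with $\delta_i>\delta_i'>\frac{1}{2}-i$, and then
\begin{equation*}
\left\|(1-\chi_R)(A-A_\infty)\right\|_{H_{s_2,\delta_2'}}\leq \frac{C}{R^{\delta_2-\delta_2'}}\left\|A-A_\infty\right\|_{H_{s_2,\delta_2}},
\end{equation*}
with the analogous bounds for $B$ and $C$. The algebra estimate still applies with the weakened weights $\delta_i'$, and the factor $R^{-\gamma}$ with $\gamma=\min_i(\delta_i-\delta_i')$ supplies the quantitative smallness $\frac{C_1\Lambda}{R^{\gamma}}\|u\|_{H_{s,\delta}}$ that you need to absorb into the left-hand side. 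So the smallness lives in the weighted Sobolev norm of the truncated coefficient (via a weight shift), not in its sup norm; you should replace your Sobolev-embedding/pointwise-decay argument by this weight-shifting estimate, after which the rest of your outline goes through.
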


\begin{proof}[of  Lemma \ref{lem:a_priori_estimates_weighted:4}]
 Let $\chi_R\in C_0^\infty( \mathbb{R}^3)$ be a
cut-off function satisfying $\supp(\chi_R)\subset \{|x|\leq 2R\}$,
$\chi_R(x)=1 $ for $|x|\leq R$, $0\leq\chi_R(x)\leq 1$ and
$\|\partial^\alpha
\chi_R\|_\infty\leq C_\alpha R^{-|\alpha|}$. For $u\in H_{s,\delta}$
we write
\begin{equation*}
u=(1-\chi_R)u+\chi_R u
\end{equation*}
and $R$ will be determinate later on. We start with the estimation
of $\|(1-\chi_R)u\|_{H_{s,\delta}}$ and for that purpose we use
the convention  (\ref{eq:a_priori_estimate_Bessel:2}) and compute
\begin{equation}
\label{eq:a_priori_estimates_weighted:10}
\begin{split}
A_\infty(D^2(1-\chi_R)u)  &= (1-\chi_R)A_\infty(D^2u)-
2A_\infty(D\chi_R)(Du)- A_\infty(D^2\chi_R)u \\
& =(1-\chi_R)(Lu) +E_1+E_2,
\end{split}
\end{equation}
where 
\begin{equation*}
 E_1=-(1-\chi_R)\left\{
\left(A-A_\infty\right)(D^2u)+B(x)(Du)+C(x)u\right\}
\end{equation*}
and 
\begin{equation*}
 E_2=- \left\{2A_\infty(D\chi_R)(Du)+
A_\infty(D^2\chi_R)u\right\}
\end{equation*}
Applying  inequality (\ref{eq:a_priori_estimates_weighted:8}) of Lemma
\ref{lem:a_priori_estimates_weighted:3},
\begin{equation}
\label{eq:a_priori_estimates_weighted:11}
\begin{split}
  \|(1-\chi_R)u\|_{H_{s,\delta}}
  & \leq  C\| A_\infty D^2((1-\chi_R)u)\|_{H_{s-2,\delta+2}}\\
  & \leq  C\left\{\| (1-\chi_R)Lu\|_{H_{s-2,\delta+2}}+
   \| E_1\|_{H_{s-2,\delta+2}}+\|
   E_2\|_{H_{s-2,\delta+2}}\right\}.
   \end{split}
\end{equation}
Since $\| (1-\chi_R)Lu\|_{H_{s-2,\delta+2}}\lesssim \|
Lu\|_{H_{s-2,\delta+2}}$ (see Proposition \ref{prop:Properties:1}),
it     
remains to estimate $\| E_1\|_{H_{s-2,\delta+2}}$ and $ \|
E_2\|_{H_{s-2,\delta+2}}$. We may choose $\delta_i'$ so that
$\delta_i>\delta_i'>\frac{1}{2} -i$, $i=0,1,2$ and then we put
$\gamma=\min_{i=0,1,2}(\delta_i-\delta_i')$. Under these conditions
the application of Corollary \ref{cor:const:1}, Propositions
\ref{prop:Properties:11} and \ref{prop:Properties:3} yield
\begin{equation}
\label{eq:a_priori_estimates_weighted:12}
\begin{split}
 \|E_1\|_{H_{s-2,\delta+2}}& \leq C
 \left\|(1-\chi_R)\left\{\left(A-A_\infty\right)(D^2u)+ B(Du) +
C u\right\}\right\|_{H_{s-2,\delta+2}}
\\&\leq C  \{\left\|(1-\chi_R) (A-A_\infty)
\right\|_{H_{s_2,\delta_2'}} \|D^2 u\|_{H_{s-2,\delta+2}} \\
& + \left\|(1-\chi_R) B \right\|_{H_{s_1,\delta_1'}} \|D
u\|_{H_{s-1,\delta+1}}+\left\|(1-\chi_R) C
\right\|_{H_{s_0,\delta'}} \| u\|_{H_{s,\delta}}\}
\\&\leq   \frac{C_1}{R^\gamma}  \left( \left\|(
A-A_\infty)\right\|_{H_{s_2,\delta_2}} + \left\|
B\right\|_{H_{s_1,\delta_1}}+\left\|C
\right\|_{H_{s_0,\delta_0}}\right)
\| u\|_{H_{s,\delta}} \\
&\leq \frac{C_1\Lambda}{R^\gamma} \| u\|_{H_{s,\delta}},
\end{split}
\end{equation}
where  \begin{math}
\Lambda=\left(\|A-A_\infty\|_{H_{s_2,\delta_2}}+\|B\|_{H_{s_1,\delta_2
}}+\|C\|_{H_{s_0,\delta_0}}\right)
\end{math}.

Next, since  $D\chi_R$ has compact support, Remark \ref{rem:constr:1}
 and Corollary \ref{cor:const:1} imply that
\begin{equation}
\label{eq:a_priori_estimates_weighted:13}
\begin{split}
 \|E_2\|_{H_{s-2,\delta+2}} &\leq C(R)\left\{
   \|2A_\infty((D\chi_R)(Du))\|_{H_{s-2,\delta'+1}} 
 + \|A_\infty((D^2\chi_R)u)\|_{H_{s-2,\delta'}}\right\}\\
 & \leq
 C(R)\|A_\infty\|
 \left\{2\|Du\|_{H_{s-2,\delta'+1}}+\|u\|_{H_{s-2,\delta'}}\right\}
\\& \leq  C(R)\|A_\infty\|
\|u\|_{H_{s-1,\delta'}}.
\end{split}
\end{equation}

We turn now to the estimation of $ \|\chi_R u\|_{H_{s,\delta}}$.
Noting that $(\chi_R u)$ has compact support, we have by
Remark \ref{rem:constr:1}, (\ref{eq:a_priori_estimates_weighted:6})
and
Proposition \ref{prop:Properties:1} that 
\begin{equation}
\label{eq:a_priori_estimates_weighted:14}
\begin{split}
\|\chi_R u\|_{H_{s,\delta}} & \leq C(R) \|\chi_R
u\|_{H_{s,\delta'}} \leq  C(R)\{ \|L(\chi_R
u)\|_{H_{s-2,\delta'+2}} + \| u\|_{H_{s-1,\delta'}}\}.
\end{split}
\end{equation}
Similarly to (\ref{eq:a_priori_estimates_weighted:10}) we compute
\begin{equation}
\label{eq:a_priori_estimates_weighted:15}
\begin{split}
& L(\chi_Ru) = \chi_R L(u)+ 2 A(D\chi_R)(Du)+ A(D^2\chi_R)u
+ B( D\chi_R) u.
\end{split}
\end{equation}

We estimate each term of (\ref{eq:a_priori_estimates_weighted:15})
separately. Once again, since $\chi_R L u$ has compact support,
\begin{equation}
\label{eq:elliptic part:46}
\begin{split}
 \|\chi_R(L u)\|_{H_{s-2,\delta'+2}} \leq C(R) \|L
u\|_{H_{s-2,\delta+2}}.
\end{split}
\end{equation}

Next, using the second assumption of $(H)$, Proposition
\ref{prop:Properties:3}  and the compactness of $\supp(\chi_R)$
 we get
\begin{equation}
\label{eq:a_priori_estimates_weighted:16}
\begin{split}
& \ \ \ \ \|2 A(D\chi_R)(Du)\|_{H_{s-2,\delta'+2}}\\ & \leq 2   \|
(A-A_\infty)(D\chi_R)(Du)\|_{H_{s-2,\delta'+2}}+ \|
A_\infty(D\chi_R)(Du)\|_{H_{s-2,\delta'+2}}\\
 &  \leq C
\left(\|(A-A_\infty)\|_{H_{s_2,\delta_2}}+\|A_\infty\|\right)
\|(D\chi_R)(Du)\|_{H_{s-2,\delta'+2}}
\\
 & \leq C(R)
\left(\|(A-A_\infty)\|_{H_{s_2,\delta_2}}+\|A_\infty\|\right)
\|Du\|_{H_{s-2,\delta'+1}}\\
 & \leq C(R)
\left(\|(A-A_\infty)\|_{H_{s_2,\delta_2}}+\|A_\infty\|\right)
\|u\|_{H_{s-1,\delta'}}.
\end{split}
\end{equation}
In a similar manner we estimate the other terms and together with
inequalities
(\ref{eq:a_priori_estimates_weighted:11})--(\ref{eq:a_priori_estimates_weighted:14}),%
(\ref{eq:elliptic part:46}) and
(\ref{eq:a_priori_estimates_weighted:16}) we have
\begin{equation}
\label{eq:a_priori_estimates_weighted:17}
\begin{split}
 \|  u\|_{H_{s,\delta}}& \leq \|(1-\chi_R)  u\|_{H_{s,\delta}}+
\|\chi_R  u\|_{H_{s,\delta}}
 \\& \leq C\left\{ \| L u\|_{H_{s-2,\delta+2}} + C_2\|
u\|_{H_{s-1,\delta'}}
+ \frac{C_1\Lambda}{R^\gamma} \| u\|_{H_{s,\delta}}\right\},
\end{split}
\end{equation}
where $C_1$ and $C_2$ depend on the norms of the coefficients of $L$
and in addition $C_2$ depends in $R$.  We now take $R$ such that
$\frac{C_1\Lambda}{R^\gamma}\leq\frac{1}{2}$, then
(\ref{eq:a_priori_estimates_weighted:9}) follows from
(\ref{eq:a_priori_estimates_weighted:17}).  \BeweisEnde
\end{proof}

The next two theorems are consequence of the compact embedding,
Theorem \ref{thr:Appendix:1}, the a priori estimate
(\ref{eq:a_priori_estimates_weighted:9}) and standard arguments of
Functional Analysis.
\begin{thm}[Semi Fredholm]
  \label{thm:eq:a_priori_estimates_weighted:1}
  Assume the operator $L$ satisfies hypotheses$(H)$, $s \geq 2 $ and
  $-\frac{3}{2}<\delta<-\frac{1}{2}$. Then $ {L}:H_{s,\delta}\to
  H_{s-2,\delta+2}$ is {\it semi Fredholm}, that is,
 \begin{enumerate}
\item[{\rm (i)}]  ${\bf dim}({\bf Ker}L)<\infty$; \item[{\rm
(ii)}] If $L$ is injective, then there is a constant $C$ such that
\begin{equation}
\label{eq:a_priori_estimates_weighted:18} \|u\|_{H_{s,\delta}}
\leq C\|Lu\|_{H_{s-2,\delta+2}};
\end{equation}
\item[{\rm (iii)}] $L$ has a closed range.
 \end{enumerate}
\end{thm}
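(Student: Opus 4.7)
The plan is to derive all three assertions from the improved a priori estimate (\ref{eq:a_priori_estimates_weighted:9}) combined with the compact embedding $H_{s,\delta}\hookrightarrow H_{s-1,\delta'}$ for some $\delta'<\delta$ (Theorem \ref{thr:Appendix:1}). This is the standard Peetre-type semi-Fredholm argument; the only delicate point is choosing $\delta'$ so that the embedding is truly compact, which requires $\delta'$ strictly smaller than $\delta$ and the spatial loss of one derivative. Once this is set up, (i)--(iii) are formal consequences.

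First I would pick $\delta'<\delta$ (arbitrary, subject to $\delta'<\delta$) so that by Theorem \ref{thr:Appendix:1} the inclusion $H_{s,\delta}\hookrightarrow H_{s-1,\delta'}$ is compact. Write the estimate of Lemma \ref{lem:a_priori_estimates_weighted:4} in the form
\begin{equation*}
\|u\|_{H_{s,\delta}}\leq C\bigl(\|Lu\|_{H_{s-2,\delta+2}}+\|u\|_{H_{s-1,\delta'}}\bigr).
\end{equation*}
For (i), let $\{u_n\}\subset \mathrm{Ker}(L)$ with $\|u_n\|_{H_{s,\delta}}=1$. By compactness of the embedding, a subsequence converges in $H_{s-1,\delta'}$; applied to $u_n-u_m$ the estimate above gives that this subsequence is Cauchy in $H_{s,\delta}$ (since $L(u_n-u_m)=0$). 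Hence the unit ball of $\mathrm{Ker}(L)$ is compact in $H_{s,\delta}$, which forces $\dim\mathrm{Ker}(L)<\infty$ by F.\ Riesz's theorem.

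For (ii), assume $L$ is injective and suppose (\ref{eq:a_priori_estimates_weighted:18}) fails. Then there exists a sequence $\{u_n\}$ with $\|u_n\|_{H_{s,\delta}}=1$ and $\|Lu_n\|_{H_{s-2,\delta+2}}\to 0$. By compactness extract a subsequence converging in $H_{s-1,\delta'}$ to some $u$; applying the a priori estimate to $u_n-u_m$ shows this subsequence is Cauchy in $H_{s,\delta}$, so $u_n\to u$ in $H_{s,\delta}$ with $\|u\|_{H_{s,\delta}}=1$. Continuity of $L$ yields $Lu=0$, contradicting injectivity.

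For (iii), suppose $Lu_n\to v$ in $H_{s-2,\delta+2}$; one needs to produce $u\in H_{s,\delta}$ with $Lu=v$. Since $\mathrm{Ker}(L)$ is finite dimensional by (i), it admits a topological complement $Y\subset H_{s,\delta}$, and on $Y$ the restriction $L|_Y$ is injective, so by the argument of (ii) applied to $L|_Y$ we obtain $\|y\|_{H_{s,\delta}}\leq C\|Ly\|_{H_{s-2,\delta+2}}$ for all $y\in Y$. Writing $u_n=y_n+k_n$ with $y_n\in Y$, $k_n\in\mathrm{Ker}(L)$, we have $Ly_n=Lu_n\to v$, hence $\{y_n\}$ is bounded in $H_{s,\delta}$; compactness plus the a priori estimate upgrade weak to norm convergence of a subsequence, giving $y_n\to u\in Y$ with $Lu=v$. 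The main obstacle, which is handled precisely by Lemma \ref{lem:a_priori_estimates_weighted:4}, is that the naive a priori estimate contains $\|u\|_{H_{s-2,\delta}}$ on the right, whereas for the compactness argument to close we need the right-hand side to be controlled by a norm strictly weaker than $\|u\|_{H_{s,\delta}}$ \emph{in both} regularity and decay; the improved form with $\|u\|_{H_{s-1,\delta'}}$, $\delta'<\delta$, is exactly what makes Theorem \ref{thr:Appendix:1} applicable. \BeweisEnde
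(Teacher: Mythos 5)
Your proposal is correct and follows exactly the route the paper intends: the paper itself gives no written proof but states that the theorem is "a consequence of the compact embedding, Theorem \ref{thr:Appendix:1}, the a priori estimate (\ref{eq:a_priori_estimates_weighted:9}) and standard arguments of Functional Analysis," and your Peetre-type argument — choosing $\delta'<\delta$ so that $H_{s,\delta}\hookrightarrow H_{s-1,\delta'}$ is compact and combining this with Lemma \ref{lem:a_priori_estimates_weighted:4} — is precisely that standard argument, correctly executed in all three parts.
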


\begin{thm}[A homotopy argument]
  \label{thm:eq:a_priori_estimates_weighted:2}
  Let  $s \geq 2$ and $-\frac{3}{2}<\delta< -\frac{1}{2}$. Assume $L$
be
  an elliptic operator of the form
  (\ref{eq:a_priori_estimates_weighted:1}) that fulfilled the
  hypotheses $(H)$ and $ {L_t}$ is a continuous family of operators
  which satisfy hypotheses $(H)$ for $t\in [0,1]$, $L_1=L$ and
 \begin{equation*}
 {L}_t : H_{s,\delta}\to H_{s-2,\delta+2} \text{ is injective}.
 \end{equation*}
 If
  \begin{equation*}
  L_0 : H_{s,\delta}\to H_{s-2,\delta+2} \text{ is an isomorphism,}
  \end{equation*}
 then  the same is true for $L$.
\end{thm}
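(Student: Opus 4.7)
The plan is the classical method of continuity. Define
\[
S=\{t\in[0,1] : L_t\colon H_{s,\delta}\to H_{s-2,\delta+2}\text{ is an isomorphism}\}.
\]
By hypothesis $0\in S$, so $S\neq\emptyset$. The goal is to prove that $S$ is both open and closed in $[0,1]$, so that by connectedness $S=[0,1]$ and in particular $L=L_1\in S$.

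\emph{Openness.} The family $t\mapsto L_t$ is continuous as a map from $[0,1]$ into the Banach space of bounded operators $\mathcal{B}(H_{s,\delta},H_{s-2,\delta+2})$. This follows from the continuous dependence of the coefficients on $t$ in the $H_{s_i,\delta_i}$ norms and the multiplication inequality (Proposition \ref{prop:Properties:3}), which gives continuity of each piece of $L_t$. Since the set of isomorphisms is open in $\mathcal{B}(H_{s,\delta},H_{s-2,\delta+2})$, every $t_0\in S$ has a neighborhood in $S$.

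\emph{Uniform a priori estimate.} The main step, and the principal obstacle, is to prove the existence of a constant $K$ independent of $t\in[0,1]$ with
\begin{equation}\label{eq:prop-uniform}
\|u\|_{H_{s,\delta}}\leq K\,\|L_t u\|_{H_{s-2,\delta+2}}\qquad\text{for all }u\in H_{s,\delta},\ t\in[0,1].
\end{equation}
I would argue by contradiction. If \eqref{eq:prop-uniform} fails, one finds $t_n\in[0,1]$ and $u_n\in H_{s,\delta}$ with $\|u_n\|_{H_{s,\delta}}=1$ and $\|L_{t_n}u_n\|_{H_{s-2,\delta+2}}\to 0$. By passing to a subsequence, $t_n\to t_*\in[0,1]$. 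Because the $H_{s_i,\delta_i}$-norms of the coefficients of $L_t$ are bounded uniformly in $t$ on the compact interval $[0,1]$, the constant in the improved estimate (\ref{eq:a_priori_estimates_weighted:9}) of Lemma \ref{lem:a_priori_estimates_weighted:4} can be taken uniform in $t$; choosing some $\delta'>\delta$, this yields
\[
1=\|u_n\|_{H_{s,\delta}}\leq C\bigl(\|L_{t_n}u_n\|_{H_{s-2,\delta+2}}+\|u_n\|_{H_{s-1,\delta'}}\bigr),
\]
so $\|u_n\|_{H_{s-1,\delta'}}\geq 1/(2C)$ for large $n$. By the compact embedding $H_{s,\delta}\hookrightarrow H_{s-1,\delta'}$ (Theorem \ref{thr:Appendix:1}), a further subsequence converges in $H_{s-1,\delta'}$ to some $u$ and weakly in $H_{s,\delta}$; in particular $\|u\|_{H_{s-1,\delta'}}\geq 1/(2C)>0$, so $u\not\equiv 0$. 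Passing to the limit in $L_{t_n}u_n\to 0$ (the coefficients of $L_{t_n}$ converge to those of $L_{t_*}$ by continuity of the family, and the terms are handled by Proposition \ref{prop:Properties:3}), we get $L_{t_*}u=0$, contradicting injectivity of $L_{t_*}$. This proves \eqref{eq:prop-uniform}.

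\emph{Closedness.} Let $t_n\in S$ with $t_n\to t_*\in[0,1]$. Injectivity of $L_{t_*}$ is already given. To show surjectivity, fix $f\in H_{s-2,\delta+2}$ and set $u_n=L_{t_n}^{-1}f$. By \eqref{eq:prop-uniform}, $\|u_n\|_{H_{s,\delta}}\leq K\|f\|_{H_{s-2,\delta+2}}$, so $\{u_n\}$ is bounded in $H_{s,\delta}$; extract a subsequence converging weakly in $H_{s,\delta}$ and strongly in $H_{s-1,\delta'}$ to some $u$. Using continuity of the coefficients of $L_t$ in $t$ together with Proposition \ref{prop:Properties:3}, one passes to the limit in $L_{t_n}u_n=f$ to conclude $L_{t_*}u=f$. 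Hence $L_{t_*}$ is surjective, and combined with injectivity and the semi-Fredholm property of Theorem \ref{thm:eq:a_priori_estimates_weighted:1} it is an isomorphism, so $t_*\in S$. Thus $S$ is closed, open, and non-empty, whence $S=[0,1]$ and $L_1=L$ is an isomorphism. The technical heart of the argument is the uniform estimate \eqref{eq:prop-uniform}; everything else is soft functional-analytic bookkeeping.
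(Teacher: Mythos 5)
Your strategy is sound and is essentially the ``standard argument of Functional Analysis'' the paper invokes without writing out: the paper offers no explicit proof of this theorem, stating only that it follows from the compact embedding (Theorem \ref{thr:Appendix:1}), the improved a priori estimate (\ref{eq:a_priori_estimates_weighted:9}) and standard functional analysis, and your method-of-continuity argument is a legitimate instantiation of exactly those ingredients. Two points deserve correction or emphasis. First, a directional slip: in the contradiction argument you write ``choosing some $\delta'>\delta$,'' but the compact embedding $H_{s,\delta}\hookrightarrow H_{s-1,\delta'}$ of Theorem \ref{thr:Appendix:1} requires $\delta'<\delta$ (larger $\delta$ means a smaller space), and with $\delta'>\delta$ the quantity $\|u_n\|_{H_{s-1,\delta'}}$ need not even be controlled by $\|u_n\|_{H_{s,\delta}}$. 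Since Lemma \ref{lem:a_priori_estimates_weighted:4} holds for \emph{any} $\delta'$, you should simply take $\delta'<\delta$; the same fix applies in your closedness step. Second, your uniform estimate \eqref{eq:prop-uniform} tacitly requires that the constant in (\ref{eq:a_priori_estimates_weighted:9}) be uniform along the path, which in turn needs not only uniform bounds on the $H_{s_i,\delta_i}$-norms of the coefficients of $L_t$ (fine, by continuity on a compact interval) but also a uniform lower bound on the ellipticity constant $\lambda_t$ and on the isomorphism constant of the constant-coefficient operators $A_{\infty,t}$; this should be stated as part of what ``continuous family satisfying $(H)$'' means. Finally, note that the uniform estimate can be bypassed altogether: Theorem \ref{thm:eq:a_priori_estimates_weighted:1} shows each injective $L_t$ is semi-Fredholm with zero-dimensional kernel and closed range, the set of semi-Fredholm operators is open in $\mathcal{B}(H_{s,\delta},H_{s-2,\delta+2})$ and the (generalized) index is locally constant there (Kato), so $\mathrm{ind}(L_1)=\mathrm{ind}(L_0)=0$ and injectivity forces surjectivity. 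That route is what the paper's phrasing most naturally suggests; yours buys an explicit, quantitative bound $\|L_t^{-1}\|\leq K$ at the cost of the extra compactness argument.
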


The next Lemma shows that solutions to the homogeneous system have
lower growth at infinity. We follow Christodoulou and
O'Murchadha's proof  \cite{OMC}.

\begin{lem}[Lower growth of homogeneous solutions]
  \label{lem:a_priori_estimates_weighted:5}
Assume $ {L}$ satisfies hypotheses $(H)$, $u\in H_{s,\delta}$,
 $s \geq  2$ and $-\frac{3}{2} < \delta<-\frac{1}{2}$.
If $Lu=0$, then $u\in H_{s,\delta'}$ for any
 $-\frac{3}{2} < \delta'<-\frac{1}{2}$.
\end{lem}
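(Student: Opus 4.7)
The plan is a cutoff-and-bootstrap argument exploiting the constant-coefficient isomorphism of Lemma~\ref{lem:a_priori_estimates_weighted:3}, in the spirit of the proof of the a priori estimate (\ref{eq:a_priori_estimates_weighted:9}). Since $H_{s,\delta}\hookrightarrow H_{s,\delta'}$ whenever $\delta'\le\delta$, it suffices to show that a homogeneous solution gains a fixed increment $\eta>0$ in its weight, and then iterate finitely many times until any prescribed $\delta'\in(-3/2,-1/2)$ is reached (shrinking $\eta$ at the last step so as not to overshoot $-1/2$).

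The core step is as follows. Fix a cutoff $\chi_R\in C_0^\infty(\mathbb R^3)$ with $\chi_R\equiv1$ for $|x|\le R$, and split $u=\chi_R u+(1-\chi_R)u$. The first summand has compact support and so lies in every $H_{s,\delta''}$, so the issue is the exterior piece $u_\infty=(1-\chi_R)u$. Mimicking the computation (\ref{eq:a_priori_estimates_weighted:10}), I would write
\begin{equation*}
A_\infty D^2 u_\infty = (1-\chi_R)(Lu)+E_1+E_2 = E_1+E_2,
\end{equation*}
where
\begin{equation*}
E_1=-(1-\chi_R)\bigl[(A-A_\infty)D^2 u+B(Du)+Cu\bigr],\qquad
E_2=-2A_\infty(D\chi_R)(Du)-A_\infty(D^2\chi_R)u.
\end{equation*}
The term $E_2$ has compact support and is therefore in $H_{s-2,\delta+2+\eta}$ for every $\eta$.

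Next I would use the weighted multiplication of Proposition~\ref{prop:Properties:3} together with the excesses in hypothesis~(H2), namely $\delta_2>-3/2$, $\delta_1>-1/2$, and $\delta_0>1/2$, to show that each of the three pieces of $E_1$ gains weight over the generic $H_{s-2,\delta+2}$ target: the product $(A-A_\infty)D^2u$ picks up $\delta_2+3/2$, the product $B(Du)$ picks up $\delta_1+1/2$, and $Cu$ picks up $\delta_0-1/2$. Setting $\eta$ equal to the minimum of these three strictly positive numbers, and invoking the support condition of $(1-\chi_R)$ exactly as in the derivation of (\ref{eq:a_priori_estimates_weighted:12}) to obtain an $R^{-\eta}$ factor, gives $E_1\in H_{s-2,\delta+2+\eta}$ with norm controlled by $R^{-\eta}\|u\|_{H_{s,\delta}}$. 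Choose $R$ so large that $\delta+\eta$ lies in $(-3/2,-1/2)$; Lemma~\ref{lem:a_priori_estimates_weighted:3} then provides a unique $v\in H_{s,\delta+\eta}$ with $A_\infty D^2 v=E_1+E_2$. Since $H_{s,\delta+\eta}\hookrightarrow H_{s,\delta}$, both $u_\infty$ and $v$ sit in the $H_{s,\delta}$-isomorphism range of $A_\infty$, forcing $u_\infty=v\in H_{s,\delta+\eta}$ and hence $u\in H_{s,\delta+\eta}$.

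The main obstacle is the accounting in the middle step: verifying uniformly in the three terms of $E_1$ that the weighted multiplication really does contribute the claimed strict gain $\eta>0$ (the regularity indices $s_i\ge s-2$ in (H2) must be checked against the hypotheses of Proposition~\ref{prop:Properties:3}), and keeping the intermediate weight $\delta+\eta$ inside the admissible window $(-3/2,-1/2)$ of Lemma~\ref{lem:a_priori_estimates_weighted:3} at every bootstrap step. The rest is standard: choose $R$ large, apply the isomorphism, iterate.
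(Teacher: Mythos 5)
Your argument is essentially the paper's: the paper likewise uses $Lu=0$ to write $A_\infty D^2u=(A_\infty-A)D^2u-B(Du)-Cu$, extracts a strictly positive weight increment from the excesses $\delta_i>\tfrac12-i$ in (H2) via Proposition \ref{prop:Properties:3} (exactly your three gains $\delta_2+\tfrac32$, $\delta_1+\tfrac12$, $\delta_0-\tfrac12$), inverts the constant-coefficient isomorphism of Lemma \ref{lem:a_priori_estimates_weighted:3}, identifies the preimage with $u$ by injectivity on the larger space, and iterates. The only divergence is that your cutoff decomposition $u=\chi_R u+(1-\chi_R)u$ and the $R^{-\eta}$ factor are superfluous here — no smallness/absorption is required for a pure regularity gain, so the multiplication estimate can be applied to the remainder directly on all of $\mathbb{R}^3$ (and note the admissible window for $\delta+\eta$ is arranged by shrinking $\eta$, not by enlarging $R$).
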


\begin{proof}[of Lemma \ref{lem:a_priori_estimates_weighted:5}]
We recall that if $\delta'<\delta$, then it follows from Remark
\ref{rem:const:1} that  $H_{s,\delta}\hookrightarrow H_{s,\delta'}$.
Hence  it suffices to show the
statement for $\delta'>\delta$. The conditions on $\delta_i$ imply
that we may find $\delta'>\delta$ so that
$\delta_i+\delta+i>\delta'+2-\frac{3}{2}$. Then  applying the
Proposition \ref{prop:Properties:3} to
\begin{equation*}
 f:=A_{\infty} u - L u =
\left(A_\infty-A(x)\right)\left( D^2 u\right)-B(x)(Du) -C(x) u,
\end{equation*}
we obtain that $f$ belongs to $ H_{s-2,\delta'+2}$. Now $L
u=0$, so
\begin{math}
 A_\infty u= f
\end{math}
and  since $ A_\infty: H_{s,\delta'}\to H_{s-2,\delta'+2} $ is
isomorphism by Lemma \ref{lem:a_priori_estimates_weighted:3}, we
conclude that hence $u\in H_{s,\delta'}$. We now replace $\delta$
by $\delta'$ repeat the above arguments. 
\BeweisEnde
\end{proof}

\subsection{Semi Linear Elliptic Equations on Asymptotically Flat
Manifolds} \label{subsection:Semi_linear_elliptic}

A Riemannian 3-manifold $(M,h)$ is asymptotically flat (AF) if there
is a compact subset $K$ such that $M\setminus K$ is diffeomorphic to $
\mathbb{R}^3\setminus B_1(0)$ and the metric $h$ tends to the identity
$I$ at infinity.  A natural definition of the last statement in our
case is $h-I\in H_{s', \delta'}$. We will hence assume here that
$h-I\in H_{s',\delta'}$, $s'>\frac{3}{2}$ and
$\delta'>-\frac 3 2$.

We denote by $\Delta_h$ be the Laplace-Beltrami operator on
$(M,h)$. In the coordinates $(x^1,x^2,x^3)$ it takes the form
\begin{equation}
\label{eq:Semi_linear_elliptic:1}
 \Delta_h=\frac{1}{\sqrt{|h|}}\partial_j\left(\sqrt{|h|}h^{ij}
\partial_i\right),
\end{equation}
where $|h|={\rm det}(h_{ij})$ and $h^{ij}=(h_{ij})^{-1}$.
 Inserting the identity
$\partial_j |h|=|h|{\rm tr}(h^{ij}(\partial_j (h_{ij}))$ into
(\ref{eq:Semi_linear_elliptic:1}), we have
\begin{equation}
\label{eq:Semi_linear_elliptic:2}
 \Delta_h= h^{ij}\partial_{j}\partial_i+\partial_j(h^{ij})\partial_i
 +\frac{1}{2} {\rm tr}(h^{ij}(\partial_j (h_{ij}))h^{ij}\partial_i.
\end{equation}
Hence, by means of Proposition \ref{prop:Properties:3}, Theorem
\ref{thr:Appendix:5} and Remark \ref{rem:Properties:2}, the elliptic
operator
(\ref{eq:Semi_linear_elliptic:2}) satisfies hypothesis $(H)$ of
Section \ref{subsec:a_priori_estimates_weighted} provided that $s\leq
s'$.

Let us introduce some more notations. We denote by
$\mu_h=\sqrt{|h|}dx$ the Lebesgue measure on the manifold $(M,h)$,
\begin{math}
 (Du\cdot Dv)_h= h^{ij}\partial_i u\partial_jv,
\end{math}
and $\|D u\|_h^2=(Du\cdot Du)_h$. Integration by parts yields
\begin{equation}
\label{eq:Semi_linear_elliptic:3}
\begin{split}
& \int \left(\Delta_h u\right) v d\mu_h=\int
\partial_j \left(\sqrt{|h|}h^{ij}\partial_i
 u\right)v dx\\= - &\int h^{ij}\partial_i u \partial_j v\sqrt{|h|}
 dx=-\int (Du\cdot Dv)_hd\mu_h.
\end{split}
\end{equation}
Formula (\ref{eq:Semi_linear_elliptic:3}) holds whenever $v\in
H_0^1(\mathbb{R}^3)$, $u\in H_{s,\delta}$ and $s \geq 1$. Therefore it
enables us to define weak solutions on the manifold $(M,h)$.

\begin{defn}[Weak solutions]
\label{def:Semi_linear_elliptic:1} A function $u$ in $
H_{s,\delta}$  is a weak solution of
\begin{equation*}
 -\Delta_h u +c(x)u=f\in H_{s-2,\delta+2}
\end{equation*}
on $(M,h)$, if
\begin{equation}
\label{eq:Semi_linear_elliptic:4} \int \left((Du\cdot Dv)_h
+cuv\right)d\mu_h= \int fv d\mu_h,
\end{equation}
for all $v\in H_0^1(\setR^3)$.
\end{defn}

\begin{rem}
 In case $u,v\in H_{s,\delta}$, $s \geq 2$ and
$\delta  \geq -1$, then by algebra $h^{ij}\partial_i u,
\sqrt{|h|}\partial_j v\in H_{s-1,0}$ (see subsection
\ref{sec:algebra}). Applying the Cauchy Schwarz
inequality
\begin{equation*}
\begin{split}
&\int |(Du\cdot Dv)_h|d\mu_h = \int
|h^{ij}\partial_iu\partial_jv|\sqrt{|h|}dx\\\leq & \left(\int
(h^{ij}\partial_i
u)^2\right)^{\frac{1}{2}}\left(\int\sqrt{|h|}
\partial_jv)^2\right)^{\frac{1}{2}}\leq
\|h^{ij}\partial_iu\|_{H_{s-1,0}}\|\sqrt{|h|}\partial_jv\|_{H_{s-1,0}}
,
\end{split}
\end{equation*}
we see that $h^{ij}\partial_iu\partial_jv|\sqrt{|h|}\in L^1(\setR^3)$.
Similarly, the integrand of the left hand side of
(\ref{eq:Semi_linear_elliptic:3}) belongs to $L^1(\setR^3)$.  Hence,
approximating $u$ and $v$ by $C_0^\infty$ functions and using
Lebesgue's Dominated Convergence Theorem  we have
 \begin{equation}
 \label{eq:Semi_linear_elliptic:5} \int \left(\Delta_h u\right) v
d\mu_h= -\int
(Du\cdot Dv)_hd\mu_h,\quad u,v\in H_{s,\delta},\, \text{whenever}\,
s \geq 2, \, \text{and}\, \delta \geq -1.
\end{equation}
\end{rem}

In this section we will prove existence and uniqueness for the
semi-linear equation
\begin{equation}
\label{eq:Semi_linear_elliptic:10}
 -\Delta_h u= F(x,u):= \sum_{i=1}^N m_i(x)h_i(u),
\end{equation}
where
 $m_i\in H_{s_0,\delta_0}$, $m_i(x) \geq  0$, $s_0 \geq  0$,
$\delta_0>
\frac{1}{2}$ and for $u>-1$ the functions  $h_i$ are decreasing,
nonnegative and smooth. These conditions  ensure $F(\cdot,u)$ and
$\frac{\partial F}{\partial p}(\cdot, u)$ are in $H_{s-2,\delta+2}$
whenever $u\in H_{s,\delta}$ and $s \geq  2$.

\begin{thm}[Existence and uniqueness]
\label{thm:Semi_linear_elliptic:1}
Let $h-I\in H_{s',\delta'}$, $s'> \frac{3}{2}$,
$\delta'>-\frac{3}{2}$, $2\leq s\leq s'$ and $-\frac{3}{2}
<\delta<-\frac{1}{2}$. Then equation
(\ref{eq:Semi_linear_elliptic:10}) has a unique solution $u$ in
$H_{s,\delta}$.  Furthermore, $0\leq u\leq K$ for a nonnegative
constant $K$.
\end{thm}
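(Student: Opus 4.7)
The plan is to follow Cantor's homotopy argument \cite{cantor79}, adapted to the fractional-order weighted spaces $H_{s,\delta}$. First I would show $-\Delta_h : H_{s,\delta} \to H_{s-2,\delta+2}$ is an isomorphism. Writing $-\Delta_h$ in the form \eqref{eq:Semi_linear_elliptic:2}, the coefficients satisfy hypotheses $(H)$ of Section \ref{subsec:a_priori_estimates_weighted}, so by Theorem \ref{thm:eq:a_priori_estimates_weighted:2} it suffices to exhibit a homotopy to an invertible operator and verify injectivity along the way. Taking $L_t = -\Delta_{h_t}$ with $h_t = (1-t)I + th$, the endpoint $L_0 = -\Delta$ is an isomorphism by Lemma \ref{lem:a_priori_estimates_weighted:3}; injectivity follows because $L_t u = 0$ forces $u \in H_{s,-1}$ by Lemma \ref{lem:a_priori_estimates_weighted:5}, whence the weak identity \eqref{eq:Semi_linear_elliptic:5} yields $\int |Du|_{h_t}^2 \, d\mu_{h_t} = 0$ and thus $u \equiv 0$.

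Next I would set up the continuity method for $\mathcal{L}_\tau(u) := -\Delta_h u - \tau F(x,u) = 0$, $\tau \in [0,1]$, with admissible set $S = \{\tau \in [0,1] : \mathcal{L}_\tau \text{ has a solution } u_\tau \in H_{s,\delta} \text{ with } 0 \leq u_\tau \leq K\}$, where $K$ is a constant to be fixed. Clearly $0 \in S$ with $u_0 \equiv 0$. For openness, the linearization $D\mathcal{L}_\tau(u) = -\Delta_h - \tau \partial_p F(x,u)$ has zero-order coefficient $c(x) = -\tau \sum m_i(x) h_i'(u) \geq 0$ since $h_i' \leq 0$ and $m_i \geq 0$; integration by parts via \eqref{eq:Semi_linear_elliptic:5} gives injectivity, Theorem \ref{thm:eq:a_priori_estimates_weighted:2} provides the inverse, and the implicit function theorem produces a local family of solutions. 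For closedness, along a sequence $\tau_n \to \tau_*$ with $0 \leq u_n \leq K$ the Moser estimate (Theorem \ref{thr:Appendix:5}) together with the uniform $L^\infty$ bound controls $\|F(x, u_n)\|_{H_{s-2, \delta+2}}$; the a priori estimate \eqref{eq:a_priori_estimates_weighted:9} then bounds $\|u_n\|_{H_{s,\delta}}$, and compactness (Theorem \ref{thr:Appendix:1}) extracts a limit $u_* \in H_{s,\delta}$ solving $\mathcal{L}_{\tau_*}(u_*) = 0$ and respecting the pointwise bounds.

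The pointwise bounds are propagated along the homotopy as follows. For $u_\tau \geq 0$, test against $w = -\min(u_\tau, 0) \in H_0^1(\setR^3)$: on $\{u_\tau < 0\}$ one has $F(x, u_\tau) \geq F(x, 0) \geq 0$ by monotonicity of $h_i$, and the identity $-\int_{\{u_\tau < 0\}} |Du_\tau|_h^2 \, d\mu_h = \int_{\{u_\tau < 0\}} F(x, u_\tau)(-u_\tau) \, d\mu_h \geq 0$ together with a Harnack-type argument as in the proof of Theorem \ref{thm:Solutions_to_the_elliptic_systems:1} forces $\{u_\tau < 0\}$ to be empty. For the upper bound, once $u_\tau \geq 0$ the monotonicity gives $F(x, u_\tau) \leq F(x, 0) \in H_{s-2, \delta+2}$; the unique solution $\bar u \in H_{s,\delta}$ of $-\Delta_h \bar u = F(x, 0)$ lies in $L^\infty$ by Theorem \ref{thr:Appendix:2}, and a weak comparison principle yields $u_\tau \leq \bar u \leq K := \|\bar u\|_{L^\infty}$.

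The main obstacle I anticipate is keeping the iterates inside the smooth domain $u > -1$ of the nonlinearities $h_i$ uniformly along the homotopy: the Moser-type composition estimate requires $u$ to stay in the regular region of $h_i$, so the bound $u_\tau \geq 0$ must be established simultaneously with the continuity argument rather than a posteriori. Uniqueness follows by subtracting two solutions $u_1, u_2 \in H_{s,\delta}$, testing the difference equation $-\Delta_h(u_1 - u_2) = \sum m_i(x) \bigl(h_i(u_1) - h_i(u_2)\bigr)$ against $u_1 - u_2$, and exploiting $m_i \geq 0$ together with monotonicity $(h_i(u_1) - h_i(u_2))(u_1 - u_2) \leq 0$ to obtain $\int |D(u_1 - u_2)|_h^2 \, d\mu_h \leq 0$, forcing $u_1 \equiv u_2$.
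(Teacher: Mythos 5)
Your overall strategy coincides with the paper's: a continuity method in $\tau$ for $-\Delta_h u=\tau F(x,u)$, openness via the linearization (whose zero-order coefficient $-\tau\,\partial_p F\geq 0$ makes it injective), Theorem \ref{thm:eq:a_priori_estimates_weighted:2} plus the implicit function theorem, and a weak maximum principle for $u\geq 0$. Two steps differ genuinely. For closedness the paper does not use compactness: it differentiates $\Phi(u(\tau),\tau)=0$ in $\tau$ and shows $\tau\mapsto u(\tau)$ is Lipschitz into $H_{s,\delta}$, so the solutions along $\tau_n\to\tau_*$ form a Cauchy sequence converging strongly; your weak-limit-plus-compact-embedding route also works, but you then have to pass to the limit in the nonlinearity through the lower norm. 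For uniqueness the paper argues on $\Omega=\{u_1>u_2\}$ with the weak maximum principle rather than testing against $u_1-u_2$; your energy identity is fine in substance, but since $\delta$ may lie below $-1$ the identity \eqref{eq:Semi_linear_elliptic:5} is not directly available for $u_1-u_2\in H_{s,\delta}$ — you must first upgrade its decay (the difference solves a homogeneous linear equation with nonnegative zero-order coefficient, so Lemma \ref{lem:a_priori_estimates_weighted:5} places it in $H_{s,-1}$), exactly the device the paper uses in Step 1 to justify its own integrations by parts.

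The one step that does not work as written is the a priori bound inside your closedness argument. The Moser estimate (Theorem \ref{thr:Appendix:5}) bounds $\|F(x,u_n)\|_{H_{s-2,\delta+2}}$ by a constant depending on $\|u_n\|_{L^\infty}$ \emph{times a Sobolev norm of $u_n$}, so feeding it into \eqref{eq:a_priori_estimates_weighted:9} yields $\|u_n\|_{H_{s,\delta}}\lesssim \|u_n\|_{H_{s-2,\delta+2}}+\|u_n\|_{H_{s-1,\delta'}}$, which is circular: at that point you possess only the pointwise bounds $0\leq u_n\leq K$, not any Sobolev bound to start the estimate from. The way out — the paper's Step 2 — is to use the pointwise bounds together with the monotonicity of the $h_i$ to get $0\leq F(x,u_n)\leq F(x,0)$ pointwise, hence $\|F(\cdot,u_n)\|_{H_{0,\delta+2}}\leq\|F(\cdot,0)\|_{H_{0,\delta+2}}$ with no Sobolev information on $u_n$ at all; injectivity of $-\Delta_h$ and Theorem \ref{thm:eq:a_priori_estimates_weighted:1}(ii) then give the base bound $\|u_n\|_{H_{2,\delta}}\leq CK$ as in \eqref{eq:Semi_linear_elliptic:6}, after which one bootstraps up to $H_{s,\delta}$ by iterating the estimate, now legitimately invoking Moser. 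You already have the needed monotonicity (you use it for the upper barrier), so the repair is a reordering of your own ingredients, but as stated the uniform bound $\|u_n\|_{H_{s,\delta}}\leq C$ is not established.
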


In order to show Theorem \ref{thm:Semi_linear_elliptic:1} we need the
weak maximal principle:

\begin{prop}[Weak maximal principle]
  \label{prop:Semi_linear_elliptic:1}
  Assume   $c\in
H_{s'-2,\delta'+2}$ is nonnegative. If $u\in H_{s,\delta}$
satisfies
\begin{equation}
\label{eq:Semi_linear_elliptic:71}
 -\Delta_h u+ cu\leq 0,
\end{equation}
then $u\leq 0$.
\end{prop}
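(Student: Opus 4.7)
The statement is a weak maximum principle and the natural route is the classical test-function argument, adapted to the weighted setting via the embedding theorem for $H_{s,\delta}$. Since $u\in H_{s,\delta}$ with $s\geq 2>3/2$, the embedding Theorem \ref{thr:Appendix:2} gives that $u$ is continuous on $\mathbb{R}^3$ and $u(x)\to 0$ as $|x|\to\infty$ (for the admissible range of $\delta$, in particular $\delta>-3/2$ as in the framework of this section).

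For each fixed $\epsilon>0$, I would set
\begin{equation*}
u_\epsilon := \max(u-\epsilon,\,0).
\end{equation*}
By the pointwise decay of $u$, the set $\{u>\epsilon\}$ is bounded, so $u_\epsilon$ is compactly supported. The local $H^2$-regularity of $u$ (from $s\geq 2$) together with the fact that the positive-part map is a contraction on $H^1_{\rm loc}$ yields $u_\epsilon\in H^1(\mathbb{R}^3)$; combined with compact support this gives $u_\epsilon\in H^1_0(\mathbb{R}^3)$, and of course $u_\epsilon\geq 0$. Thus $u_\epsilon$ is an admissible nonnegative test function in Definition \ref{def:Semi_linear_elliptic:1}.

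Inserting $v=u_\epsilon$ into the weak inequality $\int\bigl((Du,Dv)_h+cuv\bigr)d\mu_h\leq 0$, and using the chain-rule identities $Du_\epsilon=\chi_{\{u>\epsilon\}}Du$ and $u_\epsilon=(u-\epsilon)\chi_{\{u>\epsilon\}}$, one obtains
\begin{equation*}
\int_{\{u>\epsilon\}}\Bigl(|Du|_h^2+c\,u\,(u-\epsilon)\Bigr)\,d\mu_h\leq 0.
\end{equation*}
On $\{u>\epsilon\}$ both summands of the integrand are pointwise nonnegative (using $c\geq 0$ and $u>\epsilon>0$), hence each must vanish a.e. In particular $Du\equiv 0$ on the open set $\{u>\epsilon\}$, so $u$ is locally constant there; but by continuity $u=\epsilon$ on the topological boundary of $\{u>\epsilon\}$, contradicting $u>\epsilon$ on the interior unless the set is empty. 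Hence $u\leq\epsilon$ everywhere, and letting $\epsilon\downarrow 0$ gives $u\leq 0$.

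The only genuinely technical point is verifying that $u_\epsilon$ is an admissible $H^1_0$ test function; this is where the setup is used crucially, since the truncation by $\epsilon$ is what converts the weighted decay of $u$ into honest compact support, bypassing the fact that $u$ itself need not lie in $L^2(\mathbb{R}^3)$ when $\delta<-1/2$. The sign hypothesis $c\geq 0$ is essential, as it is what makes the zeroth-order contribution keep the correct sign after multiplication by $u(u-\epsilon)>0$ on $\{u>\epsilon\}$.
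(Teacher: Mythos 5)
Your proof is correct and follows essentially the same route as the paper: truncate with $w=\max(u-\epsilon,0)$, use the decay from the embedding theorem to get compact support, test the weak inequality with $w$, and conclude from the sign of both terms that $\{u>\epsilon\}$ is empty before letting $\epsilon\downarrow 0$. Your handling of the zeroth-order term as $c\,u\,(u-\epsilon)$ is in fact slightly more careful than the paper's, which writes $cu^2$ at that step.
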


\begin{proof}[of Proposition \ref{prop:Semi_linear_elliptic:1}]
  For $\epsilon>0$ we put $w=\max(u-\epsilon,0)$.  It has compact
  support since $\lim_{x\to\infty}u(x)=0$. Further, $Dw=Du$ a.e. in
  $\{u(x)>\epsilon\}$ (see
  e.~g.~\cite{gilbarg83:_ellip_partial_differ_equat_secon_order
  } 
  or
  \cite{kinderlehrer00:_introd_variat_inequal_their_applic%
  }. 
  Thus, $w\in H_0^1(\setR^3)$ and $w \geq 0$, so by
(\ref{eq:Semi_linear_elliptic:4})
\begin{equation*}
 0 \geq  \int\left( (Du,Dw)_{h}+cuw\right)d\mu_h=
 \int_{\{u \geq \epsilon\}}\left( \|Du\|^2_{h}+cu^2\right)d\mu_h.
\end{equation*}
Therefore $u\equiv\epsilon$  in $\{u(x)\geq\epsilon\}$. Since
$\epsilon$ is
arbitrary, we have $u\leq0$.
\BeweisEnde
\end{proof}

\begin{proof}[of Existence]
The proof will be done in several steps. We define a map
$\Phi:\{H_{s,\delta}\times [0,1], u(x)>-1\}\to H_{s-2,\delta+2}$
by
\begin{equation*}
 \Phi(u,\tau)=-\Delta_h u- \tau F(x,u),
\end{equation*}
   let $u(\tau)$ denotes a solution of $\Phi(u,\tau)=0$ and
 put $J=\{0\leq s\leq 1: \Phi(u(s),s)=0\}$. We will show that $J$
is  both  open and  closed set. Since $0\in J$, $J=[0,1]$ which
yields the existence result.

\medskip\noindent
{\textbf{Step 1.}} {\it The set $J$ is open.}

Let
\begin{equation*}
 Lw:=\left(\frac{\partial \Phi}{\partial
u}(u,\tau)\right)(w)=-\Delta_h
 w-\tau \frac{\partial F}{\partial p}(\cdot, u)w
\end{equation*}
and
\begin{equation*}
 L_t w=-\Delta_{\{th+(1-t)I\}}
 w-t\tau \frac{\partial F}{\partial p}(\cdot, u)w.
\end{equation*}
If $ L_t w=0$, then by Lemma (\ref{lem:a_priori_estimates_weighted:5})
$w\in H_{s,-1}$. So we may use (\ref{eq:Semi_linear_elliptic:5}) and
get
\begin{equation*}
\begin{split} \int\left( L_t w\right) w d\mu_{\{th+(1-t)I\}} =\int
\left(\|Dw\|_{\{th+(1-t)I\}}^2
 -t\tau \frac{\partial F}{\partial p}(\cdot,
 u)w^2\right)d\mu_{\{th+(1-t)I\}}.
\end{split}
\end{equation*}
Since $ \frac{\partial F}{\partial p}\leq0$, the above yields that
$L_t w=0$
implies $w\equiv 0$ for each $t\in [0,1]$. In addition
$L_0=-\Delta_I=-\Delta$  is an isomorphism according to Lemma
\ref{lem:a_priori_estimates_weighted:3}. Therefore  Theorem
\ref{thm:eq:a_priori_estimates_weighted:2} implies that $L_1=L$ is an
isomorphism too. Thus $J$ is open by the Implicit Function
Theorem.

\medskip\noindent
{\textbf{Step 2.}} {\it $\|u(\tau)\|_{H_{s,\delta}}\leq C$ for a
constant $C$ independent of $\tau$.}

We first establish the bound in $H_{2,\delta}$-norm. The weak maximum
principle implies $u(\tau) \geq 0$ and since $F(x,p)$ is decreasing in
$p$,
\begin{equation*}
\|F(\cdot,u(\tau))\|_{H_{0,\delta+2}}\leq
\|F(\cdot,0)\|_{H_{0,\delta+2}}\leq\left( \sum_{i=1}^N
h_i(0)^2\|m_i\|_{H_{0,\delta+2}}^2\right)^{\frac{1}{2}} :=K.
\end{equation*}
We showed in Step 1 that $\Delta_h:H_{s,\delta}\to
H_{s-2,\delta+2}$ is injective, therefore from Theorem
\ref{thm:eq:a_priori_estimates_weighted:1} (ii),
\begin{equation}
\label{eq:Semi_linear_elliptic:6} \|u(\tau)\|_{H_{2,\delta}}\leq C
\|-\Delta_h u(\tau)\|_{H_{0,\delta+2}} \leq
 C\|F(\cdot,0)\|_{H_{0,\delta+2}}\leq C K.
\end{equation}

Now, Theorem \ref{thr:Appendix:5} implies
$\|h_i(u(\tau))\|_{H_{2,\delta}}\leq C \|(u(\tau))\|_{H_{2,\delta}}$
and by Proposition  \ref{prop:Properties:3},
$\|F(\cdot,u(\tau))\|_{H_{2,\delta}}\leq C
\|u(\tau)\|_{H_{2,\delta}}$. In order to improve
(\ref{eq:Semi_linear_elliptic:6}), we take $s^{\prime\prime}$ so that
$ s^{\prime\prime}-2\leq2$ and $s^{\prime\prime}\leq s$. Then we may
apply again (\ref{eq:a_priori_estimates_weighted:18}) and combine it
with
(\ref{eq:Semi_linear_elliptic:6}) and Remark \ref{rem:const:1}, we
have
\begin{equation}
\begin{split}
& \|u(\tau)\|_{H_{s^{\prime\prime},\delta}}\leq C \|-\Delta_h
u(\tau)\|_{H_{s^{\prime\prime}-2,\delta+2}} \leq
 C \|-\Delta_h
u(\tau)\|_{H_{2,\delta+2}}\\=  C&
\|F(\cdot,u(\tau))\|_{H_{2,\delta}}\leq C
\|u(\tau)\|_{H_{2,\delta}}\leq CK.
\end{split}
\end{equation}
We have proved the boundedness in case $s^{\prime\prime}=s$,
otherwise we can repeat the same procedure as above to improve
regularity until we would reach the desired regularity. It is
obvious that the bound on $\|u(\tau)\|_{H_{s,\delta}}$ does not
depend on $\tau$.

\medskip\noindent
{\textbf{Step 3.}} {\it Lipschitz
 continuity with respect to  $\tau$:}

  Differentiation of the equation
$\Phi(u(\tau),\tau)=0$ with respect to $\tau$ gives

\begin{equation*}
-\Delta_h u_\tau(\tau)-\tau\frac{\partial F}{\partial
  p}F(x,u(\tau))u_\tau(\tau)=F(x,u(\tau)). 
\end{equation*}

Now $ \frac{\partial F}{\partial p}F(x,p)\leq 0$, so in the same way
as we did in Step 1 we obtain that the operator $L=-\Delta_h
-\tau\frac{\partial F}{\partial p}F(x,u(\tau)):{H_{s,\delta}}\to
{H_{s-2,\delta+2}}$ is injective. Hence, by Theorem
\ref{thm:eq:a_priori_estimates_weighted:1} (ii),
\begin{equation}
\label{eq:Semi_linear_elliptic:7}
 \|u_\tau \|_{H_{s,\delta}}\leq C
\|L(u_\tau) \|_{H_{s-2,\delta+2}}=C \|F(x,u(\tau))
\|_{H_{s-2,\delta+2}}.
\end{equation}
Next, Step 2 implies
\begin{equation}
\label{eq:Semi_linear_elliptic:8} \|F(x,u(\tau))
\|_{H_{s-2,\delta+2}}\leq C \|u(\tau)
\|_{H_{s,\delta}}\left(\sum_{i=1}^N \|m_i
\|_{H_{s_0,\delta_0}}\right)\leq C.
\end{equation}
Thus, combining (\ref{eq:Semi_linear_elliptic:7}) with
(\ref{eq:Semi_linear_elliptic:8}) we get
\begin{equation}
\label{eq:Semi_linear_elliptic:9}
\|u(\tau_1)-u(\tau_2)\|_{H_{s,\delta}}\leq C|\tau_1-\tau_2|.
\end{equation}

\medskip\noindent
{\textbf{Step 4.}} {\it The set $J$ is closed:}

Take a sequence  $\{\tau_n\}\subset J$ such that $\tau_n\to
\tau_0$. By
 (\ref{eq:Semi_linear_elliptic:9}),
 $\{u(\tau_n)\}$ is Cauchy in  ${H_{s,\delta}}$  and
therefore it converges to $u_0\in {H_{s,\delta}}$.  Since the map
$\Phi$ is continuous, it follows that $\Phi(u_0,\tau_0)=0$, that
is $\tau_0\in J$. This completes the proof of the existence.
\BeweisEnde
\end{proof}

\begin{proof}[of Uniqueness]
  Assume $u_1$ and $u_2$ are solutions to
  (\ref{eq:Semi_linear_elliptic:10}). We conduct the proof by showing
  that $\Omega:=\{x: u_1(x)>u_2(x)\}$ is an empty set. Note that
  $\Omega$ is open since $ u_1$ and $u_2$ are continuous.  Put
  $w=u_1-u_2$, then $-\Delta_h w=F(x,u_1)-F(x,u_2)\leq0$ in $\Omega$.
  So $w\leq 0$ in $\Omega$ by Proposition
  (\ref{prop:Semi_linear_elliptic:1}). That obviously leads to a
  contradiction unless $\Omega$ is empty.  \BeweisEnde
\end{proof}



\textbf{Acknowledgement:} The problem we worked out goes back to Alan
Rendall and we would like to thank him for enlightening discussions.
The second author would like to thank Victor Ostrovsky for many
valuable conversations.

\section*{Appendix}

\section{The construction of the Spaces $H_{s,\delta}$:}
\label{sec:constr-spac-h_s}

The weighted Sobolev spaces of integer order  below were
introduced by Cantor~\cite{cantor75:_spaces_funct_condit_r}    and
independently by Nirenberg and Walker
\cite{nirenberg73:_null_spaces_ellip_differ_operat_r}.
 Nirenberg and Walker initiate the study of elliptic operators
in these spaces, while Cantor used them to solve the constraint
equations on asymptotically flat manifolds. For an  nonnegative
integer $m$ and a real $\delta$  we define a norm
\begin{equation}
\label{eq:const:1} \left(\|u\|_{m,\delta}^{*}\right)^{2}
=\sum_{|\alpha|\leq m}\int\left(\langle
  x\rangle^{\delta+|\alpha|} |\partial^\alpha u|\right)^2dx,
  \end{equation}
where $\left\langle x \right\rangle=1+|x| $. The space $H_{s,m}$
is the completion of $C_0^\infty(\setR^3)$ under the norm
(\ref{eq:const:1}). Note that the weight varies with the
derivatives.

Here we will repeat Triebel's extension of these spaces into a
fractional order, \cite{triebel76:_spaces_kudrj2},
\cite{triebel95
}.  Let $s=m+\lambda$, where $ m$ is a nonnegative
integer and $0<\lambda<1$. One possibility {of
extending} the ordinary integer order Sobolev spaces  is the {\it
Lipschitz-Sobolevskij Spaces}, {having} a norm
\begin{equation}
\label{eq:const:2}
 \|u\|_{m+\lambda,2}^2=\sum_{|\alpha|\leq m}\int |\partial^\alpha u|^2dx+
\sum_{|\alpha|= m}\int\int{ |\partial^\alpha u(x)-\partial^\alpha
u(y)|^2\over |x-y|^{3+\lambda 2}}dxdy.
\end{equation}

Hence, a reasonable definition of {\it weighted fractional Sobolev
norm} is a combination of the norm (\ref{eq:const:1}) with
(\ref{eq:const:2}): 
\begin{equation} 
\label{eq:const:3}
 \left(\|u\|_{s,\delta}^{*}\right)^{2} =\left \{
  \begin{array}{lc}
\displaystyle
 \sum_{|\alpha|\leq m}
 \int|\langle  x\rangle^{\delta+|\alpha|} \partial^\alpha u|^2 dx,&  s=m\\[0.4cm]
\left.
\begin{array}{l}
\displaystyle  \sum_{|\alpha|\leq m}\int|\langle
x\rangle^{\delta+|\alpha|} \partial^\alpha u|^2dx\\  [0.8cm]
\displaystyle + \sum_{|\alpha|= m}\int\int  { |\langle
    x\rangle^{m+\lambda+\delta }\partial^\alpha u(x)-\langle
    y\rangle^{m+\lambda+\delta}\partial^\alpha u(y)|^2\over |x-y|^{3+2
      \lambda }}dxdy\, \quad 
\end{array}
\right\}, &  s=m+\lambda.
\end{array}
\right.
\end{equation}
here $m$ is a nonnegative integer and $0<\lambda<1$. The space
$H_{s,\delta}$ is the completion of $C_0^\infty(\setR^3)$ under
the norm (\ref{eq:const:3}).

The norm (\ref{eq:const:3}) is essential for the understating of the
connections between the integer and the fractional order. But it has a
disadvantage, namely, the double integral makes it almost impossible
to establish {any property} (embedding, a priori estimate, etc.)
needed for PDEs.  We are therefore looking for an equivalent
definition of the norm (\ref{eq:const:3}).

Let $K_j=\{x:2^{j-3}\leq |x|\leq 2^{j+2}\}$, $(j=1,2,...)$ and
$K_0=\{x:  |x|\leq4\}$.  Let $\{\psi_j\}_{j=0}^\infty\subset
C_0^\infty(\setR^3)$ be a sequence  such that $ \psi_j(x)=1$ on
$K_j$, $\supp(\psi_j)\subset \{x:2^{j-4}\leq |x|\leq 2^{j+3}\}$,
for $j\geq 1$, $\supp(\psi_0)\subset\{x: |x|\leq 2^{3}\}$ and
\begin{equation}
\label{eq:const:4} |\partial^\alpha  \psi_j(x)|\leq  C_\alpha
2^{-|\alpha|j},
\end{equation}
where the constant $C_\alpha$ does not depend on $j$.

We define now, 
\begin{equation}
  \label{eq:const:19}
  \left(\|u\|_{s,\delta}^{\bigstar}\right)^{2} =\left \{
    \begin{array}{ll}
      \displaystyle
      \sum_{j=0}^\infty \left(2^{\delta 2j
        }\|\psi_ju\|_{L^2}^2 + 2^{(\delta+m)2 j}
        \sum_{|\alpha|=m}\|\partial^\alpha(\psi_ju)\|_{L^2}^2\right),&
      s=m\\
      \left.
        \begin{array}{l}
          \displaystyle  \sum_{j=0}^\infty\left( 2^{\delta
              2j}\|\psi_ju\|_{L^2}^2 + 2^{(\delta+m) 2j}
            \sum_{|\alpha|=m}\|\partial^\alpha(\psi_ju)\|_{L^2}^2\right) \\
+ \sum_{j=0}^\infty 2^{(\delta+m+\lambda)2
            j}\left(\sum_{|\alpha|= m}\int\int  {
              | \partial^\alpha(\psi_j u)(x)-\partial^\alpha(\psi_j u)(y)|
              ^2\over |x-y|^{3+2 \lambda }}dxdy\right),
        \end{array}
      \right\} &  s=m+\lambda.
    \end{array}
  \right.
\end{equation}

\begin{prop}[Equivalence of norms]
  \label{appendix;prop1}
  There are two positive constant{{s}} \ $c_0$
  and $c_1$ depending only on $s$, $\delta$ and the constants in
  (\ref{eq:const:4}) such that
  \begin{equation}
    \label{eq:const:5}
    c_0\|u\|_{s,\delta}^{\bigstar} \leq\|u\|_{s,\delta}^{\ast}\leq c_1
    \|u\|_{s,\delta}^{\bigstar}.
  \end{equation}
\end{prop}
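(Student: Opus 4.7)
The strategy is to use the dyadic scale structure of $\{\psi_j\}$ to convert between the pointwise weight $\langle x\rangle^{\delta+|\alpha|}$ appearing in $\|\cdot\|^{\ast}_{s,\delta}$ and the scaling factor $2^{(\delta+|\alpha|)j}$ in $\|\cdot\|^{\bigstar}_{s,\delta}$. Three geometric facts will be used repeatedly: on $\supp(\psi_j)$ one has $\langle x\rangle \sim 2^j$ for $j\geq 1$ and $\langle x\rangle\sim 1$ for $j=0$; the supports $\{\supp \psi_j\}$ have bounded overlap, while the interiors $\{K_j\}$ cover $\mathbb{R}^3$ with $\psi_j \equiv 1$ on $K_j$; and the derivative bound (\ref{eq:const:4}) gives $|\partial^\beta \psi_j|\lesssim 2^{-|\beta|j}$. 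A preliminary reduction is that in (\ref{eq:const:3}) the intermediate derivatives $0<|\alpha|<m$ can be absorbed: a weighted interpolation performed annulus by annulus yields $\|\cdot\|^{\ast}_{m',\delta}\lesssim \|\cdot\|^{\ast}_{0,\delta}+\|\cdot\|^{\ast}_{m,\delta}$ for $0\leq m'\leq m$, so it suffices to compare the $|\alpha|=0$ and $|\alpha|=m$ pieces.

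First I would treat the integer case $s=m$. For the direction $\|u\|^{\ast}_{m,\delta}\lesssim \|u\|^{\bigstar}_{m,\delta}$, partition $\mathbb{R}^3=\bigcup K_j$, use $\partial^\alpha u = \partial^\alpha(\psi_j u)$ on $K_j$, and replace $\langle x\rangle^{2(\delta+|\alpha|)}$ by the equivalent $2^{2(\delta+|\alpha|)j}$. For the reverse inequality, expand by Leibniz
\begin{equation*}
  \partial^\alpha(\psi_j u)=\sum_{\beta\leq\alpha}\binom{\alpha}{\beta}\partial^\beta\psi_j\,\partial^{\alpha-\beta}u,
\end{equation*}
bound $|\partial^\beta\psi_j|\lesssim 2^{-|\beta|j}$, convert the scalar factor $2^{(\delta+m-|\beta|)j}=2^{(\delta+|\alpha-\beta|)j}$ into $\langle x\rangle^{\delta+|\alpha-\beta|}$ on $\supp(\psi_j)$, and sum using bounded overlap; this reproduces the $\ast$-norm with $|\gamma|=|\alpha-\beta|\leq m$.

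For the fractional case $s=m+\lambda$ the single-integral parts are handled identically, and the new work is in the Gagliardo-type double integral. For $\|u\|^{\bigstar}\lesssim \|u\|^{\ast}$, split the seminorm over $j$ into a near-diagonal piece $|x-y|\leq 2^{j-6}$ and a far piece. On the near part both $x$ and $y$ lie in a slightly enlarged annulus where $\langle x\rangle\sim\langle y\rangle\sim 2^j$, and a Leibniz/telescoping expansion
\begin{equation*}
  \partial^\alpha(\psi_j u)(x)-\partial^\alpha(\psi_j u)(y) = \sum_{\beta\leq\alpha}\binom{\alpha}{\beta}\bigl[\partial^\beta\psi_j(x)\partial^{\alpha-\beta}u(x)-\partial^\beta\psi_j(y)\partial^{\alpha-\beta}u(y)\bigr]
\end{equation*}
further splits each term into $\partial^\beta\psi_j(x)\bigl(\partial^{\alpha-\beta}u(x)-\partial^{\alpha-\beta}u(y)\bigr)$ and $\bigl(\partial^\beta\psi_j(x)-\partial^\beta\psi_j(y)\bigr)\partial^{\alpha-\beta}u(y)$; the first piece, after multiplying by $2^{2(\delta+m+\lambda)j}\sim\langle\cdot\rangle^{2(\delta+m+\lambda)}$ and summing in $j$, reproduces (a localized version of) the $\ast$-double-integral, while the second is controlled via the mean-value bound $|\partial^\beta\psi_j(x)-\partial^\beta\psi_j(y)|\lesssim 2^{-(|\beta|+1)j}|x-y|$ together with the local integrability of $|x-y|^{-3-2\lambda+2}$. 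On the far part $|x-y|>2^{j-6}$, use $\int_{|z|>r}|z|^{-3-2\lambda}dz\lesssim r^{-2\lambda}$ to reduce to $L^2$-quantities already accounted for by the integer-order pieces of the $\ast$-norm. The converse $\|u\|^{\ast}\lesssim\|u\|^{\bigstar}$ is dual: replace $\{\psi_j\}$ by a strict partition of unity with the same support and derivative properties, distribute the finite difference across $j$, and run the same near/far split, using that at most a bounded number of indices contribute at any point.

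The main obstacle will be the careful bookkeeping in the near-diagonal piece of the double-integral comparison: one must simultaneously handle the mismatch between the two distinct weights $\langle x\rangle$ and $\langle y\rangle$ appearing in (\ref{eq:const:3}) and the smooth cutoff appearing in (\ref{eq:const:19}), ensuring that every cross term produced by Leibniz and by telescoping the weight is absorbed either into the Gagliardo part of the target norm or into an $L^2$-piece via the mean-value estimate. The equivalence $\langle x\rangle\sim 2^j$ on $\supp(\psi_j)$, the bounded-overlap property, and the elementary inequality $|\langle x\rangle^{\sigma}-\langle y\rangle^{\sigma}|\lesssim 2^{(\sigma-1)j}|x-y|$ for $x,y$ in a common $\supp(\psi_j)$ are the tools that make the accounting uniform in $j$ and yield the two constants $c_0,c_1$ of (\ref{eq:const:5}).
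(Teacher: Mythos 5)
The paper does not actually prove Proposition \ref{appendix;prop1}: immediately after the statement it says ``This equivalence was proved in \cite{triebel76:_spaces_kudrj2} (see also \cite{BK2})'' and moves on. So your proposal is necessarily a different route --- a direct, self-contained argument in place of a citation --- and its overall architecture is sound: the reduction to $|\alpha|\in\{0,m\}$ by annulus-wise interpolation, the dictionary $\langle x\rangle\sim 2^{j}$ on $\supp\psi_j$ together with bounded overlap and the covering by the sets $K_j$ where $\psi_j\equiv 1$, the Leibniz expansion with $|\partial^\beta\psi_j|\lesssim 2^{-|\beta|j}$ for the integer part, and the near-diagonal/far split of the Gagliardo seminorm with the mean-value bounds $|\partial^\beta\psi_j(x)-\partial^\beta\psi_j(y)|\lesssim 2^{-(|\beta|+1)j}|x-y|$ and $|\langle x\rangle^{\sigma}-\langle y\rangle^{\sigma}|\lesssim 2^{(\sigma-1)j}|x-y|$ are exactly the right tools, and the exponent bookkeeping I checked (e.g.\ $2^{2(\delta+m+\lambda)j}\cdot 2^{-2(|\beta|+1)j}\cdot 2^{(2-2\lambda)j}=2^{2(\delta+m-|\beta|)j}$, matching the weight of the $(m-|\beta|)$-th derivatives) closes correctly. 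What your argument buys over the paper's citation is an elementary, inspectable proof using only the geometry of the dyadic annuli; what it costs is the bookkeeping you yourself identify as the main obstacle.

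One step as written is imprecise and should be repaired. In the fractional near-diagonal piece you claim that the terms $\partial^\beta\psi_j(x)\bigl(\partial^{\alpha-\beta}u(x)-\partial^{\alpha-\beta}u(y)\bigr)$ ``reproduce a localized version of the $\ast$-double-integral''; that is true only for $\beta=0$. For $\beta\neq 0$ the prefactor $2^{-2|\beta|j}$ turns this into a Gagliardo seminorm of the \emph{lower-order} derivative $\partial^{\alpha-\beta}u$ of fractional order $m-|\beta|+\lambda<m+\lambda$, which is not literally a term of $\|\cdot\|^{\ast}_{s,\delta}$ in (\ref{eq:const:3}). You need one more step there: either write $\partial^{\alpha-\beta}u(x)-\partial^{\alpha-\beta}u(y)=\int_0^1\nabla\partial^{\alpha-\beta}u(y+t(x-y))\cdot(x-y)\,dt$ and absorb the resulting factor $|x-y|^2/|x-y|^{3+2\lambda}$ by local integrability (landing on the weighted $L^2$ norm of the $(m-|\beta|+1)$-th derivatives, which the $\ast$-norm does contain), or invoke the same annulus-wise interpolation you already set up in your preliminary reduction. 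This is a fillable gap, not a flaw in the strategy, but the proof is not complete without it.
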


This equivalence was proved in \cite{triebel76:_spaces_kudrj2} (see also \cite{BK2}).

We express these norms in terms of Fourier transform.  Let
\begin{equation*}
\hat{u}(\xi)=\mathcal{F}(u)(\xi)=\frac{1}{(2\pi)^3}\int
u(x)e^{-ix\cdot\xi} dx
\end{equation*}
denotes the Fourier transform, put
\begin{equation}
\Lambda^s u=\mathcal{F}^{-1}(1+|\xi|^2)^{\frac{s}{2}} \mathcal{F}u),
\end{equation}
and let $H^s$ denotes the  Bessel Potentials space having the norm
\begin{equation}
\label{eq:const:15}
 \|u\|_{H^{s}}^2=\|\Lambda^s u\|_{L^2}^2=\int
(1+|\xi|^2)^s|\hat{u}(\xi)|^2d\xi.
\end{equation}
 We also set
\begin{equation*}
\|u\|_{h^{s}}^2=\|\mathcal{F}^{-1}(|\xi|^s
\mathcal{F}u)\|_{L^2}^2=\int (|\xi|^s|\hat{u}(\xi)|)^2d\xi.
\end{equation*}

It is well known  that (see e.~g.~\cite{HOER
};  p. 240-241)
\begin{equation}
\label{eq:const:6}
 \|u\|_{h^{s}}^2\simeq\left\{ \begin{array}{cc}
  \sum_{|\alpha|=m}\int|\partial^\alpha u|^2dx  \quad  & s=m \\
  \sum_{|\alpha|=m}\int\int{{|\partial^\alpha u(x)-\partial^\alpha
  u(y)|^2} \over{|x-y|^{3+2\lambda}}}dx  \quad  & s=m+\lambda \\
\end{array}\right.
\end{equation}

 and since $(1+|\xi|^2)^s\simeq (1+|\xi|^s)$,
 \begin{equation}
 \label{eq:const:7}
\|u\|_{H^{s}}^2\simeq \left( \|u\|_{L^2}^2+\|u\|_{h^{s}}^2\right).
\end{equation}
Hence, by (\ref{eq:const:19}),
\begin{equation}
\label{eq:const:8}
 \left(\|u\|_{s,\delta}^{\bigstar}\right)^{2} \simeq
 \sum_{j=0}^\infty \left(2^{\delta 2 j
}\|\psi_ju\|_{L^2}^2 +2^{(\delta+s)2
j}\|{{\psi_j}}u\|_{h^{s}}^2\right)
\end{equation}

We invoke now  the scaling $u_\epsilon(x):=u(\epsilon x)$
($\epsilon>0$), then simple calculations yields
\begin{math}
 \|u_\epsilon\|_{L^2}^{2} =\epsilon^{-3} \|u\|_{L^2}^{2}
\end{math}  and 
\begin{math}
 \|u_\epsilon\|_{h^s}^{2} =\epsilon^{2s-3} \|u\|_{h^s}^{2}
\end{math}.
Combining the later one with (\ref{eq:const:7}), we have
\begin{equation}
\label{eq:const:9} \|u_\epsilon\|_{H^s}^{2} \simeq
\epsilon^{-3}\left(\|u\|_{L^2}+ \epsilon^{2s}
\|u\|_{h^s}^{2}\right).
\end{equation}
Setting $\epsilon =2^j$, multiplying (\ref{eq:const:9}) by
$2^{3j}$ and inserting it in (\ref{eq:const:8}), we conclude
\begin{equation}
\label{eq:const:10}
 \left(\|u\|_{s,\delta}^{\bigstar}\right)^{2} \simeq
 \sum_{j=0}^\infty 2^{(\frac{3}{2}+\delta )2 j
}\|(\psi_ju)_{2^j}\|_{H^s}^2.
\end{equation} 
The last one is the most convenience form of norm for
applications and therefore the right hand side of (\ref{eq:const:10})
defines the norm of $ H_{s,\delta}$ space (see Definition
\ref{def:weighted:3}).

Combining Proposition \ref{appendix;prop1} with (\ref{eq:const:8})
and (\ref{eq:const:10}) we get:
\begin{thm}[Equivalence of norms, Triebel]
\label{thm:a1}
There are two positive constant $c_0$ and $c_1$
depending only on $s$, $\delta$ and the constants in
(\ref{eq:const:4}) such that
\begin{equation}
 c_0\|u\|_{H{s,\delta}} \leq\|u\|_{s,\delta}^{\ast}\leq
 c_1 \|u\|_{H{s,\delta}}.
\end{equation}
\end{thm}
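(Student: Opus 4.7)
My plan is to combine the two equivalences that were already set up in the preceding paragraphs: namely the equivalence of $\|\cdot\|_{s,\delta}^{*}$ with $\|\cdot\|_{s,\delta}^{\bigstar}$ (Proposition \ref{appendix;prop1}) and the equivalence of $\|\cdot\|_{s,\delta}^{\bigstar}$ with the dyadic $H^{s}$-based norm $\|\cdot\|_{H_{s,\delta}}$ of Definition \ref{def:weighted:3}. Thus the proof reduces to chaining two equivalences
\[
\|u\|_{s,\delta}^{*}\simeq \|u\|_{s,\delta}^{\bigstar}\simeq \|u\|_{H_{s,\delta}}.
\]

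For the first equivalence I simply cite Proposition \ref{appendix;prop1}, whose proof (reproduced from Triebel) is contained in the references. For the second, I follow the chain of identifications already sketched between (\ref{eq:const:6}) and (\ref{eq:const:10}). The key step is to replace the Lipschitz--Sobolevskij double integral appearing in $\|\cdot\|_{s,\delta}^{\bigstar}$ by the Bessel--potential norm: the Fourier characterisation (\ref{eq:const:6}) shows that for each fixed $j$,
\[
\sum_{|\alpha|=m}\int\!\!\int\frac{|\partial^{\alpha}(\psi_{j}u)(x)-\partial^{\alpha}(\psi_{j}u)(y)|^{2}}{|x-y|^{3+2\lambda}}\,dxdy\simeq \|\psi_{j}u\|_{h^{s}}^{2},
\]
with constants independent of $j$ because the left hand side and $\|\cdot\|_{h^{s}}$ are both translation/dilation-invariant functionals of $\psi_{j}u$. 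Combining this with (\ref{eq:const:7}) yields the intermediate equivalence (\ref{eq:const:8}).

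Next I would carry out the scaling step. Writing $u_{\varepsilon}(x)=u(\varepsilon x)$, elementary changes of variables give $\|u_{\varepsilon}\|_{L^{2}}^{2}=\varepsilon^{-3}\|u\|_{L^{2}}^{2}$ and $\|u_{\varepsilon}\|_{h^{s}}^{2}=\varepsilon^{2s-3}\|u\|_{h^{s}}^{2}$, which combine via (\ref{eq:const:7}) to yield (\ref{eq:const:9}). Applying this with $u$ replaced by $\psi_{j}u$ and $\varepsilon=2^{j}$, and multiplying through by $2^{(\frac{3}{2}+\delta)2j}$, converts the sum in (\ref{eq:const:8}) into the sum $\sum_{j\geq 0}2^{(\frac{3}{2}+\delta)2j}\|(\psi_{j}u)_{2^{j}}\|_{H^{s}}^{2}$, which is precisely $\|u\|_{H_{s,\delta}}^{2}$ by Definition \ref{def:weighted:3}. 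Chaining the three equivalences delivers the claimed constants $c_{0}$ and $c_{1}$, which depend only on $s$, $\delta$ and the implicit constants in (\ref{eq:const:4}).

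The only real subtlety --- and what I expect to be the main obstacle --- is ensuring that the constants arising in the scaling argument and in the Fourier equivalence of (\ref{eq:const:6}) are genuinely uniform in the index $j$; this uses the homogeneity of $\|\cdot\|_{h^{s}}$ together with the fact that $\psi_{j}(x)=\psi_{1}(2^{-(j-1)}x)$ for $j\geq 1$, so all derivative bounds on $\psi_{j}$ scale predictably in $j$ (cf.~(\ref{eq:const:4})). Apart from this bookkeeping the argument is a direct assembly of the preceding material, and no further analytic input is required.
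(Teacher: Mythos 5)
Your proposal is correct and follows the paper's own route exactly: cite Proposition \ref{appendix;prop1} for $\|\cdot\|_{s,\delta}^{*}\simeq\|\cdot\|_{s,\delta}^{\bigstar}$, use the Fourier characterisations (\ref{eq:const:6})--(\ref{eq:const:7}) to pass to (\ref{eq:const:8}), and then the dilation identities with $\varepsilon=2^{j}$ to reach (\ref{eq:const:10}), i.e.\ the norm of Definition \ref{def:weighted:3}. The uniformity in $j$ that you flag is indeed the only bookkeeping point, and it holds because the equivalence constants in (\ref{eq:const:6})--(\ref{eq:const:7}) are universal (independent of the function) while the scaling identities are exact.
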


\begin{rem} 
  Theorem \ref{thm:a1} enables us to use both sorts of the norms
  (\ref{eq:const:3}) and (\ref{eq:weighted:4}), and for each
  application we will use the suitable type of norm.
\end{rem}

\begin{rem}
\label{rem:const:1}
Let $s'\leq s$ and $\delta'\leq\delta$, then the inclusion
\begin{math}
H_{s,\delta}\hookrightarrow H_{s',\delta'}
\end{math}
follows easily from the representations (\ref{eq:const:15}) and
(\ref{eq:weighted:4}) of the norms.
\end{rem}

\begin{rem}
  The functions $\{\psi_j\}$ are constructed by means of a
  composition of exponential functions. Hence, for any positive
  $\gamma$ there holds
  \begin{equation}
    c_1(\gamma,\alpha)|\partial^\alpha \psi_j^\gamma(x)|\leq
    |\partial^\alpha \psi_j(x)| 
    \leq c_2(\gamma,\alpha)|\partial^\alpha \psi_j^\gamma(x)|.
  \end{equation}

  Therefore the equivalence (\ref{eq:const:5})
  remains valid with $\psi_j^\gamma$ replacing $\psi_j$ and hence
  \begin{equation}
    \label{eq:const:12}
    \sum_j 2^{( \frac{3}{2} + \delta)2j} \| (\psi_j^\gamma u)_{(2^j)}
    \|_{H^{s}}^{2} \simeq
    \left(\|u\|_{s,\delta}^{{\bigstar}}\right)^2\simeq \sum_j 2^{(
      \frac{3}{2} + \delta)2j} \| (\psi_j u)_{(2^j)} \|_{H^{s}}^{2}.
  \end{equation}

\end{rem}

\begin{cor}[{Equivalence of norms}]
  \label{cor:app11}
  For any positive $\gamma$, there are two positive constants $c_0$ and
  $c_1$ depending on $s $, $\delta$ and $\gamma$ such that
  \begin{equation}
    \label{eq:const:13}
    c_0 {  \|u\|_{H_{s,\delta}}^2 \leq  \sum_j 2^{(
        \frac{3}{2} + \delta)2j} \| (\psi_j^\gamma u)_{(2^j)}
      \|_{H^{s}}^{2}  \leq c_1 {  \|u\|_{H_{s,\delta}}^2}}.
  \end{equation}

\end{cor}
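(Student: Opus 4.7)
The plan is to view the corollary as an immediate consequence of the equivalence chain already assembled in the preceding remark together with Theorem \ref{thm:a1}. By Definition \ref{def:weighted:3}, the norm $\|u\|_{H_{s,\delta}}$ is exactly the sum on the right-hand side of (\ref{eq:const:13}) specialized to $\gamma=1$, so the content of the corollary is that replacing $\psi_j$ by $\psi_j^\gamma$ in that sum yields an equivalent quantity. The route is: first identify the $\psi_j^\gamma$-sum with a $\bigstar$-type norm via the scaling identity used to pass from (\ref{eq:const:8}) to (\ref{eq:const:10}); next identify the $\bigstar$-norm with the $\ast$-norm via an analog of Proposition \ref{appendix;prop1}; finally apply Theorem \ref{thm:a1} to identify $\|u\|_{s,\delta}^\ast$ with $\|u\|_{H_{s,\delta}}$.

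The key step is to check that $\{\psi_j^\gamma\}$ satisfies the cutoff derivative bound
\begin{equation*}
|\partial^\alpha \psi_j^\gamma(x)| \leq C_\alpha(\gamma)\, 2^{-|\alpha| j},
\end{equation*}
i.e., condition (\ref{eq:const:4}) with new constants. Once this bound and the obvious fact $\supp(\psi_j^\gamma)=\supp(\psi_j)$ are in hand, the proof of Proposition \ref{appendix;prop1} (which uses only these two features of the cutoffs) carries over verbatim to give
\begin{equation*}
c_0(\gamma)\, \|u\|_{s,\delta,\gamma}^{\bigstar} \leq \|u\|_{s,\delta}^{\ast} \leq c_1(\gamma)\, \|u\|_{s,\delta,\gamma}^{\bigstar},
\end{equation*}
where $\|u\|_{s,\delta,\gamma}^{\bigstar}$ denotes the analog of (\ref{eq:const:19}) built from $\psi_j^\gamma$. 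The scaling computation that produced (\ref{eq:const:10}) from (\ref{eq:const:8}) is insensitive to the particular cutoff and so gives
\begin{equation*}
\bigl(\|u\|_{s,\delta,\gamma}^{\bigstar}\bigr)^2 \simeq \sum_j 2^{(\frac{3}{2}+\delta)2j} \|(\psi_j^\gamma u)_{(2^j)}\|_{H^s}^2,
\end{equation*}
and combining these three equivalences with Theorem \ref{thm:a1} closes the argument.

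The main obstacle is justifying the derivative estimate and, more subtly, the smoothness of $\psi_j^\gamma$ when $\gamma$ is not an integer. For integer $\gamma$ the bound follows at once from Leibniz and the hypothesis (\ref{eq:const:4}) on $\psi_j$. For general $\gamma>0$ I would invoke the parenthetical fact that the $\psi_j$ are built by composition of exponential functions, so that $\psi_j$ vanishes to infinite order at the boundary of its support; consequently $\psi_j^\gamma \in C_0^\infty(\mathbb{R}^3)$, and Faà di Bruno applied to the composition $t\mapsto t^\gamma$ on the range of $\psi_j$ (where one must keep track of vanishing by factoring out appropriate powers of $\psi_j$ using the exponential structure) yields exactly the required bound with a constant $C_\alpha(\gamma)$ depending only on $\alpha$ and $\gamma$. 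This is precisely the statement asserted in the remark immediately preceding the corollary, so invoking that remark suffices; writing out the chain of three equivalences then finishes the proof.
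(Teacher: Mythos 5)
Your proof is correct and takes essentially the same route as the paper: the paper's entire argument is the remark immediately preceding the corollary, which uses the exponential construction of the $\psi_j$ to obtain the derivative bounds for $\psi_j^\gamma$, concludes that the equivalence of Proposition \ref{appendix;prop1} survives the replacement of $\psi_j$ by $\psi_j^\gamma$, and then reruns the scaling chain leading from (\ref{eq:const:8}) to (\ref{eq:const:10}). Your extra care about the smoothness of $\psi_j^\gamma$ for non-integer $\gamma$ simply makes explicit what the paper leaves implicit.
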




\section{Some Properties of $H_{s,\delta}$}
\label{sec:some-properties-h_s}

\begin{thm}[Complex interpolation, Triebel]
  \label{const:thm:1}

Let $0<\theta<1$, $0\leq s_0<s_1$ and
  $s_\theta=\theta s_0 + (1-\theta)s_1$, then
  \begin{equation}
    \label{eq:const:16}
    [H_{s_0,\delta}, H_{s_1,\delta}]_\theta= H_{s_\theta,\delta},
  \end{equation}
  where (\ref{eq:const:16}) is a complex interpolation.

\end{thm}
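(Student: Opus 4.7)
My plan is to reduce the theorem to the classical complex interpolation identity $[H^{s_0},H^{s_1}]_\theta = H^{s_\theta}$ for the Bessel potential spaces via a retraction--coretraction argument, in the spirit of Triebel's interpolation theory. Let $\ell^2_\delta(H^s)$ denote the weighted $\ell^2$-valued space of sequences $\{v_j\}_{j\geq 0}\subset H^s$ with norm
\begin{equation*}
\|\{v_j\}\|_{\ell^2_\delta(H^s)}^2 = \sum_{j=0}^\infty 2^{(\frac{3}{2}+\delta)2j}\|v_j\|_{H^s}^2.
\end{equation*}
A standard abstract argument (Calder\'on's result on interpolation of weighted $\ell^2$-valued Banach-lattice constructions) gives
\begin{equation*}
[\ell^2_\delta(H^{s_0}),\,\ell^2_\delta(H^{s_1})]_\theta = \ell^2_\delta(H^{s_\theta}).
\end{equation*}

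Next I construct a coretraction $J:H_{s,\delta}\to\ell^2_\delta(H^s)$ and a retraction $P:\ell^2_\delta(H^s)\to H_{s,\delta}$ with $P\circ J=\mathrm{id}_{H_{s,\delta}}$, both bounded simultaneously at $s=s_0$ and $s=s_1$. For $J$ I take $Ju=\{(\psi_j u)_{2^j}\}_j$; by Definition \ref{def:weighted:3} this is an isometric embedding for every $s\geq 0$. For $P$ I introduce a second system of cutoffs $\{\chi_j\}$, chosen (using Corollary \ref{cor:app11}, which allows $\psi_j$ to be replaced by $\psi_j^\gamma$ without changing the norm up to equivalence) so that $\chi_j\psi_j=\psi_j$ on the relevant annuli and $\supp(\chi_j)\subset\cup_{\ell=j-4}^{j+3}K_\ell$. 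Setting
\begin{equation*}
P(\{v_j\})(x):=\sum_{j=0}^\infty \chi_j(x)\,v_j(2^{-j}x),
\end{equation*}
a direct computation yields $PJu=\sum_j\chi_j\psi_j u=\left(\sum_j\chi_j\psi_j\right)u=u$ once the $\chi_j$ are normalized to form a partition of unity on $\mathbb{R}^3$. Boundedness $\|Pv\|_{H_{s,\delta}}\lesssim\|v\|_{\ell^2_\delta(H^s)}$ then follows term-by-term from the scaling law $\|f_\varepsilon\|_{H^s}\simeq\varepsilon^{-3/2}\|f\|_{H^s}$ on dyadic annuli together with pointwise multiplier estimates for $\chi_j$ on $H^s$, and the finite-overlap property of the supports.

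The main obstacle is making the bookkeeping on cutoffs work so that $P\circ J=\mathrm{id}$ holds exactly, while at the same time $P$ is bounded with a constant uniform in $s$ on the compact interval $[s_0,s_1]$. This requires choosing $\{\chi_j\}$ consistent with both the structure of $\{\psi_j\}$ in the appendix and the finite-overlap bound $\chi_j\psi_k=0$ whenever $|j-k|>4$, so that the sum defining $P$ is actually locally finite. Using the freedom provided by Corollary \ref{cor:app11} (equivalent norms under $\psi_j\leftrightarrow\psi_j^\gamma$) and the observation (\ref{eq:energy-estimates:12}) on the interaction of $\Psi_k$ with $\psi_j^2$, this is a matter of careful selection rather than any deep estimate.

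Once $J$ and $P$ are in place for both endpoints, functoriality of complex interpolation yields bounded maps $J:H_{s_\theta,\delta}\to\ell^2_\delta(H^{s_\theta})$ and $P:\ell^2_\delta(H^{s_\theta})\to H_{s_\theta,\delta}$ with $PJ=\mathrm{id}$. Combined with the identity $[\ell^2_\delta(H^{s_0}),\ell^2_\delta(H^{s_1})]_\theta=\ell^2_\delta(H^{s_\theta})$ above, the standard retract--coretract principle (see Triebel \cite{triebel95}) identifies $H_{s_\theta,\delta}$ with the image of $P$ on the interpolated sequence space, giving $[H_{s_0,\delta},H_{s_1,\delta}]_\theta=H_{s_\theta,\delta}$ with equivalence of norms.
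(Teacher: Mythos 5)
The paper does not actually prove this theorem: it is quoted from Triebel (\cite{triebel76:_spaces_kudrj2}, \cite{triebel95}) with no argument supplied, so there is nothing in the text to compare against line by line. Your retraction--coretraction reduction to $[H^{s_0},H^{s_1}]_\theta=H^{s_\theta}$ via the weighted sequence spaces $\ell^2_\delta(H^s)$ is precisely the standard mechanism by which such results are established in Triebel's interpolation theory, and the skeleton is sound: $J$ is an isometry by Definition \ref{def:weighted:3}, the identity $[\ell^2_\delta(H^{s_0}),\ell^2_\delta(H^{s_1})]_\theta=\ell^2_\delta(H^{s_\theta})$ is the classical vector-valued interpolation result (the weight being the same at both endpoints), and boundedness of $P$ at the two endpoints plus functoriality is all that is needed (uniformity of constants over $[s_0,s_1]$, which you worry about, is not required).

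One piece of your bookkeeping is internally inconsistent as written. You require both $\chi_j\psi_j=\psi_j$ (i.e.\ $\chi_j\equiv 1$ on $\supp\psi_j$) and that $\{\chi_j\}$ be a partition of unity; these are incompatible, since the sets $\supp\psi_j$ overlap, and with your condition one gets $PJu=\bigl(\sum_j\psi_j\bigr)u$, where $\sum_j\psi_j$ lies between $1$ and $7$ but is not identically $1$. The correct choice is the opposite containment: take $\{\chi_j\}$ a partition of unity with $\supp\chi_j\subset K_j$, the set on which $\psi_j\equiv 1$, so that $\chi_j\psi_j=\chi_j$ and hence
\begin{equation*}
PJu=\sum_j\chi_j\psi_j u=\Bigl(\sum_j\chi_j\Bigr)u=u .
\end{equation*}
Equivalently one may use the normalized family $\Psi_j=\psi_j/\sum_k\psi_k$ already introduced in the proof of Theorem \ref{thr:Appendix:5}, noting $\sum_j\Psi_j\psi_j\ne 1$ but $\sum_j\Psi_j=1$ and $\Psi_j=\Psi_j\cdot 1$ on $K_j$; either way the finite-overlap property (\ref{eq:energy-estimates:12}) and the scaling bound (\ref{eq:energy-estimates:26}) (with $\epsilon=2^{k-j}$, $|k-j|$ bounded) give the boundedness of $P$ exactly as you describe. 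With that correction the argument is complete.
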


As a consequence of the interpolation Theorem
\ref{const:thm:1} we get 
\begin{cor}[$H_{s,\delta}$-norm of a derivative]
  \label{cor:const:1}
  \begin{equation}
    \label{eq:const:17}
    \|\partial_i u\|_{H_{s-1,\delta+1}}\leq \| u\|_{H_{s,\delta}}
  \end{equation}
\end{cor}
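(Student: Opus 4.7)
The plan is to leverage the fact that for \emph{integer} order the inequality is essentially immediate from the integral representation of the norm, and then use complex interpolation (Theorem \ref{const:thm:1}) to pass to arbitrary $s$.

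First, assume $s=m\geq 1$ is an integer. Working with the equivalent norm $\|\cdot\|_{s,\delta}^{\ast}$ of (\ref{eq:const:3}), a direct re-indexing $\beta=\alpha+e_i$ shows
\begin{equation*}
\left(\|\partial_i u\|_{m-1,\delta+1}^{\ast}\right)^2
=\sum_{|\alpha|\leq m-1}\int \langle x\rangle^{2(\delta+1+|\alpha|)}|\partial^{\alpha}\partial_i u|^2\,dx
=\sum_{\substack{|\beta|\leq m \\ \beta_i\geq 1}}\int \langle x\rangle^{2(\delta+|\beta|)}|\partial^{\beta}u|^2\,dx
\leq \left(\|u\|_{m,\delta}^{\ast}\right)^2.
\end{equation*}
By the equivalence of norms (Theorem \ref{thm:a1}) this gives $\|\partial_i u\|_{H_{m-1,\delta+1}}\leq C \|u\|_{H_{m,\delta}}$ for every integer $m\geq 1$.

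Next, for general $s\geq 1$, pick non-negative integers $m_0<m_1$ with $m_0\leq s\leq m_1$ and write $s=\theta m_0+(1-\theta)m_1$ with $\theta\in[0,1]$. Then $s-1=\theta(m_0-1)+(1-\theta)(m_1-1)$. By the integer case, the differentiation operator $\partial_i$ is a bounded linear map
\begin{equation*}
\partial_i:H_{m_k,\delta}\longrightarrow H_{m_k-1,\,\delta+1},\qquad k=0,1.
\end{equation*}
Applying Theorem \ref{const:thm:1} to both endpoints gives
$[H_{m_0,\delta},H_{m_1,\delta}]_{\theta}=H_{s,\delta}$ and $[H_{m_0-1,\delta+1},H_{m_1-1,\delta+1}]_{\theta}=H_{s-1,\delta+1}$. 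The standard interpolation principle for linear operators between complex interpolation couples then yields the boundedness of $\partial_i:H_{s,\delta}\to H_{s-1,\delta+1}$, i.e.\ (\ref{eq:const:17}).

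No serious obstacle is expected: the integer case is just a rearrangement of sums of weighted $L^2$ integrals, and the fractional case is a bookkeeping exercise in interpolation once one observes that the weight $\delta$ and the differentiability index $s$ interpolate independently with the shift relation $(m,\delta)\mapsto(m-1,\delta+1)$ preserved under $\theta$. The only mild subtlety is verifying that the two interpolation scales identify with $H_{s,\delta}$ and $H_{s-1,\delta+1}$ with the same $\theta$, which is immediate from the linear parametrization of $s$ and $s-1$.
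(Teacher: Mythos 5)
Your proof is correct and follows essentially the same route as the paper: the paper likewise verifies the integer case directly from the integral norm (\ref{eq:const:1}) and then invokes the interpolation Theorem \ref{const:thm:1}. Your version merely spells out the re-indexing and the identification of the two interpolation scales, which the paper leaves implicit.
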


\begin{proof}[of Corollary \ref{cor:const:1}]
  Let $m$ be a positive integer and define $T:H_{m,\delta}\to
  H_{m-1,\delta+1}$ by $T(u)=\partial_i u$. Using the norm
  (\ref{eq:const:1}) we see that $\|T(u)\|_{H_{m-1,\delta+1}}\leq
  \|u\|_{H_{m,\delta}}$. So (\ref{eq:const:17}) follows from Theorem
  \ref{const:thm:1}.
  \BeweisEnde
\end{proof}

\begin{rem}
  \label{rem:constr:1} 
  If $\supp{u}\subset \{|x|\leq R\}$, then for any $\delta$
  \begin{equation}
    \label{eq:const:14}
    c_1(R) \|u\|_{H^s}\leq \|u\|_{H_{s,\delta}}\leq
    c_2(R)\|u\|_{H^s}.
  \end{equation}
\end{rem}
This follows from the integral representation of the norm
(\ref{eq:const:1}) and the interpolation (\ref{eq:const:16}).

\begin{prop}[Multiplication by smooth functions]
  \label{prop:Properties:1}

Let $N\geq s$ be an integer. Assume $f\in C^N(\setR^3)$
    satisfies $\sup|D^k f|\leq K$ for $k=0,1,...N$, then
    \begin{equation}
      \label{eq:3} \|fu\|_{H_{s,\delta}}\leq C_sK \|u\|_{H_{s,\delta}}.
    \end{equation}
\end{prop}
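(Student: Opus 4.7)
My plan is to reduce the statement to the standard Bessel-potential multiplier theorem via the dyadic characterization~(\ref{eq:weighted:4}) of the $H_{s,\delta}$-norm. The starting identity is
$(\psi_j fu)_{2^j}(x) = f(2^j x)\,(\psi_j u)_{2^j}(x) = f_{2^j}(x)\,(\psi_j u)_{2^j}(x)$,
so $\|fu\|_{H_{s,\delta}}^2 = \sum_j 2^{(3+2\delta)j}\|f_{2^j}\,(\psi_j u)_{2^j}\|_{H^s}^2$, and it suffices to produce a $j$-uniform bound of the form $\|f_{2^j}\,(\psi_j u)_{2^j}\|_{H^s}\le C_s K\,\|(\psi_j u)_{2^j}\|_{H^s}$ for every $j$.

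The key geometric input is that every $(\psi_j u)_{2^j}$ has support in a single fixed compact set $\Omega\subset\setR^3$ (the ball $\{|x|\le 2^3\}$ for $j=0$, the annulus $\{2^{-4}\le|x|\le 2^3\}$ for $j\ge 1$), independent of $j$; this comes from $\supp\psi_j\subset\{2^{j-4}\le|x|\le 2^{j+3}\}$ together with the rescaling $(\,\cdot\,)_{2^j}$. Choosing once-and-for-all a cutoff $\tilde\psi\in C_0^\infty(\setR^3)$ equal to $1$ on $\Omega$, one has $f_{2^j}(\psi_j u)_{2^j}=(\tilde\psi f_{2^j})(\psi_j u)_{2^j}$. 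The standard $H^s$ multiplier estimate, $\|gv\|_{H^s}\le C_s\|g\|_{C^N}\|v\|_{H^s}$ for $N\ge s$ (immediate Leibniz for integer $s$, then interpolation for general $s$), reduces the whole problem to a $j$-uniform bound $\|\tilde\psi f_{2^j}\|_{C^N}\le C\,K$.

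The main obstacle I expect is precisely this uniform $C^N$-bound. For $\beta=0$ one has $\|f_{2^j}\|_{L^\infty}\le K$ immediately, but for $|\beta|>0$ the chain rule yields $\|\partial^\beta f_{2^j}\|_{L^\infty}=2^{j|\beta|}\|\partial^\beta f\|_{L^\infty}\le K\,2^{j|\beta|}$, which grows with $j$ and threatens the summation. Overcoming this requires balancing the growing factors $2^{j|\beta|}$ against the dyadic weight $2^{(3+2\delta)j}$ in the outer sum, using the equivalence $2^j\simeq\langle x\rangle$ on $\supp\psi_j$. Concretely, I would carry the argument out first in the integer case $s=m$ by switching to the integral norm~(\ref{eq:const:1}) and expanding $\partial^\alpha(fu)$ via Leibniz: the $\beta=0$ contribution is bounded by $CK\|u\|_{H_{m,\delta}}$ at once from $|f|\le K$, while each term $(\partial^\beta f)(\partial^{\alpha-\beta}u)$ with $|\beta|>0$ is handled by reassigning the weight $\langle x\rangle^{\delta+|\alpha|}$ as $\langle x\rangle^{\delta+|\alpha-\beta|}\cdot\langle x\rangle^{|\beta|}$ and absorbing the second factor against the dyadic decomposition by matching it to the piece of the sum in~(\ref{eq:weighted:4}) localized where $2^j\simeq\langle x\rangle$. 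The fractional case $s=m+\lambda$, $0<\lambda<1$, then follows by complex interpolation between the integer indices $m$ and $m+1$ bracketing $s$, via Theorem~\ref{const:thm:1}.
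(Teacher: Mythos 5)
Your reduction to a $j$-uniform bound on $\|f_{2^j}\,(\psi_j u)_{2^j}\|_{H^s}$ is exactly the paper's route, and you have correctly put your finger on the step that the paper's own one-line proof passes over in silence: since $\|\partial^\beta f_{2^j}\|_{L^\infty}=2^{j|\beta|}\|\partial^\beta f\|_{L^\infty}$, the norms $\|f_{2^j}\|_{C^N}$ are not uniformly bounded, so the termwise application of $\|gv\|_{H^s}\le C_s\|g\|_{C^N}\|v\|_{H^s}$ is not legitimate under the stated hypotheses. The problem is that your proposed repair does not close this gap. In the Leibniz term $(\partial^\beta f)(\partial^{\alpha-\beta}u)$ with $|\beta|>0$, splitting the weight as $\langle x\rangle^{\delta+|\alpha|}=\langle x\rangle^{|\beta|}\cdot\langle x\rangle^{\delta+|\alpha-\beta|}$ leaves $\int\bigl(\langle x\rangle^{|\beta|}\,\langle x\rangle^{\delta+|\alpha-\beta|}|\partial^{\alpha-\beta}u|\bigr)^2dx$: the factor $\langle x\rangle^{2|\beta|}\simeq 2^{2j|\beta|}$ sits inside the integrand and multiplies the quantity you are trying to control, and there is no compensating negative power anywhere in $\|u\|_{H_{s,\delta}}^2$ to absorb it. The dyadic weight $2^{(3+2\delta)j}$ appears identically in the $j$-th term of both $\|fu\|_{H_{s,\delta}}^2$ and $\|u\|_{H_{s,\delta}}^2$ and simply cancels; nothing "balances" the extra $2^{2j|\beta|}$.

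In fact no argument can close this gap, because the statement is false as literally written. Take $f(x)=\sin x_1$ (so $\sup|D^kf|\le1$ for all $k$) and $u_L(x)=\eta(x/L)$ with $\eta$ a fixed bump equal to $1$ on the unit ball. For $\delta>-\frac{3}{2}$ one computes $\|u_L\|_{H_{1,\delta}}^2\simeq L^{2\delta+3}$, whereas $\|fu_L\|_{H_{1,\delta}}^2\ge\int_{|x|\le L}\langle x\rangle^{2\delta+2}\cos^2(x_1)\,dx\simeq L^{2\delta+5}$, so the ratio of norms grows like $L$. What both your Leibniz computation and the paper's termwise argument actually need is the symbol-type decay $|\partial^\beta f(x)|\le K\langle x\rangle^{-|\beta|}$ for $1\le|\beta|\le N$: then $\langle x\rangle^{|\beta|}|\partial^\beta f|\le K$ and your weight-splitting closes, and equivalently $\|f_{2^j}\|_{C^N}$ is bounded uniformly in $j$ on the fixed annulus carrying $(\psi_j u)_{2^j}$, so the paper's termwise argument closes. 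That stronger hypothesis holds in every use the paper makes of this proposition (the cutoffs $\chi_R$ and $1-\chi_R$, the functions $\psi_j$), so the proposition should be read with it; under the hypotheses as stated, neither your proof nor the paper's can be completed.
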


\begin{proof}[of Proposition \ref{prop:Properties:1}]
  By the well known property of $H^s$
\begin{equation}
 \label{eq:Properties:2} 
\|fu\|_{H^{s}}\leq C_sK \|u\|_{H^{s}}
\end{equation} 
  we have
    \begin{equation}
\label{eq:wellposs-einstein-euler-hyper2}
\begin{split}
        \|f u\|_{H_{s,\delta}}^2 & =
        \sum_{j=0}^\infty 2^{\left( \frac{3}{2} +\delta \right)2j}
        \left\| \left( \psi_j fu \right)_{2^j} \right\|_{H^s}^2
       \\ & \leq  (C_sK)^2
        \sum_{j=0}^\infty 2^{\left( \frac{3}{2} +\delta \right)2j}
        \left\| \left( \psi_j u \right)_{2^j} \right\|_{H^s}^2\ =(C_sK)^2
        \| u\|_{H_{s,\delta}}^2.
      \end{split}
    \end{equation}
\BeweisEnde
\end{proof}

\begin{prop}[Multiplication by cutoff function]
\label{prop:Properties:11}
Let $ \chi_R\in C^\infty(\mathbb{R}^3)$ satisfies $\chi_R(x)=1$
for $|x|\leq R$, $\chi_R(x)=0$ for $|x|\geq 2R$ and
\begin{math}
\label{eq:2}
  |\partial^\alpha \chi_R|\leq c_\alpha R^{-|\alpha|}
\end{math}. Then for $\delta'<\delta$ there holds
    \begin{equation}
    \label{eq:properties:1}
      \|(1-\chi_R)u\|_{H_{s,\delta'}}\leq
      \frac{C(\delta,\delta')}{R^{\delta-\delta'}} \|u\|_{H_{s,\delta}}.
    \end{equation}
\end{prop}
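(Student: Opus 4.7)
The strategy is to first establish the estimate for integer order and then invoke complex interpolation (Theorem \ref{const:thm:1}) to pass to fractional $s$. This avoids the need to work directly with the double-integral portion of the norm (\ref{eq:const:3}), which is awkward in the presence of a cutoff whose support straddles the annulus $\{R\leq|x|\leq 2R\}$.

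For integer $m$, use the equivalent norm $\|\cdot\|_{m,\delta}^{*}$ from (\ref{eq:const:1}). By the Leibniz rule,
\begin{equation*}
\partial^{\alpha}\bigl((1-\chi_R)u\bigr)=\sum_{\beta\le\alpha}\binom{\alpha}{\beta}\partial^{\alpha-\beta}(1-\chi_R)\,\partial^{\beta}u,
\end{equation*}
and I would split the contribution to $\|(1-\chi_R)u\|_{m,\delta'}^{*2}$ into two cases. When $\beta=\alpha$, the factor is $(1-\chi_R)\partial^{\alpha}u$, which is supported in $\{|x|\ge R\}$; there $(1+|x|)^{\delta'-\delta}\le C R^{-(\delta-\delta')}$ since $\delta'<\delta$, so the weight $(1+|x|)^{2(\delta'+|\alpha|)}$ is replaced by $R^{-2(\delta-\delta')}(1+|x|)^{2(\delta+|\alpha|)}$ and the integral is dominated by $C R^{-2(\delta-\delta')}\|u\|_{m,\delta}^{*2}$. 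When $|\alpha-\beta|\ge 1$, the factor $\partial^{\alpha-\beta}(1-\chi_R)=-\partial^{\alpha-\beta}\chi_R$ is supported in the annulus $\{R\le|x|\le 2R\}$ and bounded by $C R^{-|\alpha-\beta|}$; on that annulus $(1+|x|)\sim R$, so pulling out $R^{-2|\alpha-\beta|}\cdot R^{2(\delta'+|\alpha|)}=R^{2(\delta'+|\beta|)}$ and then replacing $R^{2(\delta'+|\beta|)}\le CR^{-2(\delta-\delta')}(1+|x|)^{2(\delta+|\beta|)}$ yields the same bound $C R^{-2(\delta-\delta')}\|u\|_{m,\delta}^{*2}$. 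Summing over $|\alpha|\le m$ gives the integer-order estimate
\begin{equation*}
\|(1-\chi_R)u\|_{m,\delta'}^{*}\le \frac{C(m,\delta,\delta')}{R^{\delta-\delta'}}\|u\|_{m,\delta}^{*}.
\end{equation*}

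To promote this to fractional $s$, view $T_R u=(1-\chi_R)u$ as a bounded linear operator. Choosing nonnegative integers $m_0<s<m_1$ so that $s=(1-\theta)m_0+\theta m_1$ with $\theta\in(0,1)$, the previous step shows
\begin{equation*}
T_R:H_{m_i,\delta}\to H_{m_i,\delta'},\qquad \|T_R\|\le C_i\,R^{-(\delta-\delta')},\quad i=0,1.
\end{equation*}
By the complex interpolation identity (\ref{eq:const:16}) of Theorem \ref{const:thm:1}, $[H_{m_0,\delta},H_{m_1,\delta}]_{\theta}=H_{s,\delta}$ and likewise for $\delta'$; the standard complex-interpolation inequality for operator norms gives $\|T_R\|_{H_{s,\delta}\to H_{s,\delta'}}\le C_0^{1-\theta}C_1^{\theta}\,R^{-(\delta-\delta')}$, which is the claim (\ref{eq:properties:1}).

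The only subtle point is keeping the dependence on $R$ uniform across the interpolation step; this is automatic because complex interpolation yields the multiplicative bound on operator norms, and the estimates at both integer endpoints carry the identical $R^{-(\delta-\delta')}$ factor. The main bookkeeping obstacle is the second Leibniz case, where one must verify that the loss $R^{-|\alpha-\beta|}$ from the derivatives of $\chi_R$ exactly compensates the difference between the weight exponents $\delta'+|\alpha|$ and $\delta+|\beta|$ on the annulus; the calculation above shows that the surplus is precisely the desired factor $R^{-(\delta-\delta')}$.
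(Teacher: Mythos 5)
Your proof is correct, but it takes a genuinely different route from the paper's. The paper works directly with the dyadic norm (\ref{eq:weighted:4}): it observes that $(1-\chi_R)\psi_j\equiv 0$ for all $j<J_0$ where $2^{J_0}\sim R$, so the sum defining $\|(1-\chi_R)u\|_{H_{s,\delta'}}^2$ starts only at $j=J_0$; there one writes $2^{(3/2+\delta')2j}=2^{(3/2+\delta)2j}2^{(\delta'-\delta)2j}$ and bounds $2^{(\delta'-\delta)2j}\le 2^{(\delta'-\delta)2J_0}\lesssim R^{-2(\delta-\delta')}$, after discarding the factor $(1-\chi_R)$ term-wise via the multiplication estimate (\ref{eq:Properties:2}). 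That argument is short, handles all real $s\ge 0$ at once, and makes the $R$-dependence of the constant transparent. You instead prove the integer-order case by a Leibniz computation in the Nirenberg--Walker--Cantor norm (\ref{eq:const:1}) --- correctly matching the loss $R^{-|\alpha-\beta|}$ from $\partial^{\alpha-\beta}\chi_R$ against the weight shift on the annulus $\{R\le|x|\le 2R\}$ --- and then promote to fractional $s$ by complex interpolation of the operator $T_R=(1-\chi_R)\cdot$ between the two endpoint couples, using Theorem \ref{const:thm:1} for both the $\delta$ and the $\delta'$ scales together with Theorem \ref{thm:a1} to pass between the starred and dyadic norms. This is a legitimate alternative; what it buys is that each endpoint estimate is completely elementary, and the uniformity in $R$ survives interpolation because the complex method gives the multiplicative bound $M_0^{1-\theta}M_1^{\theta}$ with the same $R^{-(\delta-\delta')}$ at both endpoints. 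What it costs is reliance on the interpolation identity and the norm-equivalence theorem, and a small caveat you should acknowledge: your annulus comparison $(1+|x|)\sim R$ and the bound $(1+|x|)^{\delta'-\delta}\lesssim R^{-(\delta-\delta')}$ for $|x|\ge R$ are uniform only for $R$ bounded away from $0$ (which is the regime in which the proposition is used, and in which the paper's own constant is meaningful).
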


\begin{proof}[of Proposition \ref{prop:Properties:11}]
    Let $J_0$ be the smallest  integer such that $R\leq 2^{J_0-3}$.
    Then $(1-\chi_R)\psi_j=0$ for $j=0,1,...,J_0-1$. Hence
    \begin{equation}
      \begin{split}
        & \|(1-\chi_R) u\|_{H_{s,\delta'}}^2 =
        \sum_{j=J_0}^\infty 2^{\left( \frac{3}{2} +\delta' \right)2j}
        \left\| \left( \psi_j(1-\chi_R) u \right)_{2^j} \right\|_{H^s}^2\\
        \leq C^2 &
        \sum_{j=J_0}^\infty 2^{\left( \frac{3}{2} +\delta' \right)2j}
        \left\| \left( \psi_j u \right)_{2^j} \right\|_{H^s}^2=
        C^2 \sum_{j=J_0}^\infty 2^{\left( \frac{3}{2} +\delta
          \right)2j}2^{(\delta'-\delta)2j}
        \left\| \left( \psi_j u \right)_{2^j} \right\|_{H^s}^2\\
        \leq C^2 &  2^{(\delta'-\delta)2J_0}\sum_{j=0}^\infty 2^{\left(
            \frac{3}{2} +\delta 
          \right)2j}
        \left\| \left( \psi_j u \right)_{2^j} \right\|_{H^s}^2  \leq
        \frac{C^2}{(8R)^{(\delta-\delta')2}}\|u\|_{H_{s,\delta}}^2.
      \end{split}
    \end{equation}
  \BeweisEnde
\end{proof}

\label{sec:two-interm-estim}

\begin{prop}[An intermediate estimate]
  \label{prop:Properties:4}
 Let $0\leq s_0<s<s_1$ and  $\varepsilon>0$, then
    there is a constant $C=C(\varepsilon)$ such that
    \begin{equation}
      \label{eq:11}
      \|u\|_{H_{s,\delta}}\leq \sqrt{2}\varepsilon \|u
      \|_{H_{s_1,\delta}}+ C\|u \|_{H_{s_0,\delta}},
    \end{equation}
    holds for all $u\in H_{s_1,\delta}$.
\end{prop}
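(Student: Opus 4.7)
The plan is to apply the classical intermediate estimate in the Bessel potential space $H^s$ term-wise to the dyadic decomposition (\ref{eq:weighted:4}) which defines the $H_{s,\delta}$ norm. The philosophy is exactly the guiding one described in the introduction: transfer a known property of $H^s$ to $H_{s,\delta}$ by applying it to each rescaled piece $(\psi_j u)_{2^j}$ and summing with the weights $2^{(\frac{3}{2}+\delta)2j}$.

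First, I would recall the classical intermediate estimate in $H^s(\setR^3)$: for $0\leq s_0<s<s_1$ and any $\varepsilon>0$ there exists $C(\varepsilon)>0$ with
\begin{equation*}
 \|v\|_{H^s}\leq \varepsilon\|v\|_{H^{s_1}}+C(\varepsilon)\|v\|_{H^{s_0}}
\qquad\text{for all } v\in H^{s_1}.
\end{equation*}
This is standard and follows, for instance, by writing $s=\theta s_0+(1-\theta)s_1$, using the log-convexity bound $\|v\|_{H^s}\leq \|v\|_{H^{s_0}}^{1-\theta}\|v\|_{H^{s_1}}^{\theta}$ from the Fourier representation (\ref{eq:const:15}) and applying Young's inequality $ab\leq \varepsilon^{1/\theta} a^{1/\theta}+C(\varepsilon)b^{1/(1-\theta)}$.

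Next, I apply this estimate with $v=(\psi_j u)_{2^j}$, which is legitimate because $u\in H_{s_1,\delta}$ implies each $(\psi_j u)_{2^j}\in H^{s_1}$. Squaring and using $(a+b)^2\leq 2a^2+2b^2$, I obtain term-wise
\begin{equation*}
 \|(\psi_j u)_{2^j}\|_{H^s}^2\leq 2\varepsilon^2 \|(\psi_j u)_{2^j}\|_{H^{s_1}}^2
 + 2\,C(\varepsilon)^2\|(\psi_j u)_{2^j}\|_{H^{s_0}}^2.
\end{equation*}
Multiplying by $2^{(\frac{3}{2}+\delta)2j}$, summing over $j\geq 0$, and invoking Definition~\ref{def:weighted:3}, I get
\begin{equation*}
 \|u\|_{H_{s,\delta}}^2\leq 2\varepsilon^2\|u\|_{H_{s_1,\delta}}^2
 + 2\,C(\varepsilon)^2\|u\|_{H_{s_0,\delta}}^2.
\end{equation*}
Taking square roots (and using $\sqrt{a^2+b^2}\leq a+b$ for nonnegative $a,b$) yields (\ref{eq:11}) with the constant $\sqrt{2}\,C(\varepsilon)$ replacing $C(\varepsilon)$ in the lower-order term, which is acceptable since $\varepsilon$ was arbitrary and $C(\cdot)$ depends only on $\varepsilon$.

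There is essentially no obstacle here, since the same weight $2^{(\frac{3}{2}+\delta)2j}$ appears uniformly on both sides and the dyadic decomposition does not interact with the scale of smoothness $s$: the dilation $(\,\cdot\,)_{2^j}$ affects only the underlying $H^s$-norm, not the intermediate inequality itself. The only mild subtlety worth flagging is that the constant in the summand is \emph{uniform in $j$}, which is why passing from the term-wise bound to the sum is harmless; this uniformity is precisely what allows the fractional intermediate estimate in $H^s$ to lift to $H_{s,\delta}$.
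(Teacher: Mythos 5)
Your proof is correct and follows exactly the paper's own argument: apply the classical intermediate estimate in $H^s$ term-wise to each $(\psi_j u)_{2^j}$ in the dyadic norm (\ref{eq:weighted:4}), square, sum with the weights, and take square roots. The only difference is that you also sketch a proof of the underlying $H^s$ estimate, which the paper simply cites as well known.
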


\begin{proof}[of Proposition \ref{prop:Properties:4}]
  \label{sec:properties-4}
  Inequality (\ref{eq:11})  is well known
  in $H^s$ spaces. We apply it to each term of the norm
  (\ref{eq:weighted:4}) and get
  \begin{equation*}
    \begin{split}
      \| u\|_{H_{s,\delta}}^2& =
      \sum_{j=0}^\infty 2^{\left( \frac{3}{2} +\delta \right)2j}
      \left\| \left( \psi_j u \right)_{2^j} \right\|_{H^s}^2\\
      &\leq 2\epsilon^2
      \sum_{j=0}^\infty 2^{\left( \frac{3}{2} +\delta \right)2j}
      \left\| \left( \psi_j u \right)_{2^j} \right\|_{H^{s_1}}^2+
      2C^2(\epsilon) \sum_{j=0}^\infty 2^{\left( \frac{3}{2} +\delta
        \right)2j}
      \left\| \left( \psi_j u \right)_{2^j} \right\|_{H^{s_0}}^2\\& =
      2\epsilon^2 \| u\|_{H_{s_{1},\delta}}^2+2C^2(\epsilon)\|
      u\|_{H_{s_{0},\delta}}^2,
    \end{split}
  \end{equation*}
\BeweisEnde
\end{proof}

\begin{prop}[An intermediate estimate]
\label{prop:Properties:5}
     Let $0< s'<s $, then
    \begin{equation}
      \label{eq:12}
      \|u\|_{H_{s',\delta}}\leq
      \|u\|_{H_{s,\delta}}^{\frac{s'}{s}}
      \| u\|_{H_{0,\delta}}^{1-\frac{s'}{s}}.
    \end{equation}
  \end{prop}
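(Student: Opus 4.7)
The plan is to reduce the weighted fractional estimate to the corresponding estimate for the unweighted Bessel potential spaces, applied term-wise to the infinite-sum norm (\ref{eq:weighted:4}), and then to couple the terms via Hölder's inequality. Set $\theta = s'/s \in (0,1)$.

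First I would invoke the classical interpolation inequality in $H^s$, namely
\begin{equation*}
\|v\|_{H^{s'}} \leq \|v\|_{H^{s}}^{\theta} \|v\|_{L^2}^{1-\theta},
\end{equation*}
which follows from Hölder's inequality applied to the Fourier representation (\ref{eq:const:15}) after writing $(1+|\xi|^2)^{s'} = (1+|\xi|^2)^{s\theta}\cdot 1^{1-\theta}$. Apply this with $v = (\psi_j u)_{(2^j)}$ for each $j$, so that
\begin{equation*}
\|(\psi_j u)_{(2^j)}\|_{H^{s'}}^{2} \leq \|(\psi_j u)_{(2^j)}\|_{H^{s}}^{2\theta} \|(\psi_j u)_{(2^j)}\|_{L^{2}}^{2(1-\theta)}.
\end{equation*}

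Next I would split the weight as $2^{(3/2+\delta)2j} = 2^{(3/2+\delta)2j\theta}\cdot 2^{(3/2+\delta)2j(1-\theta)}$, which gives
\begin{equation*}
2^{(3/2+\delta)2j} \|(\psi_j u)_{(2^j)}\|_{H^{s'}}^{2} \leq \bigl( 2^{(3/2+\delta)2j}\|(\psi_j u)_{(2^j)}\|_{H^{s}}^{2}\bigr)^{\theta} \bigl( 2^{(3/2+\delta)2j}\|(\psi_j u)_{(2^j)}\|_{L^{2}}^{2}\bigr)^{1-\theta}.
\end{equation*}
Now summing in $j$ and applying the discrete Hölder inequality with conjugate exponents $1/\theta$ and $1/(1-\theta)$ to the sequences $a_j = 2^{(3/2+\delta)2j}\|(\psi_j u)_{(2^j)}\|_{H^{s}}^{2}$ and $b_j = 2^{(3/2+\delta)2j}\|(\psi_j u)_{(2^j)}\|_{L^{2}}^{2}$ yields
\begin{equation*}
\|u\|_{H_{s',\delta}}^{2} \leq \Bigl(\sum_j a_j\Bigr)^{\theta} \Bigl(\sum_j b_j\Bigr)^{1-\theta} = \|u\|_{H_{s,\delta}}^{2\theta}\, \|u\|_{H_{0,\delta}}^{2(1-\theta)},
\end{equation*}
where the identification of the two sums uses Definition \ref{def:weighted:3}. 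Taking square roots gives (\ref{eq:12}).

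I do not anticipate any genuine obstacle: the only delicate point is the bookkeeping of the weight $2^{(3/2+\delta)2j}$, which must be distributed multiplicatively between the two factors so that the subsequent Hölder step produces exactly the $H_{s,\delta}$ and $H_{0,\delta}$ norms. Everything else is a direct term-wise lift of a standard $H^s$ interpolation inequality to the dyadic sum representation of the weighted fractional space.
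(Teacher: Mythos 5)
Your proposal is correct and follows essentially the same route as the paper's proof: the term-wise Fourier-side interpolation inequality in $H^{s}$, the multiplicative splitting of the dyadic weight $2^{(\frac{3}{2}+\delta)2j}$ into the powers $\theta$ and $1-\theta$, and the discrete H\"older inequality with exponents $1/\theta$ and $1/(1-\theta)$ to recover the $H_{s,\delta}$ and $H_{0,\delta}$ norms. No gaps.
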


\begin{proof}[of Proposition \ref{prop:Properties:5}]
Using  H\"older inequality we get $\|u\|_{H^{s'}}\leq \|u\|_{H^{s}}^{\frac{s'}{s}}
      \| u\|_{L^2}^{1-\frac{s'}{s}}$ and applying it and using again 
   H\"older inequality yields
  \begin{equation*}
    \label{eq:generalized-gag-nirenb:1}
    \begin{split}   \|
      u\|_{H_{s',\delta}}^2 & = \sum_j 2^{\left( \frac{3}{2}+ \delta
        \right)2j}
      \left\| \left( \psi_ju \right)_{2^j} \right\|_{H^{s'}}^{{2}} \\
      & \leq  \sum\limits_j^{}
      2^{\left( \frac{3}{2}+ \delta  \right)2j \left( \frac{s^{\prime}}{s}
        \right)}
      \left\| \left( \psi_ju \right)_{2^j} \right\|_{H^{s}}^{2
        \frac{s^{\prime}}{s}} \quad
      2^{\left( \frac{3}{2}+ \delta  \right)2j \left( \frac{s-s^{\prime}}{s}
        \right)}
      \left\| \left( \psi_ju \right)_{2^j} \right\|_{L_2}^{2
        \frac{s-s^{\prime}}{s}}
      \\
      &\leq \left( \sum\limits_j^{}
        2^{\left( \frac{3}{2}+ \delta  \right)2j}
        \left\| \left( \psi_ju \right)_{2^j} \right\|_{H^{s}}^{2 }
      \right)^{\frac{s^{\prime}}{s}} \quad \left(   \sum\limits_j^{}
        2^{\left( \frac{3}{2}+ \delta  \right)2j }
        \left\| \left( \psi_ju \right)
        \right\|^2_{L_2}\right)^{\frac{s-s^{\prime}}{s}} \\
      &= \left( \|u\|_{H_{s,\delta}} \right)^{\frac{2s^{\prime}}{s}}
      \quad \left( \|u\|_{H_{0,\delta}}
      \right)^{\frac{2\left(s^{\prime}-1    \right)}{s}}.
    \end{split}
  \end{equation*}
  \BeweisEnde
\end{proof}

\subsection{Algebra}
\label{sec:algebra}

\begin{prop}[Algebra in $H_{s,\delta}$]
  \label{prop:Properties:3}

 If $s_1,s_2\geq s$, $s_1+s_2>s+\frac{3}{2}$ and
    $\delta_1+\delta_2\geq\delta-\frac{3}{2}$, then
    \begin{equation}
    \label{eq:elliptic_part:12}
      \|uv\|_{H_{s,\delta}}\leq C \|u\|_{H_{s_1,\delta_1}}
      \|v\|_{H_{s_2,\delta_2}}.
    \end{equation}
\end{prop}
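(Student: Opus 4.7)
The plan is to reduce \eqref{eq:elliptic_part:12} to the classical Bessel-potential multiplication inequality applied term-by-term in the dyadic norm \eqref{eq:weighted:4}, and then to sum over the dyadic scales using the hypothesis $\delta_1+\delta_2 \geq \delta-\tfrac{3}{2}$.

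First I would pick for each $j\geq 0$ a slightly enlarged cutoff $\tilde\psi_j\in C_0^\infty(\setR^3)$ satisfying $\tilde\psi_j\equiv 1$ on $\supp(\psi_j)$, with support in $\cup_{\ell=j-5}^{j+4} K_\ell$ and with derivative bounds of the form \eqref{eq:const:4}. Since $\psi_j\equiv\psi_j\tilde\psi_j$, one has the factorization
\begin{equation*}
  \bigl(\psi_j(uv)\bigr)_{2^j} \;=\; (\psi_j u)_{2^j}\,(\tilde\psi_j v)_{2^j}.
\end{equation*}
The point of introducing $\tilde\psi_j$ is that $u$ and $v$ are localized to essentially the same dyadic shell, while still being controlled by the $H_{s_i,\delta_i}$ norms via Corollary \ref{cor:app11}.

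Next I would invoke the classical $H^s$ multiplication inequality on $\setR^3$: under the conditions $s_1,s_2\geq s$ and $s_1+s_2>s+\tfrac{3}{2}$,
\begin{equation*}
  \|fg\|_{H^s} \;\leq\; C\,\|f\|_{H^{s_1}}\,\|g\|_{H^{s_2}}.
\end{equation*}
Applying this to $f=(\psi_j u)_{2^j}$, $g=(\tilde\psi_j v)_{2^j}$ and multiplying by the weight $2^{(3/2+\delta)j}$ yields
\begin{equation*}
  2^{(\frac{3}{2}+\delta)j}\bigl\|(\psi_j uv)_{2^j}\bigr\|_{H^s}
  \;\leq\; C\, a_j\,b_j\, 2^{-\kappa j},
\end{equation*}
where $a_j:=2^{(\frac{3}{2}+\delta_1)j}\|(\psi_j u)_{2^j}\|_{H^{s_1}}$, $b_j:=2^{(\frac{3}{2}+\delta_2)j}\|(\tilde\psi_j v)_{2^j}\|_{H^{s_2}}$ and $\kappa:=\tfrac{3}{2}+\delta_1+\delta_2-\delta\geq 0$ by hypothesis. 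In particular $2^{-\kappa j}\leq 1$.

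Squaring and summing in $j$, I would use the elementary bound
\begin{equation*}
  \sum_{j}a_j^2 b_j^2 \;\leq\; \Bigl(\sup_{j} a_j^2\Bigr)\sum_{j}b_j^2 \;\leq\; \Bigl(\sum_{k}a_k^2\Bigr)\Bigl(\sum_{\ell}b_\ell^2\Bigr),
\end{equation*}
and then recognize $\sum_j a_j^2 = \|u\|_{H_{s_1,\delta_1}}^2$ from Definition \ref{def:weighted:3}, while $\sum_j b_j^2 \lesssim \|v\|_{H_{s_2,\delta_2}}^2$ by Corollary \ref{cor:app11} applied to the enlarged cutoffs $\{\tilde\psi_j\}$. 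This establishes \eqref{eq:elliptic_part:12}.

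The main obstacle will be the endpoint case $\delta_1+\delta_2=\delta-\tfrac{3}{2}$, where $\kappa=0$ and the exponential gain disappears; here the $\ell^\infty\cdot\ell^2$ argument above is essential and cannot be replaced by a naive Cauchy--Schwarz (which would waste a factor). A secondary technical point is ensuring that Corollary \ref{cor:app11} really applies to the slightly shifted cutoffs $\tilde\psi_j$, which is guaranteed by the finite overlap of their supports and the uniform derivative bounds \eqref{eq:const:4}.
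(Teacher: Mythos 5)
Your proposal is correct and follows essentially the same route as the paper: localize to dyadic shells, apply the classical $H^{s_1}\times H^{s_2}\to H^s$ multiplication termwise, distribute the weight using $\delta_1+\delta_2\geq\delta-\tfrac{3}{2}$, and sum. The only cosmetic differences are that the paper splits the product by writing the norm with $\psi_j^2$ (Corollary \ref{cor:app11} with $\gamma=2$) instead of introducing an enlarged cutoff $\tilde\psi_j$, and it closes the summation by Cauchy--Schwarz followed by $\ell^1\hookrightarrow\ell^2$ rather than your $\ell^\infty$--$\ell^1$ bound; both handle the endpoint $\kappa=0$ equally well.
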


\begin{proof}[of Proposition \ref{prop:Properties:3}]
    \label{sec:properties-5}
By Corollary  \ref{cor:app11},
\begin{equation}
 \label{eq:const:20} 
\| uv\|_{H_{s,\delta}}^2 \simeq
  \sum_{j=0}^\infty 2^{\left( \frac{3}{2} +\delta \right)2j}
  \left\| \left( \psi_j^2 uv \right)_{2^j} \right\|_{H^s}^2.
\end{equation}  
We apply the classic algebra property
\begin{math}
      \|uv\|_{H^{s}}\leq C \|u\|_{H^{s_1}}
      \|v\|_{H^{s_2}}
\end{math}
    (see e.~g.~ \cite{Taylor91} Ch. 3, Section 5), to each term of the norm
    (\ref{eq:const:20}) and then we use Cauchy Schwarz inequality,
\begin{equation*}
\begin{split}
 & \| uv\|_{H_{s,\delta}}^2 \leq C
  \sum_{j=0}^\infty 2^{\left( \frac{3}{2} +\delta \right)2j}
  \left\| \left( \psi_j^2 uv \right)_{2^j} \right\|_{H^s}^2\\
  &\leq C^2
 \sum_{j=0}^\infty 2^{\left( \frac{3}{2} +\delta \right)2j}
  \left\| \left( \psi_j u \right)_{2^j} \right\|_{H^{s_1}}^2
  \left\| \left( \psi_j v \right)_{2^j} \right\|_{H^{s_2}}^2\\ &
  \leq C^2\sum_{j=0}^\infty \left(2^{\left( \frac{3}{2} +\delta_1 \right)2j}
  \left\| \left( \psi_j u \right)_{2^j}
  \right\|_{H^{s_1}}^2\right)\left(2^{\left( \frac{3}{2} +\delta_2 \right)2j}
  \left\| \left( \psi_j v \right)_{2^j}
  \right\|_{H^{s_2}}^2\right)\\ &\leq C^2
  \left(\sum_{j=0}^\infty \left(2^{\left( \frac{3}{2} +\delta_1 \right)2j}
  \left\| \left( \psi_j u \right)_{2^j}
  \right\|_{H^{s_1}}^2\right)^2\right)^{\frac{1}{2}}
  \left(\sum_{j=0}^\infty \left(2^{\left( \frac{3}{2} +\delta_2 \right)2j}
  \left\| \left( \psi_j v \right)_{2^j}
  \right\|_{H^{s_2}}^2\right)^2\right)^{\frac{1}{2}}\\ & \leq C^2
  \left(\sum_{j=0}^\infty \left(2^{\left( \frac{3}{2} +\delta_1 \right)2j}
  \left\| \left( \psi_j u \right)_{2^j}
  \right\|_{H^{s_1}}^2\right)\right)
  \left(\sum_{j=0}^\infty \left(2^{\left( \frac{3}{2} +\delta_2 \right)2j}
  \left\| \left( \psi_j v \right)_{2^j}
  \right\|_{H^{s_2}}^2\right)\right)\\ & \leq C^2
  \| u\|_{H_{s_1,\delta_1}}^2\| v\|_{H_{s_2,\delta_2}}^2.
 \end{split}
\end{equation*}
\BeweisEnde
\end{proof}

\subsection{Fractional power $|u|^\gamma$} 
\label{sec:fract-power-ugamma}
In \cite{kateb03:_besov} Kateb showed that
if $u\in H^s\cap L^\infty$, $1<\gamma$ and  $ 0<s<\gamma +\frac{1}{2}$, then
\begin{equation}
\label{eq:crm-broken-3:2} 
\||u|^\gamma\|_{H^{s}}\leq
C(\|u\|_{L^\infty})
    \|u\|_{H^{s}}.
\end{equation}

\begin{prop}[Fractional power in $H_{s,\delta}$]
  \label{prop:properties:5a} 
  Let $u\in H_{s,\delta}\cap L^\infty$, $1<\gamma$, $
  0<s<\gamma +\frac{1}{2}$ and $\delta\in \mathbb{R}$, then
  \begin{equation}
    \label{eq:15} \||u|^\gamma\|_{H_{s,\delta}}\leq
    C(\|u\|_{L^\infty})
    \|u\|_{H_{s,\delta}}.
  \end{equation}
\end{prop}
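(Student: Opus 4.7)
The plan is to reduce the claim to Kateb's $H^s$ estimate \eqref{eq:crm-broken-3:2} by applying it term-wise to the dyadic decomposition defining the $H_{s,\delta}$ norm \eqref{eq:weighted:4}, following the overall philosophy stated in the introduction. The key algebraic observation that makes this work is that since $\psi_j\geq 0$, we have
\begin{equation*}
 \psi_j^{\gamma}\,|u|^{\gamma} \;=\; |\psi_j u|^{\gamma},
\end{equation*}
and the scaling operation $f\mapsto f_{2^j}$ commutes with taking absolute values and powers, so
\begin{equation*}
 \bigl(\psi_j^{\gamma}\,|u|^{\gamma}\bigr)_{2^j} \;=\; \bigl|\,(\psi_j u)_{2^j}\,\bigr|^{\gamma}.
\end{equation*}

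First I would rewrite the norm of $|u|^{\gamma}$ using Corollary \ref{cor:app11}, which (for any positive exponent, in particular $\gamma>1$) gives the equivalence
\begin{equation*}
 \bigl\|\,|u|^{\gamma}\,\bigr\|_{H_{s,\delta}}^{2} \;\simeq\; \sum_{j=0}^{\infty} 2^{(\frac{3}{2}+\delta)2j}\,\bigl\|\bigl(\psi_j^{\gamma}\,|u|^{\gamma}\bigr)_{2^j}\bigr\|_{H^{s}}^{2} \;=\; \sum_{j=0}^{\infty} 2^{(\frac{3}{2}+\delta)2j}\,\bigl\|\,|(\psi_j u)_{2^j}|^{\gamma}\,\bigr\|_{H^{s}}^{2}.
\end{equation*}
Next, to each summand I would apply Kateb's estimate \eqref{eq:crm-broken-3:2}, which under the hypothesis $0<s<\gamma+\tfrac{1}{2}$ yields
\begin{equation*}
 \bigl\|\,|(\psi_j u)_{2^j}|^{\gamma}\,\bigr\|_{H^{s}} \;\leq\; C\bigl(\|(\psi_j u)_{2^j}\|_{L^{\infty}}\bigr)\,\|(\psi_j u)_{2^j}\|_{H^{s}}.
\end{equation*}
Since $0\leq\psi_j\leq 1$ and scaling is an $L^{\infty}$-isometry, $\|(\psi_j u)_{2^j}\|_{L^{\infty}}\leq\|u\|_{L^{\infty}}$, and since Kateb's constant $C(\cdot)$ may be taken non-decreasing in its argument, the prefactor is uniformly bounded by $C(\|u\|_{L^{\infty}})$ independently of $j$.

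Substituting back and using the standard definition \eqref{eq:weighted:4} of the $H_{s,\delta}$ norm, I would then obtain
\begin{equation*}
 \bigl\|\,|u|^{\gamma}\,\bigr\|_{H_{s,\delta}}^{2} \;\leq\; C(\|u\|_{L^{\infty}})^{2}\sum_{j=0}^{\infty} 2^{(\frac{3}{2}+\delta)2j}\,\|(\psi_j u)_{2^j}\|_{H^{s}}^{2} \;\simeq\; C(\|u\|_{L^{\infty}})^{2}\,\|u\|_{H_{s,\delta}}^{2},
\end{equation*}
which is the desired inequality \eqref{eq:15}. The only non-routine step is the algebraic identity $(\psi_j^{\gamma}|u|^{\gamma})_{2^j}=|(\psi_j u)_{2^j}|^{\gamma}$ together with invoking Corollary \ref{cor:app11} for the exponent $\gamma$ rather than $1$; the rest is purely a term-wise lift of Kateb's scalar estimate, with the uniform $L^{\infty}$ bound on the localized-and-rescaled pieces supplying the $u$-independent constant. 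No delicate obstacle arises because $\psi_j$ is nonnegative and bounded, so the power operation interacts cleanly with the dyadic cutoffs, and the hypothesis $s<\gamma+\tfrac{1}{2}$ is exactly what is required to apply \eqref{eq:crm-broken-3:2} within each block.
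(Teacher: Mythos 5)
Your proof is correct and follows essentially the same route as the paper's: both invoke Corollary \ref{cor:app11} with the exponent $\gamma$, use the identity $(\psi_j^{\gamma}|u|^{\gamma})_{2^j}=|(\psi_j u)_{2^j}|^{\gamma}$, and apply Kateb's estimate \eqref{eq:crm-broken-3:2} term-wise with the uniform bound $\|(\psi_j u)_{2^j}\|_{L^{\infty}}\leq\|u\|_{L^{\infty}}$. Your write-up is in fact more explicit than the paper's about why the constant is uniform in $j$.
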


\begin{proof}[of Proposition \ref{prop:properties:5a}]
Inequality (\ref{eq:15}) is a direct consequence of the
Corollary \ref{cor:app11}  and (\ref{eq:crm-broken-3:2}). Because
\begin{equation}
    \label{eq:frac_power}
    \begin{split}
     \| |u|^\gamma\|_{H{s,\delta}}^2  &\simeq
    \sum_{j=0}^\infty 2^{( \frac{3}{2} + \delta)2j}
    \| (\psi_j^\gamma |u|^\gamma)_{(2^j)} \|_{H^{s}}^{2}\\ & \leq \left(
    C(\|u\|_{L^\infty})\right)^2
    \sum_{j=0}^\infty 2^{( \frac{3}{2} + \delta)2j}
    \| (\psi_j u)_{(2^j)}\|_{H^{s}}^{2}\leq \left(
    C(\|u\|_{L^\infty})\right)^2 \| u \|_{H_{s,\delta}}^2.
    \end{split}
\end{equation}
\BeweisEnde
\end{proof}

\subsection{Moser type estimates}
\label{sec:moser-type-estim}

Y. Meyer proved the below Moser type estimate \cite{meyer84:_regul
}.
See also
Taylor \cite{Taylor91
}. 
\begin{thm}[Third Moser inequality for {{Bessel potentials spaces}}]
  \label{prop:Appendix:2}

Let $F: \setR^m\to\setR^l$ be $C^{N+1}$
  function such that $F(0)=0$. Let $s>0$ and $u\in H^s\cap L^\infty$.
  Then
  \begin{equation}
    \|F(u)\|_{H^s}\leq K \|u\|_{H^s},
  \end{equation}
  where
  \begin{equation}
    \label{eq:13}
    K=K_N(F,\|u\|_{L^\infty})\leq C
    \|F\|_{C^{N+1}}\left(1+\|u\|_{L^\infty}^N\right),
  \end{equation}
  here $N$ is a positive integer such that $N\geq
  [s]+1$. 
\end{thm}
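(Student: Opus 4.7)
The plan is to reduce everything to frequency-localized estimates via a Littlewood--Paley decomposition, exploiting the hypothesis $F(0)=0$ through a telescoping identity. Let $\{\Delta_k\}_{k\geq -1}$ denote a standard dyadic Littlewood--Paley partition of unity and $S_k=\sum_{j<k}\Delta_j$ the low-frequency truncation, normalized so that $S_{-1}u\equiv 0$ and $S_k u\to u$ in $L^2$. Since $F(0)=0$, I can write
\begin{equation*}
F(u)=\sum_{k\geq -1}\bigl[F(S_{k+1}u)-F(S_k u)\bigr]
= \sum_{k\geq -1} m_k(u)\,\Delta_k u, \qquad
m_k(u)(x):=\int_0^1 F'\bigl(S_k u(x)+\tau \Delta_k u(x)\bigr)\,d\tau,
\end{equation*}
where I have used the Taylor representation $F(a+b)-F(a)=b\int_0^1 F'(a+\tau b)d\tau$.

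Next I would control the multipliers $m_k$. Because $\|S_k u\|_{L^\infty}$ and $\|\Delta_k u\|_{L^\infty}$ are bounded by $C\|u\|_{L^\infty}$ (uniformly in $k$, using the standard $L^\infty$-boundedness of the convolution projectors), the arguments of $F'$ remain in a fixed ball $B$ of radius $\sim\|u\|_{L^\infty}$. Consequently $\|m_k\|_{L^\infty}\leq\|F'\|_{L^\infty(B)}$. Bernstein's inequality gives $\|\partial^\alpha S_k u\|_{L^\infty}\lesssim 2^{k|\alpha|}\|u\|_{L^\infty}$, so Fa\`a di Bruno applied to $m_k$ (differentiating the composition $F'\circ(S_k u+\tau\Delta_k u)$) produces factors of $\|F\|_{C^{N+1}(B)}(1+\|u\|_{L^\infty}^N)$ together with products of $2^{k|\alpha_i|}$-growing terms, which is the mechanism by which the power $N\geq[s]+1$ enters.

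Using the square-function characterization $\|v\|_{H^s}^2\simeq\sum_j 2^{2js}\|\Delta_j v\|_{L^2}^2$, I would then estimate
\begin{equation*}
\|F(u)\|_{H^s}^2\simeq\sum_j 2^{2js}\Bigl\|\sum_k \Delta_j(m_k\,\Delta_k u)\Bigr\|_{L^2}^2
\end{equation*}
by splitting the inner sum into the diagonal band $|j-k|\leq C$ and the two off-diagonal regimes $k\ll j$ and $k\gg j$. In the diagonal and high-frequency ($k\geq j$) regimes, the uniform bound $\|m_k\|_{L^\infty}\leq\|F'\|_{L^\infty(B)}$ together with the almost-orthogonality of the $\Delta_j$'s immediately yields $\|\Delta_j(m_k\Delta_k u)\|_{L^2}\lesssim \|F'\|_{L^\infty(B)}\|\Delta_k u\|_{L^2}$, producing a Schur-summable convolution in the exponent $j-k$. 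In the low-frequency regime $k<j$, I would transfer derivatives onto $m_k$ via integration by parts (or, equivalently, use the Fa\`a di Bruno bound for $\|\partial^\alpha m_k\|_{L^\infty}$) to gain factors $2^{-(j-k)\sigma}$ for every available derivative up to order $N+1$. Combining these bounds with Minkowski/Cauchy--Schwarz in $\ell^2_j$ and summing the geometric tails yields the estimate with $K\leq C\|F\|_{C^{N+1}}(1+\|u\|_{L^\infty}^N)$.

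The hard part will be the off-diagonal low-frequency regime: here one has to carefully track which derivatives of $F$ are hit on by the various parts of Fa\`a di Bruno's formula applied to the composition defining $m_k$, and then interpolate the resulting products of $\partial^{\alpha_i}u$ factors using Gagliardo--Nirenberg to land at exactly one $\|u\|_{H^s}$ factor and the remaining pieces absorbed into $\|u\|_{L^\infty}^N$. The exponent $N=[s]+1$ is forced by the worst-case chain-rule expansion where $s$ derivatives are distributed among $N$ first-derivative factors of $u$, each requiring a Gagliardo--Nirenberg interpolation of the form $\|\partial u\|_{L^p}\lesssim\|u\|_{L^\infty}^{1-1/s}\|u\|_{H^s}^{1/s}$ to balance the L\'evy exponents. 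Once this integer-$s$ engine is in place, values of $s$ between two integers can be handled by either repeating the Littlewood--Paley argument directly (nothing in the scheme used integrality) or by complex interpolation combined with the substitution trick that views $F\mapsto(u\mapsto F(u))$ as a family of maps parameterized by the smoothness of $F$.
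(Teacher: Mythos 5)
Your outline is essentially correct, but note that the paper does not prove this theorem at all: it is imported verbatim from Meyer \cite{meyer84:_regul} and Taylor \cite{Taylor91}, and what you have reconstructed is precisely the standard paradifferential proof from those sources --- the telescoping identity $F(u)=\sum_k\bigl[F(S_{k+1}u)-F(S_ku)\bigr]=\sum_k m_k\,\Delta_k u$ (where $F(0)=0$ kills the $k=-1$ boundary term), the symbol bounds $\|\partial^\alpha m_k\|_{L^\infty}\lesssim 2^{k|\alpha|}\|F\|_{C^{|\alpha|+1}}\bigl(1+\|u\|_{L^\infty}^{|\alpha|}\bigr)$ via Bernstein and Fa\`a di Bruno, and the Meyer multiplier lemma proved by the diagonal/off-diagonal splitting you describe. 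One remark: the Gagliardo--Nirenberg interpolation you invoke in the final paragraph is not needed and belongs to the \emph{classical} integer-$s$ Moser argument, not to this scheme; in the Littlewood--Paley route the low-frequency regime $k<j$ closes using only the $L^\infty$ symbol bounds on $\partial^\alpha m_k$ up to order $N\geq[s]+1$ (to beat the weight $2^{js}$), and, as you correctly observe, nothing in the argument uses integrality of $s$, so no separate interpolation step is required for fractional $s$.
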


We  generalize this important inequality  to the $H_{s,\delta}$ spaces. 
\begin{thm}[Third Moser inequality in $H_{s,\delta}$] 
\label{thr:Appendix:5}

Let $F: \setR^m\to\setR^l$ be $C^{N+1}$ function such that $F(0)=0$.
Let $s>0$, $\delta\in\mathbb{R}$ and $u\in H_{s,\delta}\cap L^\infty$.
Then 
\begin{equation}
\label{eq:14}
 \|F(u)\|_{H_{s,\delta}}\leq K \|u\|_{H_{s,\delta}},
\end{equation}
The constant $K$ in (\ref{eq:14}) depends on one
in (\ref{eq:13}) and in addition on $\delta$.
\end{thm}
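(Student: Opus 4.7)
The strategy follows the guiding philosophy stated in the introduction: apply the known $H^s$ Moser estimate (Theorem \ref{prop:Appendix:2}) term-wise to each summand in the dyadic representation of the $H_{s,\delta}$ norm, and then sum up with appropriate care for the weights. By Corollary \ref{cor:app11}, it suffices to estimate the sum $\sum_j 2^{(3/2+\delta)2j}\|(\psi_j F(u))_{2^j}\|_{H^s}^2$, so the main task is to control each factor $\|(\psi_j F(u))_{2^j}\|_{H^s}$ in terms of $\|(\tilde\chi_j u)_{2^j}\|_{H^s}$ for some auxiliary cutoff $\tilde\chi_j$, uniformly in $j$.

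The first step is to introduce an auxiliary sequence of cutoffs $\{\tilde\chi_j\}\subset C_0^\infty$ with $\tilde\chi_j\equiv 1$ on $\mathrm{supp}(\psi_j)$, having the same dyadic-scaling properties as $\psi_j$, in particular $|\partial^\alpha\tilde\chi_j|\leq C_\alpha 2^{-|\alpha|j}$. Then $\tilde\chi_j u = u$ on $\mathrm{supp}(\psi_j)$, so
\begin{equation*}
\psi_j F(u) \;=\; \psi_j F(\tilde\chi_j u).
\end{equation*}
Applying the scaling $x\mapsto 2^j x$ yields
\begin{equation*}
(\psi_j F(u))_{2^j}(x) \;=\; (\psi_j)_{2^j}(x)\, F\bigl((\tilde\chi_j u)_{2^j}(x)\bigr),
\end{equation*}
where $(\psi_j)_{2^j}$ has a fixed compact support (independent of $j$, for $j\geq 1$) and derivatives bounded independently of $j$ by (\ref{eq:const:4}). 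The same holds for $(\tilde\chi_j)_{2^j}$.

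Next, apply the $H^s$ Moser estimate of Theorem \ref{prop:Appendix:2} to $F\circ(\tilde\chi_j u)_{2^j}$: since $F(0)=0$ and $(\tilde\chi_j u)_{2^j}$ has compact support (so that composition with $F$ makes sense in $H^s$), and since $\|(\tilde\chi_j u)_{2^j}\|_{L^\infty}\leq\|u\|_{L^\infty}$ (the $L^\infty$ norm is invariant under scaling and multiplication by a cutoff bounded by $1$), we obtain
\begin{equation*}
\bigl\|F\bigl((\tilde\chi_j u)_{2^j}\bigr)\bigr\|_{H^s} \;\leq\; K_N(F,\|u\|_{L^\infty})\,\bigl\|(\tilde\chi_j u)_{2^j}\bigr\|_{H^s},
\end{equation*}
with a constant $K_N$ that is independent of $j$. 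Multiplication by the fixed smooth compactly supported function $(\psi_j)_{2^j}$ preserves $H^s$ with a uniform constant (cf.\ Proposition \ref{prop:Properties:1}), giving
\begin{equation*}
\bigl\|(\psi_j F(u))_{2^j}\bigr\|_{H^s} \;\leq\; C\,K_N\,\bigl\|(\tilde\chi_j u)_{2^j}\bigr\|_{H^s}.
\end{equation*}

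Finally, squaring, multiplying by $2^{(3/2+\delta)2j}$, and summing over $j$ yields
\begin{equation*}
\|F(u)\|_{H_{s,\delta}}^2 \;\lesssim\; \sum_{j=0}^{\infty} 2^{(3/2+\delta)2j}\bigl\|(\psi_j F(u))_{2^j}\bigr\|_{H^s}^2 \;\leq\; (CK_N)^2\sum_{j=0}^{\infty} 2^{(3/2+\delta)2j}\bigl\|(\tilde\chi_j u)_{2^j}\bigr\|_{H^s}^2,
\end{equation*}
and the final sum is equivalent to $\|u\|_{H_{s,\delta}}^2$ by the generalized-cutoff version of Corollary \ref{cor:app11} (Remark preceding it, with $\tilde\chi_j$ in place of $\psi_j^\gamma$). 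The constant $K$ in (\ref{eq:14}) therefore depends on $K_N$ from (\ref{eq:13}), on the equivalence constants from Corollary \ref{cor:app11} (which depend on $\delta$), and on the uniform $H^s$-multiplier bound for $(\psi_j)_{2^j}$. The only non-routine point is verifying that the constants emerging from the $H^s$-Moser estimate and the smooth multiplication estimate are genuinely $j$-independent; this is guaranteed precisely because after the $2^j$-scaling the cutoffs $(\psi_j)_{2^j}$ and $(\tilde\chi_j)_{2^j}$ form a uniformly bounded family in every $C^k$ seminorm with uniformly bounded supports.
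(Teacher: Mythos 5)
Your proof is correct and follows essentially the same route as the paper's: localize $u$ near $\supp(\psi_j)$ so that the $H^s$ Moser estimate (Theorem \ref{prop:Appendix:2}) applies term-wise with a $j$-uniform constant after the $2^j$-scaling, then resum with the weights. The paper realizes the localization with the partition of unity $\Psi_k=\psi_k/\sum_l\psi_l$, writing $u=\sum_{k=j-4}^{j+3}\Psi_k u$ on $\supp(\psi_j)$ and carrying out the re-indexing $(\Psi_k u)_{2^j}=((\Psi_k u)_{2^{j-k}})_{2^k}$ with the scaling bound (\ref{eq:m:15}) explicitly, whereas you use a single cutoff $\tilde\chi_j\equiv1$ on $\supp(\psi_j)$ and defer the resummation to a norm equivalence with the modified cutoffs. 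The one point to tighten: Corollary \ref{cor:app11} and the remark before it cover only the powers $\psi_j^\gamma$, not an arbitrary admissible family $\tilde\chi_j$, so the equivalence $\sum_j 2^{(\frac{3}{2}+\delta)2j}\|(\tilde\chi_j u)_{2^j}\|_{H^s}^2\lesssim\|u\|_{H_{s,\delta}}^2$ is not literally cited; it is true, but proving it requires exactly the finite-overlap decomposition $\tilde\chi_j=\sum_{k\approx j}\tilde\chi_j\Psi_k$ followed by the rescaling and index shift that constitute the main displayed computation in the paper's proof, so this step should be written out rather than referenced.
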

\begin{proof}[of Theorem \ref{thr:Appendix:5}] 
Let $\{\psi_j\}$ be the
sequence satisfying (\ref{eq:const:4}) and $\Psi_j(x)=
\frac{1}{\varphi(x)}\psi_j(x)$, where $\varphi(x)=\sum_{j=0}^\infty
\psi_j(x)$. From the properties of the sequence $\{\psi_j\}$, it
follows that $1\leq\varphi(x)\leq7$. So the sequence
$\{\Psi_j\}\subset C_0^\infty(\setR^3)$ and $\sum_{j=0}^\infty
\Psi_j(x)=1$. From (\ref{eq:const:9}) we conclude that
\begin{equation}
\label{eq:m:15}
 \|u_\epsilon\|_{H^s}^2\leq
 C\max\{\epsilon^{2s-3},\epsilon^{-3}\}\|u\|_{H^s}^2
\end{equation}
and with the combination of (\ref{eq:multiplication:1}) and
Meyer's  Theorem\ref{prop:Appendix:2} we have, 
\begin{equation}
 \begin{split}
  \|F(u)\|_{H_{s,\delta}}^2 & =
 \sum_{j=0}^\infty 2^{( \frac{3}{2} + \delta)2j} \| (\psi_j (F(u))_{(2^j)}
 \|_{H^{s}}^{2}
\\ &
=\sum_{j=0}^\infty 2^{( \frac{3}{2} + \delta)2j} \left\|
\left(\psi_j F\left(\sum_{k=0}^\infty
\Psi_k(x)u\right)\right)_{(2^j)}\right\|_{H^{s}}^{2}
\\ &
 =\sum_{j=0}^\infty 2^{( \frac{3}{2} + \delta)2j} \left\|
\left(\psi_jF\left(\sum_{k=j-4}^{j+3}
\Psi_k(x)u\right)\right)_{(2^j)}\right\|_{H^{s}}^{2}
\\ &
\leq CK^2
\sum_{j=0}^\infty 2^{( \frac{3}{2} + \delta)2j}
 \sum_{k=j-4}^{j+3} \|\left(\Psi_k u\right)_{(2^j)}
 \|_{H^{s}}^{2}
\\ &
\leq CK^2 \sum_{j=0}^\infty 2^{( \frac{3}{2} + \delta)2j}
 \sum_{k=j-4}^{j+3} \|\left(\left(\Psi_k u\right)_{2^{j-k}}\right)_{(2^k)}
 \|_{H^{s}}^{2}
\\ &
\leq CK^2 \sum_{j=0}^\infty 2^{( \frac{3}{2} + \delta)2j}
 \sum_{k=j-4}^{j+3} \max\{2^{(2s-3)(j-k)}, 2^{-3(j-k)}\}\|\left(\Psi_k
   u\right)_{(2^k)} 
 \|_{H^{s}}^{2}
\\ &
\leq C(s)K^2 \sum_{j=0}^\infty 2^{(
\frac{3}{2} + \delta)2j}
 \sum_{k=j-4}^{j+3}\|\left(\psi_k u\right)_{(2^k)}
 \|_{H^{s}}^{2}
\\ &
\leq C(s,\delta)K^2
 \sum_{j=0}^\infty
 \sum_{k=j-4}^{j+3}2^{( \frac{3}{2} + \delta)2k}\|\left(\psi_k u\right)_{(2^k)}
 \|_{H^{s}}^{2}
\\ &
\leq 7 C(s,\delta)K^2
 \sum_{k=0}^{\infty}2^{( \frac{3}{2} + \delta)2k}\|\left(\psi_k u\right)_{(2^k)}
 \|_{H^{s}}^{2}
\leq 7C(s,\delta)K^2\|u\|_{H_{s,\delta}}^2.
\end{split}
\end{equation} 
\BeweisEnde
\end{proof}

\begin{rem}
  \label{rem:Properties:2}
  If $F(0)\not=0$ and $F(0)\in H_{s,\delta}$,
  then we can apply  Theorem \ref{thr:Appendix:5} to
  $\widetilde{F}(u):=F(u)-F(0)$  and get
  \begin{equation}
    \label{eq:Moser:3}
    \|F(u)\|_{H_{s,\delta}}\leq  \|\widetilde{F}(u)\|_{H_{s,\delta}}
    + \|F(0)\|_{H_{s,\delta}}\leq
    K\|u\|_{H_{s,\delta}}+\|F(0)\|_{H_{s,\delta}}.
  \end{equation}
\end{rem}

We may apply Theorem \ref{thr:Appendix:5} to the estimate the
difference $F(u)-F(v)$.
\begin{cor}[A difference estimate in $H_{s,\delta}$]
  \label{cor:appendix:3}
  Suppose $F$ is a $C^{N+2}$ function and $u,v\in H_{s,\delta}\cap
  L^\infty$. Then
  \begin{equation}
    \label{eq:Moser:6}
    \|F(u)-F(v)\|_{H_{s,\delta}}\leq C(\|u\|_{L^\infty},\|v\|_{L^\infty})
    \left(\|u\|_{H_{s,\delta}}+\|v\|_{H_{s,\delta}}
    \right)\left\|u-v\right\|_{H_{s,\delta}}. 
  \end{equation}
\end{cor}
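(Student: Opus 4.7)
The plan is to reduce the difference $F(u)-F(v)$ to a product via the fundamental theorem of calculus, estimate the product by the algebra property of $H_{s,\delta}$, and then control the resulting symbol by a Moser estimate. Concretely, write
\begin{equation*}
F(u)-F(v)=\left(\int_0^1 F'(v+\tau(u-v))\,d\tau\right)(u-v)=:G(u,v)\,(u-v),
\end{equation*}
so that the estimate is reduced to controlling $\|G(u,v)(u-v)\|_{H_{s,\delta}}$.

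For the product, I would apply Proposition \ref{prop:Properties:3} in the balanced form with $s_1=s_2=s$ and $\delta_1+\delta_2=\delta$ (the exponent constraints from the context in which the corollary will be invoked, namely $s>\tfrac32$ and $\delta\geq -\tfrac32$, ensure the algebra hypotheses are satisfied), giving
\begin{equation*}
\|F(u)-F(v)\|_{H_{s,\delta}}\;\lesssim\;\|G(u,v)\|_{H_{s,\delta}}\;\|u-v\|_{H_{s,\delta}}.
\end{equation*}
It then remains to estimate $\|G(u,v)\|_{H_{s,\delta}}$ in terms of $\|u\|_{H_{s,\delta}}+\|v\|_{H_{s,\delta}}$.

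For this last step I would regard $G$ as the composition $(u,v)\mapsto \widetilde F(u,v)$ with the smooth function
\begin{equation*}
\widetilde F(a,b)=\int_0^1 F'\!\left(b+\tau(a-b)\right)d\tau,
\end{equation*}
which inherits regularity $C^{N+1}$ from the hypothesis $F\in C^{N+2}$. Applying Theorem \ref{thr:Appendix:5} (together with Remark \ref{rem:Properties:2}, which handles the fact that $\widetilde F(0,0)=F'(0)$ is a constant and must be peeled off before invoking the Moser estimate) to the vector-valued argument $(u,v)$, we get
\begin{equation*}
\|G(u,v)\|_{H_{s,\delta}}\leq K\bigl(\|u\|_{L^\infty},\|v\|_{L^\infty}\bigr)\bigl(\|u\|_{H_{s,\delta}}+\|v\|_{H_{s,\delta}}\bigr),
\end{equation*}
with the dependence of $K$ on the $L^\infty$ norms coming from the $C^{N+1}$-norm of $\widetilde F$ evaluated on the compact range of $(u,v)$. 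Substituting this into the product estimate yields (\ref{eq:Moser:6}).

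The only genuine subtlety is the passage from Theorem \ref{thr:Appendix:5} to the two-variable setting: the Moser argument as stated concerns a single function $u$, so I would either invoke it with the composite argument $w=(u,v)$ viewed as a vector-valued element of $H_{s,\delta}$ (using the fact that the norm (\ref{eq:weighted:4}) tensorizes trivially over components), or repeat the term-wise dyadic argument of the proof of Theorem \ref{thr:Appendix:5} verbatim with $\widetilde F$ in place of $F$. The rest is a straightforward chain of the algebra and Moser estimates already proved.
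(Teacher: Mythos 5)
Your overall architecture is the same as the paper's: factor $F(u)-F(v)=G(u,v)(u-v)$ by the fundamental theorem of calculus, bound the product with the algebra Proposition \ref{prop:Properties:3}, and bound $\|G(u,v)\|_{H_{s,\delta}}$ by the Moser Theorem \ref{thr:Appendix:5} applied to the pair $(u,v)$ (the two-variable issue you flag is not a real obstruction, since Theorem \ref{thr:Appendix:5} is already stated for maps $\setR^m\to\setR^l$). The one place where your argument genuinely breaks is the treatment of $G(0,0)=F'(0)\neq 0$. You propose to dispose of it via Remark \ref{rem:Properties:2}, but that remark requires $F(0)\in H_{s,\delta}$, and a nonzero \emph{constant} does not belong to $H_{s,\delta}$ in the range $\delta\geq-\frac{3}{2}$ relevant here (its weighted $L^2$ norm diverges). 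Even in a range where the constant were admissible, the remark would only give
\begin{equation*}
\|G(u,v)\|_{H_{s,\delta}}\leq K\bigl(\|u\|_{H_{s,\delta}}+\|v\|_{H_{s,\delta}}\bigr)+\|F'(0)\|_{H_{s,\delta}},
\end{equation*}
and the extra additive term survives into the final bound as $\bigl(\|u\|+\|v\|+C\bigr)\|u-v\|$, which is not the product form (\ref{eq:Moser:6}).

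The paper's fix is to normalize $F$ \emph{before} differentiating under the integral: set $\tilde F(u)=F(u)-F(0)-DF(0)u$, so that $D\tilde F(0)=0$ and hence $G(0,0)=\int_0^1 D\tilde F(0)\,dt=0$, after which Theorem \ref{thr:Appendix:5} applies directly to $G$ with no leftover constant. You should adopt the same normalization. Be aware, though, that this reduction silently discards the affine piece: $F(u)-F(v)=\tilde F(u)-\tilde F(v)+DF(0)(u-v)$, and the term $DF(0)(u-v)$ contributes $\lesssim\|u-v\|_{H_{s,\delta}}$ \emph{without} the factor $\|u\|_{H_{s,\delta}}+\|v\|_{H_{s,\delta}}$; so the estimate in the literal form (\ref{eq:Moser:6}) really requires $DF(0)=0$ (it fails already for $F(u)=u$ with $u,v$ small), and otherwise one should read it with an additional $C\|u-v\|_{H_{s,\delta}}$ on the right. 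That caveat applies to the paper's statement as much as to your proof, and is harmless for the way the corollary is used, but it is exactly the point your appeal to Remark \ref{rem:Properties:2} was trying, unsuccessfully, to paper over.
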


\begin{proof}[of Corollary \ref{cor:appendix:3}]
  Put $\tilde{F}(u)=F(u)-F(0)-DF'(0)u$, then it suffices to show
  inequality (\ref{eq:Moser:6}) for  $\tilde{F}$. Now,
  \begin{equation}
    \label{eq:Moser:4} \tilde{F}(u)-\tilde{F}(v)=\int_0^1 \left(D
      \tilde{F}\left(tu+(1-t)v\right)\right)(u-v)dt=G(u,v)(u-v),
  \end{equation}
  where $G(u,v)=\int_0^1 D\tilde{F}\left(tu+(1-t)v\right)dt$. Since
  $G(0,0)=\int_0^1D\tilde{F}(0)dt=0$, we can apply Theorem \ref{thr:Appendix:5}
  to $G(u,v)$ and get: 
  \begin{equation}
    \label{eq:Moser:7} 
    \|G(u,v)\|_{H_{s,\delta}}\leq
    C(\|u\|_{L^\infty},\|v\|_{L^\infty})\left(\|u\|_{H_{s,\delta}}
      +\|v\|_{H_{s,\delta}}\right).
  \end{equation} 
  Applying Proposition (\ref{prop:Properties:3}) to the right side of
  (\ref{eq:Moser:4}), we have
  \begin{equation}
    \label{eq:Moser:5} \left\|
      \tilde{F}(u)-\tilde{F}(v)\right\|_{H_{s,\delta}}\leq
    C\left\|G(u,v)\right\|_{H_{s,\delta}}
    \left\|(u-v)\right\|_{H_{s,\delta}}
  \end{equation}
  and its combination with (\ref{eq:Moser:7}) gives (\ref{eq:Moser:6}).
  \BeweisEnde
\end{proof}

\subsection{Compact embedding } 
\label{sec:compact-embedding--1}

\begin{thm}[Compact embedding]
  \label{thr:Appendix:1}
Let $0\leq s'<s$ and $\delta'<\delta$, then the embedding
\begin{equation}
\label{eq:elliptic part:8} H_{s,\delta}\hookrightarrow
H_{s',\delta'}.
\end{equation}
is compact.
\end{thm}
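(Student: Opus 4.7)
The plan is to work with the dyadic-sum representation of the norm from Definition \ref{def:weighted:3} and split any bounded sequence in $H_{s,\delta}$ into a tail (high-frequency-index) part, which is small uniformly in $n$ thanks to the gap $\delta' < \delta$, and a finite head part that we treat piece by piece with classical Rellich--Kondrachov compactness. Concretely, let $\{u_n\}$ be bounded in $H_{s,\delta}$ with $\|u_n\|_{H_{s,\delta}} \leq M$, and fix $J \in \mathbb{N}$. Using the continuous embedding $H^s \hookrightarrow H^{s'}$ term-wise,
\begin{equation*}
\sum_{j>J} 2^{(\frac{3}{2}+\delta')2j} \|(\psi_j u_n)_{2^j}\|_{H^{s'}}^2
\leq C \sum_{j>J} 2^{2(\delta'-\delta)j}\, 2^{(\frac{3}{2}+\delta)2j}\|(\psi_j u_n)_{2^j}\|_{H^{s}}^2
\leq C\, 2^{2(\delta'-\delta)J} M^2,
\end{equation*}
which tends to $0$ as $J\to\infty$ since $\delta'-\delta < 0$. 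This controls the tail uniformly in $n$.

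For the head, the key geometric observation is that the support of $(\psi_j u_n)_{2^j}$ is contained, after the scaling $x\mapsto 2^j x$, in a single fixed compact annulus $\{2^{-4}\leq |x|\leq 2^3\}$ (and in the ball $\{|x|\leq 2^3\}$ when $j=0$), independently of $n$ and $j$. Consequently, for each fixed $j\in\{0,1,\ldots,J\}$ the sequence $\{(\psi_j u_n)_{2^j}\}_n$ is bounded in $H^s$ and supported in a fixed bounded set, hence by the classical Rellich--Kondrachov compactness of the embedding $H^s(\Omega)\hookrightarrow H^{s'}(\Omega)$ on a bounded domain $\Omega$, it admits a subsequence converging in $H^{s'}$. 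A standard diagonal extraction over $j=0,1,\ldots,J$ then produces a subsequence $\{u_{n_k}\}$ such that $(\psi_j u_{n_k})_{2^j}\to v_j$ in $H^{s'}$ for every $j\leq J$.

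To conclude, given $\varepsilon>0$ I first choose $J$ with $C\cdot 2^{2(\delta'-\delta)J}M^2<\varepsilon/4$ so the tail of $\|u_{n_k}-u_{n_l}\|_{H_{s',\delta'}}^2$ is bounded by $\varepsilon/2$ uniformly in $k,l$; then for $k,l$ sufficiently large the finite head sum
\begin{equation*}
\sum_{j\leq J} 2^{(\frac{3}{2}+\delta')2j} \|(\psi_j (u_{n_k}-u_{n_l}))_{2^j}\|_{H^{s'}}^2
\end{equation*}
is $<\varepsilon/2$ by the joint convergence. Thus $\{u_{n_k}\}$ is Cauchy in $H_{s',\delta'}$, proving compactness.

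The main point to be careful about is the fractional case of Rellich--Kondrachov: on a fixed bounded set, $H^s\hookrightarrow H^{s'}$ is compact for any $0\leq s'<s$. This is standard (reduce to integer orders by interpolation, or use the Littlewood--Paley characterization combined with compactness of $H^1\hookrightarrow L^2$ on bounded domains). It is crucial to use the fact that $\delta'$ is \emph{strictly} less than $\delta$ so that the geometric factor $2^{2(\delta'-\delta)J}$ actually decays; without this strict inequality the tail argument would fail, which reflects the well-known fact that compactness is lost in the limit case $\delta'=\delta$.
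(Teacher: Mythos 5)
Your proof is correct, and it takes a somewhat different route from the paper's. The paper performs a single spatial splitting $u_n=\chi_R u_n+(1-\chi_R)u_n$ with one cutoff at radius $R$: the far part is controlled uniformly by Proposition \ref{prop:Properties:11} (which encodes the same decay $R^{-(\delta-\delta')}$ that your factor $2^{2(\delta'-\delta)J}$ encodes), while the inner part is a single bounded sequence in $H^s$ of a fixed bounded domain, handled by one application of Rellich; the limit is identified as $\chi_R u_0$ by first extracting a weakly convergent subsequence in the Hilbert space $H_{s,\delta}$. You instead decompose directly in the dyadic norm of Definition \ref{def:weighted:3}, apply Rellich--Kondrachov term-wise on the fixed rescaled annulus, and glue with a diagonal extraction; this avoids the weak-convergence identification step entirely and produces a Cauchy subsequence directly, at the cost of the diagonal argument. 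One small ordering point: as written you extract the subsequence after fixing $J$, but $J$ depends on $\varepsilon$; you should diagonalize once over \emph{all} $j\in\mathbb{N}_0$ (a countable family) to obtain a single subsequence for which $(\psi_j u_{n_k})_{2^j}$ converges in $H^{s'}$ for every $j$, and only then run the $\varepsilon$--$J$ argument. With that trivial adjustment the argument is complete, and your closing remarks about the necessity of the strict inequality $\delta'<\delta$ and the fractional Rellich theorem on bounded domains are exactly the right points to flag.
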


\begin{proof}[of Theorem \ref{thr:Appendix:1}]
  \label{sec:compact-embedding-}
  Let $\{u_n\}\subset H_{s,\delta}$ be a sequence with
  $\|u_n\|_{H_{s,\delta}}\leq 1$. Since $H_{s,\delta}$ is a Hilbert
  space there is a subsequence, denoted by $\{u_n\}$, which converges
  weakly to $u_0$. We will complete the proof by showing that $u_n\to
  u_0$ strongly in $H_{s',\delta'}$.

Let $\chi_R\in C_0^\infty$ such that $\chi_R(x)=1$ for $|x|\leq R$ and
$\supp(\chi_R)\subset B_{2R}$. For a given $\epsilon>0$, we take $R$
such that $2\frac{C(\delta,\delta')}{R^{\delta-\delta'}} <\epsilon$,
where $C(\delta,\delta')$ is the constant of inequality
(\ref{eq:properties:1}) of Proposition \ref{prop:Properties:11}.  For
a bounded domain $\Omega$, it is known that the embedding
$H^s(\Omega)\hookrightarrow H^{s'}(\Omega)$ is compact and from Remark
\ref{rem:constr:1} it follows that $\|\chi_R u_n\|_{H^s}\leq C$, where
$C$ does not depend on $n$ . Hence $\chi_R u_n$ converges strongly to
$\hat{u}_0$ in $H^{s'}$.  In addition, we have that $\chi_R u_n\to
\chi_R u_0$ weakly in $H^s$ and hence $\chi_R u_n\to \chi_R u_0$
weakly in $H^{s'}$. Thus the sequence $\{\chi_R u_n\}$ converges both
strongly to $\hat{u}_0$ and weakly to $ \chi_R u_0$ in $H^{s'}$, hence
$\hat{u}_0=\chi_R u_0$ (because {{$\lim_n \langle (\chi_R u_n-\chi_R
u_0),(\hat{u}_0-\chi_R u_0) \rangle_{s'}= \langle ( \hat{u}_0-\chi_R
u_0),( \hat{u}_0-\chi_R u_0)\rangle_{s'}=\|\hat{u}_0-\chi_R
u_0\|_{H^{s'}}^2=0$}}
$\lim_n \|{\chi_Ru}_n-\chi_R u_0\|_{H_{s',\delta'}}=0$, hence we may
take $n$ sufficiently large so that $\|{\chi_Ru}_n-\chi_R
u_0\|_{H_{s',\delta'}}<\epsilon$. Therefore
\begin{equation}
\begin{split}
  & \|u_n-u_0\|_{H_{s',\delta'}}=\|(\chi_R u_n-\chi_R
  u_0)+(1-\chi_R)(u_n-u_0)\|_{H_{s',\delta'}}\\& \leq \|(\chi_R
  u_n-\chi_R
  u_0)\|_{H_{s',\delta'}}+\|(1-\chi_R)(u_n-u_0)\|_{H_{s',\delta'}}\\
  & <\epsilon+
  \frac{C}{R^{\delta-\delta'}}\|(u_n-u_0)\|_{H_{s,\delta}}\leq
  \epsilon+ \frac{C}{R^{\delta-\delta'}}
  \left(\|u_n\|_{H_{s,\delta}}+\|u_0\|_{H_{s,\delta}}\right) \\ &\leq
  \epsilon+ 2\frac{C(\delta,\delta')}{R^{\delta-\delta'}}<2\epsilon
\end{split}
\end{equation}
and that completes the proof.
\BeweisEnde
\end{proof}

\subsection{Embedding into the continuous}
\label{subsec:Embedding_into_the_continuous}

We introduce the following  notations. For a   nonnegative
integer $m$ and $\beta\in \mathbb{R}$, we set
\begin{eqnarray*}
& \|u\|_{C_\beta^m}=\sum_{|\alpha|\leq
m}\sup_x\left((1+|x|)^{\beta+|\alpha|} |\partial^\alpha u(x)|\right)
\end{eqnarray*}

Let  $ C_{\beta}^m$ be the functions space
corresponding to the above norms.

\begin{thm}[Embedding into the continuous]
  \label{thr:Appendix:2}

  If $s>\frac{3}{2} + m$ and $\delta+\frac 3  2\geq\beta$, then any $u\in
  H_{s,\delta}$ has a representative $\tilde{u}\in C_{\beta}^m $ satisfying
\begin{equation}
\label{eq:em:2} \| \tilde{u}\|_{C_\beta^m}\leq C \|{u}
\|_{H_{s,\delta}}.
\end{equation}
\end{thm}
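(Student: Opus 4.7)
The plan is to apply the classical Sobolev embedding $H^s(\mathbb{R}^3)\hookrightarrow C^m(\mathbb{R}^3)$ (valid for $s>\tfrac{3}{2}+m$) term-wise to the dyadic decomposition defining $\|u\|_{H_{s,\delta}}$, and then track how the scaling $f\mapsto f_{2^j}$ interacts with the polynomial weight $(1+|x|)^{\beta+|\alpha|}$. Since $\{\psi_j\}$ is a (partition of unity modulo constants) resolution with $\psi_j\equiv 1$ on $K_j=\{2^{j-3}\leq|x|\leq 2^{j+2}\}$, I will fix $x\in\mathbb{R}^3$, choose $j$ so that $x$ lies in the interior of $K_j$ (so that, in a neighborhood, $u=\psi_j u$ and hence $\partial^\alpha u(x)=\partial^\alpha(\psi_j u)(x)$ for $|\alpha|\leq m$), and use the chain rule identity
\[
\partial^\alpha(\psi_j u)(x)=2^{-j|\alpha|}\bigl(\partial^\alpha(\psi_j u)_{2^j}\bigr)(x/2^j).
\]

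The key pointwise step is: for such $j$, by Sobolev embedding applied to the rescaled function,
\[
\bigl|\partial^\alpha(\psi_j u)_{2^j}(y)\bigr|\leq C\,\|(\psi_j u)_{2^j}\|_{H^s},\qquad |\alpha|\leq m,
\]
uniformly in $y$, with $C$ depending only on $s,m$. Combining this with the previous identity and with $|x|\sim 2^j$ on $K_j$ yields
\[
(1+|x|)^{\beta+|\alpha|}|\partial^\alpha u(x)|
\lesssim 2^{j(\beta+|\alpha|)}\cdot 2^{-j|\alpha|}\,\|(\psi_j u)_{2^j}\|_{H^s}
=2^{-j(\frac{3}{2}+\delta-\beta)}\cdot\bigl(2^{j(\frac{3}{2}+\delta)}\|(\psi_j u)_{2^j}\|_{H^s}\bigr).
\]
The factor in parentheses is bounded by $\|u\|_{H_{s,\delta}}$ (it is a single term in the defining sum), and the hypothesis $\delta+\tfrac{3}{2}\geq\beta$ makes the exponent $-(\tfrac{3}{2}+\delta-\beta)\leq 0$, so that $2^{-j(\frac{3}{2}+\delta-\beta)}\leq 1$ for $j\geq 0$.

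For $x$ near the origin (say $|x|\leq 4$) we instead use $j=0$: here $\psi_0\equiv 1$ on $K_0$, the weight $(1+|x|)^{\beta+|\alpha|}$ is bounded, and the same Sobolev estimate applied to $(\psi_0 u)_{(1)}=\psi_0 u$ delivers $|\partial^\alpha u(x)|\leq C\|\psi_0 u\|_{H^s}\leq C\|u\|_{H_{s,\delta}}$. Taking the supremum over $x$ and summing over $|\alpha|\leq m$ gives \eqref{eq:em:2}. Continuity of the representative $\tilde u$ follows from the fact that on each $K_j$ one has uniform convergence of the standard mollifications via $(\psi_j u)_{2^j}\in H^s\hookrightarrow C^m$, and a standard density argument (the $C_0^\infty$ functions are dense in $H_{s,\delta}$) identifies $\tilde u$ with $u$ almost everywhere.

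The only mildly delicate point is the boundary case: ensuring that every $x\neq 0$ lies in the interior of at least one $K_j$, which is guaranteed by the overlap $K_j\cap K_{j+1}\supset\{2^{j-2}\leq |x|\leq 2^{j+2}\}$, so a valid $j$ with $|x|\sim 2^j$ always exists. No other genuine obstacle arises; the proof is essentially the term-wise application of Sobolev embedding combined with the bookkeeping of the weights $2^{j(\frac{3}{2}+\delta)}$ versus $(1+|x|)^{\beta+|\alpha|}\sim 2^{j(\beta+|\alpha|)}$, exactly as in Corollary \ref{cor:app11} and Proposition \ref{prop:Properties:1}.
\BeweisEnde
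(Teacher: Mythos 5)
Your proof is correct and follows essentially the same route as the paper's: localize to the dyadic annuli $K_j$ where $\psi_j\equiv 1$, rescale, apply the classical embedding $H^s\hookrightarrow C^m$ to $(\psi_j u)_{2^j}$, and compare the weight $2^{j(\beta+|\alpha|)}$ against the factor $2^{j(\frac{3}{2}+\delta)}$ in the norm using $\beta\leq\delta+\frac{3}{2}$. The only (harmless) organizational difference is that you treat all derivatives at once by cancelling the chain-rule factor $2^{-j|\alpha|}$ against the weight, whereas the paper proves the case $m=0$ first and then applies it to $\partial^\alpha u\in H_{s-|\alpha|,\delta+|\alpha|}$ via Corollary \ref{cor:const:1}.
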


\begin{proof}[of Theorem \ref{thr:Appendix:2}]
  We first show (\ref{eq:em:2})  when $m=0$.
 In order to make notations simpler we will use the convention
 $2^k=0$ if $k<0$.
  Recall that $\psi_j(x)=1 $
on  $K_j:=\{2^{j-3}\leq |x|\leq 2^{j+2}\}$. Using the known
embedding
\begin{math}
 \sup_x|u(x)|\leq C \|u\|_{H^s}
\end{math} (see e.~g.~\cite{krantz92:_partial_differ_equat_compl_analy
}),
we have
\begin{equation}
\label{eq:6}
\begin{split}
 &   \sup_x(1+|x|)^\beta|u(x)|  \leq 2^\beta \sup_{j\geq
-1}\left(2^{\beta
j}\sup_{\{2^j\leq|x|\leq2^{j+1}\}}|u(x)|\right)
\\
 \leq & 2^\beta \sup_{j\geq -1}\left(2^{\beta
j}\sup|\psi_j(x)u(x)|\right)
 =  2^\beta \sup_{j\geq
-1}\left(2^{\beta j}\sup|\psi_j(2^j x)u(2^j x)|\right)
\\ 
 \leq &
2^\beta C \sup_{j\geq -1}\left(2^{\beta j}\|(\psi_j
u)_{2^j}\|_{H^s}\right)
 \leq 2^\beta C \sup_{j\geq
-1}\left(2^{(\frac{3}{2}+\delta) j}\|(\psi_j u)_{2^j}\|_{H^s}\right)
  \leq 2^\beta C \|u\|_{H_{s,\delta}}.
\end{split}
\end{equation}

If $m>1$, $s>\frac{3}{2}+m$  and
$\delta+\frac{3}{2} \geq \beta$, then $\partial^\alpha u\in
H_{s-|\alpha|,\delta+|\alpha|}$ for $1\leq |\alpha|\leq m$. So we may
apply (\ref{eq:6})  to $\partial^\alpha u$ and obtain
$ \|\partial^\alpha u\|_{C_{\beta+k}}\leq C \|\partial^\alpha
u\|_{H_{s-|\alpha|,\delta+|\alpha|}}$.  \BeweisEnde
\end{proof}

\subsection{Density}
\label{subsec:Density}

\begin{thm}[Density of $C_0^\infty$ functions]
  \label{thm:density} $\quad$
  \begin{enumerate}
    \item[{\rm (a)}] The class $C_0^\infty(\mathbb{R}^3)$ is dense in
    $H_{s,\delta}$.

    \item[{\rm (b)}] Given $u\in H_{s,\delta}$ and $s'>s\geq 0$. Then
    for $\rho>0$  there is $u_\rho\in C_0^\infty(\mathbb{R}^3)$ and  a
    positive constant $C(\rho)$ such that
    \begin{equation}
      \|u_\rho -u\|_{H_{s,\delta}}\leq \rho\quad \text{and}\quad
      \|u_\rho\|_{H_{s',\delta}}\leq C(\rho) \|u\|_{H_{s,\delta}}.
    \end{equation}
  \end{enumerate}
\end{thm}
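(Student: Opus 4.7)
My plan is a standard two-step truncate-and-mollify argument, executed in the dyadic representation of the $H_{s,\delta}$ norm.

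For part (a), given $u \in H_{s,\delta}$, I will first show that the spatial truncation $\chi_R u$ converges to $u$ in $H_{s,\delta}$ as $R \to \infty$, where $\chi_R \in C_0^\infty(\mathbb{R}^3)$ equals $1$ on $|x|\leq R$, is supported in $|x|\leq 2R$, and satisfies $|\partial^\alpha \chi_R|\leq C_\alpha R^{-|\alpha|}$. The key observation is that if $J$ is the largest integer with $2^{J+3}\leq R$, then $\psi_j(1-\chi_R)\equiv 0$ for all $j\leq J$, so
\begin{equation*}
\|(1-\chi_R)u\|_{H_{s,\delta}}^2 \;=\; \sum_{j>J} 2^{(\frac{3}{2}+\delta)2j}\,\|\,(\psi_j(1-\chi_R)u)_{2^j}\|_{H^s}^2.
\end{equation*}
A termwise application of Proposition \ref{prop:Properties:1} (using that $(1-\chi_R)_{2^j}$ has $C^N$ norm bounded uniformly in $R,j$ on the annular support of $(\psi_j)_{2^j}$) bounds each term by $C\cdot 2^{(\frac{3}{2}+\delta)2j}\|(\psi_j u)_{2^j}\|_{H^s}^2$, and this is the tail of a convergent series, hence tends to $0$. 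Since $\chi_R u$ has compact support, Remark \ref{rem:constr:1} gives $\|v\|_{H_{s,\delta}}\simeq \|v\|_{H^s}$ for all $v$ supported in $B_{2R}$, so standard mollification $J_\epsilon(\chi_R u)\to \chi_R u$ in $H^s$ (and therefore in $H_{s,\delta}$) as $\epsilon\downarrow 0$ completes the approximation by a $C_0^\infty$ function.

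For part (b), given $\rho>0$, I will choose $R=R(\rho,u)$ large enough that $\|(1-\chi_R)u\|_{H_{s,\delta}}<\rho/2$ by the tail estimate above, and then set $u_\rho = J_\epsilon(\chi_R u)$ with $\epsilon=\epsilon(\rho,R)$ sufficiently small that $\|J_\epsilon(\chi_R u) - \chi_R u\|_{H^s}<\rho/(2C(R))$, where $C(R)$ is the constant in Remark \ref{rem:constr:1}. Combining the two bounds yields $\|u_\rho - u\|_{H_{s,\delta}}\leq \rho$. For the higher-regularity bound, since $u_\rho$ is supported in $B_{2R+\epsilon}$, Remark \ref{rem:constr:1}, the standard mollifier estimate $\|J_\epsilon v\|_{H^{s'}}\leq C\epsilon^{-(s'-s)}\|v\|_{H^s}$, and Proposition \ref{prop:Properties:1} applied to $\chi_R$ give
\begin{equation*}
\|u_\rho\|_{H_{s',\delta}} \;\leq\; C(R)\,\|J_\epsilon(\chi_R u)\|_{H^{s'}}
\;\leq\; C(R)\,\epsilon^{-(s'-s)}\,\|\chi_R u\|_{H^s}
\;\leq\; C(R,\epsilon)\,\|u\|_{H_{s,\delta}}.
\end{equation*}
Absorbing the $R,\epsilon$-dependence into $C(\rho)$ (which is permissible because $R$ and $\epsilon$ are now fixed in terms of $\rho$ and the implicit data) yields the stated bound.

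The only nontrivial point is the first tail estimate: one must verify that multiplication by $(1-\chi_R)$ acts boundedly term-by-term in the dyadic sum, uniformly in $R$ and $j$. This follows by a rescaling in each term and applying the standard $H^s$-multiplier estimate (\ref{eq:Properties:2}) to the rescaled cutoff $(1-\chi_R)(2^j\cdot)$ on the fixed shell $\{2^{-4}\leq |x|\leq 2^3\}$ that supports $(\psi_j)_{2^j}$; on this shell the relevant $C^N$-norms of $(1-\chi_R)(2^j\cdot)$ are bounded independently of $R$ and $j$, since either $(1-\chi_R)(2^j x)\equiv 1$ there (when $2^{j-4}\geq 2R$) or the derivative contributions $(2^j/R)^{|\alpha|}$ are matched by the annular support restriction. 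Once this uniformity is in hand, the rest of the argument is routine.
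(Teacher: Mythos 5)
Your proposal is correct and follows essentially the same route as the paper: the paper's truncation $u_N=\sum_{k\le N}\Psi_k u$ is exactly a smooth spatial cutoff at scale $R\sim 2^N$, so your $\chi_R$-truncation plus tail estimate, followed by mollification of the compactly supported piece and the mollifier gain $\|J_\epsilon v\|_{H^{s'}}\le C(\epsilon)\|v\|_{H^s}$, reproduces the paper's argument (the paper runs the mollification estimate termwise in the dyadic sum, whereas you invoke Remark \ref{rem:constr:1}, but this is only a cosmetic difference). Your closing verification that the rescaled cutoff $(1-\chi_R)(2^j\cdot)$ has uniformly bounded $C^N$-norm on the fixed annulus is the one delicate point, and you have handled it correctly.
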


Property (a) was proved by Triebel 
\cite{triebel95
}.
We prove both of them here since (b) relies on (a).

\begin{proof}[of Theorem \ref{thm:density}]
  Let $J_\epsilon$ be the standard mollifier, that is,
  $\supp(J_\epsilon)\subset B(0,\epsilon)$,
  $\hat{J_\epsilon}(\xi)=\hat{J_1}(\epsilon\xi)=\hat{J}(\epsilon\xi)$
  and $\hat{J}(0)=1$. It is well known that for any $v\in H^s$,
  $\|J_\epsilon\ast v-v\|_{H^s}\to 0$ and that $J_\epsilon\ast v$
  belongs to $C^\infty(\mathbb{R}^3)$. In addition, we claim that
  there is $C=C(\epsilon,s,s')$ such that
  \begin{equation}
    \label{eq:16} \|J_\epsilon\ast v\|_{H^{s'}}\leq C \|v\|_{H^{s}}.
  \end{equation}
  Indeed, since $J\in C_0^\infty(\mathbb{R}^3)$, $|\hat{J}(\xi)|\leq
  C_m(1+|\xi|)^{-m}$ for any integer $m$. Therefore, for a given $s'$
  and $\epsilon$, we chose $m$ and the constant $C(\epsilon,s,s')$ so
  that $(1+|\xi|^2)^{s'-s}|\hat{J}(\epsilon\xi)|^2\leq
  C^2(\epsilon,s,s')$.  Hence
  \begin{equation*}
    \begin{split}
      \|J_\epsilon\ast v\|_{H^{s'}}^2 & =\int
      (1+|\xi|^2)^{s'}|\hat{J}(\epsilon\xi)|^2 |\hat{v}(\xi)|^2
      d\xi=\int
      (1+|\xi|^2)^{s}|\hat{v}(\xi)|^2(1+|\xi|^2)^{s'-s}\hat{J}(\epsilon\xi)|^2d
      \xi \\ \leq & C^2(\epsilon,s,s') \int
      (1+|\xi|^2)^{s}|\hat{v}(\xi)|^2d \xi =C^2(\epsilon,s,s')\|
      v\|_{H^{s}}^2.
    \end{split}
  \end{equation*}
  \begin{enumerate}
    \item [{\rm (a)}] Given $u\in H_{s,\delta}$ and $\rho>0$ we may
    chose $N$ such that
    \begin{equation*}
      \sum_{j=N-2}^\infty 2^{( \frac{3}{2} + \delta)2j} \| (\psi_j
      (u)_{(2^j)}
      \|_{H^{s}}^{2}\leq \rho^2.
    \end{equation*}
    Set now $u_N=\sum_{j=0}^N \Psi_k u$, where $\Psi_k$ is defined as in
    the proof of Theorem \ref{thr:Appendix:5}.  We use 
and get
\begin{equation}
 \label{eq:Properties:2} 
\|fu\|_{H^{s}}\leq C_sK \|u\|_{H^{s}}
\end{equation} 
    \begin{equation*}
      \begin{split}
        & \| u-u_N\|_{H_{s,\delta}}^2 \leq \sum_{j=0}^\infty 2^{(
          \frac{3}{2} + \delta)2j} \left\| \left(\psi_j
            \left(\sum_{k=N+1}^\infty \Psi_k u\right)\right)_{(2^j)}
        \right\|_{H^{s}}^{2}  
        \\ = &
        \sum_{j=N-2}^\infty 2^{(
          \frac{3}{2} + \delta)2j} \left\|  \left(\sum_{k=j-3}^{j+4}
            \psi_j\Psi_k u\right)_{(2^j)}
        \right\|_{H^{s}}^{2}
        \leq C
        \sum_{j=N-2}^\infty 2^{( \frac{3}{2} +
          \delta)2j}\sum_{k=j-3}^{j+4} \left\|  \left( \psi_j
            u\right)_{(2^j)}
        \right\|_{H^{s}}^{2}
        \\ 
        \leq  7 C &
        \sum_{j=N-2}^\infty 2^{( \frac{3}{2} + \delta)2j} \left\|  \left(
            \psi_j u\right)_{(2^j)}
        \right\|_{H^{s}}^{2}=7 C \rho^2.
      \end{split}
    \end{equation*}
    Now $u_N$ has compact support, therefore $J_\epsilon\ast u_N\in
    C_0^\infty(\mathbb{R}^3)$ and
    \begin{equation*}
      \begin{split}
        \|J_\epsilon\ast u_N-u_N\|_{H_{s,\delta}}^2 & \leq
        \sum_{j=0}^{N+4} 2^{( \frac{3}{2} + \delta)2j} \left\|
          \left(\psi_j (J_\epsilon\ast u_N-u_N)\right)_{(2^j)}
        \right\|_{H^{s}}^{2}\to 0 \quad \text{as}\quad \epsilon\to0.
      \end{split}
    \end{equation*}

    \item [{\rm (b)}] Let $ u\in H_{s,\delta}$ and $\rho>0$, then by
    (a) we can chose $N$ sufficiently large and $\epsilon$ small so
    that $ \|J_\epsilon\ast u_N-u\|_{H_{s,\delta}}< \rho$ and by
    (\ref{eq:16})

    \begin{equation*}
      \begin{split}
        & \|J_\epsilon\ast u_N\|_{H_{s',\delta}}^2 \leq \sum_{j=0}^{N+4}
        2^{( \frac{3}{2} + \delta)2j} \left\| \left(\psi_j (J_\epsilon\ast
            u_N)\right)_{(2^j)}
        \right\|_{H^{s'}}^{2}\\ \leq C^2(\epsilon,s,s')&  \sum_{j=0}^{N+4}
        2^{( \frac{3}{2} + \delta)2j} \left\| \left(\psi_j
            u_N)\right)_{(2^j)}
        \right\|_{H^{s}}^{2}  \leq C^2C^2(\epsilon,s,s')
        \| u\|_{H_{s,\delta}}^2.
      \end{split}
    \end{equation*}
    Thus, $u_\rho=J_\epsilon\ast u_N$.
  \end{enumerate}
  \BeweisEnde
\end{proof}

\vspace{5mm}

\bibliographystyle{plain}
\bibliography{bibgraf}

\end{document}